\let\cites\cite		
\def\diagArrowLeft{<-}
\def\diagArrowRight{->}
\def\diagArrowGen#1#2/#3,#4/(#5,#6)(#7,#8)(#9){%
	\edef\testArrow{#2}%
	\dimen20=#5\psxunit	\dimen21=#6\psyunit
	\dimen22=#7\psxunit	\dimen23=#8\psyunit
	\dimen24=#3\psxunit \dimen25=#4\psyunit
	\ifx\testArrow\diagArrowLeft
		\advance\dimen20 by \dimen24
		\advance\dimen21 by \dimen25
	\else\ifx\testArrow\diagArrowRight
		\advance\dimen22 by -\dimen24
		\advance\dimen23 by -\dimen25
	\fi\fi
	\psline{#2}(\the\dimen20,\the\dimen21)(\the\dimen22,\the\dimen23)%
	\rput[c](#9){$\scriptstyle{#1}$}
\endgroup\ignorespaces}
\def\diagArrow{\begingroup\psset{linewidth=2pt}\diagArrowGen}
\def\diagDblArrow{\begingroup\psset{linewidth=0.5pt,doubleline=true,doublesep=2pt}\diagArrowGen}
\newcommand*\diagConnMM[4]{%
	\psset{linewidth=0.5pt,dimen=outer,arrowlength=0.8,arrowsize=6pt}%
	\pscircle(-1.2,0){0.4}
	\pscircle( 0.0,0){0.4}
	\pscircle( 1.2,0){0.4}
	\diagArrow{#1}{#2}/0,0/(-0.8,0)(-0.4,0)(-0.6,0.2)
	\diagArrow{#3}{#4}/0,0/( 0.4,0)( 0.8,0)( 0.6,0.2)
}
\newcommand*\diagConnX[4]{%
	\psset{linewidth=0.5pt,arrowlength=0.8,arrowsize=6pt,dimen=outer,fillstyle=solid,fillcolor=white}
	\diagArrow{#1}{#2}/0.35,0/(-0.7, 0.15)(0.7, 0.15)(0, 0.35)
	\diagArrow{#3}{#4}/0.35,0/(-0.7,-0.15)(0.7,-0.15)(0,-0.35)
	\pscircle(-0.7,0){0.4}
	\pscircle( 0.7,0){0.4}
}
\newcommand*\diagConnT[4]{%
	\psset{linewidth=0.5pt,arrowlength=0.8,arrowsize=6pt,dimen=outer}
	\edef\testArrow{#4}%
	\ifx\testArrow\diagArrowLeft
		\psbezier[linewidth=2pt]{<-}(0.4,0)(0.8,0)(0.8,0.6)(0,0.6)
	\else
		\psbezier[linewidth=2pt,arrowlength=0.8,arrowsize=6pt]{-}(0.4,0)(0.8,0)(0.8,0.6)(0,0.6)
	\fi
	\ifx\testArrow\diagArrowRight
		\psbezier[linewidth=2pt]{<-}(-0.4,0)(-0.8,0)(-0.8,0.6)(0,0.6)
	\else
		\psbezier[linewidth=2pt,arrowlength=0.8,arrowsize=6pt]{-}(-0.4,0)(-0.8,0)(-0.8,0.6)(0,0.6)
	\fi
	\rput[c](0,0.8){$\scriptstyle{#3}$}
	\psclip{\pscircle(0,0){0.4}}%
		\diagArrow{#1}{#2}/0,0.02/(0,-0.4)(0,0.4)(0.15,0)
	\endpsclip
}
\newcommand*\diagConnTwb[4]{%
	\psset{linewidth=0.5pt,arrowlength=0.8,arrowsize=6pt,dimen=outer}
	\edef\testArrow{#4}%
	\ifx\testArrow\diagArrowLeft
		\psbezier[linewidth=2pt]{<-}(0.4,0)(0.8,0)(0.8,0.6)(0,0.6)
	\else
		\psbezier[linewidth=2pt,arrowlength=0.8,arrowsize=6pt]{-}(0.4,0)(0.8,0)(0.8,0.6)(0,0.6)
	\fi
	\ifx\testArrow\diagArrowRight
		\psbezier[linewidth=2pt]{<-}(-0.4,0)(-0.8,0)(-0.8,0.6)(0,0.6)
	\else
		\psbezier[linewidth=2pt,arrowlength=0.8,arrowsize=6pt]{-}(-0.4,0)(-0.8,0)(-0.8,0.6)(0,0.6)
	\fi
	\rput[c](0,0.8){$\scriptstyle{#3}$}
	\diagDblArrow{#1}{#2}/0,0.05/(0,-0.4)(0,0.4)(0.15,0)
	\pscircle(0,0){0.4}
}
\newcommand*\diagConnTbw[4]{%
	\psframe[linestyle=none,fillstyle=solid,fillcolor=blackRegion](-1,-0.5)(1,0.9)
	\psset{arrowlength=0.8,arrowsize=6pt,dimen=outer,linewidth=0.5pt,doubleline=true,doublesep=2pt}%
	\edef\testArrow{#4}%
	\ifx\testArrow\diagArrowLeft
		\psbezier{<-}(0.4,0)(0.8,0)(0.8,0.6)(0,0.6)
	\else
		\psbezier{-}(0.4,0)(0.8,0)(0.8,0.6)(0,0.6)
	\fi
	\ifx\testArrow\diagArrowRight
		\psbezier{<-}(-0.4,0)(-0.8,0)(-0.8,0.6)(0,0.6)
	\else
		\psbezier{-}(-0.4,0)(-0.8,0)(-0.8,0.6)(0,0.6)
	\fi
	\rput[c](0,0.8){$\scriptstyle{#3}$}
	\psset{doubleline=false}%
	\begin{psclip}{\pscircle(0,0){0.4}}%
		\diagArrow{#1}{#2}/0,0.05/(0,-0.4)(0,0.4)(0.15,0)
	\end{psclip}
}
\def\diagToPictOne#1(#2,#3)(#4,#5){%
	\expandafter\edef\csname pictLowered#1\endcsname##1{\noexpand\begingroup
		\dimen20=#3\psyunit
		\advance\dimen20 by 0.2\baselineskip
		\noexpand\pspicture[shift=\noexpand\the\dimen20](#2,#3)(#4,#5)
			\expandafter\noexpand\csname diag#1\endcsname{##1}%
		\noexpand\endpspicture
	\noexpand\endgroup}%
	\expandafter\edef\csname pictCentered#1\endcsname##1{%
		\noexpand\centerpict(#2,#3)(#4,#5)
			\expandafter\noexpand\csname diag#1\endcsname{##1}%
		\noexpand\endcenterpict
	}%
	\expandafter\edef\csname pict#1\endcsname{%
		\noexpand\@ifstar
			{\expandafter\noexpand\csname pictCentered#1\endcsname}%
			{\expandafter\noexpand\csname pictLowered#1\endcsname}%
	}
}
\def\diagToPictTwo#1(#2,#3)(#4,#5){%
	\expandafter\edef\csname pictLowered#1\endcsname##1##2{\noexpand\begingroup
		\dimen20=#3\psyunit
		\advance\dimen20 by 0.2\baselineskip
		\noexpand\pspicture[shift=\noexpand\the\dimen20](#2,#3)(#4,#5)
			\expandafter\noexpand\csname diag#1\endcsname{##1}{##2}%
		\noexpand\endpspicture
	\noexpand\endgroup}%
	\expandafter\edef\csname pictCentered#1\endcsname##1##2{%
		\noexpand\centerpict(#2,#3)(#4,#5)
			\expandafter\noexpand\csname diag#1\endcsname{##1}{##2}%
		\noexpand\endcenterpict
	}%
	\expandafter\edef\csname pict#1\endcsname{%
		\noexpand\@ifstar
			{\expandafter\noexpand\csname pictCentered#1\endcsname}%
			{\expandafter\noexpand\csname pictLowered#1\endcsname}%
	}
}
\def\diagToPictThree#1(#2,#3)(#4,#5){%
	\expandafter\edef\csname pictLowered#1\endcsname##1##2##3{\noexpand\begingroup
		\dimen20=#3\psyunit
		\advance\dimen20 by 0.2\baselineskip
		\noexpand\pspicture[shift=\noexpand\the\dimen20](#2,#3)(#4,#5)
			\expandafter\noexpand\csname diag#1\endcsname{##1}{##2}{##3}%
		\noexpand\endpspicture
	\noexpand\endgroup}%
	\expandafter\edef\csname pictCentered#1\endcsname##1##2##3{%
		\noexpand\centerpict(#2,#3)(#4,#5)
			\expandafter\noexpand\csname diag#1\endcsname{##1}{##2}{##3}%
		\noexpand\endcenterpict
	}%
	\expandafter\edef\csname pict#1\endcsname{%
		\noexpand\@ifstar
			{\expandafter\noexpand\csname pictCentered#1\endcsname}%
			{\expandafter\noexpand\csname pictLowered#1\endcsname}%
	}
}
\def\diagToPictFour#1(#2,#3)(#4,#5){%
	\expandafter\edef\csname pictLowered#1\endcsname##1##2##3##4{\noexpand\begingroup
		\dimen20=#3\psyunit
		\advance\dimen20 by 0.2\baselineskip
		\noexpand\pspicture[shift=\noexpand\the\dimen20](#2,#3)(#4,#5)
			\expandafter\noexpand\csname diag#1\endcsname{##1}{##2}{##3}{##4}%
		\noexpand\endpspicture
	\noexpand\endgroup}%
	\expandafter\edef\csname pictCentered#1\endcsname##1##2##3##4{%
		\noexpand\centerpict(#2,#3)(#4,#5)
			\expandafter\noexpand\csname diag#1\endcsname{##1}{##2}{##3}{##4}%
		\noexpand\endcenterpict
	}%
	\expandafter\edef\csname pict#1\endcsname{%
		\noexpand\@ifstar
			{\expandafter\noexpand\csname pictCentered#1\endcsname}%
			{\expandafter\noexpand\csname pictLowered#1\endcsname}%
	}
}
\def\trefoil#1#2#3{%
	\begin{centerpict}(-1,-0.8)(1,1.1)%
		\psbezier(0,1)( 0.5,1)( 0.59641,0.23302)( 0.34641,-0.2)			
		\psbezier(-0.86603,-0.5)(-1.11503,-0.06698)(-0.5,0.4)(0,0.4)		
		\psbezier( 0.86603,-0.5)( 0.61603,-0.93302)(-0.09641,-0.63302)(-0.34641,-0.2)		
		\psbezier[border=4pt](0,1)(-0.5,1)(-0.59641,0.23302)(-0.34641,-0.2)			
		\psbezier[border=4pt]( 0.86603,-0.5)( 1.11503,-0.06698)( 0.5,0.4)(0,0.4)		
		\psbezier[border=4pt](-0.86603,-0.5)(-0.61603,-0.93302)( 0.09641,-0.63302)( 0.34641,-0.2)		
		\psline[linewidth=0.5pt,arrowsize=5pt,arrowinset=0.4]{->}(0.625,0.01519)(0.225,0.70801)
		\psline[linewidth=0.5pt,arrowsize=5pt,arrowinset=0.4]{->}(0.3,-0.55)(-0.5,-0.55)
		\psline[linewidth=0.5pt,arrowsize=5pt,arrowinset=0.4]{->}(-0.625,0.01519)(-0.225,0.70801)
		\uput[ 30]( 0.475, 0.275){$1$}
		\uput[-90]( 0.000,-0.550){$2$}
		\uput[150](-0.475, 0.275){$3$}
	\end{centerpict}%
}
\def\trefoilshadow{%
	\psbezier(0,1)( 0.5,1)( 0.59641,0.23302)( 0.34641,-0.2)
	\psbezier(-0.86603,-0.5)(-1.11503,-0.06698)(-0.5,0.4)(0,0.4)
	\psbezier( 0.86603,-0.5)( 0.61603,-0.93302)(-0.09641,-0.63302)(-0.34641,-0.2)
	\psbezier(0,1)(-0.5,1)(-0.59641,0.23302)(-0.34641,-0.2)
	\psbezier( 0.86603,-0.5)( 1.11503,-0.06698)( 0.5,0.4)(0,0.4)
	\psbezier(-0.86603,-0.5)(-0.61603,-0.93302)( 0.09641,-0.63302)( 0.34641,-0.2)
}
\def\trefoilarcdiag#1#2#3{\begingroup
	\trefoilshadow
	\psset{linestyle=solid,linewidth=0.5pt,fillstyle=solid,fillcolor=white}%
	\pscircle( 0.475,0.275){0.2}\rput( 0.475, 0.275){\tiny#1}%
	\pscircle( 0    ,-0.55){0.2}\rput( 0.000,-0.550){\tiny#2}%
	\pscircle(-0.475,0.275){0.2}\rput(-0.475, 0.275){\tiny#3}%
\endgroup}
\def\trefoilresolution#1#2#3{%
	\trefoilshadow
	\begingroup
		\psset{linestyle=none,fillstyle=solid,fillcolor=white}%
		\pscircle( 0    ,-0.55){0.2}
		\pscircle( 0.475,0.275){0.2}
		\pscircle(-0.475,0.275){0.2}
	\endgroup
	\ifx1#1\relax
		\psbezier(0.47105,0.46555)(0.48679,0.33682)(0.53343,0.254  )(0.63713,0.17613)
		\psbezier(0.45367,0.0879 )(0.48132,0.21841)(0.42964,0.30766)(0.30286,0.34897)
	\else
		\psbezier(0.47105,0.46555)(0.48679,0.33682)(0.42964,0.30766)(0.30286,0.34897)
		\psbezier(0.45367,0.0879 )(0.48132,0.21841)(0.53343,0.254  )(0.63713,0.17613)
		\ifx0#1\relax\else
			\psline[linewidth=0.5pt,arrowsize=3pt,arrowinset=0.4]{->}(0.625,0.01519)(0.225,0.70801)
		\fi
	\fi
	\ifx1#2\relax
		\psbezier(-0.16765,-0.64072)(-0.04831,-0.58998)( 0.04688,-0.58917)( 0.1662 ,-0.6401 )
		\psbezier( 0.15071,-0.43686)( 0.05151,-0.52606)(-0.05151,-0.52606)(-0.15071,-0.43686)
	\else
		\psbezier(-0.16765,-0.64072)(-0.04831,-0.58998)(-0.05151,-0.52606)(-0.15071,-0.43686)
		\psbezier( 0.15071,-0.43686)( 0.05151,-0.52606)( 0.04688,-0.58917)( 0.1662 ,-0.6401 )
		\ifx0#2\relax\else
			\psline[linewidth=0.5pt,arrowsize=3pt,arrowinset=0.4]{->}(0.4,-0.55)(-0.5,-0.55)
		\fi
	\fi
	\ifx1#3\relax
		\psbezier(-0.47105,0.46555)(-0.48679,0.33682)(-0.53343,0.254  )(-0.63713,0.17613)
		\psbezier(-0.45367,0.0879 )(-0.48132,0.21841)(-0.42964,0.30766)(-0.30286,0.34897)
	\else
		\psbezier(-0.47105,0.46555)(-0.48679,0.33682)(-0.42964,0.30766)(-0.30286,0.34897)
		\psbezier(-0.45367,0.0879 )(-0.48132,0.21841)(-0.53343,0.254  )(-0.63713,0.17613)
		\ifx0#3\relax\else
			\psline[linewidth=0.5pt,arrowsize=3pt,arrowinset=0.4]{->}(-0.625,0.01519)(-0.225,0.70801)
		\fi
	\fi
}
\def\defPicSmallCob#1#2{%
	\expandafter\def\csname fntCob#1\endcsname{%
		\begin{centerpict}(-0.1,0)(1.1,1.2)
			\psset{linewidth=0.15pt,dotsep=1pt,dash=1pt 1.5pt,linestyle=dotted,dimen=mid}
			\psellipse(0.5,0.2)(0.5,0.2)\psline(0,0.2)(0,1)
			\psellipse(0.5,1.0)(0.5,0.2)\psline(1,0.2)(1,1)
			\psset{linewidth=0.3pt,linestyle=solid}#2
		\end{centerpict}}%
}
\def\defPicCob#1#2{\expandafter\def\csname cobInsert@#1\endcsname{#2}}%
\def\psdash#1{#1[linestyle=dashed,linewidth=0.15pt]}
\def\insertCob#1{\begin{centerpict}(-0.1,0)(1.3,2)
	\psset{xunit=1.2cm,linewidth=0.15pt,dotsep=1pt,dash=1pt 1pt,linestyle=dotted,dimen=mid}
	\psellipse(0.5,0.2)(0.5,0.2)\psline(0,0.2)(0,1.8)
	\psellipse(0.5,1.8)(0.5,0.2)\psline(1,0.2)(1,1.8)
	\psset{linewidth=0.4pt,linestyle=solid}
	\csname cobInsert@#1\endcsname
\end{centerpict}}%
\def\fntCob#1{\csname fntCob#1\endcsname}
\def\cobRIhtop{%
	\psellipticarc(0.65,0)(0.15,0.1){-90}{90}
	\psbezier(0.13523, 0.13679)(0.45,-0.10)(0.3 , 0.1)(0.65, 0.1)
	\psbezier(0.27639,-0.17889)(0.35, 0.05)(0.45,-0.1)(0.65,-0.1)
}
\def\cobRIvtop{%
	\psellipse(0.65,0)(0.15,0.1)
	\psbezier(0.27639,-0.17889)(0.4,0)(0.4,0)(0.13523,0.13679)
}
\def\cobRIhbottom{%
	\psellipticarc(0.65,0)(0.15,0.1){-90}{0}
	\psbezier(0.27639,-0.17889)(0.35,0.05)(0.45,-0.1)(0.65,-0.1)
	\psclip{\psframe[linestyle=none](0,-0.2)(0.277,1)}
		\psbezier(0.13523,0.13679)(0.45,-0.1)(0.3,0.1)(0.65,0.1)
	\endpsclip
	\psdash\psellipticarc(0.65,0)(0.15,0.1){0}{90}
	\psclip{\psframe[linestyle=none](1,-0.2)(0.276,1)}
		\psdash\psbezier(0.13523,0.13679)(0.45,-0.1)(0.3,0.1)(0.65,0.1)
	\endpsclip
}
\def\cobRIvbottom{%
	\psellipticarc(0.65,0)(0.15,0.1){-180}{0}
	\psdash\psellipticarc(0.65,0)(0.15,0.1){0}{180}
	\psclip{\pspolygon[linestyle=none](0,-0.2)(0.4,-0.2)(0.4,-0.028)(0.277,-0.028)(0.277,0.2)(0,0.2)}
		\psbezier(0.27639,-0.17889)(0.4,0)(0.4,0)(0.13523,0.13679)
	\endpsclip
	\psclip{\psframe[linestyle=none](0.2763,-0.0284)(0.3564,0.2)}
		\psdash\psbezier(0.27639,-0.17889)(0.4,0)(0.4,0)(0.13523,0.13679)
	\endpsclip
}
\def\cobRIFourTubes{%
	\rput(0,0.2)\cobRIvbottom
	\rput(0,1.8)\cobRIvtop
	\psline(0.27639,0.02111)(0.27639,1.62111)
	\psline(0.13523,0.33679)(0.13523,1.93679)
	\psline(0.35637,1.77164)(0.35637,0.6)
	\psellipticarc(0.85,0.8)(0.08,0.05){-180}{0}
	\psellipticarc(0.55,0.8)(0.08,0.05){-180}{0}
	\psdash\psellipticarc(0.85,0.8)(0.08,0.05){0}{180}
	\psdash\psellipticarc(0.55,0.8)(0.08,0.05){0}{180}
	\psbezier(0.35637,0.6)(0.35637,0.5)(0.47,0.6)(0.47,0.8)
	\psbezier(0.35637,0.2)(0.35637,0.3)(0.63,0.4)(0.63,0.8)
	\psbezier(0.5,0.2)(0.5,0.3)(0.77,0.4)(0.77,0.8)
	\psbezier(0.8,0.2)(0.8,0.3)(0.93,0.4)(0.93,0.8)
	\psellipticarc(0.85,1.4)(0.08,0.05){-180}{0}
	\psellipticarc(0.55,1.4)(0.08,0.05){-180}{0}
	\psdash\psellipticarc(0.85,1.4)(0.08,0.05){0}{180}
	\psdash\psellipticarc(0.55,1.4)(0.08,0.05){0}{180}
	\psbezier(0.5,1.8)(0.5,1.6)(0.47,1.6)(0.47,1.4)
	\psbezier(0.8,1.8)(0.8,1.6)(0.93,1.6)(0.93,1.4)
	\psellipticarc(0.7,1.4)(0.07,0.15){0}{180}
}
\def\cobRIIhT{%
	\psbezier(0.20,-0.160)(0.52,-0.03)(0.52,-0.03)(0.95,-0.087)
	\psbezier(0.05, 0.087)(0.48, 0.03)(0.48, 0.03)(0.80, 0.160)
}
\def\cobRIIvvT{%
	\psbezier(0.2,-0.16)(0.3,-0.05)(0.25,0)(0.05, 0.087)
	\psbezier(0.8, 0.16)(0.7, 0.05)(0.75,0)(0.95,-0.087)
	\psbezier(0.45,0.07)(0.61,0.10)(0.71,-0.04)(0.55,-0.07)
	\psbezier(0.45,0.07)(0.29,0.04)(0.39,-0.10)(0.55,-0.07)
}
\def\cobRIIvhT{%
	\psbezier(0.2,-0.16)(0.3,-0.05)(0.25,0)(0.05, 0.087)
	\psbezier(0.45,0.07)(0.29,0.04)(0.39,-0.10)(0.55,-0.07)
	\psbezier(0.80, 0.160)(0.75,0.00)(0.61, 0.10)(0.45, 0.07)
	\psbezier(0.95,-0.087)(0.73,0.06)(0.71,-0.04)(0.55,-0.07)
}
\def\cobRIIhvT{%
	\psbezier(0.8, 0.16)(0.7, 0.05)(0.75,0)(0.95,-0.087)
	\psbezier(0.45,0.07)(0.61,0.10)(0.71,-0.04)(0.55,-0.07)
	\psbezier(0.20,-0.160)(0.25, 0.00)(0.39,-0.10)(0.55,-0.07)
	\psbezier(0.05, 0.087)(0.27,-0.06)(0.29, 0.04)(0.45, 0.07)
}
\def\cobRIIhhT{%
	\psbezier(0.80, 0.160)(0.75,0.00)(0.61, 0.10)(0.45, 0.07)
	\psbezier(0.95,-0.087)(0.73,0.06)(0.71,-0.04)(0.55,-0.07)
	\psbezier(0.20,-0.160)(0.25, 0.00)(0.39,-0.10)(0.55,-0.07)
	\psbezier(0.05, 0.087)(0.27,-0.06)(0.29, 0.04)(0.45, 0.07)
}
\def\cobRIIhB{%
	\psbezier(0.20,-0.160)(0.52,-0.03)(0.52,-0.03)(0.95,-0.087)
	\psclip{\psframe[linestyle=none](0,-0.2)(0.2,0.2)}%
		\psbezier(0.05, 0.087)(0.48, 0.03)(0.48, 0.03)(0.80, 0.160)
	\endpsclip
	\psclip{\psframe[linestyle=none](0.2,-0.2)(1,0.2)}%
		\psdash\psbezier(0.05, 0.087)(0.48, 0.03)(0.48, 0.03)(0.80, 0.160)
	\endpsclip
}
\def\cobRIIvvB{%
	\psclip{\pspolygon[linestyle=none](0,-0.2)(0,0.2)(0.2,0.2)(0.2,-0.0664)(0.25,-0.0664)(0.368,0)(0.631,0)(0.75,0.0664)(1,0.0664)(1,-0.2)}%
		\psbezier(0.2,-0.16)(0.3,-0.05)(0.25,0)(0.05, 0.087)
		\psbezier(0.8, 0.16)(0.7, 0.05)(0.75,0)(0.95,-0.087)
		\psbezier(0.45,0.07)(0.61,0.10)(0.71,-0.04)(0.55,-0.07)
		\psbezier(0.45,0.07)(0.29,0.04)(0.39,-0.10)(0.55,-0.07)
	\endpsclip
	\psclip{\pspolygon[linestyle=none](0.2,0.2)(0.2,-0.0664)(0.25,-0.0664)(0.368,0)(0.631,0)(0.75,0.0664)(1,0.0664)(1,0.2)}
		\psdash\psbezier(0.2,-0.16)(0.3,-0.05)(0.25,0)(0.05, 0.087)
		\psdash\psbezier(0.8, 0.16)(0.7, 0.05)(0.75,0)(0.95,-0.087)
		\psdash\psbezier(0.45,0.07)(0.61,0.10)(0.71,-0.04)(0.55,-0.07)
		\psdash\psbezier(0.45,0.07)(0.29,0.04)(0.39,-0.10)(0.55,-0.07)
	\endpsclip
}
\def\cobRIIvhB{%
	\psclip{\pspolygon[linestyle=none](0,0.2)(0.2,0.2)(0.2,-0.0664)(0.25,-0.0664)(0.368,0)(1,0.11)(1,-0.2)(0,-0.2)}
		\psbezier(0.2,-0.16)(0.3,-0.05)(0.25,0)(0.05, 0.087)
		\psbezier(0.45,0.07)(0.29,0.04)(0.39,-0.10)(0.55,-0.07)
		\psbezier(0.95,-0.087)(0.73,0.06)(0.71,-0.04)(0.55,-0.07)
	\endpsclip
	\psclip{\pspolygon[linestyle=none](0.2,0.2)(0.2,-0.0664)(0.25,-0.0664)(0.368,0)(1,0.11)(1,0.2)}
		\psdash\psbezier(0.2,-0.16)(0.3,-0.05)(0.25,0)(0.05, 0.087)
		\psdash\psbezier(0.45,0.07)(0.29,0.04)(0.39,-0.10)(0.55,-0.07)
		\psdash\psbezier(0.80, 0.160)(0.75,0.00)(0.61, 0.10)(0.45, 0.07)
	\endpsclip
}
\def\cobRIIhvB{%
	\psclip{\pspolygon[linestyle=none](1,-0.2)(1,0.0664)(0.75,0.0664)(0.631,0)(0.2,-0.02)(0.2,0.2)(0,0.2)(0,-0.2)}
		\psbezier(0.8, 0.16)(0.7, 0.05)(0.75,0)(0.95,-0.087)
		\psbezier(0.45,0.07)(0.61,0.10)(0.71,-0.04)(0.55,-0.07)
		\psbezier(0.20,-0.160)(0.25, 0.00)(0.39,-0.10)(0.55,-0.07)
		\psbezier(0.05, 0.087)(0.27,-0.06)(0.29, 0.04)(0.45, 0.07)
	\endpsclip
	\psclip{\pspolygon[linestyle=none](1,0.2)(1,0.0664)(0.75,0.0664)(0.631,0)(0.2,-0.11)(0.2,0.2)}
		\psdash\psbezier(0.8, 0.16)(0.7, 0.05)(0.75,0)(0.95,-0.087)
		\psdash\psbezier(0.45,0.07)(0.61,0.10)(0.71,-0.04)(0.55,-0.07)
		\psdash\psbezier(0.05, 0.087)(0.27,-0.06)(0.29, 0.04)(0.45, 0.07)
	\endpsclip
}
\def\cobRIIhhB{%
	\psbezier(0.95,-0.087)(0.73,0.06)(0.71,-0.04)(0.55,-0.07)
	\psbezier(0.20,-0.160)(0.25, 0.00)(0.39,-0.10)(0.55,-0.07)
	\psclip{\psframe[linestyle=none](0,0.2)(0.2,-0.2)}
		\psbezier(0.05, 0.087)(0.27,-0.06)(0.29, 0.04)(0.45, 0.07)
	\endpsclip
	\psdash\psbezier(0.80, 0.160)(0.75,0.00)(0.61, 0.10)(0.45, 0.07)
	\psclip{\psframe[linestyle=none](0.2,0.2)(1,-0.2)}
		\psdash\psbezier(0.05, 0.087)(0.27,-0.06)(0.29, 0.04)(0.45, 0.07)
	\endpsclip
}
\def\cobRIIFourTubes{%
	\rput(0,0.2)\cobRIIvvB
	\rput(0,1.8)\cobRIIvvT
	\psline(0.05, 0.287)(0.05, 1.887)
	\psline(0.20, 0.040)(0.20, 1.640)
	\psdash\psline(0.80, 0.360)(0.80, 1.797)
	\psline(0.80, 1.797)(0.80, 1.960)
	\psline(0.95, 0.113)(0.95, 1.713)
	\psline(0.25,1.7336)(0.25,0.8)
	\psline(0.75,0.2664)(0.75,1.3)
	\psellipticarc(0.39,1.3)(0.06,0.04){-180}{0}
	\psellipticarc(0.61,1.3)(0.06,0.04){-180}{0}
	\psdash\psellipticarc(0.39,1.3)(0.06,0.04){0}{180}
	\psdash\psellipticarc(0.61,1.3)(0.06,0.04){0}{180}
	\psbezier(0.361,1.8)(0.361,1.5)(0.33,1.6)(0.33,1.3)
	\psbezier(0.639,1.8)(0.639,1.6)(0.45,1.5)(0.45,1.3)
	\psbezier(0.75,1.8664)(0.75,1.5)(0.55,1.5)(0.55,1.3)
	\psbezier(0.75,1.3)(0.75,1.6)(0.67,1.4)(0.67,1.3)
	\psellipticarc(0.39,0.8)(0.06,0.04){-180}{0}
	\psellipticarc(0.61,0.8)(0.06,0.04){-180}{0}
	\psdash\psellipticarc(0.39,0.8)(0.06,0.04){0}{180}
	\psdash\psellipticarc(0.61,0.8)(0.06,0.04){0}{180}
	\psbezier(0.361,0.2)(0.361,0.6)(0.33,0.5)(0.33,0.8)
	\psbezier(0.45,0.8)(0.45,0.5)(0.639,0.4)(0.639,0.2)
	\psclip{\pspolygon[linestyle=none](0.36,0.3)(0.35,0.7)(0.42,0.7)(0.62,0.3)}
		\psdash\psbezier(0.25,0.1336)(0.25,0.4)(0.67,0.4)(0.67,0.8)
		\psdash\psbezier(0.25,0.8)(0.25,0.2)(0.55,0.6)(0.55,0.8)
	\endpsclip
	\psclip{\pspolygon[linestyle=none](0,-0.2)(0,1)(0.35,1)(0.35,0.7)(0.36,0.3)(0.62,0.3)(0.42,0.7)(0.42,1)(1,1)(1,-0.2)}
		\psbezier(0.25,0.1336)(0.25,0.4)(0.67,0.4)(0.67,0.8)
		\psbezier(0.25,0.8)(0.25,0.2)(0.55,0.6)(0.55,0.8)
	\endpsclip
}
\def\pictRelS{\begin{centerpict}(-0.6,-0.5)(0.6,0.5)
	\psset{linewidth=0.5pt,dash=1pt 1.5pt,dimen=middle}%
	\pscircle(0,0){0.5}
	\psellipticarc(0,0)(0.5,0.15){-180}{0}
	\psdash\psellipticarc(0,0)(0.5,0.15){0}{180}
\end{centerpict}}
\def\pictRelT{\begin{centerpict}(-0.1,-0.7)(1.3,0.7)
	\psset{linewidth=0.5pt,dash=1pt 1.5pt,dimen=middle}
	\psdash\psellipticarc(0.2,0)(0.2,0.1){0}{180}
	\psdash\psellipticarc(1,0)(0.2,0.1){0}{180}
	\psellipticarc(0.2,0)(0.2,0.1){-180}{0}
	\psellipticarc(1,0)(0.2,0.1){-180}{0}
	\psbezier(0.0,0)(0.00,0.9)(1.20,0.9)(1.2,0)
	\psbezier(0.4,0)(0.35,0.4)(0.85,0.4)(0.8,0)
	\psline[linewidth=0.3pt,arrowsize=3pt]{<-}(0.45,0.1)(0.75,0.1)
	\psbezier(0.0,0)(0.00,-0.9)(1.20,-0.9)(1.2,0)
	\psbezier(0.4,0)(0.35,-0.4)(0.85,-0.4)(0.8,0)
	\psline[linewidth=0.3pt,border=1pt,arrowsize=3pt]{->}(0.40,-0.4)(0.70,-0.1)
\end{centerpict}}
\def\pictRelTuCircles{%
	\psset{linewidth=0.5pt,dash=1pt 1.5pt,dimen=middle}
	\psellipticarc(0.2,0.1)(0.2,0.07\psxunit){-180}{0}
	\psellipticarc(1.0,0.1)(0.2,0.07\psxunit){-180}{0}
	\psdash\psellipticarc(0.2,0.1)(0.2,0.07\psxunit){0}{180}
	\psdash\psellipticarc(1.0,0.1)(0.2,0.07\psxunit){0}{180}
	\psellipse(0.2,1.4)(0.2,0.07\psxunit)
	\psellipse(1.0,1.4)(0.2,0.07\psxunit)
}
\def\pictRelTuL{\begin{centerpict}(-0.1,0)(1.3,1.5)
	\pictRelTuCircles
	\psline(0.0,0.1)(0.0,1.4)\psline(0.4,0.1)(0.4,1.4)
	\psbezier(0.8,0.1)(0.8,0.7)(1.2,0.7)(1.2,0.1)
	\psbezier(0.8,1.4)(0.8,0.8)(1.2,0.8)(1.2,1.4)
\end{centerpict}}
\def\pictRelTuR{\begin{centerpict}(-0.1,0)(1.3,1.5)
	\pictRelTuCircles
	\psbezier(0.0,0.1)(0.0,0.7)(0.4,0.7)(0.4,0.1)
	\psbezier(0.0,1.4)(0.0,0.8)(0.4,0.8)(0.4,1.4)
	\psline(0.8,0.1)(0.8,1.4)\psline(1.2,0.1)(1.2,1.4)
\end{centerpict}}
\def\pictRelTuB{\begin{centerpict}(-0.1,0)(1.3,1.5)
	\pictRelTuCircles
	\psbezier(0.0,0.1)(0.0,1.0)(1.2,1.0)(1.2,0.1)
	\psbezier(0.4,0.1)(0.35,0.5)(0.85,0.5)(0.8,0.1)
	\psline[linewidth=0.3pt,arrowsize=3pt]{<-}(0.45,0.2)(0.75,0.2)
	\psbezier(0.0,1.4)(0.0,1.0)(0.4,1.0)(0.4,1.4)
	\psbezier(0.8,1.4)(0.8,0.8)(1.2,0.8)(1.2,1.4)
\end{centerpict}}
\def\pictRelTuT{\begin{centerpict}(-0.1,0)(1.3,1.5)
	\pictRelTuCircles
	\psbezier(0.0,1.4)(0.0,0.5)(1.2,0.5)(1.2,1.4)
	\psbezier(0.4,1.4)(0.35,1.0)(0.85,1.0)(0.8,1.4)
	\psline[linewidth=0.3pt,border=1pt,arrowsize=3pt]{->}(0.40,1.0)(0.70,1.3)
	\psbezier(0.0,0.1)(0.0,0.5)(0.4,0.5)(0.4,0.1)
	\psbezier(0.8,0.1)(0.8,0.7)(1.2,0.7)(1.2,0.1)
\end{centerpict}}
\def\pictRelD{\begin{centerpict}(-0.6,-0.5)(0.6,0.5)
	\psset{linewidth=0.5pt,dash=1pt 1.5pt,dimen=middle,dotsize=5pt}%
	\pscircle(0,0){0.5}
	\psellipticarc(0,0)(0.5,0.15){-180}{0}
	\psdash\psellipticarc(0,0)(0.5,0.15){0}{180}
	\psdot(0,0.15)
\end{centerpict}}
\def\pictRelNeckI{\begin{centerpict}(-0.6,0)(0.6,2)
	\psset{linewidth=0.5pt,dash=1pt 1.5pt,dimen=middle,dotsize=4pt}%
	\psellipticarc(0,0.1)(0.4,0.1){-180}{0}
	\psdash\psellipticarc(0,0.1)(0.4,0.1){0}{180}
	\psellipse(0,1.9)(0.4,0.1)
	\psline(-0.4,0.1)(-0.4,1.9)
	\psline( 0.4,0.1)( 0.4,1.9)
\end{centerpict}}
\def\pictRelNeckB{\begin{centerpict}(-0.6,0)(0.6,2)
	\psset{linewidth=0.5pt,dash=1pt 1.5pt,dimen=middle,dotsize=4pt}%
	\psellipticarc(0,0.1)(0.4,0.1){-180}{0}
	\psdash\psellipticarc(0,0.1)(0.4,0.1){0}{180}
	\psellipse(0,1.9)(0.4,0.1)
	\psbezier(-0.4,0.1)(-0.4,0.9)(0.4,0.9)(0.4,0.1)
	\psbezier(-0.4,1.9)(-0.4,1.1)(0.4,1.1)(0.4,1.9)
	\psdot(0,0.4)
\end{centerpict}}
\def\pictRelNeckT{\begin{centerpict}(-0.6,0)(0.6,2)
	\psset{linewidth=0.5pt,dash=1pt 1.5pt,dimen=middle,dotsize=4pt}%
	\psellipticarc(0,0.1)(0.4,0.1){-180}{0}
	\psdash\psellipticarc(0,0.1)(0.4,0.1){0}{180}
	\psellipse(0,1.9)(0.4,0.1)
	\psbezier(-0.4,0.1)(-0.4,0.9)(0.4,0.9)(0.4,0.1)
	\psbezier(-0.4,1.9)(-0.4,1.1)(0.4,1.1)(0.4,1.9)
	\psdot(0,1.6)
\end{centerpict}}
\def\pictRelNeckM{\begin{centerpict}(-0.6,0)(0.6,2)
	\psset{linewidth=0.5pt,dash=1pt 1.5pt,dimen=middle,dotsize=4pt}%
	\psellipticarc(0,0.1)(0.4,0.1){-180}{0}
	\psdash\psellipticarc(0,0.1)(0.4,0.1){0}{180}
	\psellipticarc(0,1.0)(0.4,0.1){-180}{0}
	\psdash\psellipticarc(0,1.0)(0.4,0.1){0}{180}
	\psellipse(0,1.9)(0.4,0.1)
	\psbezier(-0.4,0.1)(-0.4,0.6)(0.4,0.6)(0.4,0.1)
	\psbezier(-0.4,1.9)(-0.4,1.4)(0.4,1.4)(0.4,1.9)
	\pscircle(0,1){0.4}
	\psdot(0,1.15)
	\psdot(0,0.75)
\end{centerpict}}
\def\pictRelTNscheme{%
	\psset{linewidth=0.5pt,dash=1pt 1.5pt,dimen=middle,dotsize=3pt}
	\rput(0,0){
		\psellipticarc(0.2,0)(0.2,0.07\psxunit){-180}{0}
		\psellipticarc(1.0,0)(0.2,0.07\psxunit){-180}{0}
		\psdash\psellipticarc(0.2,0)(0.2,0.07\psxunit){0}{180}
		\psdash\psellipticarc(1.0,0)(0.2,0.07\psxunit){0}{180}
	}\rput(0,1){
		\psellipticarc(0.2,0)(0.2,0.07\psxunit){-180}{0}
		\psellipticarc(1.0,0)(0.2,0.07\psxunit){-180}{0}
		\psdash\psellipticarc(0.2,0)(0.2,0.07\psxunit){0}{180}
		\psdash\psellipticarc(1.0,0)(0.2,0.07\psxunit){0}{180}
	}\rput(0,0){
		\psbezier(0.0,0)(0.00,-0.9)(1.20,-0.9)(1.2,0)
		\psbezier(0.4,0)(0.35,-0.4)(0.85,-0.4)(0.8,0)
		\psline[linewidth=0.3pt,border=1pt,arrowsize=3pt]{->}(0.40,-0.4)(0.70,-0.1)
	}\rput(0,1){
		\psbezier(0.0,0)(0.00,0.9)(1.20,0.9)(1.2,0)
		\psbezier(0.4,0)(0.35,0.4)(0.85,0.4)(0.8,0)
		\psline[linewidth=0.3pt,arrowsize=3pt]{<-}(0.45,0.1)(0.75,0.1)
	}
	\psline(0.8,0)(0.8,1)\psline(1.2,0)(1.2,1)
}
\def\pictRelTfromNi{\begin{centerpict}(-0.2,-0.7)(1.4,1.7)
	\pictRelTNscheme
	\psline(0,0)(0,1)\psline(0.4,0)(0.4,1)
	\psline[linestyle=dashed,dash=2pt 2pt,linewidth=1pt,linecolor=red](-0.1,0.5)(0.5,0.5)
\end{centerpict}}
\def\pictRelTfromNb{\begin{centerpict}(-0.2,-0.7)(1.4,1.7)
	\pictRelTNscheme
	\psbezier(0,0)(0,0.5)(0.4,0.5)(0.4,0)
	\psbezier(0,1)(0,0.5)(0.4,0.5)(0.4,1)
	\psdot(0.2,0.2)
\end{centerpict}}
\def\pictRelTfromNt{\begin{centerpict}(-0.2,-0.7)(1.4,1.7)
	\pictRelTNscheme
	\psbezier(0,0)(0,0.5)(0.4,0.5)(0.4,0)
	\psbezier(0,1)(0,0.5)(0.4,0.5)(0.4,1)
	\psdot(0.2,0.8)
\end{centerpict}}
\def\pictRelTfromNm{\begin{centerpict}(-0.2,-0.7)(1.4,1.7)
	\pictRelTNscheme
	\psbezier(0,0)(0,0.25)(0.4,0.25)(0.4,0)
	\psbezier(0,1)(0,0.75)(0.4,0.75)(0.4,1)
	\pscircle(0.2,0.5){0.2}
	\psdot(0.2,0.42)
	\psdot(0.2,0.58)
\end{centerpict}}
\def\pictRelTfromNd{\begin{centerpict}(-0.5,-0.5)(0.5,0.5)
	\psset{linewidth=0.5pt,dash=1pt 1.5pt,dimen=middle,dotsize=3pt}%
	\pscircle(0,0){0.3}
	\psellipticarc(0,0)(0.3,0.1){-180}{0}
	\psdash\psellipticarc(0,0)(0.3,0.1){0}{180}
	\psdot(0,0.1)
\end{centerpict}}
\def\pictRelTuNcaps{%
	\psset{yunit=1.2\psyunit}%
	\pictRelTuCircles
	\psbezier(0.0,0.1)(0.0,0.5)(0.4,0.5)(0.4,0.1)
	\psbezier(0.0,1.4)(0.0,1.0)(0.4,1.0)(0.4,1.4)
	\psline(0.8,0.1)(0.8,0.3)
	\psline(1.2,0.1)(1.2,0.3)
	\psbezier(0.8,0.3)(0.8,0.7)(1.2,0.7)(1.2,0.3)
	\psline(0.8,1.4)(0.8,1.2)
	\psline(1.2,1.4)(1.2,1.2)
	\psbezier(0.8,1.2)(0.8,0.8)(1.2,0.8)(1.2,1.2)
}
\def\pictRelTuNtl{\begin{centerpict}(-0.1,0)(1.3,1.8)
	\pictRelTuNcaps
	\psdot(0.2,1.23)
\end{centerpict}}
\def\pictRelTuNtr{\begin{centerpict}(-0.1,0)(1.3,1.8)
	\pictRelTuNcaps
	\psdot(1.0,1.02)
\end{centerpict}}
\def\pictRelTuNbl{\begin{centerpict}(-0.1,0)(1.3,1.8)
	\pictRelTuNcaps
	\psdot(0.2,0.25)
\end{centerpict}}
\def\pictRelTuNbr{\begin{centerpict}(-0.1,0)(1.3,1.8)
	\pictRelTuNcaps
	\psdot(1.0,0.48)
\end{centerpict}}
\def\pictRelTuNmid{\begin{centerpict}(-0.1,0)(1.3,1.8)
	\psset{yunit=1.2\psyunit}\pictRelTuCircles
	\psbezier(0.0,0.1)(0.0,0.40)(0.4,0.40)(0.4,0.1)
	\psbezier(0.8,0.1)(0.8,0.55)(1.2,0.55)(1.2,0.1)
	\psbezier(0.0,1.4)(0.0,1.05)(0.4,1.05)(0.4,1.4)
	\psbezier(0.8,1.4)(0.8,0.90)(1.2,0.90)(1.2,1.4)
	\pscircle(0.6,0.75){0.23}
	\psdot(0.6,0.67)\psdot(0.6,0.83)
\end{centerpict}}
\def\pictDeloopTR{\begin{pspicture}(0,0)(0.4,0.6)
	\psellipse(0.2,0.5)(0.2,0.07)
	\psbezier(0,0.5)(0,0)(0.4,0)(0.4,0.5)
	\psdot(0.2,0.3)
\end{pspicture}}
\def\pictDeloopTL{\begin{pspicture}(0,0)(0.4,0.6)
	\psellipticarc(0.2,0.1)(0.2,0.07){-180}{0}
	\psdash\psellipticarc(0.2,0.1)(0.2,0.07){0}{180}
	\psbezier(0.4,0.1)(0.4,0.6)(0,0.6)(0,0.1)
\end{pspicture}}
\def\pictDeloopBR{\begin{pspicture}(0,0)(0.4,0.6)
	\psellipse(0.2,0.5)(0.2,0.07)
	\psbezier(0,0.5)(0,0)(0.4,0)(0.4,0.5)
\end{pspicture}}
\def\pictDeloopBLa{\begin{centerpict}(0,0)(0.4,0.7)
	\psellipticarc(0.2,0.1)(0.2,0.07){-180}{0}
	\psdash\psellipticarc(0.2,0.1)(0.2,0.07){0}{180}
	\psbezier(0.4,0.1)(0.4,0.6)(0,0.6)(0,0.1)
	\psdot(0.2,0.3)
\end{centerpict}}
\def\pictDeloopBLb{\begin{centerpict}(0,0)(0.4,0.7)
	\psellipticarc(0.2,0.1)(0.2,0.07){-180}{0}
	\psdash\psellipticarc(0.2,0.1)(0.2,0.07){0}{180}
	\psbezier(0.4,0.1)(0.4,0.4)(0,0.4)(0,0.1)
	\pscircle(0.2,0.6){0.2}
	\psdot(0.2,0.52)
	\psdot(0.2,0.68)
\end{centerpict}}
\def\COBxsize{0.4}
\def\COBysize{1.2}
\def\locusCanvas(#1,#2){\begingroup
	\psset{linewidth=0.5pt,arrowsize=0 8}%
	\rput(0,#2){\psline{->}(0,0)(0,0.5)\uput[l](0,0.5){$f_t$}}%
	\rput(#1,0){\psline{->}(0,0)(0.5,0)\uput[d](0.5,0){$t$}}%
	\psline{-}(0,#2)(0,0)(#1,0)
	\psline[linewidth=0.5pt,linestyle=dashed]{-}(#1,0)(#1,#2)(0,#2)
\endgroup}
\def\drawSaddle{%
	\begin{centerpict}(-1pt,-2pt)(21pt,18pt)%
		\psset{linewidth=0.5pt}%
		\psline(17pt,16pt)(17pt,5pt)
		\psbezier( 0pt, 5pt)( 9pt, 3pt)( 9pt, 3pt)(17pt, 5pt)
		\psline[border=1pt]( 3pt,11pt)( 3pt,0pt)
		\psbezier[border=1pt](20pt,11pt)(12pt,12pt)(11pt,15pt)(17pt,16pt)
		\psbezier( 3pt, 0pt)(11pt, 2pt)(11pt, 2pt)(20pt, 0pt)
		\psbezier( 3pt,11pt)( 9pt,12pt)( 8pt,15pt)( 0pt,16pt)
		\psline(17pt,16pt)(17pt,14pt)
		\psline( 0pt,16pt)( 0pt,5pt)
		\psline(20pt,11pt)(20pt,0pt)
		\psbezier(6.67pt,14pt)(7pt,5pt)(13pt,5pt)(13.33pt,14pt)
	\end{centerpict}%
}
\def\cobSaddle#1#2{\begingroup
		\rput(0,#1){%
			\psbezier(0.0, 0.2)(0.4, 0.1)(0.4, 0.1)(0.2, 0)
			\psbezier(0.8, 0.2)(0.6, 0.1)(0.6, 0.1)(1.0, 0)
		}
		\psbezier(0.0, 0.2)(0.45, 0.1)(0.45, 0.1)(0.8, 0.2)
		\psclip{\psframe[linestyle=none,fillstyle=solid,fillcolor=white](0.2,0)(1,0.3)}
			\psdash\psbezier(0.0, 0.2)(0.45, 0.1)(0.45, 0.1)(0.8, 0.2)
		\endpsclip
		\psbezier(0.2, 0.0)(0.55, 0.1)(0.55, 0.1)(1.0, 0.0)
		\COBloady\COBdim@a{#1}%
		\COBdim@b=\COBdim@a \advance\COBdim@b 0.2\psyunit
		\COBdim@c=\COBdim@a \advance\COBdim@c 0.05136\psyunit
		\psline(0.0, 0.2)(0.0, \the\COBdim@b)
		\psline(0.2, 0.0)(0.2, \the\COBdim@a)
		\psdash\psline(0.8, 0.2)(0.8, \the\COBdim@c)
		\psline(0.8, \the\COBdim@c)(0.8, \the\COBdim@b)
		\psline(1.0, 0.0)(1.0, \the\COBdim@a)
		\COBdim@b=\COBdim@a	\advance\COBdim@b 0.1\psyunit
		\COBdim@c=\COBdim@a \advance\COBdim@c 0.13\psyunit
		\psline(0.33,\the\COBdim@b)(0.33,#2)
		\psline(0.67,\the\COBdim@c)(0.67,#2)
		\rput(0,#2){%
			\psbezier(0.33, 0)(0.33,-0.6)(0.67,-0.6)(0.67, 0)
			\psbezier[linewidth=0.3pt,border=0.5pt,arrowsize=3pt,arrowlength=1.2]%
						{<-}(0.55,-0.6)(0.53,-0.2)(0.5,-0.2)(0.45,-0.4)
		}
\endgroup}
\def\clap#1{\hbox to 0pt{\hss#1\hss}}
\def\mathllap{\mathpalette\mathllapinternal}
\def\mathrlap{\mathpalette\mathrlapinternal}
\def\mathclap{\mathpalette\mathclapinternal}
\def\mathllapinternal#1#2{\llap{$\mathsurround=0pt#1{#2}$}}
\def\mathrlapinternal#1#2{\rlap{$\mathsurround=0pt#1{#2}$}}
\def\mathclapinternal#1#2{\clap{$\mathsurround=0pt#1{#2}$}}
\newtheorem{theorem}{Theorem}[section]
\newtheorem{lemma}[theorem]{Lemma}
\newtheorem{conjecture}[theorem]{Conjecture}
\newtheorem{proposition}[theorem]{Proposition}
\newtheorem{corollary}[theorem]{Corollary}
\theoremstyle{definition}
\newtheorem{definition}[theorem]{Definition}
\newtheorem{example}[theorem]{Example}
\newtheorem{remark}[theorem]{Remark}
\definecolor{internalLink}{rgb}{0.5,0,0}
\definecolor{citeLink}{rgb}{0,0.5,0}
\definecolor{urlLink}{rgb}{0,0,0.5}
\DeclareMathOperator{\id}{id}		
\def\blank{\raisebox{0.3ex}{\underline{\ \ }}}		
\newcommand*{\Ob}{\mathrm{Ob}}
\newcommand*{\Mor}{\mathrm{Mor}}
\newcommand*{\OFun}{\mathrm{Fun}^{\mathrm{or}}}
\newcommand*{\FFun}{\mathrm{Fun}^{\mathrm{fr}}}
\newcommand*{\bS}{\mathbb{S}^1}			
\newcommand*{\Disk}{\mathbb{D}^2}		
\newcommand*{\Z}{\mathbb{Z}}				
\newcommand*{\R}{\mathbb{R}}				
\newcommand*{\Aut}{\mathrm{Aut}}
\newcommand*{\Hess}{\mathit{Hess}}
\newcommand*{\cone}{C}
\newcommand*{\Kom}{\textrm{\normalfont\textbf{Kom}}}
\newcommand*{\cat}[1]{\textrm{\normalfont\textbf{#1}}}		
\newcommand*{\catAdd}[1]{\mathrm{Mat}(#1)}
\newcommand*\Mod[1]{\cat{Mod}_{#1}}
\def\newcobcategory#1#2{%
	\expandafter\def\csname #1\endcsname{#2}
	\expandafter\def\csname #1L\endcsname{#2_{/\ell}}
	\expandafter\def\csname #1D\endcsname{#2_{\bullet}}
	\expandafter\def\csname k#1\endcsname{\scalars#2}
	\expandafter\def\csname k#1L\endcsname{\scalars#2_{/\ell}}
	\expandafter\def\csname k#1D\endcsname{\scalars#2_{\bullet}}
}
\newcommand*{\F}{\mathcal{F}}
\newcommand*{\FA}{\mathcal{F}_{\!A}}
\newcommand*{\Fev}{\mathcal{F}_{\!ev}}
\newcommand*{\Fodd}{\mathcal{F}_{\!odd}}
\newcommand*{\Fcov}{\mathcal{F}_{\!cov}}
\newcommand*{\FD}{\mathcal{F}_{\!\bullet}}
\newcommand*\SingLocus[1]{S(#1)}
\newcommand{\quotient}[2]{\raisebox{0.3\baselineskip}{$#1$}\!\Big/\!\raisebox{-0.3\baselineskip}{$#2$}}
\newcommand*{\vv}[2]{v_{#1}\otimes v_{#2}}
\def\vvvv#1,{v_{#1}\vvvvv}
\def\vvvvv#1,{%
	\ifx\endvvv#1\relax\else
		\otimes\ifx*#1\else v_{#1}\fi
	\expandafter\vvvvv\fi}
\newbox\fntbox
\def\defineMathSymbol#1(#2)(#3)[#4]#5{%
	\expandafter\edef\csname fntMath#1\endcsname{\noexpand\mathchoice
			{\expandafter\noexpand\csname fntMath#1@draw\endcsname\noexpand\textfont\noexpand\displaystyle}%
			{\expandafter\noexpand\csname fntMath#1@draw\endcsname\noexpand\textfont\noexpand\textstyle}%
			{\expandafter\noexpand\csname fntMath#1@draw\endcsname\noexpand\scriptfont\noexpand\scriptstyle}%
			{\expandafter\noexpand\csname fntMath#1@draw\endcsname\noexpand\scriptscriptfont\noexpand\scriptscriptstyle}%
	}%
	\expandafter\def\csname fntMath#1@draw\endcsname##1##2{\begingroup
		\fntinkwidth=\fontdimen8##13\relax
		\setbox\fntbox=\hbox{$\m@th##2\mathrm{M}$}%
		\fntsize = \wd\fntbox
		\psset{unit=\fntsize,linewidth=\fntinkwidth}%
		\begin{pspicture}[shift=#4](#2)(#3)#5\end{pspicture}%
	\endgroup}
	\edef\next{\noexpand\DeclareMathOperator\expandafter\noexpand\csname#1\endcsname{\expandafter\noexpand\csname fntMath#1\endcsname}}\next
}
\def\KaufBracket{\@ifstar\KaufBracketScaled\KaufBracketSimple}
\def\KhBracket{\@ifstar\KhBracketScaled\KhBracketSimple}
\def\KhCube{\@ifstar\KhCubeScaled\KhCubeSimple}
\def\KhCubeSigned{\@ifstar\KhCubeSignedScaled\KhCubeSignedSimple}
\def\KhCubeGraded{\@ifstar\KhCubeGradedScaled\KhCubeGradedSimple}
\def\KhCubeGradedSigned{\@ifstar\KhCubeGradedSignedScaled\KhCubeGradedSignedSimple}
\newcommand*{\KhCubeScaled}[1]{\mathcal{I}\left(#1\right)}
\newcommand*{\KhCubeSimple}[1]{\mathcal{I}(#1)}
\newcommand*{\KhCubeSignedScaled}[2]{\mathcal{I}^{#2}\left(#1\right)}
\newcommand*{\KhCubeSignedSimple}[2]{\mathcal{I}^{#2}(#1)}
\newcommand*{\KhCubeGradedScaled}[1]{\mathcal{I}_{\mathrm{gr}}\left(#1\right)}
\newcommand*{\KhCubeGradedSimple}[1]{\mathcal{I}_{\mathrm{gr}}(#1)}
\newcommand*{\KhCubeGradedSignedScaled}[2]{\mathcal{I}_{\mathrm{gr}}^{#2}\left(#1\right)}
\newcommand*{\KhCubeGradedSignedSimple}[2]{\mathcal{I}_{\mathrm{gr}}^{#2}(#1)}
\newcommand*{\KhBracketScaled}[1]{\left\llbracket#1\right\rrbracket}
\newcommand*{\KhBracketSimple}[1]{\llbracket#1\rrbracket}
\newcommand*{\KaufBracketScaled}[1]{\left\langle#1\right\rangle}
\newcommand*{\KaufBracketSimple}[1]{\langle#1\rangle}
\newcommand*{\KhCom}{K\!h}
\newcommand*{\Kh}{\mathcal{H}}
\newcommand*{\EKh}{\Kh_{ev}}
\newcommand*{\OKh}{\Kh_{odd}}
\newcommand*{\KhCov}{\Kh_{cov}}
\newcommand*{\DKh}{\Kh_{\bullet}}
\def\chdeg{\mathrm{chdeg}\mskip\thinmuskip}
\newcommand*{\CPO}{\mathcal{CPO}}
\def\permMM{X}
\def\permSS{Y}
\def\permMS{Z}
\def\permSM{Z^{-1}}
\def\permTpos{1}
\def\permTneg{\permMM\permSS}
\def\permT{\alpha}
\let\scalars\Bbbk
\def\invScalars{\scalars^*}
\def\Zev{\Z_{ev}}
\def\Zodd{\Z_{odd}}
\def\Zpi{\mathbb{Z}_{\pi}}
\def\scalarsLong{\@ifstar
	{\mathbb{Z}[\permMM,\permSS,\permMS^{\pm1}]/(\permMM^2{=}\permSS^2{=}1)}%
	{\mathbb{Z}[\permMM,\permSS,\permMS^{\pm1}]/(\permMM^2=\permSS^2=1)}%
}
\def\ZevLong{\@ifstar{\Zpi/(\pi{-}1)}{\Zpi/(\pi-1)}}
\def\ZoddLong{\@ifstar{\Zpi/(\pi{+}1)}{\Zpi/(\pi+1)}}
\def\ZpiLong{\@ifstar{\mathbb{Z}[\pi]/(\pi^2{-}1)}{\mathbb{Z}[\pi]/(\pi^2-1)}}
\def\quot#1{`#1'}
\begin{document}

\title{A 2-category of chronological cobordisms and odd Khovanov homology}

\author[Krzysztof K. Putyra]{Krzysztof K. Putyra}
\address{Department of Mathematics, Columbia University\\ New York, NY 10027}
\email{putyra@math.columbia.edu}

\begin{abstract}
	We create a~framework for odd Khovanov homology in the~spirit of Bar-Natan's construction for the~ordinary Khovanov homology. Namely, we express the~cube of resolutions of a~link diagram as a~diagram in a~certain $2$-category of chronological cobordisms and show that it is $2$-commutative: the~composition of $2$-morphisms along any $3$-dimensional subcube is trivial. This allows us to create a~chain complex, whose homotopy type modulo certain relations is a~link invariant. Both the~original and the~odd Khovanov homology can be recovered from this construction by applying certain strict $2$-functors. We describe other possible choices of functors, including the~one that covers both homology theories and another generalizing dotted cobordisms to the~odd setting. Our construction works as well for tangles and is conjectured to be functorial up to sign with respect to tangle cobordisms.
\end{abstract}

\keywords{%
	categorification,
	chronology,
	cobordism,
	covering homology,
	dotted cobordisms,
	Frobenius algebra,
	functoriality,
	Khovanov homology,
	knot,
	link,
	odd homology,
	planar algebra,
	tangle%
}

\maketitle

\section{Introduction}\label{sec:intro}
The~Khovanov homology \cite{KhCatJones} opened to knot theorists a~new and interesting world of powerful invariants, of which knot polynomials are only shadows. For instance, the~Euler characteristic of the~Khovanov homology is the~Jones polynomial of a~link. It did not take much time to prove usefulness of these invariants. For instance, the~Lee deformation of the~Khovanov's differential \cite{Lee} leads to a~spectral sequence, from which J.~Rasmussen extracted a~lower bound for the~knot genus, giving a~combinatorial proof of Milnor Conjecture \cite{RasmGenus}. Moreover, the~Khovanov homology detects the~unknot \cite{KhDetectsUnknot} and unlinks \cite{KhDetectsUnlinks}, although the~question whether the~Jones polynomial is an~unknot detector is still open. This raised a~question, whether there were other link homology theories categorifying the~Jones polynomial. In particular, D.~Bar-Natan \cite{DrorCobs} described a~very general construction that produces link homology for rank two Frobenius algebras satisfying some additional relations. Then M.~Khovanov classified all theories that arise from Frobenius systems \cite{KhFrobExt}, proving that the~Bar-Natan's theory of dotted cobordisms is universal.

When it seemed that categorifications of the~Jones polynomial were well understood, P.~Ozsv\'ath, J.~Rasmussen and Z.~Szab\'o published a~paper with a~distinct construction \cite{ORS} based on a~\emph{projective} TQFT. Their invariant also categorifies the~Jones polynomial, but the~algebra used in the~construction is not cocommutative and even not coassociative. They call it \emph{odd Khovanov homology}, because of similarity with the~original construction, which we now refer to as \emph{even}. Both theories agree modulo $2$, but they are not equivalent over $\Z$. In particular, results of A.~Shumakovitch \cite{ShumComp} provide examples of pairs of knots that can be distinguished by one theory but not by the~other. Moreover, it was proved by J.~Bloom that the~odd Khovanov homology is mutation invariant \cite{Bloom}, generalizing the~similar result by S.~Wehrli for even Khovanov homology with $\Z_2$ coefficients \cite{Wehrli}. On the~other hand, the~even Khovanov homology detects mutant links, but the~problem is still open for knots.

Both theories are obtained from the~cube of resolutions of a~link diagram. Namely, given a~link diagram $D$ with $n$ crossings we create $2^n$ pictures, by resolving each crossing horizontally (type $0$ resolution) or vertically (type $1$ resolution):
\begin{equation*}
	\psset{unit=0.5}
	\begin{centerpict}(-1,-1)(1,1)
		\psbezier(-1,-1)(0,-0.1)(0,-0.1)(1,-1)
		\psbezier(-1, 1)(0, 0.1)(0, 0.1)(1, 1)
	\end{centerpict}
		\quad\to/<-/^0\quad
	\begin{centerpict}(-1,-1)(1,1)
		\psline(-1,-1)(1,1)
		\psline[border=5\pslinewidth](-1,1)(1,-1)
	\end{centerpict}
		\quad\to^1\quad
	\begin{centerpict}(-1,-1)(1,1)
		\psbezier(-1,-1)(-0.1,0)(-0.1,0)(-1,1)
		\psbezier( 1,-1)( 0.1,0)( 0.1,0)( 1,1)
	\end{centerpict}
		\hskip 2cm
	\begin{centerpict}(-1.5,-1.5)(1.5,1.5)
		\psset{linewidth=0.5pt}
		\psline[doubleline=true,doublesep=1pt](-1.2,-1.2)(1.2, 1.2)
		\psline(0.35,1.05)(-1.05,-0.35)
		\psline(-0.35,-1.05)(1.05,0.35)
		\psline[linecolor=white,linewidth=1](-1,1)(1,-1)
		\psline[doubleline=true,doublesep=1pt](-1.2, 1.2)(1.2,-1.2)
		\psline(-0.4,1.1)(1.1,-0.4)
		\psline(0.4,-1.1)(-1.1,0.4)
		\psarc(-2.4,0){1.197}{-45}{45}
		\psarc( 2.4,0){1.197}{135}{225}
		\psarc(0,-2.4){1.197}{ 45}{135}
		\psarc(0, 2.4){1.197}{225}{315}
		\psarc{c-c}(-2.4,0){1.395}{-15}{ 15}
		\psarc{c-c}( 2.4,0){1.395}{165}{195}
		\psarc{c-c}(0,-2.4){1.395}{ 75}{105}
		\psarc{c-c}(0, 2.4){1.395}{255}{285}
	\end{centerpict}
\end{equation*}
The~picture of crossing highways is placed to the~right to help to remember the~naming convention: a~resolution of a~crossing can be seen as leaving one highway by turning right (assuming the~traffic is on the~right side). In type $0$ we leave the~lower highway, while in type $1$ the~upper one. We place all such pictures in vertices of an $n$-dimensional cube and decorate its edges with certain cobordisms. This cube commutes and by applying a~TQFT functor we obtain a~commuting cube of abelian groups and homomorphisms, which can be collapsed to a~chain complex (after changing signs of some maps). On the~other hand, a~projective TQFT from \cite{ORS} produces a~cube that commutes only up to signs, which has to be fixed before collapsing. %
It is a~kind of mystery, why this is possible.

\wrapfigure[r]<12>{%
	\psset{unit=1cm}%
	\begin{pspicture}(-2.4,-1ex)(2.4,2)
		\diagnode cov( 0,1.55)[\Kh(L)]
		\diagnode  ev(-1.7,0)[\EKh(L)]
		\diagnode odd( 1.7,0)[\OKh(L)]
		\diagarrow|b{npos=0.6,labelsep=3pt}|[cov`ev;\permMM,\permSS,\permMS\,\mapsto<0.7em>\mathrlap1]
		\diagarrow|a{npos=0.65,labelsep=3pt}|[cov`odd;\substack{%
					\ \mathllap{\permMM,\permMS\,}\mapsto<0.7em> 1\phantom{-}\\
					\ \mathllap{\permSS\,}\mapsto<0.7em>-1}]
	\end{pspicture}}%
The~last step is exactly why the~odd theory does not fit into Bar-Natan's framework. The~latter starts with a~cube of resolutions and cobordisms, and invariance is proved at this level, before applying a~TQFT functor. The~author extended this framework \cite{Putyra} using cobordisms with an~additional structure, a~\emph {chronology}, which is a~framed Morse function $\tau\colon W\to I$ that separates critical points \cite{IgusaFMF}. Isotopies of these functions equip the~category of chronological cobordisms with a~structure of a~$2$-category and we can express the~projective functor from \cite{ORS} as a~strict $2$-functor. By translating Bar-Natan's construction into this new framework, we were able to show invariance of the~complex built from chronological cobordisms. Applying different $2$-functors recovers both the~even and odd Khovanov homology. In~particular, it follows from contractibility of certain loops in the~space of framed functions that in the~odd theory we can always distribute signs over edges of the~cube to make it commute. In addition to that, we have found several theories with parameters, especially the~\emph{covering homology} $H^{cov}(L)$. It is a~sequence of graded modules over the~ring of truncated polynomials $\Z[\permMM,\permSS,\permMS^{\pm1}]/(\permMM^2=\permSS^2=1)$, from which we can obtain both even and odd Khovanov homology as illustrated to the~right. The~specializations take place at the~level of chains. This construction was first described in \cite{Putyra}. Another example is given by chronological cobordisms with dots that generalizes the~universal Bar-Natan's theory to the~odd setting. By an~analogy to the~even case it is proved to be universal, see Theorem~\ref{thm:universality}. A~motivation was to find an~odd analog of Lee's deformation, the~goal that has not been reached.

\subsection*{A~connection with categorified quantum groups}
The~existence of covering homology theory fits nicely with recent discoveries regarding odd categorifications of quantum groups. It is known that the~even Khovanov homology can be recovered from categorical representations of categorified $U_q(\mathfrak{sl}_2)$ \cite{Webster}. A~recent discovery of odd nilHecke algebras \cites{EllisLaudaKhov,KangKashTsu}, which categorifies the~negative half of $U_q(\mathfrak{sl}_2)$, suggests the~existence of the~odd Khovanov homology may also possess a~representation-theoretical explanation.\linebreak

\noindent\wrapfigure[r]<3>{\begin{diagps}(-2.4,-0.2)(2.6,2)
		\node Uqp(0,1.5)[U_{q,\pi}]
		\node sl2(-1.7,0)[U_q(\mathfrak{sl}_2)]
		\node osp12( 1.7,0)[U_q(\mathfrak{osp}_{1|2})]
		\arrow|b|[Uqp`sl2;\pi=1]
		\arrow|a|[Uqp`osp12;\pi=-1]
	\end{diagps}}
The~odd nilHecke algebras appeared to be connected with the~Lie superalgebra $U_q(\mathfrak{osp}_{1|2})$. Both $U_q(\mathfrak{sl}_2)$ and $U_q(\mathfrak{ops}_{1|2})$ are covered by a~Kac-Moody algebra $U_{q,\pi}$ introduced by S.~Clark, D.~Hill and W.~Wang \cites{ClarkHillWang,HillWang}, where $\pi$ is a~formal parameter with $\pi^2=1$. The~relationship is illustrated in the~diagram to the~right. Recently, A.~Lauda and A.~Ellis categorified the~covering algebra $U_{q,\pi}$ using graded supermonoidal categories, in which the~relation $(f\otimes\id)\circ(\id\otimes g) = (\id\otimes g)\circ(f\otimes\id)$ holds up to a~sign in a~coherent way \cite{oddSl2}. It is expected that this categorification leads to homologies covering both odd and even homology theories and the~author believes that the~covering Khovanov homology described in this paper is one of them.

\subsection*{Outline}
We start the~paper with a~picture visualizing the~construction of the~Khovanov complex for the~trefoil knot, see Fig.~\ref{diag:khov-cube}. We hope it will serve as a~motivation for the~next two sections, in which we define chronological cobordisms and analyze changes of chronologies. Section~\ref{sec:chron} describes the~$2$-category of chronological cobordisms of any dimension and explains a~symmetric monoidal structure induced by a~disjoint union. The~section ends with a~detailed description of the~two-dimensional case. A~refined version of chronological cobordisms embedded in $\Disk\times I$ is described in Section~\ref{sec:cob-linear}, together with a~solution for \emph{chronological relations}: permuting critical points corresponds to scaling a~cobordism by an~invertible scalar.

Details of the~construction of the~generalized Khovanov complex for a~tangle diagram are given in Sections~\ref{sec:complex} and \ref{sec:tangles}. The~former deals with link diagrams only, whereas the~latter describes how to extend the~construction to tangles in the~spirit of Bar-Natan, using planar algebras. Unfortunately, the~functors forming a~planar algebra of chronological cobordisms are not strict, so that we cannot combine complexes for tangles in the~naive way. This issue is partially resolved in Section~\ref{sec:invariance}, where we prove invariance of the~generalized complex under Reidemeister moves. Section \ref{sec:properties} contains several straightforward properties of the~complex.

The~next few sections are devoted to computation of the~homology of the~complex. We recover both odd and even Khovanov homology from our construction in Section~\ref{sec:homology}. The~covering homology is defined in Section~\ref{sec:chron-Frob}, in which we define a~chronological version of a~Frobenius system. Similarly to the~ordinary case, a~chronological Frobenius system induces a~TQFT $2$-functor from the~category of chronological cobordisms to a~$2$-category of graded symmetric bimodules. They are analyzed in the~next section. In particular, we describe dotted chronological cobordisms and their algebra, proving it is universal among all Frobenius systems fitting in our framework.

Section~\ref{sec:odds-ends} contains several remarks and constructions related to this paper, but not fully explored. We discuss, following \cite{DrorCobs}, functoriality up to \quot{sign} of our construction, where a~\quot{sign} is understood as an~invertible scalar in degree $0$. Then we analyze a~choice in defining chronological relations: there is one type of changes for which the~associated coefficients are defined only up to a~scalar $\permMM\permSS$, although the~whole construction is independent of this choice. Finally, we analyze a~possible connection of our construction to the~one based on $\mathfrak{sl}_2$ foams \cite{Caprau}. We suppose there is a~parallel theory of chronological foams, closely connected to our construction.

The~construction of chronological cobordisms utilizes the~theory of framed functions, which is an~interesting enrichment of Morse theory. It is described in Appendix~\ref{sec:framed-function} following \cite{IgusaFMF}. In particular, we describe all singularities of these functions up to codimension two.

The~paper uses also several $2$-categorical constructions, including semi-strict monoidal structure and braiding. These are explained in Appendix~\ref{sec:2-cats}.

\subsection*{Acknowledgement}
This paper would have never been written without help of many people. The~problem of creating a~framework for odd Khovanov homology was suggested by Dror Bar-Natan while the~author was at the~University of Toronto. The~dotted algebra was understood with the~help of Anna Beliakova, when the~author visited her in Z\"urich. Several ideas used to clarify the~original construction came out after discussions with Aaron Lauda, Robert Lipshitz, Mikhail Khovanov, J\'ozef Przytycki and Alexander Shumakovitch. The~author is also thankful to Alexander Ellis, Maria Hempel, Vasilly Manturov, Cotton Seed and Joshua Batson for interesting discussions and remarks. Further, the~author would like to thank the~referees for their insightful remarks on an~earlier version of this paper.

\section{The~picture}\label{sec:cube-picture}
We begin with describing elements of the~big diagram in Fig.~\ref{diag:khov-cube}. In the~next few sections we shall create a~framework for this picture.

\begin{figure}[b]
	\input{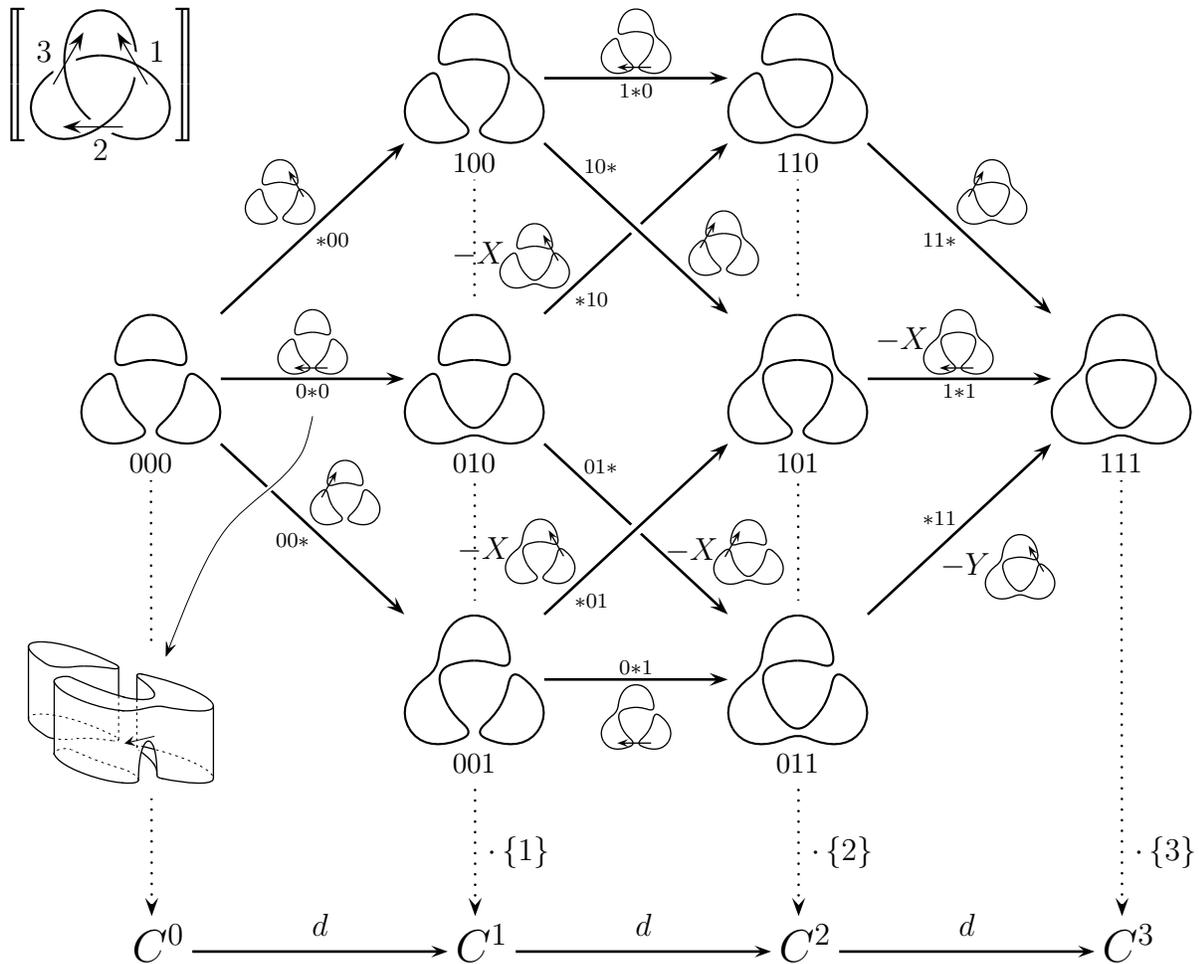}%
	\caption{The Khovanov bracket for the~trefoil.}\label{diag:khov-cube}
\end{figure}

\subsection*{Knot}
In the~left top corner we can see a~diagram $D$ of the~left-handed trefoil with enumerated crossings. Each crossing is equipped with an~arrow oriented in such a~way that it connects the two arcs in the~type~$0$ resolution at this crossing (there are two choices of such an~arrow). These arrows do not appear in the~construction of the~even Khovanov complex \cites{KhCatJones,DrorCobs}, but it is crucial for the~odd Khovanov complex \cite{ORS}.

\subsection*{Vertices in the~cube}
Most of the picture is occupied by resolutions of the~diagram $D$, placed in vertices of a~unit three-dimensional cube. A~vertex $\xi$ of the~cube is encoded by a~sequence $(\xi_1,\xi_2,\xi_3)$ with each $\xi_i=0$ or $1$, and it is decorated with a~diagram $D_\xi$ obtained from $D$ by replacing $i$-th crossing with the~resolution of type $\xi_i$. The~cube is drawn slant, to have all diagrams grouped in columns with respect to the~weight of the~vertex $\|\xi\|:=\xi_1+\xi_2+\xi_3$.

\subsection*{Edges in the~cube}
Edges are encoded by sequences $\zeta=(\zeta_1,\zeta_2,\zeta_3)$ with exactly one $\zeta_i$ being a~star $*$. The~star indicates direction of the~edge: replacing it with $0$ or $1$ results in the~source or the~target vertex respectively. Choose an~edge $\zeta\colon\xi\to\xi'$ and let $U$ be a~small neighborhood of the~$i$-th crossing, where $\zeta_i=*$. It is decorated with a~unique cobordism $D_\zeta\subset\mathbb{R}^2\times I$ that has only one critical point: a~cylinder $(D - U)\times I$ with a~saddle \drawSaddle{} inserted over $U$. We equip this cobordism with a~height function $h\colon D_\zeta\to I$. The~small arrow over the~crossing determines the~\emph{framing}, hence the~\emph{orientation} of the~saddle (see Appendix~\ref{sec:framed-function}). For simplicity we represent the~cobordism by its input together with an~arrow, which determines both the~place and the~orientation of the~saddle. This is the~same arrow that decorates the~$i$-th crossing in the~diagram of the~knot. A~3D picture of the~cobordism decorating the~edge $0{*}0$ is given in the~left-bottom corner.

\subsection*{An~underlying diagram with holes}
\wrapfigure[r]<0>{%
	\psset{unit=1cm}%
	\begin{pspicture}(-1,-0.8)(1,1.1)\trefoilarcdiag{1}{2}{3}\end{pspicture}%
}
The~two paragraphs above can be unified by a~single construction, which also explains how to create the~cube for any link diagram $D$. Take the~diagram $D$ and remove a~small neighborhood of \emph{each} crossing, obtaining a~new diagram $D_\bullet$.\footnote{
	This is an~example of a~planar arc diagram, see Section~\ref{sec:tangles}.
}
For instance, the~trefoil diagram produces a~diagram with three holes. Copy the~numbers associated to crossings to the~holes---this gives an~ordering of them. The~picture $D_\xi$ at a~vertex $\xi$ is obtained from $D_\bullet$ by filling the~holes with resolutions, type $\xi_i$ at the~$i$-th hole. To obtain the~cobordism $D_\zeta$ associated to an~edge $\zeta$, where $\zeta_i=*$, copy the~arrow from $i$-th crossing to the~$i$-th hole. For a~3D picture, take a~product $D_\bullet\times I$ and insert into the~$i$-th hole\footnote{
	Notice that holes in $D_\bullet\times I$ are three-dimensional.
}
either a~pair of vertical rectangles, when $\zeta_i\neq *$, or a~saddle for $\zeta_i=*$ with a~framing induced by the~small arrow over the~crossing, see Fig.~\ref{fig:cob-resol}.

\begin{figure}[h]%
	\rnode[c]{crossing}{\begin{pspicture}(-0.1,-0.1)(1.1,1.1)
		\psline(0,0)(1,1)
		\psline[border=5\pslinewidth](0,1)(1,0)
		\psline[linewidth=0.5pt,border=0.5pt,arrowsize=3pt,arrowlength=1.2]{->}(0.5,0.8)(0.5,0.2)
	\end{pspicture}}
	
	\vskip 1.5cm
	\psset{yunit=1.5cm,xunit=1.2cm,linewidth=0.5pt,nodesep=3pt,dash=1pt 1.5pt}%
	\rnode[t]{r0}{\begin{pspicture}(0,-0.2)(1,1)
		\psbezier(0.0, 0.2)(0.45, 0.1)(0.45, 0.1)(0.8, 0.2)
		\psbezier(0.0, 1.0)(0.45, 0.9)(0.45, 0.9)(0.8, 1.0)
		\psline(0.8, 0.2)(0.8, 1.0)
		\psline(0.0, 0.2)(0.0, 1.0)
		\psclip{\psframe[linestyle=none,fillstyle=solid,fillcolor=white](0.2,0)(1,0.843)}
			\psdash\psbezier(0.0, 0.2)(0.45, 0.1)(0.45, 0.1)(0.8, 0.2)
			\psdash\psline(0.8, 0.2)(0.8, 1.0)
		\endpsclip
		\psbezier(0.2, 0.8)(0.55, 0.9)(0.55, 0.9)(1.0, 0.8)
		\psbezier(0.2, 0.0)(0.55, 0.1)(0.55, 0.1)(1.0, 0.0)
		\psline(0.2, 0.0)(0.2, 0.8)
		\psline(1.0, 0.0)(1.0, 0.8)
	\end{pspicture}}
	\hskip 1.5cm
	\rnode[t]{r*}{\begin{pspicture}(0,-0.2)(1,1)
		\psbezier(0.0, 1.0)(0.4, 0.9)(0.4, 0.9)(0.2, 0.8)
		\psbezier(0.8, 1.0)(0.6, 0.9)(0.6, 0.9)(1.0, 0.8)
		\psbezier(0.0, 0.2)(0.45, 0.1)(0.45, 0.1)(0.8, 0.2)
		\psclip{\psframe[linestyle=none,fillstyle=solid,fillcolor=white](0.2,0)(1,0.3)}
			\psdash\psbezier(0.0, 0.2)(0.45, 0.1)(0.45, 0.1)(0.8, 0.2)
		\endpsclip
		\psbezier(0.2, 0.0)(0.55, 0.1)(0.55, 0.1)(1.0, 0.0)
		\psline(0.0, 0.2)(0.0, 1.0)
		\psline(0.2, 0.0)(0.2, 0.8)
		\psdash\psline(0.8, 0.2)(0.8, 0.85136)
		\psline(0.8, 0.85136)(0.8, 1.0)
		\psline(1.0, 0.0)(1.0, 0.8)
		\psbezier(0.33, 0.9)(0.33, 0.3)(0.67, 0.3)(0.67, 0.93)
		\psbezier[linewidth=0.3pt,border=0.5pt,arrowsize=3pt,arrowlength=1.2]%
					{<-}(0.55, 0.3)(0.53, 0.7)(0.5, 0.7)(0.45, 0.5)
	\end{pspicture}}
	\hskip 1.5cm
	\rnode[t]{r1}{\begin{pspicture}(0,-0.2)(1,1)
		\psbezier(0.0, 0.2)(0.4, 0.1)(0.4, 0.1)(0.2, 0.0)
		\psbezier(0.0, 1.0)(0.4, 0.9)(0.4, 0.9)(0.2, 0.8)
		\psclip{\psframe[linestyle=none,fillstyle=solid,fillcolor=white](0.2,0.1)(0.4,0.2)}
			\psdash\psbezier(0.0, 0.2)(0.4, 0.1)(0.4, 0.1)(0.2, 0.0)
		\endpsclip
		\psline(0.0, 0.2)(0.0, 1.0)
		\psline(0.2, 0.0)(0.2, 0.8)
		\psline(0.33, 0.9)(0.33, 0.1)
		\psbezier(0.8, 0.2)(0.6, 0.1)(0.6, 0.1)(1.0, 0.0)
		\psbezier(0.8, 1.0)(0.6, 0.9)(0.6, 0.9)(1.0, 0.8)
		\psclip{\psframe[linestyle=none,fillstyle=solid,fillcolor=white](0.6,0.11)(1,0.3)}
			\psdash\psbezier(0.8, 0.2)(0.6, 0.1)(0.6, 0.1)(1.0, 0.0)
		\endpsclip
		\psline(0.67, 0.91)(0.67, 0.11)
		\psdash\psline(0.8, 0.2)(0.8, 0.85136)
		\psline(0.8, 0.85136)(0.8, 1.0)
		\psline(1.0, 0.0)(1.0, 0.8)
	\end{pspicture}}%
	\psset{arrowsize=5pt,arrowlength=1}
	\ncline{->}{crossing}{r0}\nbput {$\scriptstyle \zeta_i=0$}%
	\ncline{->}{crossing}{r*}\ncput*{$\scriptstyle \zeta_i=*$}%
	\ncline{->}{crossing}{r1}\naput {$\scriptstyle \zeta_i=1$}%
	\caption{3D resolutions of a~crossing decorated by an~arrow.}%
	\label{fig:cob-resol}%
\end{figure}

\subsection*{Faces}
Consider a~two-dimensional face $S$ of the~cube of resolutions of a~diagram $D$ with $n$ crossings. It is encoded by a~sequence $\nu=(\nu_1,\dots,\nu_n)$ having $0$ or $1$ at all except two positions, $i_0$ and $i_1$, where we put stars. Denote the~vertices of $S$ by $S_{ab}$, where $a,b$ are the~replacements for the~two stars. We label $S$ with the~resolution of $D$ decorating the~vertex $S_{00}$ with two arrows placed over smoothings of the~$i_0$-th and $i_1$-th crossing. This is a~surgery diagram of a~cobordism with two saddles, and there are two height functions depending on which of the~two saddle points is below the~other. Instead of picking any of them, we interpret the~diagram as a~linear homotopy between the~two height functions, see Fig.~\ref{fig:surgery-description-of-a-change} for an~example.

\begin{figure}[t]
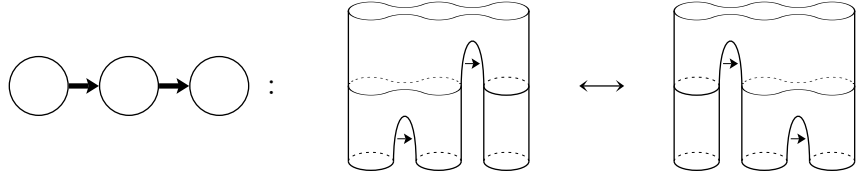

	\centering
	\psset{linewidth=0.5pt,arrowlength=0.8,arrowinset=0.2,arrowsize=4pt,dimen=middle,dash=1pt 1.5pt}%
	\begin{centerpict}(-1.7,-0.5)(1.7,0.5)
		\diagConnMM{}{->}{}{->}
	\end{centerpict}
	$:\qquad$
	\begin{centerpict}(0,0.3)(2.4,2.7)
		\rput(0,0.5){
			\psellipticarc(0.3,0)(0.3,0.12){-180}{0}
			\psellipticarc(1.2,0)(0.3,0.12){-180}{0}
			\psellipticarc(2.1,0)(0.3,0.12){-180}{0}
			\psdash\psellipticarc(0.3,0)(0.3,0.12){0}{180}
			\psdash\psellipticarc(1.2,0)(0.3,0.12){0}{180}
			\psdash\psellipticarc(2.1,0)(0.3,0.12){0}{180}
		}
		\rput(0,1.5){
			\psellipticarc(2.1,0)(0.3,0.12){-180}{0}
			\psdash\psellipticarc(2.1,0)(0.3,0.12){0}{180}
			\pscustom{%
				\scale{1 0.4}%
				\psarc(0.3,0){0.3}{180}{315}
				\psarcn(0.75,-0.45){0.33639}{135}{45}
				\psarc(1.2,0){0.3}{225}{360}
			}
			\psdash\pscustom{%
				\scale{1 0.4}%
				\psarc(1.2,0){0.3}{0}{135}
				\psarcn(0.75, 0.45){0.33639}{315}{225}
				\psarc(0.3,0){0.3}{ 45}{180}
		}}
		\rput(0,2.5){\pscustom{%
			\scale{1 0.4}%
			\psarc(0.3,0){0.3}{ 45}{315}
			\psarcn(0.75,-0.45){0.33639}{135}{45}
			\psarc(1.2,0){0.3}{225}{315}
			\psarcn(1.65,-0.45){0.33639}{135}{45}
			\psarc(2.1,0){0.3}{225}{135}
			\psarcn(1.65, 0.45){0.33639}{315}{225}
			\psarc(1.2,0){0.3}{ 45}{135}
			\psarcn(0.75, 0.45){0.33639}{315}{225}
		}}
		\psline(0,0.5)(0,2.5)
		\psbezier(0.6,0.5)(0.6,1.3)(0.9,1.3)(0.9,0.5)
		\psline(1.5,0.5)(1.5,1.5)
		\psbezier(1.5,1.5)(1.5,2.3)(1.8,2.3)(1.8,1.5)
		\psline(1.8,0.5)(1.8,1.5)
		\psline(2.4,0.5)(2.4,2.5)
		\psline[linewidth=0.3pt]{->}(0.65,0.8)(0.85,0.8)
		\psline[linewidth=0.3pt]{->}(1.55,1.8)(1.75,1.8)
	\end{centerpict}
	$\quad\to/<->/\quad$
	\begin{centerpict}(0,0.3)(2.4,2.7)
		\rput(0,0.5){
			\psellipticarc(0.3,0)(0.3,0.12){-180}{0}
			\psellipticarc(1.2,0)(0.3,0.12){-180}{0}
			\psellipticarc(2.1,0)(0.3,0.12){-180}{0}
			\psdash\psellipticarc(0.3,0)(0.3,0.12){0}{180}
			\psdash\psellipticarc(1.2,0)(0.3,0.12){0}{180}
			\psdash\psellipticarc(2.1,0)(0.3,0.12){0}{180}
		}
		\rput(0,1.5){
			\psellipticarc(0.3,0)(0.3,0.12){-180}{0}
			\psdash\psellipticarc(0.3,0)(0.3,0.12){0}{180}
			\pscustom{%
				\scale{1 0.4}%
				\psarc(1.2,0){0.3}{180}{315}
				\psarcn(1.65,-0.45){0.33639}{135}{45}
				\psarc(2.1,0){0.3}{225}{360}
			}
			\psdash\pscustom{%
				\scale{1 0.4}%
				\psarc(2.1,0){0.3}{0}{135}
				\psarcn(1.65, 0.45){0.33639}{315}{225}
				\psarc(1.2,0){0.3}{ 45}{180}
		}}
		\rput(0,2.5){\pscustom{%
			\scale{1 0.4}%
			\psarc(0.3,0){0.3}{ 45}{315}
			\psarcn(0.75,-0.45){0.33639}{135}{45}
			\psarc(1.2,0){0.3}{225}{315}
			\psarcn(1.65,-0.45){0.33639}{135}{45}
			\psarc(2.1,0){0.3}{225}{135}
			\psarcn(1.65, 0.45){0.33639}{315}{225}
			\psarc(1.2,0){0.3}{ 45}{135}
			\psarcn(0.75, 0.45){0.33639}{315}{225}
		}}
		\psline(0,0.5)(0,2.5)
		\psline(0.6,0.5)(0.6,1.5)
		\psbezier(0.6,1.5)(0.6,2.3)(0.9,2.3)(0.9,1.5)
		\psline(0.9,0.5)(0.9,1.5)
		\psbezier(1.5,0.5)(1.5,1.3)(1.8,1.3)(1.8,0.5)
		\psline(2.4,0.5)(2.4,2.5)
		\psline[linewidth=0.3pt]{->}(1.55,0.8)(1.75,0.8)
		\psline[linewidth=0.3pt]{->}(0.65,1.8)(0.85,1.8)
	\end{centerpict}
	\caption{A~two-arrow surgery diagram encodes a~permutation of two saddle points.}
	\label{fig:surgery-description-of-a-change}
\end{figure}

\subsection*{Commutativity cochain}
Take the~two-arrow description of a~face and disregard all circles that are not touched by any of the~two arrows. What remains is one of the~pictures listed in Tab.~\ref{tab:cube-faces}. We gathered all such configurations into groups labeled with some monomials from a~commutative ring $\scalars:=\Z[\permMM,\permSS,\permMS^{\pm1}]/(\permMM^2{=}\permSS^2{=}1)$ (they are explained in Section~\ref{sec:cob-linear}). They define a~$2$-cochain $\psi\in C^2(I^n; \invScalars)$, where $\invScalars$ is the~group of invertible elements in $\scalars$. Here one must be careful with the~two configurations placed in Tab.~\ref{tab:cube-faces} below the~letter $\permMS$---the~value of $\psi$ is either $\permMS$ or $\permSM$, depending on the~orientation of the~face:
\begin{equation}\label{eq:SM-group-explained}
	\psset{nrot=:U,labelsep=3pt}
	\psi\left(
	\begin{diagps}(-0.1,-0.0)(2.1,2.2)
		\node tl(0,1)[\bullet]	\node tr(1,2)[\bullet]
		\node bl(1,0)[\bullet]	\node br(2,1)[\bullet]
		\arrow[nodesep=1pt]|a{labelsep=3pt}|{->}[tl`tr;\textrm{merge}]
		\arrow[nodesep=1pt]|a{labelsep=3pt}|{->}[tr`br;\textrm{split}]
		\arrow[nodesep=1pt]|b{labelsep=3pt}|{->}[tl`bl;\textrm{split}]
		\arrow[nodesep=1pt]|b{labelsep=3pt}|{->}[bl`br;\textrm{merge}]
		\psset{linewidth=0.5pt,arrowsize,arrowsize=4pt 1.2,arrowlength=0.7,arrowinset=0.5}
		\psarcn(1,1.1){0.2}{135}{-135}\psline{<-}(0.85858,0.95858)(0.85929,0.95828)
	\end{diagps}\right) = \permMS,
	\qquad\mathrm{but}\qquad
	\psi\left(
	\begin{diagps}(-0.1,-0.0)(2.1,2.2)
		\node tl(0,1)[\bullet]	\node tr(1,2)[\bullet]
		\node bl(1,0)[\bullet]	\node br(2,1)[\bullet]
		\arrow[nodesep=1pt]|a{labelsep=3pt}|{->}[tl`tr;\textrm{split}]
		\arrow[nodesep=1pt]|a{labelsep=3pt}|{->}[tr`br;\textrm{merge}]
		\arrow[nodesep=1pt]|b{labelsep=3pt}|{->}[tl`bl;\textrm{merge}]
		\arrow[nodesep=1pt]|b{labelsep=3pt}|{->}[bl`br;\textrm{split}]
		\psset{linewidth=0.5pt,arrowsize,arrowsize=4pt 1.2,arrowlength=0.7,arrowinset=0.5}
		\psarcn(1,1.1){0.2}{135}{-135}\psline{<-}(0.85858,0.95858)(0.85929,0.95828)
	\end{diagps}\right) = \permSM.
\end{equation}
We call $\psi$ the~\emph{commutativity $2$-cochain}.

\begin{table}
	\begin{minipage}{5.5cm}
		\begin{center}
			{\Large $\permMM$}
			\par\pictDisMM{}{-}{}{-}
			\par\pictConnMM{}{-}{}{-}
			\par\pictConnX{}{->}{}{<-}
		\end{center}
	\end{minipage}
	\begin{minipage}{4.5cm}
		\begin{center}
			{\Large $\permSS$}
			\par\pictDisSS{}{-}{}{-}
			\par\pictConnSS{}{-}{}{-}
			\par\pictConnX{}{->}{}{->}
		\end{center}
	\end{minipage}
	\vskip\baselineskip
	\begin{minipage}[t]{5cm}
		\begin{center}
			{\Large $\permMS$}
			\par\pictDisMS{1}{-}{2}{-}
			\par\pictConnMS{1}{-}{2}{-}
		\end{center}
	\end{minipage}
	\begin{minipage}[t]{2.5cm}
		\begin{center}
			{\Large $\permTpos$}
			\vskip 0.75\baselineskip
			\pictConnT{}{->}{}{->}
		\end{center}
	\end{minipage}
	\begin{minipage}[t]{2.5cm}
		\begin{center}
			{\Large $\permTneg$}
			\vskip 0.75\baselineskip
			\pictConnT{}{->}{}{<-}
		\end{center}
	\end{minipage}
	\vskip 0.5\baselineskip
	\caption{%
		Diagrams for faces that can appear in a~cube of resolutions, grouped by values of the~commutativity cochain $\psi$. All~coefficients live in the~commutative ring $\scalars:=\Z[\permMM,\permSS,\permMS^{\pm1}]/(\permMM^2{=}\permSS^2{=}1)$. Thin lines are the~input circles and thick arrows visualize saddle points. Orientations of the~arrows are omitted if $\psi$ does not depend on them. The~small numbers $1$ and $2$ in the~two configurations placed under the~letter $\permMS$ indicate an~initial order of critical points, see~\eqref{eq:SM-group-explained}. For the~opposite order take $\permSM$.}\label{tab:cube-faces}
\end{table}

\subsection*{Coefficients on edges}
Edges in Fig.~\ref{diag:khov-cube} are labeled with elements of $\scalars$, describing a~$1$-cochain $\epsilon\in C^1(I^3; \invScalars)$ (take $1$ if no coefficient is present). The~product of these elements around each face $S$ is equal to $-\psi(S)$, i.e. $\psi=-d\epsilon$. Such a~cochain $\epsilon$ is called a~\emph{sign assignment}, following \cite{ORS}. It exists for any link diagram and, in some sense, it is unique (see Section~\ref{sec:complex}).

\subsection*{Complex}
The~bottom line in Fig.~\ref{diag:khov-cube} shows a~sequence of objects and maps between them. This is the~\emph{Khovanov bracket} of the~trefoil: think of the~objects $C^i$ as columns of the~diagrams above and the~maps $d^i$ as bundles of arrows between the~columns. We give more meaning to this in Section~\ref{sec:complex}, showing that $(C,d)$ is a~chain complex.

\subsection*{A word about tangles}
In the~same manner we can create a~cube of resolutions for a~tangle diagram, using cobordisms with corners. However, it has to be explained what we mean by a~$2$-cochain $\psi$ in this case, as faces are more complicated. This is done in Section~\ref{sec:tangles}.

\section{Cobordisms and chronologies}\label{sec:chron}
We start creating the~framework for Fig.~\ref{diag:khov-cube} by describing a~$2$-category\footnote{
	A~brief introduction to the~theory of $2$-categories is included in Appendix~\ref{sec:2-cats}.}
of chronological cobordisms.

An~$(n+1)$-manifold $W$ is a~\emph{cobordism} between two oriented $n$-manifolds $\Sigma_0$ and $\Sigma_1$ if its boundary is diffeomorphic to $\Sigma_0 \sqcup -\Sigma_1$ (the~minus sign stands for the~opposite orientation of $\Sigma_1$). We will often write $W_{in}$ and $W_{out}$ for the~components of $\partial W$ identified with $\Sigma_0$ and $-\Sigma_1$ respectively, and call them the~\emph{input} and the~\emph{output} of $W$.

Given cobordisms $W$ from $\Sigma_0$ to $\Sigma_1$ and $W'$ from $\Sigma_1$ to $\Sigma_2$ one can glue them together along the~orientation reversing diffeomorphism $W_{out}\approx\Sigma_1\approx W'_{in}$ to obtain a~cobordism $W'W$. Unfortunately, this operation is defined only up to a~diffeomorphism, the~issue we can address by considering cobordisms with \emph{collars}. Namely, think of an~$n$-manifold $\Sigma$ as an~open cylinders $\Sigma\times(-\varepsilon,\varepsilon)$ for a~fixed small $\varepsilon>0$, and a~cobordism from $\Sigma_0$ to $\Sigma_1$ as a~manifold $W$ with a~pair of embeddings $\Sigma_0\times[0,\varepsilon)\to W\to/<-/\Sigma_1\times(-\varepsilon,0]$. If $W'$ is another cobordism from $\Sigma_1$ to $\Sigma_2$, then the~gluing $W'W := W'\cup(\Sigma_1\times(-\varepsilon,\varepsilon))\cup W$ has a~well-defined smooth structure.

\begin{definition}\label{def:chronology}
	Let $W$ be a~cobordism and $\tau\colon W\to I$ an~oriented Morse function separating critical points. The~pair $(W,\tau)$ is called a~\emph{chronological cobordism} if $\tau^{-1}([0,\epsilon))$ and \mbox{$\tau^{-1}((1-\epsilon,1])$} are the~collars of $W_{in}$ and $W_{out}$ respectively, on which $\tau$ is the~projection on the~second factor. A~homotopy of $\tau$ in the~space of oriented Igusa functions is called a~\emph{change of a~chronology}.
\end{definition}

\noindent
We now explain some notions from the~definition, referring for details to Appendix~\ref{sec:framed-function}. An~Igusa function $f\colon W\to I$ is allowed to have two types of critical points:
\begin{itemize}
	\item $A_1$ or \emph{Morse singularities}, characterized by the~property that the~Hessian $\Hess_p(f)$ of $f$ is nondegenerate, and
	\item $A_2$ or \emph{birth-death singularities}, for which the~Hessian has a~one dimensional kernel $N(p)$, on which the~third derivative of $f$ does not vanish.
\end{itemize}
Choose a~Riemannian metric on $W$. For a~critical point $p$ we denote by $E^{\pm}(p)$ the~positive or negative eigenspace of the~Hessian $\Hess_p(f)\colon T_pW\to T_pW$. A~choice of orientations for negative eigenspaces over all critical points is called an~\emph{orientation} of $f$. We denote the~space of oriented Igusa functions on $W$ by $\OFun(W)$.

A~generic function $f\colon W\to I$ has only Morse singularities, but we need birth-death singularities for generic homotopies. Higher singularities are unnecessary for higher homotopies if we equip these functions with \emph{framing}, i.e.\ a~choice of a~basis for each $E^-(p)$, see Theorem~\ref{thm:framed-contr}. The~space of oriented functions can be seen as a~quotient  of this space, as explained at the~end of Appendix~\ref{sec:framed-function}. This space may no longer be contractible, but it is simply connected.

We are interested only in the~order of critical points of the~function $\tau$, so that we identify chronologies that differ by a~change preserving the~order.

\begin{definition}
	Chronological cobordisms $(W,\tau)$ and $(W,\tau')$ are \emph{equivalent} if there exists a~path $\gamma$ in $\OFun(W)$ from $\tau$ to $\tau'$ such that each $\gamma_t\colon W\to I$ is a~Morse function that separates critical points.\footnote{
		We are allowed to deform not only the~function $\tau$, but also the~chosen Riemannian structure on $W$. As shown in \cite{IgusaFMF} all Riemannian structures can be related by such deformations.
	}
	In such case we write $(W,\tau)\sim(W,\tau')$ or $\tau\sim\tau'$.
\end{definition}

Given cobordisms $(W,\tau)$ from $\Sigma_0$ to $\Sigma_1$ and $(W,\tau')$ from $\Sigma_1$ to $\Sigma_2$ we define a~chronology $\tau''$ on the~gluing $W'W$ by concatenation:
\begin{equation}\label{eq:chron-gluing}
	\tau''(p) := \begin{cases*}
		\frac{1}{2}\tau(p),				&	for $p\in W$,\\
		\frac{1}{2}(\tau'(p)+1),	&	for $p\in W'$.
	\end{cases*}
\end{equation}
The~assumed behavior of a~chronology on collars of a~cobordism guarantees $\tau''$ is smooth. Hence, we have an~associative and unital operation on equivalence classes of cobordisms, where units are given by cylinders $\Sigma\times I$ with the~simplest chronology---the~projection on $I$.

Recall that given two paths $\gamma,\gamma'\colon I\to X$ in a~topological space $X$ such that $\gamma(1)=\gamma'(0)$ we define their concatenation $\gamma'\star\gamma$ by the~formula
\begin{equation}
	(\gamma'\star\gamma)(t) := \begin{cases}
		\gamma(2t),    &   0\leqslant t\leqslant 1/2,\\
		\gamma'(2t-1), & 1/2\leqslant t\leqslant 1.
	\end{cases}
\end{equation}

\begin{definition}
\wrapfigure<1>{\begin{diagps}(-1.7em,-0.5ex)(9.7em,12.2ex)
		\square<8em,9.5ex>[%
			(W,H_0)`(W,H_1)`(W,H'_0)`(W,H'_1);%
			H`\gamma`\gamma'`H']%
		\arrow[nodesep=1.2em]|b{nrot=:D}|{=>}[v1`v2;\textit{htpy}]
	\end{diagps}}
	Let $H,H'\colon W\times I\to I$ be changes of chronologies such that $(W,H_0)$ and $(W,H'_0)$ are equivalent chronological cobordisms, i.e.\ there is a~path $\gamma$ in $\OFun(W)$ between $H_0$ and $H'_0$. We say $H$ and $H'$ are \emph{equivalent} if there is a~path $\gamma'$ from $H_1$ to $H'_1$ such that the~paths $\gamma'_t\star H_t$ and $H'_t\star\gamma_t$ are homotopic in $\OFun(W)$, see the~square to the~right. In such case we write $H\sim H'$.
\end{definition}

\begin{remark}\label{rmk:change-equiv-hpty-exists}
	The~connectivity of $\OFun(W)$ implies the~homotopy in the~definition above always exists. Hence, any two changes $H, H'\colon W\times I\to I$, for which $H_0\sim H'_0$ and $H_1\sim H'_1$, are equivalent.
\end{remark}

We can \emph{juxtapose} changes occurring on different regions of a~cobordism. Formally, if $H$ and $H'$ are changes of chronologies on $W$ and $W'$ respectively, and cobordisms $W$ and $W'$ can be glued together, there is a~change of a~chronology on $W'W$ induced by the~map
\begin{equation}\label{eq:chron-juxtapose}
	(H'\circ H)(p,t) := \begin{cases}
			H(p,t),	&	p\in W,\\
			H'(p,t),&	p\in W',
	\end{cases}
\end{equation}
which may need to be smoothed. This operation is clearly associative and unital.

\emph{Concatenation} of changes of chronologies is a~bit cumbersome: we cannot combine homotopies $H,H'\colon W\times I\to I$ if $(W,H_1)$ and $(W,H'_0)$ are only equivalent, as $H_1$ may not agree with $H'_0$. Instead, we define
\begin{equation}\label{eq:chron-concatenate}
	(H'\star H)(p,t) := \begin{cases}
		H(p,3t),							&   0 \leqslant t\leqslant 1/3,\\
		\gamma(p,2t-1),				& 1/3 \leqslant t\leqslant 2/3,\\
		H'(p,3t-2),	&	2/3 \leqslant t\leqslant 1,
	\end{cases}
\end{equation}
where $\gamma$ is a~path in $\OFun(W)$ from $H_1$ to $H'_0$. Hence, $H'\star H$ is given as the~following sequence of homotopies:
\begin{equation}
	(W,H_0)\dblto^{H}(W,H_1)\dblto^{\gamma}(W,H'_0)\dblto^{H'}(W,H'_1).
\end{equation}
This operation is well-defined up to equivalence due to Remark~\ref{rmk:change-equiv-hpty-exists} (in particular, it does not depend on the~path $\gamma$), and it is clearly associative and unital up to equivalence.

\begin{lemma}
	Choose pairs of equivalent changes of chronologies $H\sim\widetilde H$ and $H'\sim\widetilde H'$ on a~cobordism $W$ such that $H'$ and $H$ can be concatenated. Then we can concatenate $\widetilde H'$ with $\widetilde H$, and $H'\star H\sim \widetilde H'\star\widetilde H$.
\end{lemma}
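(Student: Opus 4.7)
The plan is to reduce everything to the simple-connectedness of $\OFun(W)$. First I would unpack the hypotheses: $H\sim\widetilde H$ supplies paths $\alpha_0$ from $H_0$ to $\widetilde H_0$ and $\alpha_1$ from $H_1$ to $\widetilde H_1$ together with a filling homotopy $\alpha_1\star H\simeq\widetilde H\star\alpha_0$ in $\OFun(W)$, and similarly $H'\sim\widetilde H'$ supplies $\beta_0,\beta_1$ and a filling homotopy $\beta_1\star H'\simeq\widetilde H'\star\beta_0$. Since $H_1$ and $H'_0$ lie in the same component of $\OFun(W)$ (joined by the path $\gamma$ implicit in $H'\star H$), so do $\widetilde H_1$ and $\widetilde H'_0$, via $\beta_0\star\gamma\star\alpha_1^{-1}$; hence an auxiliary path $\widetilde\gamma$ from $\widetilde H_1$ to $\widetilde H'_0$ exists and $\widetilde H'\star\widetilde H$ is defined by~\eqref{eq:chron-concatenate}.

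Next I would exhibit $H'\star H\sim\widetilde H'\star\widetilde H$ by taking $\alpha_0$ and $\beta_1$ as the witnessing paths at the initial and terminal chronologies, and constructing a homotopy between $\beta_1\star(H'\star H)$ and $(\widetilde H'\star\widetilde H)\star\alpha_0$ in $\OFun(W)$. By simple-connectedness of $\OFun(W)$, together with Remark~\ref{rmk:change-equiv-hpty-exists}, I am free to replace $\widetilde\gamma$ by a path homotopic rel endpoints to $\beta_0\star\gamma\star\alpha_1^{-1}$ without affecting the equivalence class of $\widetilde H'\star\widetilde H$. With this choice the required homotopy is assembled in three moves: first, slide $\beta_1$ past $H'$ using the filling homotopy for $H'\sim\widetilde H'$, turning $\beta_1\star H'$ into $\widetilde H'\star\beta_0$; second, insert the nullhomotopic loop $\alpha_1^{-1}\star\alpha_1$ in front of $H$ and slide $\alpha_1$ past $H$ via the filling homotopy for $H\sim\widetilde H$, turning $\alpha_1\star H$ into $\widetilde H\star\alpha_0$; third, recognise the middle segment $\beta_0\star\gamma\star\alpha_1^{-1}$ as $\widetilde\gamma$.

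The main obstacle is bookkeeping. Both the star-concatenation of paths in $\OFun(W)$ and the concatenation of changes of chronologies in~\eqref{eq:chron-concatenate} are associative and unital only up to reparametrization, so the moves above tacitly invoke standard homotopies between the various bracketings and the freedom to smooth near the seams. These smoothings do not leave $\OFun(W)$, because the collar condition in Definition~\ref{def:chronology} forces every function in play to agree with the projection to $I$ near $\partial W$, preventing any new critical points from being introduced there. Once these technicalities are dispatched, no geometric input beyond path-connectedness and $\pi_1$-triviality of $\OFun(W)$ is needed.
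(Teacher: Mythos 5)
Your proof is correct and takes essentially the same approach as the paper: both arguments reduce the claim to Remark~\ref{rmk:change-equiv-hpty-exists} and the (simple-)connectedness of $\OFun(W)$, after observing that $(W,H_0)\sim(W,\widetilde H_0)$ and $(W,H'_1)\sim(W,\widetilde H'_1)$. Your explicit three-move construction of the filling homotopy is correct but strictly unnecessary once the Remark is in hand, since it already guarantees that two changes with equivalent endpoints are equivalent.
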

\begin{proof}
	The~asserted equivalences guarantee $(W,H_i)\sim(W,\widetilde H_i)$ and $(W,H'_i)\sim(W,\widetilde H'_i)$ for $i=0,1$, and since $H'$ can be concatenated with $H$, $(W,H_1)$ and $(W,H'_0)$ are equivalent as well. Hence, we have a~sequence of equivalences $(W,\widetilde H_1)\sim(W,H_1)\sim(W,H'_0)\sim(W',\widetilde H'_0)$, which shows $\widetilde H'$ and $\widetilde H$ can be composed together. The~thesis follows, since the~rectangle below
	\begin{equation}\begin{diagps}(0em,-0.5ex)(21em,12ex)
		\node t0( 0em,10ex)[(W,H_0)]		\node b0( 0em,0ex)[(W,\widetilde H_0)]
		\node t1( 7em,10ex)[(W,H_1)]		\node b1( 7em,0ex)[(W,\widetilde H_1)]
		\node t2(14em,10ex)[(W,H'_0)]		\node b2(14em,0ex)[(W,\widetilde H'_0)]
		\node t3(21em,10ex)[(W,H'_1)]		\node b3(21em,0ex)[(W,\widetilde H'_1)]
		\arrow{=>}[t0`t1;H_t]						\arrow|a{npos=0.45}|{=>}[t1`t2;\omega_t]
		\arrow{=>}[b0`b1;\widetilde H_t]\arrow|a{npos=0.45}|{=>}[b1`b2;\widetilde\omega_t]
		\arrow{=>}[t2`t3;H'_t]					\arrow{=>}[b2`b3;\widetilde H'_t]
		\arrow{=>}[t0`b0;\gamma_t]			\arrow{=>}[t3`b3;\gamma_t']
	\end{diagps}\end{equation}
	commutes up to homotopy due to Remark~\ref{rmk:change-equiv-hpty-exists}, where $\gamma$, $\gamma'$, $\omega$, and $\widetilde\omega'$ are paths of Morse functions given by corresponding equivalences of chronological cobordisms.
\end{proof}

All the~above almost shows that chronological cobordisms form a~2-category---it remains to check the~interchange law \eqref{eq:interchange-law} holds; this follows immediately as concatenation and juxtaposition commute with each other. We state this as the~following proposition.

\begin{proposition}
	There is a~strict 2-category of chronological cobordisms $n\cat{ChCob}$ with oriented manifolds of dimension $(n-1)$ as objects, equivalence classes of chronological cobordisms as morphisms, and homotopy classes of changes of chronologies as 2-morphisms. The composition of morphisms is induced by gluing, and the~two compositions of 2-morphisms are given as juxtaposition (the~horizontal one) and concatenation (the~vertical one).
\end{proposition}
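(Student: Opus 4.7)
The plan is to verify in turn the three pieces of data (objects, 1-morphisms with their composition, and 2-morphisms with their two compositions), noting that almost every clause has already been established in the discussion preceding the statement, so what is genuinely left is the interchange law and a bookkeeping check that all the operations descend to the chosen equivalence classes.

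First, I would assemble the underlying 1-category structure. Objects and 1-morphisms are as declared, and equation~\eqref{eq:chron-gluing} defines gluing of chronological cobordisms (using the prescribed collar behaviour in Definition~\ref{def:chronology} to ensure smoothness). Unitality of cylinders $\Sigma\times I$ with the projection chronology and associativity of gluing hold strictly on the nose, because the concatenated Morse functions $\tfrac12\tau$ and $\tfrac12(\tau'+1)$ compose associatively up to an affine reparametrization of $I$ that fixes the ordering of critical points, hence preserves the equivalence class. This gives the strict 1-category of equivalence classes of chronological cobordisms.

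Next, I would record the 2-morphism structure. The two compositions have already been defined by juxtaposition~\eqref{eq:chron-juxtapose} and concatenation~\eqref{eq:chron-concatenate}, and both have been shown to be associative and unital up to equivalence. The lemma just proved shows concatenation descends to equivalence classes of changes of chronologies; an analogous (and easier) argument works for juxtaposition, since a homotopy between equivalent changes on each piece may be juxtaposed to give a homotopy between the juxtaposed changes on the glued cobordism. The identity 2-morphism on a chronological cobordism $(W,\tau)$ is the constant homotopy at $\tau$, and units/associativity for both compositions follow from the same reparametrization trick, modulo homotopy in $\OFun(W)$ and the connectedness observation in Remark~\ref{rmk:change-equiv-hpty-exists}.

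The only remaining content is the interchange law
\[
(H'_2\circ H_2)\star(H'_1\circ H_1) \sim (H'_2\star H'_1)\circ(H_2\star H_1),
\]
for composable changes $H_1,H_2$ on a cobordism $W$ and $H'_1,H'_2$ on a cobordism $W'$ glued to $W$. I expect this to be the main obstacle, but only notationally: unfolding the definitions~\eqref{eq:chron-juxtapose} and~\eqref{eq:chron-concatenate}, at a point $p\in W$ the left-hand side evaluates to $H_1(p,\cdot)$ followed by a connecting path followed by $H_2(p,\cdot)$, which is exactly the restriction to $W$ of the right-hand side; similarly on $W'$. The two sides therefore agree pointwise after a piecewise affine reparametrization of the time coordinate and a compatible choice of the two connecting paths (one on $W$, one on $W'$). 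Invoking Remark~\ref{rmk:change-equiv-hpty-exists} again, any two choices of such connecting paths are homotopic in $\OFun$, so both sides represent the same homotopy class of change of chronologies on $W'W$. This proves the interchange law on the nose on representatives up to the standard reparametrization, hence strictly in the 2-category of equivalence classes, and completes the verification.
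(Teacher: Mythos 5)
Your proposal is correct and follows the same route as the paper: the paper dispatches the proposition in one sentence, observing that everything except the interchange law was established in the preceding discussion, and that the interchange law holds because concatenation and juxtaposition commute. Your write-up just unpacks that sentence — pointwise on the glued cobordism the two sides of the interchange law agree up to a reparametrization of the time coordinate and a choice of connecting path, and Remark~\ref{rmk:change-equiv-hpty-exists} collapses any such choices — so the two arguments are essentially identical.
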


\begin{remark}
	For a~chronological cobordism $W$ the~set of critical points $crit(W)$ is linearly ordered by the~chronology: we write $x<y$ if $\tau(x)<\tau(y)$. This order is invariant under equivalence of cobordisms, but it is affected by changes of chronologies.
\end{remark}

One of the~important operations on cobordisms is the~\emph{disjoint union}. For chronological cobordisms it has to be defined carefully: with the~naive definition one might get two critical points at the~same level, which is prohibited. Instead, we have to shift critical points of the~left or the~right cobordism below critical points of the~other one, obtaining a~\quot{left-then-right} and a~\quot{right-then-left} disjoint unions denoted respectively by $\ldsum$ and $\rdsum$ (see Fig.~\ref{fig:left-right-disjoint-union}). Formally, we equip $(W,\tau)\rdsum(W',\tau')$ with the~chronology
\begin{equation}\label{eq:chron-disjoint-sum}
	\tau_r(p) := \begin{cases}
				\beta_{1/2}^1(\tau(p)), & p\in W,\\
				\beta_0^{1/2}(\tau'(p)),& p\in W',
		\end{cases}
\end{equation}

\noindent\unskip
\wrapfigure[r]<2>{%
	\psset{linewidth=0.5pt,arrowsize=4pt,xunit=3cm,yunit=1.5cm}
	\begin{pspicture}(-1em,-2ex)(1.25,1.5)
		\psline{<->}(0,1.3)(0,0)(1.2,0)
		\uput[l](0,1.3){$\scriptstyle y$}
		\uput[d](1.2,0){$\scriptstyle x$}
		\psset{linewidth=1pt}
		\psbezier(0,0)(0.2,0.05)(0.2,0.05)(0.4,0.5)
		\psbezier(0.4,0.5)(0.6,0.95)(0.6,0.95)(1,1)
		\rput[B](1,1.15){$\scriptstyle y=\beta_a^b(x)$}
		\psset{linewidth=0.5pt,linestyle=dashed,dash=2pt 3pt}
		\psline(0,1)(1,1)\uput[l](0,1){$\scriptstyle 1$}
		\psline(0.2,0)(0.2,1)\uput[d](0.2,0){$\scriptstyle a\vphantom{b}$}
		\psline(0.6,0)(0.6,1)\uput[d](0.6,0){$\scriptstyle b$}
		\psline(1,0)(1,1)\uput[d](1,0){$\scriptstyle 1$}
	\end{pspicture}%
}
where $\beta_a^b\colon I\to I$ is a~perturbed \quot{bump function}: an~increasing function which is very close to $0$ on the~interval $[0,a]$ and very close to $1$ on $[b,1]$. The~chronology $\tau_\ell$ on $(W,\tau)\ldsum(W',\tau')$ is defined in a~similar way. Finally, the~formula \eqref{eq:chron-disjoint-sum} can be naturally extended to changes of chronologies---replace $p$ with a~pair $(p,t)$.

\wrapfigure[r]<0>{\hskip 1em\parbox[c][3.5\baselineskip]{3.75cm}{\quad}}
This is the~first place where we can see that chronological cobordisms indeed require a~richer structure than just a~category: the disjoint unions defined above are functorial only up to a~change of a~chronology $\sigma^{\sqcup}_{W,W'}\colon (W\ldsum W',\tau_r)\dblto (W\rdsum W',\tau_\ell)$ that pulls $W$ below $W'$. This can be done by a~linear interpolation: $\sigma_{W,W'}^{\sqcup}(p,t) := (1-t)\tau_\ell(p) + t\tau_r(p)$.

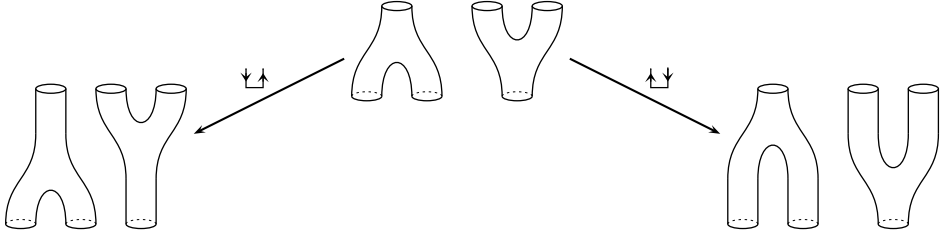
\begin{figure}[t]
	\centering
		\begin{pspicture}(-6.5,0)(6.5,4)
			\COBmerge(-1.4,2)
			\COBsplit( 0.6,2)
			\cobordism[2](-6,0.3)(M,1)(sI)
			\cobordism[1](-4.4,0.3)(sI)(S,1)
			\cobordism[2](3.6,0.3)(sI)(M,1)
			\cobordism[1](5.6,0.3)(S,1)(sI)
			\psline{->}(-1.5,2.5)(-3.5,1.5)\rput[br](-2.5,2.1){$\ldsum$}
			\psline{->}( 1.5,2.5)( 3.5,1.5)\rput[bl]( 2.5,2.1){$\rdsum$}
		\end{pspicture}
	\caption{A~disjoint union in a~chronological setting requires a~shift.}
	\label{fig:left-right-disjoint-union}
\end{figure}

\begin{theorem}\label{thm:ChCob-gray}
	The~2-category $n\cat{ChCob}$ is Gray monoidal. The~monoidal product is induced by the~`right-then-left' disjoint union $\rdsum$ and the~unit is given by the~empty manifold $\emptyset$.
\end{theorem}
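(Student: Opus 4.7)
The plan is to verify in turn (a) strict associativity and unitality of $\rdsum$ on objects and $1$-morphisms, (b) existence of the Gray interchanger $\sigma^\sqcup_{W,W'}$ as an invertible $2$-morphism, and (c) the coherence axioms relating $\sigma^\sqcup$ to horizontal and vertical composition of $2$-morphisms.

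For (a), on objects the ordinary disjoint union of oriented $(n-1)$-manifolds is strictly associative and has $\emptyset$ as a two-sided strict unit. For $1$-morphisms, given three chronological cobordisms $(W_i,\tau_i)$ the chronologies induced on $(W_1\rdsum W_2)\rdsum W_3$ and on $W_1\rdsum(W_2\rdsum W_3)$ are both described by nested compositions of the bump functions $\beta_a^b$ from~\eqref{eq:chron-disjoint-sum}. For any two such compatible nested profiles the straight-line homotopy between the corresponding chronologies remains inside the set of Morse functions separating critical points, so the two chronologies are equivalent and associativity holds strictly in $n\cat{ChCob}$. The unit axiom is immediate since $\emptyset$ has no critical points, so the bump profiles involving $\emptyset$ reduce to the identity chronology on the other factor.

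For (b), the $2$-cell $\sigma^\sqcup_{W,W'}$ defined by the linear interpolation $(1-t)\tau_\ell+t\tau_r$ gives an oriented Igusa function at every time $t$ (after a small $C^\infty$-perturbation making the one-parameter family generic, if necessary), hence represents a change of chronology. Reversing the interpolation yields a candidate inverse, and the two concatenations are loops in $\OFun(W\sqcup W')$. Since $\OFun$ of any manifold is simply connected---this is the consequence of the framed-function theorem recalled at the end of Appendix~\ref{sec:framed-function}---both loops are null-homotopic, so $\sigma^\sqcup$ is invertible as a $2$-morphism.

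For (c), one has to verify naturality of $\sigma^\sqcup$ in each argument, its compatibility with composition of $1$-morphisms along each factor, a hexagon-type coherence for triples $(W_1,W_2,W_3)$ comparing the different total orderings of the critical points in $W_1\sqcup W_2\sqcup W_3$, and triviality of $\sigma^\sqcup$ whenever one argument is $\emptyset$. In every case the two sides of the required equation are changes of chronology on the same underlying manifold with the same source and target, so they differ by a loop in $\OFun$, which is null-homotopic by the simple connectivity invoked in (b); hence they agree as $2$-morphisms in $n\cat{ChCob}$. The main obstacle will be the bookkeeping of this step: organizing bump-function profiles consistently enough across the whole $2$-category so that the Gray axioms (as stated in Appendix~\ref{sec:2-cats}) hold on the nose rather than merely up to further $2$-isomorphism, after which all the nontrivial content is absorbed into $\pi_1(\OFun)=0$.
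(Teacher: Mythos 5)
Your proof is correct and takes essentially the same route as the paper's. Both proofs reduce the Gray-monoidal axioms to the topology of the space $\OFun(W)$ of oriented Igusa functions: the strict associativity and unitality come from equivalences of chronologies that only reparametrize the target interval, the interchanger $\sigma^\sqcup$ is the linear interpolation between $\tau_\ell$ and $\tau_r$, and the coherence diagrams commute because parallel $2$-morphisms in $n\cat{ChCob}$ are automatically equal. The difference is mostly one of presentation. The paper spells out an explicit homotopy $h_s := \sigma^{\sqcup}|_{[0,s]}\star \bigl((1-s)\alpha\ldsum\beta' + s\alpha\rdsum\beta'\bigr)\star \sigma^{\sqcup}|_{[s,1]}$ witnessing commutativity of the square~\eqref{eq:cubical-2-morphisms} and a linear interpolation for~\eqref{eq:cubical-monoidal}, whereas you discharge all of these at once by invoking simple connectivity of $\OFun(W)$, which is exactly the content of Remark~\ref{rmk:change-equiv-hpty-exists}: any two changes of chronology with equivalent endpoints are equivalent, so every diagram of $2$-morphisms commutes for free. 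Since that remark appears before the theorem, your appeal is well-founded in the paper's own framework. One point worth stating more explicitly is the cubicality conditions of Definition~\ref{def:cubical} themselves; the paper disposes of them by observing that $\sigma^\sqcup_{W,W'}$ is the identity $2$-morphism whenever $W$ or $W'$ has no critical points, which follows in your framework from the same uniqueness-of-$2$-morphisms fact (the source and target chronologies are already equivalent in that case), but you mention it only obliquely under the heading of ``triviality of $\sigma^\sqcup$ whenever one argument is $\emptyset$'', which conflates the unit axiom (an object being $\emptyset$) with the cubicality axiom (a $1$-morphism being an identity cylinder).
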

\begin{proof}
	We have to check conditions from Definition~\ref{def:Gray-monoidal}. First, $\rdsum$ is cubical. Indeed, the~conditions from Definition~\ref{def:cubical} are trivially satisfied, as $\sigma^{\sqcup}_{W,W'}\colon W\ldsum W'\dblto W\rdsum W'$ does nothing if either $W$ or $W'$ has no critical points. Next, it is coherent with 2-morphism, i.e.\ the~square \eqref{eq:cubical-2-morphisms} commutes. Indeed, given changes of chronologies $\alpha$ on $W$ and $\beta'$ on $W'$ we construct a~homotopy
	\begin{equation}
		h_s := \sigma^{\sqcup}_{W,W'}|_{[0,s]}\star \left((1-s)\alpha\ldsum\beta' + s\alpha\rdsum\beta'\right)
												                 \star \sigma^{\sqcup}_{W,W'}|_{[s,1]}
	\end{equation}
	where $\sigma^{\sqcup}_{W,W'}|_{[a,b]}$ is a~restriction of $\sigma^{\sqcup}_{W,W'}$ to $t\in[a,b]$. The~homotopy $h_s$ first shifts $W$ and $W'$ a~bit towards their final position, then it applies the~changes $\alpha$ and $\beta'$ on appropriate levels, and after that it shifts $W$ and $W'$ further to their final positions. Finally, commutativity of \eqref{eq:cubical-monoidal} follows easily: the~two changes $\sigma^{\sqcup}\star(\sigma^{\sqcup}\circ\id)$ and $\sigma^{\sqcup}\star(\id\circ\sigma^{\sqcup})$ are homotopic by a~linear interpolation.
	
	The~unitarity condition is clear, which leaves only associativity to check. This follows from the~way $\rdsum$ is defined: the~two chronologies on $W\rdsum (W'\rdsum W'')$ and $(W\rdsum W')\rdsum W''$ are homotopic by a~reparametrization of the~target interval $I$.
\end{proof}

\wrapfigure[r]{%
	\def\COBxsize{0.4}\def\COBysize{1.0}%
	\begin{pspicture}(-0.3,-0.5ex)(3.2,3.3)
		\psset{linewidth=0.5pt,arrowsize=4pt,arrowlength=1,dimen=mid}
		\cobordism[2](0,1.1)(I)
		\psellipse(0.2,0.1)(0.2,0.06)\psellipse(1.0,0.1)(0.2,0.06)
		\psellipse(0.2,3.1)(0.2,0.06)\psellipse(1.0,3.1)(0.2,0.06)
		\psline{->}(0.2,0.3)(0.2,0.9)\psline{->}(0.25,2.95)(1.0,2.25)
		\psline{->}(1.0,0.3)(1.0,0.9)\psline{->}(0.95,2.95)(0.2,2.25)
		\diagnode 01(2.5,0.0)[\Sigma_0\sqcup\Sigma_1]
		\diagnode 10(2.5,3.0)[\Sigma_1\sqcup\Sigma_0]
		\diagnode  W(2.5,1.5)[W]
		\diagarrow{->}[01`W;]
		\diagarrow{->}[10`W;]
	\end{pspicture}}
The~ordinary category of cobordisms is not only monoidal, but it possesses a~symmetry induced by a~family of permutation diffeomorphisms $c\colon\Sigma_1\sqcup\Sigma_0\to^\approx\Sigma_0\sqcup\Sigma_1$. Namely, take the~cylinder $(\Sigma_0\sqcup\Sigma_1)\times I$ with the~standard inclusion as its input and the~diffeomorphism $c$ as its output (see the~picture to the~side). In case of chronological cobordisms, these permutation cylinders form natural transformations between unary functors $C\rdsum(\blank)$ and $(\blank)\rdsum C$, where $C$ stands for any cylinder. This suggests the~permutation cylinders equip $n\cat{ChCob}$ with a~strict symmetry, see Definition~\ref{def:strict-symmetry}. Indeed, commutativity of the~triangle \eqref{diag:strict-symm-triangle} follows easily from this construction.

\begin{corollary}
	The~Gray monoidal category $(n\cat{ChCob},\rdsum,\emptyset)$ has a~strict symmetry induced by permutation diffeomorphisms $c\colon\Sigma_1\sqcup\Sigma_0\to^\approx\Sigma_0\sqcup\Sigma_1$.
\end{corollary}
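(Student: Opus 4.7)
The plan is to define, for each ordered pair $(\Sigma_0,\Sigma_1)$ of $(n-1)$-manifolds, a 1-morphism $\sigma_{\Sigma_0,\Sigma_1}\colon\Sigma_1\rdsum\Sigma_0\to\Sigma_0\rdsum\Sigma_1$ as the cylinder $(\Sigma_0\sqcup\Sigma_1)\times I$ equipped with the chronology $\mathrm{pr}_I$ (projection onto the second factor), with the identity inclusion on the input collar and the permutation diffeomorphism $c$ on the output collar. Since this cobordism has no critical points, its equivalence class is unambiguous and insensitive to any left/right convention for the disjoint union. The obvious involutive relation $\sigma_{\Sigma_0,\Sigma_1}\circ\sigma_{\Sigma_1,\Sigma_0}\sim\id$ then follows from $c\circ c=\id_{\Sigma_0\sqcup\Sigma_1}$ together with a reparametrization of the target interval.

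Next I would establish naturality of $\sigma$ with respect to both 1- and 2-morphisms. Given chronological cobordisms $W\colon\Sigma_0\to\Sigma_0'$ and $W'\colon\Sigma_1\to\Sigma_1'$, the two composites $\sigma_{\Sigma_0',\Sigma_1'}\circ(W'\rdsum W)$ and $(W\rdsum W')\circ\sigma_{\Sigma_0,\Sigma_1}$ are built on the same underlying manifold $W\sqcup W'$, and their chronologies agree up to a reparametrization of $I$. Because $\sigma$ itself is critical-point-free, no nontrivial 2-morphism is needed to identify them; this is what makes the symmetry \emph{strict} rather than merely natural up to coherent 2-isomorphism. The analogous statement for changes of chronologies follows fiberwise in the homotopy parameter, as in the proof of Theorem~\ref{thm:ChCob-gray}.

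The triangle identity~\eqref{diag:strict-symm-triangle} decomposes the braiding past a composite object $\Sigma_0\rdsum\Sigma_1$ into successive braidings past $\Sigma_0$ and $\Sigma_1$. At the level of underlying manifolds this is the classical factorization of a permutation of three sets into adjacent transpositions, and at the level of chronologies it is immediate since all the intermediate cobordisms are critical-point-free cylinders related by reparametrizations of $I$. The main technical obstacle I anticipate is checking compatibility of $\sigma$ with the Gray coherence 2-morphism $\sigma^{\sqcup}_{W,W'}$ from Theorem~\ref{thm:ChCob-gray}: one must verify that braiding through a trivial cylinder commutes on the nose with the shifts between $\ldsum$ and $\rdsum$. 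This reduces to the observation that a chronology with no critical points is preserved up to equivalence by any reparametrization of $I$, so the coherence data becomes trivial when one of the cobordisms is a permutation cylinder.
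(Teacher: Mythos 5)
Your overall construction (permutation cylinders as $\sigma$, then checking the axioms) matches the paper's, but the naturality step in your second paragraph contains a genuine error. You claim that $\sigma_{\Sigma_0',\Sigma_1'}\circ(W'\rdsum W)$ and $(W\rdsum W')\circ\sigma_{\Sigma_0,\Sigma_1}$ agree up to a reparametrization of $I$. They do not: in $W'\rdsum W$ the critical points of $W$ sit below those of $W'$, whereas in $W\rdsum W'$ the critical points of $W'$ sit below those of $W$. Composing with the critical-point-free permutation cylinder $\sigma$ on either end does nothing to this ordering, so the two composites have opposite chronological orderings of $\mathrm{crit}(W)$ and $\mathrm{crit}(W')$, and no orientation-preserving diffeomorphism of the target interval can reverse that order. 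The discrepancy is exactly the Gray coherence 2-morphism $\sigma^{\sqcup}_{W,W'}$, which is precisely why $\rdsum$ is only cubical rather than a strict 2-functor. Your conclusion that ``no nontrivial 2-morphism is needed to identify them'' is therefore false for general $W$, $W'$.

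What Definition~\ref{def:strict-symmetry} actually requires is weaker, and your construction does satisfy it: only $\sigma_{A,\blank}$ and $\sigma_{\blank,B}$ (one argument held fixed) need be natural transformations. When one argument is held fixed, the corresponding factor of the disjoint union is an identity cylinder $A\times I$, which has no critical points, and then $\rdsum$ and $\ldsum$ coincide, $\sigma^{\sqcup}$ is trivial, and the naturality square commutes on the nose. This is exactly what the paper records in its one-line justification — the permutation cylinders ``form natural transformations between unary functors $C\rdsum(\blank)$ and $(\blank)\rdsum C$'' with $C$ an identity cylinder. Your closing paragraph gestures at the right idea (the coherence data trivializes when a permutation cylinder is present), but it misidentifies what must be checked: strictness is built into the definition by only asking for single-variable naturality, rather than being a property you verify against the two-variable square, which simply fails.
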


There is another operation on chronological cobordisms similar to the~disjoint unions---the~connected sum. Given chronological cobordisms $W$ and $W'$ remove vertical cylinders from them (a~cylinder $C$ in $(W,\tau)$ is \emph{vertical} if the~restriction $\tau|_C$ is a~regular function) and construct $W\connsum W'$ by identifying the~cobordisms along the~newly created boundary. Likewise for the~disjoint unions, there are two connected sums of chronological cobordisms $W$ and $W'$, the~`left-then-right' $W\lcsum W'$ and the~`right-then-left' one $W\rcsum W'$, related by a~change of a~chronology $\sigma^{\connsum}_{W,W'}\colon W\lcsum W'\dblto W'\rcsum W$ that permutes the~critical points.

Let $n\cat{ChCob}_\circ$ be the~category of nonempty manifolds with two distinguished points, and cobordisms between them, decorated with two vertical lines connecting the~basepoints of the~boundary manifolds. Then the~connected sums are well-defined (choose small neighborhoods of the~distinguished lines), and we have the~following analog of Theorem~\ref{thm:ChCob-gray}.

\begin{corollary}
	The~2-category $n\cat{ChCob}_\circ$ is Gray monoidal. The~product is induced by the~`right-then-left' connected sum $\rcsum$ and the~unit is given by the~$(n-1)$-dimensional ball.
\end{corollary}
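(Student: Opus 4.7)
The plan is to mimic, step by step, the argument used to establish Theorem~\ref{thm:ChCob-gray}, replacing the disjoint union with the connected sum along the distinguished vertical cylinders. First I would verify that $\rcsum$ is well defined on equivalence classes: removing open vertical cylinders around the distinguished lines in $(W,\tau)$ and $(W',\tau')$ and then identifying the new boundaries gives a smooth cobordism whose chronology is obtained by the same bump-function rescaling \eqref{eq:chron-disjoint-sum} that was used for $\rdsum$, now performed away from the cut region. Equivalent chronologies on $W$ and $W'$ clearly produce equivalent chronologies on $W\rcsum W'$, because the shrinking homotopy can be carried out on the complement of the cut neighbourhoods, where the two functions already agree on small collars of the glueing circles.

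Next I would check the four axioms of a Gray monoidal 2-category from Definition~\ref{def:Gray-monoidal}. For cubicality, observe that if $W$ or $W'$ has no critical points then $W\lcsum W'$ and $W\rcsum W'$ are chronologically identical, so $\sigma^\connsum_{W,W'}$ is the trivial change; the remaining conditions of Definition~\ref{def:cubical} are immediate. For coherence with 2-morphisms, one copies verbatim the construction in the proof of Theorem~\ref{thm:ChCob-gray}: given changes $\alpha$ on $W$ and $\beta'$ on $W'$, the homotopy
\begin{equation*}
	h_s := \sigma^\connsum_{W,W'}|_{[0,s]}\star\bigl((1-s)\alpha\lcsum\beta' + s\alpha\rcsum\beta'\bigr)\star\sigma^\connsum_{W,W'}|_{[s,1]}
\end{equation*}
realises the required interchange, since $\alpha$ and $\beta'$ are supported away from the cut cylinders and therefore commute with the bump-function reshuffle. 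Commutativity of \eqref{eq:cubical-monoidal} again follows from a linear interpolation between the two iterated shift functions, and unitarity is trivial because the $(n-1)$-ball equipped with its preferred basepoints produces a cylinder on $D^{n-1}$ whose removal of a vertical cylinder simply yields the opposite factor back up to the standard collar identifications.

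The one genuine thing to check is associativity, but this is the same reparametrisation argument as in the disjoint-union case: the chronologies on $W\rcsum(W'\rcsum W'')$ and $(W\rcsum W')\rcsum W''$ differ only by the order in which the bump functions $\beta_a^b$ compress the three target subintervals, and any two such triple rescalings are linked by a monotone reparametrisation of $I$, hence by an equivalence of chronologies. I do not expect a genuine obstacle here; the only mildly delicate point is making sure that the cut-and-glue is performed inside sufficiently thin neighbourhoods of the distinguished lines so that the bump-function rescalings stay disjoint from the identification region, which is the technical reason one restricts to the subcategory $n\cat{ChCob}_\circ$ of cobordisms carrying two vertical lines to begin with.
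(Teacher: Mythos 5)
The paper gives no explicit proof of this corollary, leaving it as the evident analogue of Theorem~\ref{thm:ChCob-gray}; your proposal carries out precisely that adaptation and is correct. All the points you verify — well-definedness of $\rcsum$ on equivalence classes, cubicality (trivial when a factor has no critical points since the two connected sums then coincide), coherence with 2-morphisms via the interleaved homotopy $h_s$ using that changes of chronology are supported away from the cut cylinders, the linear interpolation giving \eqref{eq:cubical-monoidal}, unitarity from the fact that a connected sum with the cylinder over a ball is a diffeomorphism, and associativity by reparametrisation of the bump functions — match the intended argument.
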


We end this section with a~combinatorial description of $2\cat{ChCob}$.

\begin{proposition}\label{prop:2ChCob-presentation}
	$2\cat{ChCob}$ is a~symmetric Gray monoidal category with objects freely generated by a~circle\/ $\bS$ and morphisms freely generated by the~following five cobordisms:
	\begin{equation}\label{diag:mor-gens}\begin{centerpict}(10,2.5)
		\COBmergeFrLeft(0,1.1)\COBsplitFrBack(2.6,1.1)
		\COBbirth(5.2,1.1)\COBposDeath(7.0,1.1)\COBnegDeath(9.2,1.1)
		\rput[B](0.6,2ex){\textnormal{a~merge}}
		\rput[B](2.8,2ex){\textnormal{a~split}}
		\rput[B](5.0,2ex){\textnormal{a~birth}}
		\rput[B](7.2,2ex){\begin{minipage}[c]{10ex}
			\centering\baselineskip=0.7\baselineskip\textnormal{a~positive death}
		\end{minipage}}
		\rput[B](9.4,2ex){\begin{minipage}[c]{10ex}
			\centering\baselineskip=0.7\baselineskip\textnormal{a~negative death}
		\end{minipage}}
	\end{centerpict}\end{equation}
	with a~twist {\psset{unit=0.6cm}\textcobordism[2](P)} acting as a~strict symmetry.
\end{proposition}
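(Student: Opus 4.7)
The plan is to proceed in two stages: generation of objects and 1-morphisms from the listed data, and then verification that no further relations are imposed beyond those forced by the symmetric Gray monoidal axioms together with the 2-morphisms induced by chronology changes. At the level of objects, every closed oriented $1$-manifold is a disjoint union of circles, so any object of $2\cat{ChCob}$ is a $\rdsum$-power of $\bS$, with the empty manifold serving as the monoidal unit.

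For 1-morphism generation, given a chronological cobordism $(W,\tau)$ the chronology partitions $W$ into elementary slabs: since $\tau$ is Morse and separates critical points, one may choose regular values $0 = a_0 < a_1 < \cdots < a_k = 1$ so that each slab $\tau^{-1}([a_{i-1},a_i])$ contains at most one critical point. Every such slab is the horizontal composite of an identity cylinder with a single elementary handle, whose type is dictated by the index and framing of the critical point. In dimension two the local analysis is straightforward: an index-$0$ point has a $0$-dimensional negative eigenspace and contributes a birth; an index-$1$ point has a $1$-dimensional negative eigenspace, so the framing is a choice of direction, and the topology of the resulting saddle yields either a merge or a split; an index-$2$ point has a $2$-dimensional negative eigenspace, so its framing is an orientation, giving a positive or a negative death. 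The several orientations of each saddle reduce to the two listed generators after pre- or post-composing with twists, so every 1-morphism decomposes as a composition of tensor products of the five generators and twists, and the twists themselves act as a strict symmetry by the corollary preceding this proposition.

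For freeness, two such decompositions of the same $(W,\tau)$ correspond to two framed Morse functions with matching boundary data. By the framed Cerf theory recalled in Appendix~\ref{sec:framed-function} (in particular Theorem~\ref{thm:framed-contr}), the space $\OFun(W)$ is connected, and any two generic framed Morse functions can be joined by a path crossing only codimension-$1$ strata, namely permutations of adjacent critical points and birth--death transitions. Each such crossing is modeled by a generating 2-morphism in $2\cat{ChCob}$, so the two decompositions are linked by a finite sequence of 2-morphisms.

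The main obstacle lies in verifying that the coherences among these 2-morphisms are exactly those forced by the symmetric Gray monoidal structure: equivalently, that the $2$-skeleton of $\OFun(W)$ imposes no hidden relations beyond Gray interchange, associators, and symmetry coherences. This reduces to a case-by-case inspection of the codimension-$2$ singularities of framed functions catalogued in the appendix, showing that each such singularity is accounted for by an axiom of a symmetric Gray monoidal category rather than by a new relation among the five generators.
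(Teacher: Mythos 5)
Your argument for generation matches the paper's essentially verbatim: decompose along regular levels of the separative Morse function into slabs with one critical point each, read off the type from the index, and observe that framings of the $1$-dimensional negative eigenspace are absorbed by pre- or post-composition with twists while the orientation of the full tangent space at an index-$2$ point is what distinguishes positive from negative deaths. The paper's proof phrases this more compactly (and notes that cobordisms with no critical points are already captured by permutation cylinders, since every orientation-preserving self-diffeomorphism of $\bS$ is isotopic to the identity), but this is the same route.

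The freeness discussion, however, is off-target in two ways. First, morphisms of $2\cat{ChCob}$ are equivalence classes of chronological cobordisms where the allowed deformations are paths through \emph{separative Morse} functions; two such representatives never exchange critical points or undergo birth-death cancellations, so they have literally the same sequence of elementary slabs, and freeness is nearly tautological. Your appeal to Cerf theory and to paths in $\OFun(W)$ that cross codimension-$1$ strata describes something different and weaker: such crossings are by definition \emph{changes of chronology}, i.e.\ nontrivial $2$-morphisms, so what you argue is that any two chronologies on $W$ are connected by a zig-zag of $2$-morphisms (connectivity of $\OFun(W)$), not that distinct words in the generators give distinct $1$-morphisms. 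Second, the expectation in your last paragraph—that the codimension-$2$ singularities yield only Gray/symmetry coherences—is false as stated; the whole point of Propositions~\ref{prop:changes-generators} and~\ref{prop:changes-relations} that follow is that there are additional generating $2$-morphisms (creations, exceptional ${\times}$- and $\Diamond$-permutations) and additional hexagon relations \eqref{eq:rel-exceptional-planar}--\eqref{eq:rel-exceptional-nonplanar} beyond the Gray axioms. Those do not contradict freeness of $1$-morphisms, but they are genuinely extra structure, so the reduction you sketch would not succeed in the form proposed.
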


One should read the~pictures above from bottom to top: the~bottom circles form the~input of a~cobordism, the~top ones form the~output and the~height function determines a~chronology. Orientations of critical points are visualized by arrows.

\begin{proof}
	Every $1$-dimensional manifold is a~family of circles, so that objects of $2\cat{ChCob}$ are freely generated under the~disjoint union by a~single circle $\bS$. Since all orientation preserving diffeomorphisms of $\bS$ are isotopic to the~identity, chronological cobordisms with no critical points are generated by a~permutation of two circles, the~symmetry of the~monoidal structure. Morse theory provides a~description of cobordisms with a~single critical point. Since the~order of critical points is fixed, it remains to analyze orientations of the~critical points.
	
	We need only one merge and one split---an~orientation of the~saddle point can be reversed by attaching a~twist. The~tangent space to a~point of index 0 (a~birth) is stable, so that there is only one choice for orientation (the~empty frame), but it is unstable at points of index 2 (deaths). Hence, a~choice of an~orientation of a~death is equivalent to an~orientation of the~tangent space, which can be either coherent with the~orientation of the~cobordism or not.
\end{proof}

We shall use Cerf theory (see Section~\ref{sec:framed-function}) to describe 2-morphisms in terms of generators and relations. Most of them are easy to draw directly, but for some it will be useful to use other presentations: \emph{movies} and \emph{surgery diagrams}.

A~\emph{movie presentation} of a~chronological cobordism is a~sequence of its regular levels, dense enough to capture all topological changes: such a~sequence contains at least one regular level between any two critical ones. Two consecutive diagrams in the~sequence differ in one of the~following ways:
\begin{itemize}
	\item they are isotopic, so that there is no critical level in between,
	\item one diagram is obtained form the~other by a~saddle move $\fntHorRes\to\fntVertRes$; this corresponds to a~merge or a~split,
	\item a~one circle component appear (for a~birth) or disappear (for a~death).
\end{itemize}
We can add additional information to encode orientations of the~critical points: an~oriented chord for a~saddle move, or $a/c$ for a~death oriented anti- or clockwise. We provide below one example.
\begin{equation}
	\psset{linewidth=0.5pt,arrowsize=5pt,arrowlength=1}
	\textcobordism[1](S-B)(srD+,2)
	\qquad=\qquad
	\begin{movie}[h](1.5,1.5){4}
		\movieclip{\pscircle(0,0){0.3}}
		\movieclip{\pscustom{%
				\moveto(-0.25,0.3)%
				\psarc(-0.25,0){0.3}{90}{270}%
				\curveto(0.1,-0.3)(0,-0.15)(0.2,-0.15)\curveto(0.3,-0.15)(0.2,-0.2)(0.4,-0.2)
				\psarc(0.4,0){0.2}{-90}{90}
				\curveto(0.2,0.2)(0.3,0.15)(0.2,0.15)\curveto(0,0.15)(0.1,0.3)(-0.25,0.3)
			}%
			\psline[linewidth=3\pslinewidth]{->}(0.2,-0.15)(0.2,0.15)
		}
		\movieclip{\pscircle(-0.25,0){0.3}\pscircle(0.4,0){0.2}\rput(0.4,0){$\scriptstyle a$}}
		\movieclip{\pscircle(-0.25,0){0.3}}
	\end{movie}
\end{equation}

Movie presentations are a~good way to visualize cobordisms. However, if a~cobordism $(W,\tau)$ has only saddle points, a~more compact description is given by its \emph{surgery diagram}: a~collection of circles with enumerated oriented chords between them. The~circles illustrate the~input of the~cobordism $W$, whereas the~chords represent 1-handles in the~handle decomposition of $W$ with respect to the~chronology $\tau$. Performing surgeries along the~chords in the~specified order results in a~movie presentation of $W$. However, we can get more: a~diagram with two chords encodes two chronological cobordisms, depending on the~order of the~chords, and a~change that permutes the~two points, see~Fig.~\ref{fig:surgery-description-of-a-change} on page \pageref{fig:surgery-description-of-a-change}.

\begin{proposition}\label{prop:changes-generators}
	Changes of chronologies are generated under composition and disjoint union by the~following:
\begin{enumerate}
	\item \emph{creation} and \emph{annihilation} changes
	\begin{equation}\label{change:creation-annihilation}
		\psset{unit=7mm}
		\def\arraystretch{0.5}
			\textcobordism[1](sI)(I)
				\,\begin{array}{c}\to/=>/ \\ \to/<=/\end{array}\,
			\textcobordism*[1](slB)(M-L)
			\hskip 1.5cm
			\textcobordism[1](I)(sI)
				\,\begin{array}{c}\to/=>/ \\ \to/<=/\end{array}\,
			\textcobordism*[1](S-B)(srD+,2)
			\hskip 1.5cm
			\textcobordism[1](I)(sI)
				\,\begin{array}{c}\to/=>/ \\ \to/<=/\end{array}\,
			\textcobordism*[1](S-B)(slD-,1)
	\end{equation}
	in which the~orientations of deaths are determined by the~monotonicity condition for $d^3\tau$ at an~$A_2$ singularity (take the~arrow at the~saddle and rotate it towards the~vertical cylinder),

	\item the~\emph{disjoint sum permutations}
	\begin{equation}
		\psset{unit=7mm}
		\begin{centerpict}(-0.1,-0.1)(3.5,2.5)
			\COBcylinder(0.1,0)(0.1,2.4)\COBcylinder(1.0,0)(1.0,2.4)
			\COBcylinder(2.0,0)(2.0,2.4)\COBcylinder(2.9,0)(2.9,2.4)
			\rput[c](0.77,0.1){$\scriptstyle\cdots$}\rput[c](0.77,2.25){$\scriptstyle\cdots$}
			\rput[c](2.67,0.1){$\scriptstyle\cdots$}\rput[c](2.67,2.25){$\scriptstyle\cdots$}
			\psframe[framearc=0.5,fillstyle=solid](-0.05,0.3)(1.55,1.2)
			\psframe[framearc=0.5,fillstyle=solid]( 1.85,1.2)(3.45,2.1)
			\rput[c](0.8,0.75){$W'$}
			\rput[c](2.7,1.65){$W\phantom'$}
		\end{centerpict}
		\,\to/=>/^{\sigma^{\sqcup}_{W,W'}}\,
		\begin{centerpict}(-0.1,-0.1)(3.5,2.5)
			\COBcylinder(0.1,0)(0.1,2.4)\COBcylinder(1.0,0)(1.0,2.4)
			\COBcylinder(2.0,0)(2.0,2.4)\COBcylinder(2.9,0)(2.9,2.4)
			\rput[c](0.77,0.1){$\scriptstyle\cdots$}\rput[c](0.77,2.25){$\scriptstyle\cdots$}
			\rput[c](2.67,0.1){$\scriptstyle\cdots$}\rput[c](2.67,2.25){$\scriptstyle\cdots$}
			\psframe[framearc=0.5,fillstyle=solid](-0.05,1.2)(1.55,2.1)
			\psframe[framearc=0.5,fillstyle=solid]( 1.85,0.3)(3.45,1.2)
			\rput[c](0.8,1.65){$W'$}
			\rput[c](2.7,0.75){$W\phantom'$}
		\end{centerpict}
	\end{equation}	

	\item the~\emph{connected sum permutations}
	\begin{equation}
		\psset{unit=7mm}
		\begin{centerpict}(-0.1,-0.1)(2.9,2.5)
			\COBcylinder(0.1,0)(0.1,2.4)\COBcylinder(1.2,0)(1.2,2.4)\COBcylinder(2.3,0)(2.3,2.4)
			\rput[c](0.87,0.1){$\scriptstyle\cdots$}\rput[c](0.87,2.25){$\scriptstyle\cdots$}
			\rput[c](1.97,0.1){$\scriptstyle\cdots$}\rput[c](1.97,2.25){$\scriptstyle\cdots$}
			\psframe[framearc=0.5,fillstyle=solid](-0.05,0.3)(1.40,1.2)
			\psframe[framearc=0.5,fillstyle=solid]( 1.40,1.2)(2.85,2.1)
			\rput[c](0.725,0.75){$W'$}
			\rput[c](2.125,1.65){$W\phantom'$}
		\end{centerpict}
		\,\to/=>/^{\sigma^{\connsum}_{W,W'}}\,
		\begin{centerpict}(-0.1,-0.1)(2.9,2.5)
			\COBcylinder(0.1,0)(0.1,2.4)\COBcylinder(1.2,0)(1.2,2.4)\COBcylinder(2.3,0)(2.3,2.4)
			\rput[c](0.87,0.1){$\scriptstyle\cdots$}\rput[c](0.87,2.25){$\scriptstyle\cdots$}
			\rput[c](1.97,0.1){$\scriptstyle\cdots$}\rput[c](1.97,2.25){$\scriptstyle\cdots$}
			\psframe[framearc=0.5,fillstyle=solid](-0.05,1.2)(1.40,2.1)
			\psframe[framearc=0.5,fillstyle=solid]( 1.40,0.3)(2.85,1.2)
			\rput[c](0.725,1.65){$W'$}
			\rput[c](2.125,0.75){$W\phantom'$}
		\end{centerpict}
	\end{equation}

	\item the~\emph{exceptional permutation} changes, represented by the~following movies
	\begin{equation}\label{change:connected-permutations}
		\psset{unit=5mm,linewidth=0.5pt}
		\begin{movie}[v](2.5,2){3}
			\movieclip{
				\pscustom{\psarc ( 0.4,0){0.6}{-90}{90}\psarc( 0.4, 0.4){0.2}{90}{270}
				          \psarcn( 0.4,0){0.2}{90}{-90}\psarc( 0.4,-0.4){0.2}{90}{270}}
				\pscustom{\psarcn(-0.4,0){0.6}{-90}{90}\psarcn(-0.4, 0.4){0.2}{90}{270}
				          \psarc (-0.4,0){0.2}{90}{-90}\psarcn(-0.4,-0.4){0.2}{90}{270}}
				\psline[linewidth=4\pslinewidth](-0.2,0.4)(0.2,0.4)
			}
			\movieclip{
				\pscustom{\psarc ( 0.4, 0.0){0.6}{-90}{90}\psarc (-0.4, 0.0){0.6}{90}{270}
				          \psarc (-0.4,-0.4){0.2}{-90}{90}\psarcn(-0.4, 0.0){0.2}{270}{90}
									\psarcn( 0.4, 0.0){0.2}{90}{-90}\psarc ( 0.4,-0.4){0.2}{90}{270}}
				\psline[linewidth=4\pslinewidth](-0.2,-0.4)(0.2,-0.4)
			}
			\movieclip{
				\pscustom{\psarc(0.4,0){0.6}{-90}{90}\psarc(-0.4,0){0.6}{90}{270}\lineto(0.4,-0.6)}
				\pscustom{\psarc(0.4,0){0.2}{-90}{90}\psarc(-0.4,0){0.2}{90}{270}\lineto(0.4,-0.2)}
			}
		\end{movie}
			\,\begin{array}{c}\to/=>/ \\ \to/<=/\end{array}\,
		\begin{movie}[v](2.5,2)3
			\movieclip{
				\pscustom{\psarc ( 0.4,0){0.6}{-90}{90}\psarc( 0.4, 0.4){0.2}{90}{270}
				          \psarcn( 0.4,0){0.2}{90}{-90}\psarc( 0.4,-0.4){0.2}{90}{270}}
				\pscustom{\psarcn(-0.4,0){0.6}{-90}{90}\psarcn(-0.4, 0.4){0.2}{90}{270}
				          \psarc (-0.4,0){0.2}{90}{-90}\psarcn(-0.4,-0.4){0.2}{90}{270}}
				\psline[linewidth=4\pslinewidth](-0.2,-0.4)(0.2,-0.4)
			}
			\movieclip{
				\pscustom{\psarc (-0.4,0.0){0.6}{90}{270}\psarc (0.4,0.0){0.6}{-90}{90}
				          \psarc ( 0.4,0.4){0.2}{90}{270}\psarcn(0.4,0.0){0.2}{90}{270}
									\psarcn(-0.4,0.0){0.2}{270}{90}\psarc(-0.4,0.4){0.2}{-90}{90}}
				\psline[linewidth=4\pslinewidth](-0.2,0.4)(0.2,0.4)
			}
			\movieclip{
				\pscustom{\psarc(0.4,0){0.6}{-90}{90}\psarc(-0.4,0){0.6}{90}{270}\lineto(0.4,-0.6)}
				\pscustom{\psarc(0.4,0){0.2}{-90}{90}\psarc(-0.4,0){0.2}{90}{270}\lineto(0.4,-0.2)}
			}
		\end{movie}
		\hskip 1cm\textrm{and}\hskip 1cm
		\begin{movie}[v](2.5,2)3
			\movieclip{
				\pscustom{\psarc (-0.4,0.0){0.6}{90}{270}\psarc (0.4,0.0){0.6}{-90}{90}
				          \psarc ( 0.4,0.4){0.2}{90}{270}\psarcn(0.4,0.0){0.2}{90}{270}
									\psarcn(-0.4,0.0){0.2}{270}{90}\psarc(-0.4,0.4){0.2}{-90}{90}}
				\psline[linewidth=4\pslinewidth](-0.2,0.4)(0.2,0.4)
			}
			\movieclip{
				\pscustom{\psarc(0.4,0){0.6}{-90}{90}\psarc(-0.4,0){0.6}{90}{270}\lineto(0.4,-0.6)}
				\pscustom{\psarc(0.4,0){0.2}{-90}{90}\psarc(-0.4,0){0.2}{90}{270}\lineto(0.4,-0.2)}
				\psline[linewidth=4\pslinewidth](0,-0.2)(0,-0.6)
			}
			\movieclip{
				\pscustom{\psarc ( 0.4, 0.0){0.6}{-90}{90}\psarc (-0.4, 0.0){0.6}{90}{270}
				          \psarc (-0.4,-0.4){0.2}{-90}{90}\psarcn(-0.4, 0.0){0.2}{270}{90}
									\psarcn( 0.4, 0.0){0.2}{90}{-90}\psarc ( 0.4,-0.4){0.2}{90}{270}}
			}
		\end{movie}
			\,\begin{array}{c}\to/=>/ \\ \to/<=/\end{array}\,
		\begin{movie}[v](2.5,2)3
			\movieclip{
				\pscustom{\psarc (-0.4,0.0){0.6}{90}{270}\psarc (0.4,0.0){0.6}{-90}{90}
				          \psarc ( 0.4,0.4){0.2}{90}{270}\psarcn(0.4,0.0){0.2}{90}{270}
									\psarcn(-0.4,0.0){0.2}{270}{90}\psarc(-0.4,0.4){0.2}{-90}{90}}
				\psline[linewidth=4\pslinewidth](0,-0.2)(0,-0.6)
			}
			\movieclip{
				\pscustom{\psarc ( 0.4,0){0.6}{-90}{90}\psarc( 0.4, 0.4){0.2}{90}{270}
				          \psarcn( 0.4,0){0.2}{90}{-90}\psarc( 0.4,-0.4){0.2}{90}{270}}
				\pscustom{\psarcn(-0.4,0){0.6}{-90}{90}\psarcn(-0.4, 0.4){0.2}{90}{270}
				          \psarc (-0.4,0){0.2}{90}{-90}\psarcn(-0.4,-0.4){0.2}{90}{270}}
				\psline[linewidth=4\pslinewidth](-0.2,0.4)(0.2,0.4)
			}
			\movieclip{
				\pscustom{\psarc ( 0.4, 0.0){0.6}{-90}{90}\psarc (-0.4, 0.0){0.6}{90}{270}
				          \psarc (-0.4,-0.4){0.2}{-90}{90}\psarcn(-0.4, 0.0){0.2}{270}{90}
									\psarcn( 0.4, 0.0){0.2}{90}{-90}\psarc ( 0.4,-0.4){0.2}{90}{270}}
			}
		\end{movie}
	\end{equation}
	to which we refer respectively as a~${\times}$-change and a~$\Diamond$-change, because of the~shapes of cobordisms involved.
\end{enumerate}
\end{proposition}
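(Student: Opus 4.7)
The plan is to invoke Cerf/framed function theory (Appendix~\ref{sec:framed-function}) to reduce the problem to enumerating the codimension-one singularities a generic path in the space $\OFun(W)$ of oriented Igusa functions can cross. Since $\OFun(W)$ is simply connected, any two generic paths with matching endpoints are related by a generic homotopy, so the equivalence class of a change of chronology is determined by the finite list of codimension-one events it traverses. The proof then amounts to matching each type of event with one of the generators (1)--(4) and observing that everything else is filled in by order-preserving deformations, which represent identity 2-morphisms.

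First, I will classify the codimension-one events. A generic path of Morse functions meets exactly two kinds: a birth-death ($A_2$) singularity, at which a pair of critical points of consecutive Morse indices appears or disappears, and a level-crossing, at which two critical points exchange their $\tau$-values. Each birth-death event is rigid up to the index of the (dis)appearing pair and the sign of $d^3\tau$ along the degenerate direction---this sign is precisely the monotonicity datum that pins down the orientations in~(\ref{change:creation-annihilation}). Hence all $A_2$ events in dimension two are accounted for by the three creation/annihilation families.

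Second, I will classify level-crossings $\tau(p)=\tau(q)$ of two critical points. The local picture is controlled by how the handles attached at $p$ and $q$ interact with the intermediate regular level. If the handles lie in disjoint connected components of that level, the local model is (a monoidal factor of) the disjoint-sum permutation $\sigma^{\sqcup}$. If they share exactly one boundary circle, the local model is a connected-sum permutation $\sigma^{\connsum}$. If they share both attaching circles, a direct enumeration---essentially the catalogue of two-arrow surgery diagrams underlying Table~\ref{tab:cube-faces}---shows that only two configurations remain: two saddles whose attaching arcs interchange the common circles (the ${\times}$-change), and a saddle attached over a birth/death region (the $\Diamond$-change).

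Finally, I will assemble these local models. Between successive codimension-one events the path is an order-preserving deformation of the chronology, hence the identity 2-morphism. Splitting a generic representative at each event therefore writes the full change as a vertical composition of horizontal juxtapositions of the generators (1)--(4), which is the assertion of the proposition. The main obstacle I expect is the exhaustive classification of level-crossings in the ``both attachments shared'' case, together with the bookkeeping for framings: framings on negative eigenspaces can rotate as critical points swap $\tau$-values, so I must verify that every orientation variant of an exceptional change is realized by one of the listed movies (possibly after attaching twists, as in Proposition~\ref{prop:2ChCob-presentation}).
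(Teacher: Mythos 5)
Your proposal follows the same route the paper takes: use Cerf theory to reduce to codimension-one events, separate $A_2$ (birth-death) events from level-crossings, and then classify level-crossings by how the two critical points interact at the singular section. Where you write ``classify by how the handles interact with the intermediate regular level,'' the paper makes this precise by looking at the $4$-valent graph $\Gamma_{\!H}$ formed by the components of the singular level carrying the two critical points, and observing (implicitly, via parity of edges incident to a $4$-valent vertex) that the number of edges between the two vertices is $0$, $2$, or $4$; these exactly produce disjoint-union permutations, connected-sum permutations, and the exceptional changes. Your phrasing in terms of shared boundary circles is morally the same trichotomy, and the assembly argument at the end (split at events, fill in between with order-preserving isotopies) is likewise what the paper intends.

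One concrete inaccuracy worth flagging: your description of the $\Diamond$-change as ``a saddle attached over a birth/death region'' is wrong. Both exceptional changes are level-crossings of \emph{two saddle points}: in the $\times$-change the two attaching arcs connect the two shared circles in parallel, while in the $\Diamond$-change they are nested (the circles appearing in the surgery diagram are arranged concentrically, which is exactly what distinguishes the two embedded versions in $\EmbChCob(0)$). Neither involves a birth or a death; the involvement of births/deaths in a level-crossing always produces a disconnected $\Gamma_{\!H}$ and hence a disjoint-union permutation. If you carry out the ``direct enumeration'' you propose, you should arrive at the correct picture, but as written the claimed outcome of that enumeration is incorrect. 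You should also be explicit that the parity constraint (each vertex of $\Gamma_{\!H}$ is $4$-valent) is what rules out the intermediate cases $1$- and $3$-connected; otherwise the trichotomy looks like an ad hoc list rather than a complete classification. Your worry about framings at the end is legitimate and the paper handles it essentially as you suspect: orientations of saddles are adjusted by composing with twists, as in Proposition~\ref{prop:2ChCob-presentation}.
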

\begin{proof}
	According to Cerf theory there are two types of changes:
	\begin{itemize}
		\item those generated by $A_2$-singularies, i.e.\ creation and annihilation changes, and
		\item those induced by homotopies $H_t$, such that $H_{t_0}$ has two critical points at one level for some $t_0$.
	\end{itemize}
	In the~latter case, we refer to the~critical level of $H_{t_0}$ as the~\emph{singular section} of $H_t$. Its components carrying the~critical points is a~four-valent graph $\Gamma_{\!H}$. Consider the~connectivity of the~graph:\footnote{
		A~graph $\Gamma$ is $n$-connected if at least $n$ edges must be removed to split it into two components.
	} the~homotopy $H$ represents a~disjoint union permutation if $\Gamma_{\!H}$ has two components, a~connected sum permutation if $\Gamma_{\!H}$ is 2-connected, or one of the~exceptional changes if $\Gamma_{\!H}$ is 4-connected.
\end{proof}

\begin{remark}
	\wrapfigure{\begin{pspicture}(-1.2,-1)(1.2,1)
		\rput(0, 0.6){\diagConnX{}{->}{}{->}}%
		\rput(0,-0.6){\diagConnX{}{->}{}{<-}}%
	\end{pspicture}}
	The~surgery diagram of an~${\times}$-change consists of two circles connected by two arrows. However, the~arrows can either point to the~same or to different circles, and the~two cases lead to surgery diagrams encoding non-equivalent changes. Hence, there are two versions of the~${\times}$-change.

	\parshape=0
	On the~other hand, reversing one chord in a~surgery diagram of a~$\Diamond$-change results in the~inverse change. Indeed, the~only topological information we have is the~order of chords induced by the~arc connecting their heads (there is a~natural orientation of the~circle in the~surgery diagram induced from the~orientation of the~underlying cobordism). This order may or may not coincide with the~order of critical points, induced by the~initial chronology, and the~two cases lead to mutually inverse permutation changes.
\end{remark}

We shall now proceed to a~description of \emph{relations} between the~generators of the~set of 2-morphisms. These are given by homotopies of paths in the~space of Igusa functions listed in Section~\ref{sec:framed-function}. As before, not all of them can be easily drawn, especially the~homotopies relating the~two ways of switching levels of three critical points. We shall encode them with three-chord surgery diagrams---such a~diagram represent six cobordisms, depending on the~order of critical points, call them $a$, $b$, $c$, and six permutation changes between these cobordisms forming a~hexagonal diagram
\begin{equation}\label{eq:hexagon-relation}
	\begin{diagps}(-8em,-6ex)(8em,7.5ex)
		\node[href=-0.5]bac(-3em, 6ex)[W\raisebox{0.2ex}{$\scriptstyle(b<a<c)$}]	\node[href= 0.5]bca( 3em, 6ex)[W\raisebox{0.2ex}{$\scriptstyle(b<c<a)$}]
		\node[href= 0.5]abc(-8em, 0ex)[W\raisebox{0.2ex}{$\scriptstyle(a<b<c)$}]	\node[href=-0.5]cba( 8em, 0ex)[W\raisebox{0.2ex}{$\scriptstyle(c<b<a)$}]
		\node[href=-0.5]acb(-3em,-6ex)[W\raisebox{0.2ex}{$\scriptstyle(a<c<b)$}]	\node[href= 0.5]cab( 3em,-6ex)[W\raisebox{0.2ex}{$\scriptstyle(c<a<b)$}]
		\arrow{=>}[abc`bac;]	\arrow{=>}[bac`bca;]	\arrow{=>}[bca`cba;]
		\arrow{=>}[abc`acb;]	\arrow{=>}[acb`cab;]	\arrow{=>}[cab`cba;]
	\end{diagps}
\end{equation}
The~notation $W\raisebox{0.2ex}{$\scriptstyle(x<y<z)$}$ is used for the~cobordism with the~point $x$ at the~lowest critical level, $y$ in the~middle, and $z$ at the~highest one. The~relation imposed by the~homotopy makes this hexagon commute.

\begin{proposition}\label{prop:changes-relations}
	The~following is the~complete set of relations among the~generating changes of chronologies listed in Proposition~\ref{prop:changes-generators}:
	\begin{enumerate}
	\item the~squares below commute for any cobordisms $W$, $W'$, $W''$, and a~2-morphism $\alpha$:
	\begin{equation}\label{eq:rel-dsum-csum-vs-change}
		\begin{diagps}(0em,-1ex)(21em,12.5ex)
			\square<7em,10ex>{=>`=>`=>`=>}[%
				W\ldsum W'`W\ldsum W''`W\rdsum W'`W\rdsum W'';
				\id\sqcup\alpha`\sigma^{\sqcup}_{W,W'}`\sigma^{\sqcup}_{W,W''}`\id\sqcup\alpha
			]
			\square(14em,0ex)<7em,10ex>{=>`=>`=>`=>}[%
				W\lcsum W'`W\lcsum W''`W\rcsum W'`W\rcsum W'';
				\id\connsum\alpha`\sigma^{\connsum}_{W,W'}`\sigma^{\connsum}_{W,W''}`\id\connsum\alpha
			]
		\end{diagps}
	\end{equation}
	
	\item hexagons encoded by the~following surgery diagrams commute:
	\begin{gather}
		\label{eq:rel-exceptional-planar}
		\psset{linewidth=0.5pt,fillstyle=solid,fillcolor=white}
		\begin{centerpict}(-1,-0.6)(1,0.9)
			\pspolygon[linewidth=1.5pt](-0.519,-0.3)(0.519,-0.3)(0,0.6)
			\pscircle(-0.519,-0.3){0.3}
			\pscircle( 0.519,-0.3){0.3}
			\pscircle(0,0.6){0.3}
		\end{centerpict}
		\hskip 1cm
		\begin{centerpict}(-1,-0.3)(1,0.3)
			\psellipse[linewidth=1.5pt](0,0)(0.8,0.2)
			\pscircle(0.5,0){0.3}
			\psclip{\pscircle(-0.5,0){0.3}}
				\psline[linewidth=1.5pt](-0.8,0)(-0.2,0)
			\endpsclip
		\end{centerpict}
		\hskip 1cm
		\begin{centerpict}(-1,-0.5)(1,0.5)
			\psellipse[linewidth=1.5pt](0,0.2)(0.7,0.3)
			\psclip{\psellipse(0,0)(0.5,0.3)}
				\psline[linewidth=1.5pt](-0.2,-0.3)(-0.2,0.3)
				\psline[linewidth=1.5pt]( 0.2,-0.3)( 0.2,0.3)
			\endpsclip
		\end{centerpict}
		\hskip 1cm
		\begin{centerpict}(-1,-0.6)(1,0.6)
			\psline[linewidth=1.5pt](-0.5,-0.3)(0.5,-0.3)
			\psline[linewidth=1.5pt](-0.5, 0  )(0.5, 0  )
			\psline[linewidth=1.5pt](-0.5, 0.3)(0.5, 0.3)
			\psellipse(-0.5,0)(0.3,0.5)
			\psellipse( 0.5,0)(0.3,0.5)
		\end{centerpict}
		\\[2ex]
		\label{eq:rel-exceptional-nonplanar}
		\psset{linewidth=0.5pt,fillstyle=solid,fillcolor=white}
		\begin{centerpict}(-1,-0.8)(1,1)
			\SpecialCoor
			\psline[linewidth=1.5pt](0,0)(1;90)
			\psline[linewidth=1.5pt](0,0)(1;210)
			\psline[linewidth=1.5pt](0,0)(1;330)
			\psccurve[fillstyle=none,curvature=.75 .1 0](1;90)(1;210)(1;330)
			\psccurve(0.6;90)(0.15;330)(0.6;210)(0.15;90)(0.6;330)(0.15;210)
		\end{centerpict}
		\hskip 1cm
		\begin{centerpict}(-1,-0.8)(1,1)
			\SpecialCoor
			\pspolygon[linewidth=1.5pt](0.7;105)(0.7;75)(0.7;345)(0.7;315)(0.7;225)(0.7;195)
			\psccurve[curvature=.75 .1 0]%
				(0.8;150)(0.9;105)(0.2;210)(0.9;315)%
				(0.8;270)(0.9;225)(0.2;330)(0.9; 75)%
				(0.8; 30)(0.9;345)(0.2; 90)(0.9;195)
		\end{centerpict}
	\end{gather}
	where the~crossings in the~last two diagrams are the~artifacts of projecting the~diagrams to the~plane (singular levels of the~corresponding homotopies are not planar). 
	\end{enumerate}
\end{proposition}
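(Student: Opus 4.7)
By Theorem~\ref{thm:framed-contr} the space of framed functions on $W$ is contractible, and $\OFun(W)$ is obtained from it as a quotient yielding a simply connected space. In particular, two parallel compositions of generating 2-morphisms represent the same 2-morphism precisely when the corresponding paths in $\OFun(W)$ are homotopic rel endpoints, which by transversality occurs if and only if one can be deformed into the other through a generic 2-parameter family whose codimension-2 strata all lie in the prescribed list of relations. Hence it suffices to enumerate the codimension-2 singularities of $\OFun(W)$ and to match each with one of the relations from items~(1) and~(2).

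A generic 2-parameter family of oriented Igusa functions meets two types of codimension-2 strata. The first type consists of two independent codimension-1 events occurring at disjoint regions of $W$ at disjoint critical levels, where each event is a birth/death or a swap of two critical points. The second type consists of a single codimension-2 event, namely three critical points meeting at a common level, or a birth/death coinciding with a simultaneous permutation involving the newborn/dying critical point.

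\emph{Independent events} produce exactly the commuting squares of item~(1). When one event is an arbitrary change $\alpha$ occurring in one region and the other is a disjoint-sum or connected-sum permutation in the complementary region, we recover precisely~\eqref{eq:rel-dsum-csum-vs-change}. When both events are level-swaps in disjoint regions, the resulting square is a special case of the naturality with $\alpha$ itself a permutation change, and a birth/death coexisting with a disjoint permutation produces a naturality square of a creation/annihilation change.

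\emph{Triple collisions} produce the hexagons of item~(2). If three critical points $a$, $b$, $c$ coalesce at one level for a single parameter value, the two possible sequences of resolutions before and after the bifurcation yield the two monotone halves of the hexagon~\eqref{eq:hexagon-relation}. The topology of the singular section---the four-valent graph $\Gamma$ whose vertices are the three colliding saddles and whose edges record incidences of attaching arcs---determines which hexagon appears. If $\Gamma$ is disconnected, the collision decomposes as two independent events, already handled by item~(1). If $\Gamma$ is $2$-connected but planar, one obtains one of the four planar surgery diagrams in~\eqref{eq:rel-exceptional-planar}, distinguished by the incidence pattern of the handles with the circles they attach to. If $\Gamma$ is $4$-connected, planarity fails and the surgery diagram is one of the two pictures in~\eqref{eq:rel-exceptional-nonplanar}, the two cases being distinguished by the cyclic ordering the orientations impose on the three chords around the singular vertex.

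The main obstacle will be the case-by-case bookkeeping for the $2$-connected and $4$-connected three-handle configurations: one must verify that the enumerated planar surgery diagrams in~\eqref{eq:rel-exceptional-planar} exhaust the $2$-connected planar possibilities up to orientation-reversal and symmetry, and that the two pictures in~\eqref{eq:rel-exceptional-nonplanar} account for every non-planar four-valent graph on three vertices. That no further relations appear at higher codimension is automatic: simple connectivity of $\OFun(W)$ implies that every $2$-sphere of paths is null-homotopic, so codimension-$\geq 3$ strata impose no identities beyond those generated by the listed ones.
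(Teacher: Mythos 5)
Your overall strategy—reducing the proposition to an enumeration of codimension-2 singularities in the space of oriented Igusa functions, using simple connectivity to rule out higher relations—is the right one and matches the paper's. But your case analysis misassigns which singularity types yield which relations, and these misassignments would derail the proof.

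First, you attribute the naturality squares of item~(1) to ``two independent codimension-1 events occurring at disjoint regions of $W$ at disjoint critical levels.'' That is the paper's \emph{Group~I} (two changes happening simultaneously but at separate levels), and that group yields only the interchange law~\eqref{eq:interchange-law}—a formal consequence of the 2-category axioms, not a new relation. The squares~\eqref{eq:rel-dsum-csum-vs-change} are genuine extra naturality conditions: they come instead from \emph{Group~III}, homotopies whose singular slice has several critical points at the \emph{same} level. Concretely, the left square arises when the singular-level graph $\Gamma_{\!H}$ is disconnected with a single 4-valent vertex in one component (the other components carrying the change $\alpha$), and the right square arises when $\Gamma_{\!H}$ is 2-edge-connected and can be split by cutting two edges, revealing a connected sum.

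Second, your assignment of the hexagons is reversed: you claim that 2-connected but planar $\Gamma_{\!H}$ gives the four planar surgery diagrams~\eqref{eq:rel-exceptional-planar} and that only the 4-connected case gives the non-planar ones~\eqref{eq:rel-exceptional-nonplanar}. In fact, as noted above, the 2-connected case already produced the connected-sum square of item~(1). \emph{All} the hexagons—planar and non-planar alike—come from the 4-edge-connected case (three 4-valent vertices, a unique such abstract graph). The planar/non-planar dichotomy among the resulting surgery diagrams is not the planarity of $\Gamma_{\!H}$ itself, but a consequence of how the arcs at a regular level just below the singular one are arranged when you replace each 4-valent vertex with two arcs. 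You would also need to acknowledge the nontransverse (Group~II) codimension-2 events, which your enumeration omits entirely; these contribute only the relation $\alpha^{-1}\star\alpha = \id$ and hence nothing new, but the completeness claim in the proposition requires accounting for them.
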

\begin{proof}
	We shall analyze the~three groups of homotopies from Section~\ref{sec:framed-function} on page~\pageref{rel:far-away}.
	
	\smallskip\noindent
	\emph{Group I: two changes occur simultaneously at different levels} \eqref{rel:far-away}. This is the~exchange law for 2-morphisms, so that this group does not introduce new relations.
	
	\smallskip\noindent
	\emph{Group II: nontransverse changes} \eqref{rel:inverses}. These imply a~change followed by its inverse is equivalent to the~trivial one. Again, no interesting relations.
	
	\smallskip\noindent
	\emph{Group III: several critical points at the~same level} \eqref{rel:permutations}. This group introduces interesting relations between generating 2-morphisms. A~homotopy $H_{s,t}$ from this group admits a~\emph{singular level}: the~critical level of some $H_{s_0,t_0}$ containing all the~critical points (either three Morse singularities, or one Morse and one birth-death point). Denote by $\Gamma_{\!H}$ the~components of the~singular level carrying the~critical points; it is a~graph with two types of vertices: 4-valent ones for Morse singularies, and 2-valent to birth-death singularities.
	
	If the~graph $\Gamma_{\!H}$ is disconnected, it must have a~component with a~single 4-valent vertex. In such a~case the~relation imposed by $H$ is commutativity of the~left square in \eqref{eq:rel-dsum-csum-vs-change}, where the~cobordism $W$ contains the~component of $\Gamma_{\!H}$ with a~single 4-vertex, and $\alpha$ is a~change encoded by the~other components (a~creation or annihilation if the~component contains one 2-valent vertex, or a~permutation otherwise).
	
	If $\Gamma_{\!H}$ is 2-connected, break its two edges to obtain two components. The~reverse operation is the~connected sum---this shows a~homotopy with such a~graph impose commutativity of the~right square in \eqref{eq:rel-dsum-csum-vs-change}.
	
	\wrapfigure{%
		\begin{pspicture}(-1.5,-0.6)(1.5,0.6)
			\psdot(-1,0)\psdot(0,0)\psdot(1,0)
			\pscurve(-1,0)(-0.7, 0.2)(-0.3, 0.2)(0,0)\pscurve(1,0)(0.7, 0.2)(0.3, 0.2)(0,0)
			\pscurve(-1,0)(-0.7,-0.2)(-0.3,-0.2)(0,0)\pscurve(1,0)(0.7,-0.2)(0.3,-0.2)(0,0)
			\pscurve(-1,0)(-1.3, 0.2)(-1.3, 0.3)(0, 0.6)(1.3, 0.3)(1.3, 0.2)(1,0)
			\pscurve(-1,0)(-1.3,-0.2)(-1.3,-0.3)(0,-0.6)(1.3,-0.3)(1.3,-0.2)(1,0)
		\end{pspicture}}
	Finally, $\Gamma_{\!H}$ can be 4-connected, which requires three 4-valent vertices. There is only one such graph, shown to the~right. Take a~look on a~regular level of $H_{s_0,t_0}$ just below the~singular one---it is a~collection of circles obtained from $\Gamma_{\!H}$ by replacing a~neighborhood of each vertex with two arcs (not necessarily in a~planar way). Join the~arcs with a~chord to obtain a~three-chord surgery diagram for $H$. All such diagrams are listed in lines \eqref{eq:rel-exceptional-planar} and \eqref{eq:rel-exceptional-nonplanar}.
\end{proof}

\begin{example}
	The~following sequence of homotopies
	\begin{equation}\label{eq:rel-create-perm}
	\psset{unit=7mm}
		\begin{centerpict}(-0.2,0)(1.6,2.4)
				\cobordism[1](0,0)(I)(I)		\cobordism[1](1.2,0)(I)(I)
				\psframe[fillstyle=solid,fillcolor=white,framearc=0.5](-0.2,0.7)(1.4,1.7)
				\rput(0.8,0.2){$\scriptstyle\cdots$}\rput(0.8,2.2){$\scriptstyle\cdots$}
				\rput(0.65,1.2){$W$}
		\end{centerpict}
		\dblto
		\begin{centerpict}(-0.2,0)(2.4,2.4)
				\cobordism[1](0,0)(I)(I)		\cobordism[1](1.2,0)(sI)(srB,2)(rM-R)
				\psframe[fillstyle=solid,fillcolor=white,framearc=0.5](-0.2,0.3)(1.4,1.1)
				\rput(0.8,0.1){$\scriptstyle\cdots$}\rput(0.8,1.8){$\scriptstyle\cdots$}
				\rput(0.65,0.7){$W$}
			\end{centerpict}
		\dblto
		\begin{centerpict}(-0.2,0)(2.4,2.4)
				\cobordism[1](0,0)(I)(I)		\cobordism[1](1.2,0)(srB,2)(rM-R)(sI)
				\psframe[fillstyle=solid,fillcolor=white,framearc=0.5](-0.2,1.3)(1.4,2.1)
				\rput(0.8,0.7){$\scriptstyle\cdots$}\rput(0.8,2.2){$\scriptstyle\cdots$}
				\rput(0.65,1.7){$W$}
		\end{centerpict}
		\dblto
		\begin{centerpict}(-0.2,0)(1.6,2.4)
				\cobordism[1](0,0)(I)(I)			\cobordism[1](1.2,0)(I)(I)
				\psframe[fillstyle=solid,fillcolor=white,framearc=0.5](-0.2,0.7)(1.4,1.7)
				\rput(0.8,0.2){$\scriptstyle\cdots$}\rput(0.8,2.2){$\scriptstyle\cdots$}
				\rput(0.65,1.2){$W$}
		\end{centerpict}
	\end{equation}
	represents a~trivial change for any cobordism $W$, not necessarily connected. Indeed, go around the~right square in \eqref{eq:rel-dsum-csum-vs-change}, where $\alpha$ is a~creation change. Likewise a~similar change involving a~split and a~death is trivial.
\end{example}

\section{Emdedded cobordisms and linearization}\label{sec:cob-linear}
In the~view of the~construction of odd Khovanov homology it is unfortunate to have only one $\Diamond$-change up to inverse, see Tab.~\ref{tab:cube-faces}. One solution to this issue is to use cobordisms embedded in $\Disk\times I$, in which case we can easily define chronological cobordisms with corners---they are necessary to construct the~generalized Khovanov bracket for tangles. These cobordisms have a~natural Riemannian structure induced from the~ambient space.

\begin{definition}
	Given a~natural number $k$ define the~2-category $\EmbChCob(k)$ as follows.
	\begin{enumerate}
	
		\item Objects are families of disjoint circles and $k$ intervals properly embedded in a~two-dimensional disk $\Disk$.
	
		\item A~morphism is a~properly embedded surface $W\subset\Disk\times I$, such that the~restriction $pr|_W$ of the~projection $pr\colon\Disk\times I\to I$ to $W$ is a~separative Morse function. We call it a~\emph{chronology} on $W$ and, as before, we orient critical points of $pr|_W$. Moreover, we assume that $\partial W$ consists of three parts: the~input $W\cap(\Disk\times\{0\})$ of $W$, the~output $W\cap(\Disk\times\{1\})$, and $2k$ vertical lines $W\cap(\partial\Disk\times I)$.
		
		\item Finally, a~$2$-morphism is an~\emph{admissible} diffeotopy $\varphi\colon(\Disk\times I)\times I\to\Disk\times I$, i.e.\ the~one that fixes boundary points and at every moment $t\in I$ the~restriction $pr|_{\varphi_t(W)}$ is an~Igusa function.
		
	\end{enumerate}
	We call $\EmbChCob(k)$ the~$2$-category of \emph{embedded chronological cobordisms}.
\end{definition}

\begin{remark}
	We shall refer to orientations of deaths as \emph{clockwise} or \emph{anticlockwise} by comparing them with the~standard orientation of $\Disk\times\{t\}\subset\Disk\times I$.
\end{remark}

We shall identify cobordisms related by diffeotopies $\varphi_t$ for which $pr|_{\varphi_t(W)}$ is separative Morse at every moment $t\in I$. In~particular, this holds for the~following deformations:
\begin{itemize}
	\item \emph{level-preserving} diffeotopies: $pr\circ\varphi_t = pr$ for every $t\in I$,
	\item \emph{vertical} diffeotopies: $\varphi_t(p,z) = (p, h_t(z))$ for some diffeotopy $h_t$ of the~interval $I$.
\end{itemize}
Another important family consists of locally vertical diffeotopies---they are vertical only over a~collection of disks, while constant beyond them.

\begin{definition}
	Choose a~family of disjoint vertical cylinders $C_1,\dots,C_r$ in $\Disk\times I$ and an~embedded chronological cobordism $W$ that is vertical in the~annular neighborhood of each $\partial C_i$. A~diffeotopy $\varphi_t$ is \emph{locally vertical} if it is vertical on all $C_i$'s, but fixes all points outside them except small annular neighborhoods of $\partial C_i$'s, in which we interpolate the~two behaviors.
\end{definition}

The~requirement that $W$ intersects each $\partial C_i$ in vertical lines implies that $\varphi_t$ cannot create critical points. Hence, each interpolation $(1-s)\varphi_1+s\id$ induces a~chronology on $W$, so that locally vertical diffeotopies can be \quot{straightened up} (compare this with Theorem~\ref{thm:framed-contr}).

\begin{proposition}\label{prop:loc-vert-unique}
	Let $\varphi_t$ and $\varphi'_t$ be diffeotopies locally vertical with respect to the~same family of cylinders. If $\varphi_1=\varphi'_1$, then they are homotopic in the~space of admissible diffeotopies. In~particular, a~locally vertical diffeotopy $\varphi_t$ satisfying $\varphi_1=\id$ is trivial.
\end{proposition}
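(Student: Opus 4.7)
The plan is to show that the space $G_0$ of locally vertical admissible diffeomorphisms of $\Disk\times I$ (with respect to the fixed family of cylinders $C_1,\dots,C_r$) is weakly contractible; both assertions then follow immediately, since admissible diffeotopies are paths in $G_0$ starting at $\id$, and any two paths in a weakly contractible space with common endpoints are path-homotopic within it.

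First I would parametrize. A locally vertical diffeomorphism $\varphi$ is uniquely determined, up to a contractible choice of interpolation in the annular collars around each $\partial C_j$, by an $r$-tuple $(h_1,\dots,h_r)\in G:=\prod_{j=1}^{r}\mathrm{Diff}^{+}(I;\partial I)$ of vertical reparametrizations; so the space of locally vertical diffeomorphisms is weakly equivalent to $G$, and a locally vertical diffeotopy starting at the identity corresponds to a smooth path in $G$ based at $(\id,\dots,\id)$. To identify the admissibility locus, let $p_1,\dots,p_N$ be the critical points of $pr|_{W}$ and consider the map sending $\varphi$ to the tuple of heights $(\mathbf{h}_i(\varphi))_{i=1}^N$, where $\mathbf{h}_i(\varphi)=h_{j(i)}(pr(p_i))$ if $p_i\in C_{j(i)}$ and $\mathbf{h}_i(\varphi)=pr(p_i)$ otherwise. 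Admissibility forces $(\mathbf{h}_i(\varphi_t))$ to remain, for all $t$, in the connected component $\mathrm{Conf}_{N}(I)_{\sigma_0}$ of the ordered configuration space $\mathrm{Conf}_{N}(I)=\{(\xi_1,\dots,\xi_N)\in I^N\colon \xi_i\ne\xi_{i'}\}$ determined by the ordering $\sigma_0$ of critical heights at $\varphi=\id$ (continuity, plus the fact that $\varphi_t$ cannot create or destroy critical points). Define $G_0\subseteq G$ to be the preimage of this chamber.

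The main step is the weak contractibility of $G_0$, deduced from a fibration argument. The heights map factors through the product of per-cylinder evaluation maps $\mathrm{ev}_j\colon\mathrm{Diff}^{+}(I;\partial I)\to\Delta_{k_j}$, where $\Delta_{k_j}=\{0<\xi_1<\dots<\xi_{k_j}<1\}$ is the open simplex of $k_j$ ordered interior points of $I$ and $k_j$ is the number of critical points of $pr|_{W}$ inside $C_j$. Each $\mathrm{ev}_j$ is a Serre fibration with contractible fibers: the fiber over a fixed $k_j$-tuple is the space of orientation-preserving diffeomorphisms of $I$ pointwise fixing the endpoints and those $k_j$ interior points, which is contractible, being homotopy-equivalent to a product of spaces of diffeomorphisms of the $k_j+1$ complementary intervals fixing their endpoints. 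Hence the product map $G\to\prod_j\Delta_{k_j}$ is a Serre fibration with contractible fibers. Restricting to $G_0$, the image is $X:=\prod_j\Delta_{k_j}\cap\mathrm{Conf}_{N}(I)_{\sigma_0}$, an intersection of two convex subsets of $I^N$, hence itself convex and contractible. A Serre fibration with contractible fibers over a contractible base has contractible total space, so $G_0$ is weakly contractible.

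In particular $\pi_1(G_0,\id)=0$, so any two admissible locally vertical diffeotopies with common endpoints are path-homotopic rel endpoints through paths in $G_0$; this is exactly the homotopy of admissible diffeotopies asserted in the first statement. The \emph{in particular} clause follows by taking $\varphi'_t\equiv\id$. The main obstacle is verifying the Serre fibration and contractible-fiber claims for $\mathrm{ev}_j$; while these are standard facts about diffeomorphism groups of the interval, they need to be carefully extended to the boundary-fixing setting and then combined into a single fibration across all cylinders at once, with attention to the fact that the interpolation data on the annular collars of $\partial C_j$ must vary smoothly with the parameter $(h_1,\dots,h_r)$.
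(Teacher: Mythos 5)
Your strategy is genuinely different from the paper's, and considerably heavier. The paper's proof is a single line: take the straight-line homotopy $h_{t,s}=s\varphi_t+(1-s)\varphi'_t$. Because the convex combination of two locally vertical diffeomorphisms is again a locally vertical diffeomorphism (over each $C_i$ it is $(q,z)\mapsto(q,\,sh_i(z)+(1-s)h'_i(z))$, and a convex combination of two orientation-preserving boundary-fixing diffeomorphisms of $I$ has positive derivative, hence is again such a diffeomorphism), each $h_{\cdot,s}$ is an admissible diffeotopy, and the family fixes endpoints because $\varphi_0=\varphi'_0=\id$ and $\varphi_1=\varphi'_1$. That is the whole argument. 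You instead go through a fibration computation to prove weak contractibility of the space of locally vertical diffeomorphisms and then conclude $\pi_1=0$ — a valid strategy in outline, but it overshoots: the relevant space $G\cong\prod_j\Diff^+(I;\partial I)$ is manifestly convex, so its contractibility (and the path-homotopy you need) is again just the straight-line homotopy, with no need for the Serre-fibration over the configuration space.

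There is also a concrete flaw you should fix. You assert that \emph{admissibility forces} the critical heights $(\mathbf{h}_i(\varphi_t))_i$ to stay inside one chamber of the ordered configuration space, and you then restrict to the preimage $G_0$ of that chamber. This is false: an admissible diffeotopy only requires $pr|_{\varphi_t(W)}$ to be an Igusa function, and the definition of an Igusa function (Definition~\ref{def:Igusa-function}) has no separativity requirement — a Morse function with two critical points at the same level is perfectly Igusa. A locally vertical diffeotopy can therefore legitimately bring two critical heights from different cylinders through each other, leaving your $G_0$. Consequently the given $\varphi_t$, $\varphi'_t$ need not be paths in $G_0$, and contractibility of $G_0$ does not directly apply to them. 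The repair is to drop $G_0$ entirely and work with all of $G$: locally vertical diffeomorphisms always yield Morse (hence Igusa) restrictions, so all of $G$ is admissible, and $G$ is contractible. Once you make that change your argument closes — but at that point the fibration machinery adds nothing over the convexity observation, which is exactly the paper's linear homotopy.
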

\begin{proof}
	Take a~linear homotopy $h_{t,s} := s\varphi_t + (1-s)\varphi'_t$. Because both $\varphi_t$ and $\varphi'_t$ are locally vertical, each $h_{t,s}$ is a~diffeomorphism of $\Disk\times I$ such that $pr|_{h_{t,s}(W)}$ is a~Morse function.
\end{proof}

The~proposition above makes it possible to define disjoint unions in $\EmbChCob(0)$ (more general operations on all categories $\EmbChCob(k)$ are defined in Section~\ref{sec:tangles}). Given embedded cobordisms $W$ and $W'$ with no corners we define the~`left-then-right' and `right-then-left' \emph{disjoint unions} $W\ldsum W'$ and $W\rdsum W'$ by placing the~cobordisms next to each other and pushing the~critical points of $W$ below or above those of $W'$ respectively. The~disjoint union permutation $\sigma^{\sqcup}_{W,W'}\colon W\ldsum W'\dblto W\rdsum W'$ is realized as a~locally vertical diffeotopy, so that it equips $\rdsum$ with a~structure of a~cubical functor.

\begin{corollary}
	$\EmbChCob(0)$ is a~Gray monoidal category, with a~monoidal structure given by the~\quot{right-then-left} disjoint union $\rdsum$
\end{corollary}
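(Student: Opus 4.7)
The plan is to mirror the argument of Theorem~\ref{thm:ChCob-gray}, but to exploit Proposition~\ref{prop:loc-vert-unique} to handle the coherence 2-morphisms uniformly. I would first verify that $\rdsum$ is a cubical functor on $\EmbChCob(0)$. Functoriality on objects and on morphisms (for chosen representatives) is clear from the construction: place $W$ and $W'$ side by side in disjoint sub-disks of $\Disk$, and shift the critical points of $W$ strictly above those of $W'$ by a vertical reparametrization of $I$ supported on the sub-disk containing $W$. The conditions in Definition~\ref{def:cubical} are immediate, since $\sigma^{\sqcup}_{W,W'}$ becomes the identity diffeotopy as soon as either side has no critical points.

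Next I would show coherence of $\sigma^{\sqcup}$ with $2$-morphisms, i.e.\ commutativity (up to a homotopy of admissible diffeotopies) of the cubical square~\eqref{eq:cubical-2-morphisms}. Given admissible diffeotopies $\alpha$ on $W$ and $\beta'$ on $W'$, both going-around compositions $\sigma^{\sqcup}\star(\alpha\ldsum\beta')$ and $(\alpha\rdsum\beta')\star\sigma^{\sqcup}$ have the same source and target and restrict to locally vertical diffeotopies on the vertical cylinders bounding the sub-disks carrying $W$ and $W'$. I would then construct an explicit homotopy $h_s$ along the lines of the one written in Theorem~\ref{thm:ChCob-gray}, splitting the parameter interval into a ``shift half-way, perform $\alpha\sqcup\beta'$, finish shifting'' pattern; admissibility at each time is ensured because at every intermediate stage $\mathrm{pr}|_{h_s(W\sqcup W')}$ separates critical points by construction.

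For the cubical monoidal coherence~\eqref{eq:cubical-monoidal}, together with associativity and unitarity, the crucial observation is that \emph{each} side of the diagrams in question is realised by a locally vertical diffeotopy with respect to the same family of disjoint vertical cylinders (one around each of the sub-disks hosting $W$, $W'$, $W''$), and the source and target of both sides agree. Proposition~\ref{prop:loc-vert-unique} therefore gives a canonical homotopy between them, so all these coherence diagrams commute in $\EmbChCob(0)$. Unitarity with respect to $\emptyset$ is trivial because no shifting is required, and associativity reduces to a reparametrization of $I$ on two sub-disks, which is again locally vertical.

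The main obstacle I expect is the bookkeeping for the coherence $2$-morphisms: in the embedded setting we must ensure that every $\sigma^{\sqcup}$ is chosen to be locally vertical with respect to a common family of vertical cylinders so that Proposition~\ref{prop:loc-vert-unique} applies. Once this is arranged by picking, once and for all, pairwise disjoint sub-disks for each factor and keeping the shifts supported inside them, the uniqueness statement of Proposition~\ref{prop:loc-vert-unique} promotes these squares, triangles, and associators from ``commutes on the nose on objects/morphisms'' to ``commutes up to a canonical admissible homotopy'', which is precisely what is needed for a Gray monoidal structure in the sense of Definition~\ref{def:Gray-monoidal}.
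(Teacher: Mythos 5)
Your proof is correct and follows essentially the route the paper intends: the corollary is left implicit, to be deduced from the observation that $\sigma^{\sqcup}$ is a locally vertical diffeotopy (hence $\rdsum$ is cubical) together with the coherence argument of Theorem~\ref{thm:ChCob-gray}, with Proposition~\ref{prop:loc-vert-unique} simplifying the remaining coherence diagrams. One caution about your second paragraph: the claim that both compositions $\sigma^{\sqcup}\star(\alpha\ldsum\beta')$ and $(\alpha\rdsum\beta')\star\sigma^{\sqcup}$ ``restrict to locally vertical diffeotopies'' is not right as stated, because $\alpha$ and $\beta'$ are arbitrary admissible diffeotopies, so these compositions are not locally vertical and Proposition~\ref{prop:loc-vert-unique} by itself does not settle the square~\eqref{eq:cubical-2-morphisms}; your fall-back to the explicit interpolating homotopy $h_s$ of Theorem~\ref{thm:ChCob-gray} is what actually carries this step, and with that the argument goes through.
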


\begin{remark}
	This monoidal structure is strictly braided (see Definition~\ref{def:strict-symmetry}) with a~braiding induced by twists {\psset{unit=0.6cm}\textcobordism[2](rP) and \textcobordism[2](lP)}. We shall not use this fact in our paper.
\end{remark}

The~connected sum $W\rcsum W'$ of embedded cobordisms with no corners is formed from $W\rdsum W'$ by performing a~surgery along a~vertical curtain in $\Disk\times I$ with one edge on $W$ and the~other on $W'$. Again, there is some choice involved, and to make it a~well defined operation one has to decorate objects and morphisms of $\EmbChCob(0)$ with additional data, such as embedded arcs originating at the~circles and ending at the~boundary of $\Disk$.

\wrapfigure[r]<1>{\begin{pspicture}(-0.8,0)(0.8,2.6)\rput(0,0.4){\diagConnT{}{->}{}{->}}\rput(0,1.8){\diagConnT{}{->}{}{<-}}\end{pspicture}}
The~2-category $\EmbChCob(0)$ is a~finer version of abstract cobordisms. For instance, there are two kinds of merges, depending on whether the~input circles are nested or not, and likewise for splits. We shall usually ignore this additional structure except one case: we split $\Diamond$-changes into two groups using the~intersection number of the~two arrows in their surgery description (the~two-arrow diagrams). In other words, rotate the~diagram to make the~inner arrow points upwards, and check the~direction of the~outer one---it points either to the~left or to the~right (as shown in the~diagrams to the~right), and the~two changes encoded by the~diagrams are not equivalent.

\subsection*{Linearization of cobordisms}
The~$2$-category $\EmbChCob(0)$ is not good for homological constructions and we shall \quot{linearize} it. More precisely, choose a~commutative ring $R$ with a~function $\iota\colon 2\Mor(\EmbChCob(0))\to R$ that is multiplicative with respect to both compositions of $2$-morphisms, and define a~category $R\EmbChCob_\iota(0)$ as follows:
\begin{enumerate}
	\item the~set of objects is not changed and it consists of families of circles in the~plane $\Disk$, and
	\item morphisms are finite linear combinations of chronological cobordisms $r_1W_1 + \ldots + r_kW_k$, with $r_i\in R$, modulo \emph{chronological relations\/ }$W'=\iota(\varphi)W$, one per every $2$-morphism $\varphi\colon W\dblto W'$.
\end{enumerate}
We extend the~composition of cobordisms to formal sums in a~linear way. The~function $\iota$ can be considered as a~part of a~2-functor $\EmbChCob(0)\to R\EmbChCob_\iota(0)$, where 2-morphisms in the~target category are scalings by elements of the~ring $R$. We want this functor to be \quot{faithful enough} to support the~construction of odd Khovanov homology. We start with a~few observations.

\begin{lemma}
	For any function $\iota$ as above there is another one, $\hat\iota$, which assigns $1$ to creations and annihilations, such that the~linearizations $R\EmbChCob_\iota(0)$ and $R\EmbChCob_{\hat\iota}(0)$ are isomorphic.
\end{lemma}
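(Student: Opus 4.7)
The plan is to exhibit a ``gauge'' rescaling of generating cobordisms that absorbs the $\iota$-values of all creation and annihilation 2-morphisms. By Proposition~\ref{prop:2ChCob-presentation} morphisms in $\EmbChCob(0)$ are generated by a~birth, a~merge, a~split, and two deaths (positive and negative), together with the~braiding acting as a~strict symmetry which we rescale trivially. Pick invertible scalars $\alpha,\mu,\sigma,\beta_+,\beta_-\in R^*$ for these five generators and extend multiplicatively under composition and disjoint union to an~assignment $W\mapsto\lambda_W\in R^*$ defined on every chronological cobordism.

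Using this gauge I would define an~$R$-linear functor $F\colon R\EmbChCob_\iota(0)\to R\EmbChCob_{\hat\iota}(0)$ that is the identity on objects and sends $W\mapsto\lambda_W W$ on morphisms, with $\hat\iota$ dictated by the requirement that $F$ respect the chronological relations: for each $\varphi\colon W\dblto W'$ the identity $W'=\iota(\varphi)W$ becomes $\lambda_{W'}W'=\iota(\varphi)\lambda_W W$, forcing $\hat\iota(\varphi):=\lambda_W\iota(\varphi)/\lambda_{W'}$. Multiplicativity of $\lambda$ under composition immediately implies multiplicativity of $\hat\iota$ under both vertical and horizontal composition of 2-morphisms, so $\hat\iota$ is a~valid linearization datum, and $F$ is an~isomorphism with inverse $W\mapsto\lambda_W^{-1}W$.

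It remains to choose the five scalars to force $\hat\iota=1$ on creations and annihilations. The three creations from \eqref{change:creation-annihilation} go from a~cylinder $C$ (with $\lambda_C=1$) to a~birth-merge, a~split-positive death and a~split-negative death, so the conditions $\hat\iota(\varphi_i)=1$ read $\alpha\mu=\iota(\varphi_1)$, $\sigma\beta_+=\iota(\varphi_2)$, and $\sigma\beta_-=\iota(\varphi_3)$. Since $\iota$ takes values in $R^*$ (every 2-morphism is invertible, so $\iota$ of its inverse is a~multiplicative inverse in $R$), there is ample freedom to satisfy these three constraints with five invertible unknowns---e.g.\ set $\mu=\sigma=1$ and solve for $\alpha,\beta_\pm$ uniquely. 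The three annihilation changes, being inverses of the creations, automatically inherit $\hat\iota$-value $1$. The main subtlety to check is that $\lambda_W$ depends only on the chronological cobordism $W$ and not on a~factorization into generators, but two factorizations of the same $W$ in $\EmbChCob(0)$ correspond to isotopic Morse decompositions of a~fixed chronology and differ at most by braidings of disjoint generators, on which $\lambda$ is trivially $1$; any finer change between decompositions is a~2-morphism, whose effect is precisely what $\hat\iota$ records in the quotient.
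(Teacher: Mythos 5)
Your proof is correct and follows the same route as the paper: the paper's proof is simply the two-sentence observation that each of the three creation changes involves a distinct special generator (a birth, a positive death, a negative death), so scaling births and deaths suffices, which is exactly what your choice $\mu=\sigma=1$, $\alpha=\iota(\varphi_1)$, $\beta_\pm=\iota(\varphi_{2,3})$ does. One small remark: your justification of well-definedness of $\lambda_W$ at the end is slightly circular in its phrasing (``what $\hat\iota$ records in the quotient'' is not needed); the cleaner argument is that morphisms of $\EmbChCob(0)$ are equivalence classes preserving the number and orientation-type of each critical point, so the product of the five scalars over the critical points of $W$ is manifestly independent of any chosen slicing.
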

\begin{proof}
	Each of the~three creations \eqref{change:creation-annihilation} involve different generators. Hence, we can force the~coefficients associated to them to be $1$ by scaling births and deaths accordingly.
\end{proof}

\begin{lemma}\label{lem:disjoint-vs-connected-permutation}
	We have $\iota(\sigma^{\sqcup}_{W,W'}) = \iota(\sigma^{\connsum}_{W,W'})$ whenever each of\/ $W$ and $W'$ is a~merge or a~split.
\end{lemma}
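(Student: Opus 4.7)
The strategy is to exhibit $\sigma^\sqcup_{W,W'}$ and $\sigma^\connsum_{W,W'}$ as opposite edges of one hexagon of chronology changes coming from a~three-chord relation of~\eqref{eq:rel-exceptional-planar}, and then extract the~desired equality from commutativity of that hexagon.

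By time-reversal it suffices to treat the~case when both $W$ and $W'$ are merges; let $a$ and $b$ denote their saddle points. Form an~auxiliary cobordism $\widetilde W$ with three saddle critical points by also introducing a~third saddle $c$ that attaches a~band between the~two distinguished vertical cylinders of $W$ and $W'$, i.e.\ realises the~tube of the~connected sum. Its surgery diagram has three chords placed symmetrically with respect to the~involution exchanging $a$ and $b$, and matches one of the~planar three-chord pictures listed in~\eqref{eq:rel-exceptional-planar}. By Proposition~\ref{prop:changes-relations}, the~hexagon~\eqref{eq:hexagon-relation} of permutation changes on~$\widetilde W$ commutes.

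Among its six edges, the~one from $(a{<}b{<}c)$ to $(b{<}a{<}c)$ coincides with $\sigma^\sqcup_{W,W'}$, because $c$ sits above both $a$ and $b$ and so below it the~cobordism is still the~disjoint union $W\rdsum W'$; dually, the~edge from $(c{<}a{<}b)$ to $(c{<}b{<}a)$ coincides with $\sigma^\connsum_{W,W'}$, because above $c$ the~tube has already been attached and the~cobordism is $W\rcsum W'$. The~remaining four edges are permutations of $c$ with either $a$ or $b$, and they pair up naturally via the~$a\leftrightarrow b$ involution of the~diagram. Applying $\iota$ to the~commutativity of the~hexagon produces a~relation in~$R$; using multiplicativity of $\iota$, commutativity of $R$, and the~pairwise equality of the~four auxiliary factors under the~involution, we cancel those contributions to obtain $\iota(\sigma^\sqcup_{W,W'})=\iota(\sigma^\connsum_{W,W'})$.

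The~main obstacle will be a~careful combinatorial verification that (i)~the~three-chord diagram assembled from $a,b,c$ indeed matches one of the~configurations in~\eqref{eq:rel-exceptional-planar}, and (ii)~the~$a\leftrightarrow b$ symmetry pairs the~four auxiliary edges in the~way required for the~cancellation; once these pictorial facts are in place, the~rest of the~argument is purely algebraic.
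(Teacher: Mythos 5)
Your overall instinct — introduce a third critical point and exploit a commuting diagram of chronology changes — is the right one, and it is essentially what the paper does. However, the specific implementation has several gaps, and once they are repaired the argument collapses onto the paper's proof rather than giving a genuinely independent route.

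The paper does not invoke the exceptional hexagons of~\eqref{eq:rel-exceptional-planar}. It instead applies the \emph{right commuting square} of~\eqref{eq:rel-dsum-csum-vs-change} (Prop.~\ref{prop:changes-relations}(1)) to the cobordism $W$ and the $2$-morphism $\alpha = \sigma^{\connsum}_{M,W'}$, where $M$ is an auxiliary merge connected-summed between $W$ and $W'$. Applying $\iota$ around that square and decomposing the side arrows into elementary permutations gives
\[
\iota(\sigma^{\connsum}_{W,M})\,\iota(\sigma^{\sqcup}_{W,W'})\,\iota(\sigma^{\connsum}_{M,W'})
=
\iota(\sigma^{\connsum}_{M,W'})\,\iota(\sigma^{\connsum}_{W,W'})\,\iota(\sigma^{\connsum}_{W,M}),
\]
from which the claim follows by commutativity of $R$. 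This is quick precisely because the square~\eqref{eq:rel-dsum-csum-vs-change} absorbs all the bookkeeping about where the third critical point sits.

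Now the gaps in your version.

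\emph{Wrong relation cited.} The hexagons in~\eqref{eq:rel-exceptional-planar} are exactly those whose singular-level graph $\Gamma_H$ is $4$-connected (three $4$-valent vertices, each pair joined by two edges). In your $\widetilde W$ the saddles $a$ and $b$ are joined only through $c$: removing the two arcs from $c$ to one side disconnects $\Gamma_H$, so it is only $2$-connected. That is the \emph{connected-sum-permutation} case in the proof of Prop.~\ref{prop:changes-relations}, i.e.\ exactly the right square of~\eqref{eq:rel-dsum-csum-vs-change}. Your hexagon still commutes (by simple connectivity of $\OFun(\widetilde W)$), but it is not one of the diagrams in~\eqref{eq:rel-exceptional-planar}; it factors into two applications of the square, so your argument is not independent of~\eqref{eq:rel-dsum-csum-vs-change} — it \emph{is} it.

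\emph{The cancellation is not justified.} You need $\iota(ac_1)\iota(bc_1)=\iota(ac_2)\iota(bc_2)$ for the four auxiliary edges. The $a\leftrightarrow b$ involution, if it existed, would map an $ac$-permutation to a $bc$-permutation, not $ac_1$ to $ac_2$, so by itself it only rewrites the required identity; one still has to show $\iota$ is unchanged. That diffeomorphism is not level-preserving (it swaps the heights of $a$ and $b$), so invariance of $\iota$ under it is not free. The clean way to get the cancellation is again the square relation applied to the pair $(a,c)$ with $b$ in the spectator role — which, once more, is the paper's tool.

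\emph{Time-reversal does not cover all cases.} Reversing the chronology turns a merge into a split and vice versa, so it reduces the split--split case to merge--merge, but the mixed merge--split case is fixed (up to renaming) by time-reversal; it is not reduced to both-merges. Also, when $W$ and $W'$ are of different types, the $a\leftrightarrow b$ symmetry you rely on does not exist. The paper avoids both issues by taking $M$ to be a merge and $W,W'$ arbitrary merges or splits with no symmetry assumption.

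So the plan is in the right spirit, but you should replace the appeal to~\eqref{eq:rel-exceptional-planar} and the involution argument by a direct application of the right square in~\eqref{eq:rel-dsum-csum-vs-change}, and drop the time-reversal reduction.
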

\begin{proof}
	It follows from the~right square in \eqref{eq:rel-dsum-csum-vs-change} for the~cobordism $W$ and the~connected sum permutation $\alpha:=\sigma^{\connsum}\colon M\lcsum W'\dblto M\rcsum W'$, where $M$ is a~merge. Indeed, commutativity of the~square implies
	\begin{equation}
		\iota(\sigma^{\connsum}_{W,M})\iota(\sigma^{\sqcup}_{W,W'})\iota(\sigma^{\connsum}_{M,W'})
		=
		\iota(\sigma^{\connsum}_{M,W'})\iota(\sigma^{\connsum}_{W,W'})\iota(\sigma^{\connsum}_{W,M})
	\end{equation}
	so that the~middle terms must be equal.
\end{proof}

As a~result, we have to specify $\iota$ only for disconnected union permutations and exceptional changes. Instead of finding the~most general formula, and keeping in mind we want to regard embedded cobordisms as close to the~abstract ones as possible, we shall define $\iota(\sigma^{\sqcup}_{W,W'})$ using the~following \emph{bidegree} $\chdeg W\in \Z\times\Z$, which counts critical points of the~cobordism $W$ as follows:
\begin{equation}\label{eq:2chcob-degree}
	\chdeg W := (\#\text{births}-\#\text{merges}, \#\text{deaths}-\#\text{splits}).
\end{equation}
The~following result shows a~connection between this bidegree with other topological properties of a~cobordism.

\begin{lemma}\label{lem:chdeg-vs-bdry}
	Given a~chronological cobordism $W$ of degree $\chdeg W = (a,b)$ with $n$ inputs and $m$ outputs, $a+b=\chi(W)$ and $a+n = b+m$.
\end{lemma}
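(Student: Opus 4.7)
The plan is to extract both equalities directly from the handle decomposition of $W$ induced by the chronology, examining how each generating cobordism affects the two quantities of interest.

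For the first equality $a+b=\chi(W)$, I would appeal to Morse theory: since the chronology $\tau\colon W\to I$ is a separative Morse function, $W$ admits a CW-structure with one cell per critical point, of dimension equal to the Morse index. In dimension two the critical points split by index as
\begin{itemize}
	\item index $0$: births,
	\item index $1$: saddles, which are either merges or splits,
	\item index $2$: deaths.
\end{itemize}
So $\chi(W) = \#\text{births} - (\#\text{merges} + \#\text{splits}) + \#\text{deaths}$, which is exactly $a+b$ after grouping terms according to \eqref{eq:2chcob-degree}.

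For the second equality $a+n = b+m$, I would slice $W$ by level sets $\tau^{-1}(t)$ and track how the number of circles changes as $t$ passes through each critical value. The contributions are $+1$ for a birth, $-1$ for a death, $-1$ for a merge and $+1$ for a split (this is immediate from the local models at each critical point). Starting with $n$ circles at $t=0$ and ending with $m$ circles at $t=1$, we get
\begin{equation*}
	m - n = \#\text{births} - \#\text{deaths} - \#\text{merges} + \#\text{splits} = a - b,
\end{equation*}
which rearranges to $a+n=b+m$.

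Neither step presents any real obstacle; the only point requiring a little care is ensuring that the CW-decomposition from Morse theory remains valid in the presence of birth-death singularities, but this is not an issue here since a chronology is assumed to be a genuine Morse function (birth-death singularities appear only in changes of chronology, not in the chronologies themselves).
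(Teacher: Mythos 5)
Your proof is correct, and both of your formulas check out against the definition of $\chdeg$ in \eqref{eq:2chcob-degree}. Your approach differs slightly in presentation from the paper's: the paper simply says to verify the two identities for the generating cobordisms in \eqref{diag:mor-gens}, which implicitly relies on noting that all four quantities ($a$, $b$, $\chi(W)$, and $m-n$) are additive under gluing, so that the general case reduces to one critical point. You instead argue intrinsically on $W$, obtaining the first identity from the Morse-theoretic CW-decomposition of $W$ and the second from counting how the number of level-set components changes across each critical value. The two routes are close in spirit (both are Morse theory applied to the chronology), but yours has the small advantage of being self-contained: it does not require first establishing the additivity that makes the reduction to generators legitimate. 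Your closing caveat about birth-death singularities is also the right thing to flag, and your resolution of it is correct — a chronology is an honest separative Morse function, and $A_2$-points arise only in homotopies of chronologies, not in the chronologies themselves.
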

\begin{proof}
	Straightforward, by checking for generating cobordisms \eqref{diag:mor-gens}.
\end{proof}

It follows the~bidegree is preserved by changes of chronologies, so that $R\EmbChCob(0)$ is a~graded category (morphisms between two objects form an~$R$-module graded by $\Z\times\Z$, and the~degree function is additive with respect to composition). The~following is determined by the~requirement that $\iota(\sigma^{\sqcup}_{W,W'})$ depends only on the~degrees of $W$ and $W'$.

\begin{table}
	\def\arraystretch{0.7}%
	\centering
	\psset{linewidth=0.5pt,fillstyle=solid,fillcolor=white}
	\begin{tabular}{cccc}%
		\begin{pspicture}(-1.25,-0.2)(1.25,1)
			\pspolygon[linewidth=1.5pt](-0.519,-0.3)(0.519,-0.3)(0,0.6)
			\pscircle(-0.519,-0.3){0.3}
			\pscircle( 0.519,-0.3){0.3}
			\pscircle(0,0.6){0.3}
		\end{pspicture}&%
		\begin{pspicture}(-1.25,-0.5)(1.25,0.6)
			\psellipse[linewidth=1.5pt](0,0)(0.8,0.2)
			\pscircle(0.5,0){0.3}
			\psclip{\pscircle(-0.5,0){0.3}}
				\psline[linewidth=1.5pt](-0.8,0)(-0.2,0)
			\endpsclip
		\end{pspicture}&%
		\begin{pspicture}(-1.25,-0.5)(1.25,0.6)
			\psellipse[linewidth=1.5pt](0,0.2)(0.7,0.3)
			\psclip{\psellipse(0,0)(0.5,0.3)}
				\psline[linewidth=1.5pt](-0.2,-0.3)(-0.2,0.3)
				\psline[linewidth=1.5pt]( 0.2,-0.3)( 0.2,0.3)
			\endpsclip
		\end{pspicture}&%
		\begin{pspicture}(-1.25,-0.4)(1.25,0.6)
			\psline[linewidth=1.5pt](-0.5,-0.3)(0.5,-0.3)
			\psline[linewidth=1.5pt](-0.5, 0  )(0.5, 0  )
			\psline[linewidth=1.5pt](-0.5, 0.3)(0.5, 0.3)
			\psellipse(-0.5,0)(0.3,0.5)
			\psellipse( 0.5,0)(0.3,0.5)
		\end{pspicture}\\
		$\begin{array}{c}\\
									   \scriptstyle(03|00|300)\\
										 \scriptstyle(21|00|300)\end{array}$ &
		$\begin{array}{c}\scriptstyle(10|11|100)\\
										 \scriptstyle(01|20|100)\\
										 \scriptstyle(01|02|100)\end{array}$ &
		$\begin{array}{c}\scriptstyle(10|20|010)\\
										 \scriptstyle(10|02|010)\\
										 \scriptstyle(01|11|010)\end{array}$ &
		$\begin{array}{c}\\
		                 \scriptstyle(30|00|030)\\
										 \scriptstyle(11|00|030)\end{array}$
	\end{tabular}
	\vskip 0.5\baselineskip
	\caption[Surgery diagrams of homotopies relating permutation changes]{Surgery diagrams of homotopies relating permutation changes. The~numbers below each diagram count how many times various permutations occur: $\times$-changes with parallel or opposite arrows (the~first group), $\Diamond$-changes with outer arrows oriented to the~left or to the~right (the~second group) and the~other changes grouped by the~value of $\iota$ (respectively $\permMM$, $\permSS$ and $\permMS$). Different sequences correspond to different orientations of chords.}
	\label{tab:chcob3-hexagon-rels}
\end{table}

\begin{proposition}
	Choose invertible elements $\permMM,\permSS,\permMS\in R$ such that $\permMM^2=\permSS^2=1$ and define $\iota$ on generating changes of chronologies by the~following rules:
	\begin{enumerate}
		\item creations and annihilations are sent to $1$,
		\item the~coefficient associated to a~disjoint union and connected sum permutation involving cobordisms of degrees $(a,b)$ and $(c,d)$ is given by $\lambda(a,b,c,d)=\permMM^{ac}\permSS^{bd}\permMS^{ad-bc}$,
		\item a~$\times$-change is sent to $\permSS$ if the~arrows point to the~same circle and to $\permMM$ otherwise, and
		\item a~$\Diamond$-change with a~diagram in which the~inner arrow is oriented upwards is sent to $1$ or $\permMM\permSS$ depending on whether the~outer arrow is oriented to the~left or to the~right respectively.
	\end{enumerate}
	Then $\iota\colon2\Mor(\EmbChCob(0))\to R$ is a~well-defined multiplicative function.
\end{proposition}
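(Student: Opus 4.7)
The plan is to verify that the prescribed values are compatible with the complete list of relations supplied by Proposition~\ref{prop:changes-relations}. I start by noting that every generating change preserves the bidegree $\chdeg$ of~\eqref{eq:2chcob-degree}: each creation or annihilation introduces a paired birth--merge or split--death whose contributions cancel, and each permutation change merely reorders critical points. Consequently $\lambda(a,b,c,d)$ depends only on the equivalence classes of the cobordisms involved, and multiplicativity with respect to horizontal composition in the presence of identities reduces to $\iota(\id)=1$, which is built into the definition. Moreover, Lemma~\ref{lem:disjoint-vs-connected-permutation} already identifies $\iota(\sigma^{\sqcup})$ with $\iota(\sigma^{\connsum})$ in the basic cases, and the formula $\lambda$ is defined symmetrically in terms of degrees so this identification propagates.

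For the first family of relations, the two squares in~\eqref{eq:rel-dsum-csum-vs-change}, fix $\alpha\colon W'\dblto W''$ and note that $\chdeg W'=\chdeg W''=(c,d)$. Both paths around the square contribute $\lambda(a,b,c,d)\cdot\iota(\alpha)$, so each square is sent to a trivial relation in $R$. This reduces the problem to the hexagonal relations coming from the six three-chord surgery diagrams of~\eqref{eq:rel-exceptional-planar} and~\eqref{eq:rel-exceptional-nonplanar}.

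It remains to treat the hexagonal relations. Each hexagon~\eqref{eq:hexagon-relation} has six edges, each of which is a permutation of one of four types: a disjoint-union permutation (scalar $\lambda$), a connected-sum permutation (same scalar $\lambda$ by Lemma~\ref{lem:disjoint-vs-connected-permutation}), a $\times$-change (scalar $\permMM$ or $\permSS$), or a $\Diamond$-change (scalar $1$ or $\permMM\permSS$). For each diagram and each choice of chord orientations I would tabulate the multiplicity of each permutation type along a fixed direction around the hexagon, following Table~\ref{tab:chcob3-hexagon-rels}. Commutativity of the hexagon is equivalent to the product of the six $\iota$-values (oriented consistently around the cycle) being $1$ in $R$; Table~\ref{tab:chcob3-hexagon-rels} is precisely a bookkeeping of the counts of $\permMM$-, $\permSS$-, $\permMS$-, and $1$-contributions, and in each row the $\permMM$- and $\permSS$-exponents are even while the $\permMS$-exponent is zero, so the relations $\permMM^2=\permSS^2=1$ force the product to collapse to $1$.

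The main obstacle is the case analysis for the two non-planar diagrams of~\eqref{eq:rel-exceptional-nonplanar}, where both $\times$- and $\Diamond$-changes occur with both flavors and one must orient the hexagonal edges carefully so that exponents of $\permMS$ cancel rather than add. The formula $\lambda(a,b,c,d)=\permMM^{ac}\permSS^{bd}\permMS^{ad-bc}$ was designed exactly so that the antisymmetric exponent $ad-bc$ yields
\[
\lambda(a,b,c,d)\,\lambda(c,d,a,b) = \permMM^{2ac}\permSS^{2bd}\permMS^{0} = 1,
\]
which is the local identity that enforces cancellation of $\permMS$-exponents around each hexagon; the invertibility of $\permMS$ is then needed only to accommodate the fact that individual edges of a hexagon may carry nontrivial $\permMS$-powers that cancel in opposite pairs.
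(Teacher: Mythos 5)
The proposal correctly identifies Proposition~\ref{prop:changes-relations} as the list of relations to verify and correctly describes the bookkeeping of Table~\ref{tab:chcob3-hexagon-rels} for the planar hexagons, but it contains a genuine error in its treatment of the non-planar diagrams~\eqref{eq:rel-exceptional-nonplanar}. You assert that these must also be checked and that they "work out" after careful orientation of hexagon edges. Both halves of this assertion are false. First, the proposition concerns $\EmbChCob(0)$, i.e.\ cobordisms embedded in $\Disk\times I$; the singular levels of the two homotopies in~\eqref{eq:rel-exceptional-nonplanar} are non-planar graphs (the crossings in those two surgery diagrams are genuine, as the caption of Proposition~\ref{prop:changes-relations} explains), so those homotopies simply do not occur in $\EmbChCob(0)$ and impose no relation on $\iota$. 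Second, had they applied, they would actually \emph{fail} for the given $\iota$: the remark at the end of Section~\ref{sec:cob-linear} points out that the non-planar diagrams force $\permMM=\permSS$, which is precisely why the abstract category $2\cat{ChCob}$ cannot support the odd theory and why the paper works with embedded cobordisms. Your hand-wavy claim that the $\permMS$-exponents cancel around those hexagons because of the antisymmetry $\lambda(a,b,c,d)\lambda(c,d,a,b)=1$ conflates a pair-inverse identity with the hexagon relation, and the hexagon relation for the non-planar diagrams does \emph{not} hold.

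Two smaller omissions: the paper's proof also records (i) that coherence with the interchange law~\eqref{eq:interchange-law} follows from commutativity of $R$, and (ii) that $\iota(\alpha)\iota(\alpha^{-1})=1$ for every elementary change---trivially for creations and annihilations, from $\lambda$ being a homomorphism in each variable for the permutation changes, and because $\iota(\alpha)=\iota(\alpha^{-1})$ is a square root of $1$ for exceptional permutations. Your proposal does not address these. The correct conclusion is therefore: verify the interchange law and inverse compatibility as above, verify the squares~\eqref{eq:rel-dsum-csum-vs-change} using multiplicativity of $\lambda$ in each variable (your argument here is fine), and then verify \emph{only} the four planar hexagons~\eqref{eq:rel-exceptional-planar} via Table~\ref{tab:chcob3-hexagon-rels}; the non-planar hexagons are simply absent in $\EmbChCob(0)$.
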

\begin{proof}
	First, coherence of $\iota$ with the~interchange law for 2-morphisms \eqref{eq:interchange-law} follows from commutativity of $R$. Next, $\iota(\alpha)\iota(\alpha^{-1})=1$ for every elementary change $\alpha$: this is trivial for creations and annihilations, and follows easily for disjoint union and connected sum permutations from the~way $\lambda$ is defined. If $\alpha$ is an~exceptional permutations, $\iota(\alpha^{-1})=\iota(\alpha)$ is a~square root of 1.
	
	The~commutativity of squares \eqref{eq:rel-dsum-csum-vs-change}, in particular the~triviality of \eqref{eq:rel-create-perm}, is preserved due to the~way $\lambda$ is defined---it is a~group homomorphism in each variable. Finally, it remains to check the~relations given by the~four planar diagrams in \eqref{eq:rel-exceptional-planar}. For that see Tab.~\ref{tab:chcob3-hexagon-rels}: the~numbers below each diagram indicate how many times a~particular elementary change occurs when we go around the~hexagon \eqref{eq:hexagon-relation}. The~product of values of $\iota$ is equal to $1$ in each case.
\end{proof}

\wrapfigure[r]{%
	\begin{tabular}{c|cccc}
		\begin{centerpict}(0,0)(1,1)
			\rput[bl](-0.05,0.15){$W$}
			\rput[tr](1.05,0.85){$W'$}
			\psline[linewidth=0.3pt](0.2,0.8)(0.8,0.2)
		\end{centerpict}
		& birth & merge & split & death\\
		\hline
		birth & $\permMM$ & $\permMM$ & $\permSM$ & $\permMS$\\
		merge & $\permMM$ & $\permMM$ & $\permMS$ & $\permSM$\\
		split & $\permMS$ & $\permSM$ & $\permSS$ & $\permSS$\\
		death & $\permSM$ & $\permMS$ & $\permSS$ & $\permSS$%
	\end{tabular}%
}
We shall often use the~values of $\iota$ for disjoint union and connected sum permutations; they are gathered in the~table to the~right. For instance, we have
\begingroup\psset{unit=7mm}
\begin{align*}
	\permMM\textcobordism[2](M-L)(sI)\textcobordism[0](I)(sB)
	\ =
	\ \textcobordism[2](sI)(M-L)\hskip\COBxsize\psxunit\textcobordism[0](sB)(I)\,,
	\hskip 1cm
	\permMS\textcobordism[2](M-L)(S-B)
	\ =
	\textcobordism[2](S-B,2)(M-L)\,.
	\hskip 0.4\textwidth
\end{align*}
\endgroup

\begin{corollary}\label{cor:orientation-rev}
	The~following rules for reversing orientations hold:
	\begin{equation}\label{rel:reverse-orientation}
		\psset{unit=8mm}%
		\textcobordism[2](M-R) = \permMM\textcobordism[2](M-L),\hskip 1cm
		\textcobordism[1](S-F) = \permSS\textcobordism[1](S-B),\hskip 1cm
		\psset{unit=1cm}%
		\textcobordism[1](sD-) = \permSS\textcobordism[1](sD+).
	\end{equation}
\end{corollary}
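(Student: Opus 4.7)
The three identities are instances of a single phenomenon: reversing the framing at a single critical point of index $1$ or $2$. The diagonal values of the function $\lambda(a,b,c,d)=X^{ac}Y^{bd}Z^{ad-bc}$ from the proposition preceding this corollary are $\lambda(d,d)=X^{a^2}Y^{b^2}$, which evaluate to $X$ for a merge (bidegree $(-1,0)$) and to $Y$ for a split (bidegree $(0,-1)$) or a death (bidegree $(0,1)$). This is exactly the pattern predicted by the three identities and motivates the strategy: realize each orientation reversal as a composition of elementary 2-morphisms whose only non-unit contribution to $\iota$ comes from a single disjoint-union permutation between the critical point and a duplicate of itself.

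Concretely, for the merge identity I would proceed in three steps. (i) Apply a creation change \eqref{change:creation-annihilation} (value $1$ under $\iota$) on the output cylinder of the original $M\text{-}L$ to introduce an auxiliary birth together with a second merge; the orientation of the new merge is pinned down by the monotonicity condition at the $A_2$ singularity. (ii) Apply a disjoint-union permutation sliding the original saddle past the new one, with $\iota$-value $\lambda(\text{merge},\text{merge})=X$. (iii) Apply an annihilation change (again value $1$) to absorb the remaining auxiliary birth-then-merge pair back into a cylinder. Tracking the framings through (i)--(iii) via the monotonicity rule shows that the surviving saddle has the framing opposite to the one started with, giving $M\text{-}R=X\cdot M\text{-}L$. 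The split and death cases follow the same recipe: substitute a split-then-death (respectively death-then-split) auxiliary pair produced by the second (respectively third) creation change in \eqref{change:creation-annihilation} and use $\lambda(\text{split},\text{split})=\lambda(\text{death},\text{death})=Y$.

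The principal obstacle is bookkeeping of orientations at step~(ii): one must verify that, after the auxiliary critical points are eventually annihilated, the framing of the surviving critical point is indeed reversed and not restored to its original value. This is governed entirely by the monotonicity clause quoted after \eqref{change:creation-annihilation} (\emph{take the arrow at the saddle and rotate it towards the vertical cylinder}). A cleaner alternative, which avoids the explicit decomposition, is to view each orientation reversal as a $\pi$-rotation of the framing at the critical point, realize this rotation by a level-preserving admissible diffeotopy of a small disk around the point, invoke Proposition~\ref{prop:loc-vert-unique} to conclude that the resulting 2-morphism is determined up to equivalence by its endpoints, and then use the decomposition above only to identify the resulting scalar as $X$ or $Y$.
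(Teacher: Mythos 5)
Your proposal matches the paper's own proof: it, too, realizes each orientation reversal as a creation change followed by a permutation of the critical point with the freshly created auxiliary one (whose $\iota$-value $\permMM$ or $\permSS$ is read off from the bidegrees exactly as you compute) and then an annihilation, and your split/death variants use the same split-and-death creation changes. The one imprecision is calling the middle step a disjoint-union permutation, whereas in the paper's pictures the two saddles are stacked so it is a connected-sum permutation; by Lemma~\ref{lem:disjoint-vs-connected-permutation} this makes no difference to the coefficient.
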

\begin{proof}
	The~last rule follows from the~following change
	\begin{equation}\label{eq:reverse-death}
	\textcobordism*[1](I)(sI)(sD+)
		\,\dblto\,
	\textcobordism*[1](S-B)(slD-)(sD+)
		\,\dblto\,
	\textcobordism*[1](S-B)(srD+,2)(sD-)
		\,\dblto\,
	\textcobordism*[1](I)(sI)(sD-)
	\end{equation}
	and the~first one from
	\begin{equation}\label{eq:reverse-merge}
	\psset{unit=8mm}%
	\textcobordism*[2](sI)(M-L)(sI)
		\,\dblto\,
	\textcobordism*{\COBshortCylinderVert(0,0)\COBshortLeftBirth[1](1.6,0)}
								 {\COBmergeFrRight[3](0,0.6)}
								 {\COBmergeFrLeft(0.4,1.8)}
		\,\dblto\,
	\textcobordism*{\COBshortCylinderVert(0,0)\COBshortLeftBirth[1](1.6,0)}
								 {\COBmergeFrLeft[3,2](0,0.6)}
								 {\COBmergeFrRight(0.4,1.8)}
		\,\dblto\,
	\textcobordism*[2](sI)(M-R)(sI)
	\end{equation}
	Reversing an~orientation of a~split is done in a~similar way.
\end{proof}

\begin{remark}
	We shall usually omit the~subscript, writing $R\EmbChCob(0)$ for the~linearized category. If the~choice of $\iota$ is important, we shall write $R\EmbChCob_{abc}(0)$ for the~quotient by chronological relations with parameters $\permMM$, $\permSS$, and $\permMS$ set to $a$, $b$, and $c$ accordingly.
\end{remark}

\begin{remark}
 A~choice of parameters $\permMM$, $\permSS, \permMS\in R$ as above is equivalent to specifying a~ring homomorphism $\scalars\to<1em> R$, where $\scalars:=\scalarsLong*$. Hence, there is a~base change isomorphism $R\EmbChCob(0)\cong R\otimes\kEmbChCob(0)$ implying $\kEmbChCob(0)$ is the~universal linearization of\/ $\EmbChCob(0)$ with respect to the~function $\iota$ defined as in the proposition above.
\end{remark}

From now on we shall take $\scalars$ as the~ring of coefficients. Choose a~change of a~chronology $\varphi\colon W\dblto W'$ that is not a~$\Diamond$-change. Despite $\varphi$ being a~diffeotopy of the~ambient space, the~value $\iota(\varphi)$ depends only the~restriction of $\varphi$ to the~cobordism $W$, which is a~change of a~chronology in the~abstract sense. Even more, given a~diffeomorphic cobordism $\widetilde W\approx W$ and a~corresponding change $\widetilde\varphi$ on $\widetilde W$, $\iota(\widetilde\varphi) = \iota(\varphi)$.

$\Diamond$-changes do not introduce essential relations in $\scalars\EmbChCob(0)$---they force a~merge followed by a~split to be annihilated by $(1-\permMM\permSS)$, a~relation that is a~consequence of the~others, see Corollary~\ref{cor:orientation-rev}. Hence, we can safely forget the~ambient space and identify diffeomorphic cobordisms obtaining another category, which we shall denote by $\kChCob(0)$. Formally, morphisms of $\kChCob(0)$ are $\scalars$-linear combinations of diffeomorphism classes of chronological cobordisms modulo the~relations induced by $\iota$: we set $W'=\iota(\varphi)W$ for any embedding of $W$ and $W'$ into $\Disk\times I$ and a~diffeotopy $\varphi\colon W\dblto W'$.

\begin{remark}
	One should not confuse $\kChCob(0)$ with a~linearization of $2\ChCob$---in the~latter one must have $\permMM=\permSS$ not only because there is only one type, up to inverse, of a~$\Diamond$-change, but this equality is also imposed by the~additional relations coming from the~non-planar diagrams \eqref{eq:rel-exceptional-nonplanar}. This is a~reason why it is so difficult to extend the~definition of odd Khovanov homology to virtual links, even if we restrict to those on orientable surfaces: the~non-planar diagrams \eqref{eq:rel-exceptional-nonplanar} encode the~cube of resolutions for the~virtual Borromean rings, which are realized on a~torus.
\end{remark}

Because we identify in $\kChCob(0)$ diffeomorphic cobordisms, there exists a~cobordism $W$ such that $W=k W$ for some $k\in\scalars$. Indeed, it is enough to find a~nontrivial change of a~chronology between diffeomorphic cobordisms, such as a~permutation of two spheres:
\begin{equation}
	\psset{linewidth=0.5pt,dash=2pt 3pt}
	\def\mysphere(#1){\rput(#1){%
		\pscircle(0,0){0.3}
		\psellipticarc[linestyle=dashed,linewidth=0.5\pslinewidth](0,0)(0.3,0.1){0}{180}
		\psellipticarc(0,0)(0.3,0.1){180}{360}
	}}%
	\begin{centerpict}(-0.5,-0.7)(0.5,0.7)\mysphere(0,0.4)\mysphere(0,-0.4)\end{centerpict}\to
	\begin{centerpict}(-0.7,-0.7)(0.7,0.7)\mysphere(-0.2,0.4)\mysphere(0.2,-0.4)\end{centerpict}\to
	\begin{centerpict}(-0.9,-0.7)(0.9,0.7)\mysphere(-0.4,0.2)\mysphere(0.4,-0.2)\end{centerpict}\to
	\begin{centerpict}(-0.9,-0.7)(0.9,0.7)\mysphere(-0.4,-0.2)\mysphere(0.4,0.2)\end{centerpict}\to
	\begin{centerpict}(-0.7,-0.7)(0.7,0.7)\mysphere(-0.2,-0.4)\mysphere(0.2,0.4)\end{centerpict}\to
	\begin{centerpict}(-0.5,-0.7)(0.5,0.7)\mysphere(0,0.4)\mysphere(0,-0.4)\end{centerpict}
\end{equation}
Another example is a~twice punctured torus---reverse orientations of both saddle points and then rotate the~cobordism. The~following result states that nothing more can happen.

\begin{theorem}\label{thm:aut(W)}
	Choose an~embedded chronological cobordism\/ $W$ in $\kChCob(0)$, where $\scalars=\scalarsLong$, and write $\Aut(W) := \{k\in\scalars\ |\ kW=W\}$. Then
	\begin{equation}
		\Aut(W)=\begin{cases*}
						\{1\},                & if\/ $W$ has genus $0$ and at most one closed component,\\
						\{1, \permMM\permSS\},& otherwise.
					\end{cases*}
	\end{equation}
\end{theorem}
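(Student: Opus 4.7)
The plan is to establish the two inclusions of the claimed equality separately: $\Aut(W) \supseteq \{1,\permMM\permSS\}$ whenever $W$ has a component of positive genus or at least two closed components, and $\Aut(W) \subseteq \{1,\permMM\permSS\}$ for every $W$. The former is an existence argument requiring explicit constructions of self-changes, while the latter is a structural argument bounding the image of $\iota$ on the group of self-changes of $W$.

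For the lower bound, I would exhibit self-changes with $\iota$-value $\permMM\permSS$ via two basic constructions. First, if some component $C$ of $W$ has positive genus, pick a handle of $C$ (a merge--split pair in its chronology) and construct a self-diffeomorphism of $C$ that reverses the orientations of both its merge and its split; geometrically this is a Dehn-twist-type rotation of the handle. By Corollary~\ref{cor:orientation-rev}, reversing a merge contributes a factor of $\permMM$ and reversing a split contributes $\permSS$, so the net $\iota$-value is $\permMM\permSS$. Second, if $W$ has at least two closed components but no positive-genus component, then all its closed components are spheres (each of bidegree $(1,1)$). The disjoint-union permutation of two such spheres $S_1, S_2$ has $\iota=\permMM^{1}\permSS^{1}\permMS^{0}=\permMM\permSS$, and since $S_1, S_2$ have no boundary, an ambient diffeomorphism interchanging them identifies the permuted cobordism with the original in $\kChCob(0)$.

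For the upper bound, fix a self-change $\varphi$ from $W$ to some $W'$ where $W$ and $W'$ are identified in $\kChCob(0)$ via a boundary-preserving diffeomorphism. Since every generator in Proposition~\ref{prop:changes-generators} has $\iota$-value without any sign factor, $\iota(\varphi)$ lies in the subgroup $G \subset \scalars^*$ generated by $\{\permMM,\permSS,\permMS\}$. This subgroup admits two useful homomorphisms: the $\permMS$-degree $G \to \Z$ reading the exponent of $\permMS$, and the parity $G \to \Z/2$ sending both $\permMM$ and $\permSS$ to $1$ and $\permMS$ to $0$. Their common kernel inside $G$ is exactly $\{1,\permMM\permSS\}$. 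The plan is to verify both vanishing conditions for $\iota(\varphi)$.

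The main obstacle is the parity argument. For the $\permMS$-degree, the total exponent accumulates as a ``bidegree discrepancy'' along the permutations in $\varphi$, which must cancel when the initial and final cobordisms are identified by a boundary-preserving diffeomorphism. For the parity, my plan is to reduce $\varphi$ to a normal form: a composition of ``handle reversals'' (each contributing $\permMM\permSS$) and closed-component permutations (each contributing an element of $\{1,\permMM\permSS\}$, computable from the bidegrees), modulo elementary changes with $\iota=1$. Establishing this normal form likely requires case analysis informed by the mapping class group of $W$ subject to boundary constraints, together with the hexagonal relations \eqref{eq:rel-exceptional-planar}; note that the non-planar relations \eqref{eq:rel-exceptional-nonplanar} do not arise in $\EmbChCob(0)$, which simplifies the analysis.
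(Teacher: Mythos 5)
Your plan, as laid out, does not prove the theorem even if every step is carried out. You propose to show $\Aut(W)\subseteq\{1,\permMM\permSS\}$ for all $W$ and $\Aut(W)\supseteq\{1,\permMM\permSS\}$ when $W$ has positive genus or two closed components. In the remaining case (genus $0$, at most one closed component) these two inclusions give only $\{1\}\subseteq\Aut(W)\subseteq\{1,\permMM\permSS\}$, whereas the theorem asserts $\Aut(W)=\{1\}$. Nothing in your proposal rules out $\permMM\permSS\in\Aut(W)$ for such $W$, and this is precisely the case that requires a separate argument: you must exhibit a concrete reason why a genus-$0$ cobordism with at most one closed component admits no self-change of value $\permMM\permSS$. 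In the paper this is handled by a second chronological Frobenius system $(R_2,A_2)$ with $R_2\cong\scalars[c,t]/(c^2,(\permMM\permSS-1)t)$, so that a sphere evaluates to an element $c$ which is not annihilated by any nonzero element of $\scalars$; after capping the boundary, $\F_2(\widehat W)$ sends a basis vector to a monomial times $c^{0}$ or $c^{1}$, forcing $(1-r)c=0$, hence $r=1$. Some such detection step is unavoidable and is missing from your outline.

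Even for the direction you do address, the approach diverges from the paper's and the hard part is not carried out. The paper's upper bound is a short algebraic argument: it constructs one chronological Frobenius system $(R_1,A_1)$ on which every closed surface maps to a non-zero-divisor, so $kW=W$ forces the image of $k$ in $R_1$ to be $1$; since the specialization identifies $\permMM$ with $\permSS$ and keeps $\permMS$ free, the kernel of $\scalars^{*}\to R_1^{*}$ is exactly $\{1,\permMM\permSS\}$. The boundary case is reduced to the closed case by capping, which preserves $\iota$. Your upper bound instead relies on a ``normal form'' for self-changes modulo the hexagon relations and on control of the boundary-fixing mapping class group, and you concede yourself that the parity step ``likely requires case analysis'' — this is a declaration of intent, not a proof. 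In particular it is not established why arbitrary concatenations of disjoint-union, connected-sum, $\times$- and $\Diamond$-permutations that return $W$ to itself must compose to something in the span of your two generators, nor why the $\permMS$-degree of $\iota(\varphi)$ vanishes. Your lower-bound examples (handle reversal via Corollary~\ref{cor:orientation-rev}, and permuting two spheres) do match the illustrative examples preceding the theorem, but they are the easy direction; the load-bearing part of the proof is exactly what you flag as the main obstacle, and the paper circumvents it entirely by the Frobenius-algebra computation.
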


\noindent
A~proof of this theorem is postponed to Section~\ref{sec:chron-Frob}. Notice that elements of $\Aut(W)$ are invertible, since they are products of values of $\iota$.

\section{Khovanov complex}\label{sec:complex}
Now we go back to the~construction of the~generalized Khovanov complex. For this section fix a~link diagram $D$ with $n$ crossings, among which there are $n_+$ positive and $n_-$ negative ones. We need to make a~few choices: enumerate the~crossings, and choose for each of them an~arrow connecting the~two arcs in its horizontal resolution, i.e.\ \fntPosCrArrowUp\ or \fntPosCrArrowDown.

Fig.~\ref{diag:khov-cube} visualizes the~construction for the~trefoil knot. We can first see it as a~diagram $\KhCube{D}$ in the~$2$-category $\EmbChCob(0)$: vertices are $1$-manifolds (resolutions of the~diagram $D$), edges are chronological cobordisms between these manifolds and faces are decorated with changes of chronologies. It should be obvious how to create such a~diagram for a~link diagram $D$ (see the~discussion about diagrams with holes in Section~\ref{sec:cube-picture}). This diagram commutes in the~$2$-categorical sense: a~composition of $2$-morphisms along any $3$-dimensional subcube is trivial:
\begin{equation}\label{eq:faces-commute}
	\psset{xunit=7ex,yunit=7ex}
	\begin{diagps}(-0.2,-1.1)(3.2,1.2)
		\node 000(0, 0)[\scriptstyle000]	\node 100(1, 1)[\scriptstyle100]
		\node 010(1, 0)[\scriptstyle010]	\node 001(1,-1)[\scriptstyle001]
		\node 110(2, 1)[\scriptstyle110]	\node 011(2,-1)[\scriptstyle011]
		\node 111(3, 0)[\scriptstyle111]
		\arrow{->}[000`100;]	\arrow{->}[000`010;]	\arrow{->}[000`001;]
		\arrow{->}[100`110;]	\arrow{->}[010`110;]	\arrow{->}[010`011;]
		\arrow{->}[001`011;]	\arrow{->}[110`111;]	\arrow{->}[011`111;]
		\arrow[nodesep=5ex]{=>}[110`011;]
		\arrow[nodesep=2ex,offsetA=-0.5ex,offsetB=0.5ex]{=>}[100`010;]
		\arrow[nodesep=2ex,offsetA=0.5ex,offsetB=-0.5ex]{=>}[010`001;]
	\end{diagps}
	\quad = \quad
	\begin{diagps}(-0.2,-1.1)(3.2,1.2)
		\node 000(0, 0)[\scriptstyle000]	\node 100(1, 1)[\scriptstyle100]
		\node 001(1,-1)[\scriptstyle001]	\node 110(2, 1)[\scriptstyle110]
		\node 101(2, 0)[\scriptstyle101]	\node 011(2,-1)[\scriptstyle011]
		\node 111(3, 0)[\scriptstyle111]
		\arrow{->}[000`100;]	\arrow{->}[000`001;]	\arrow{->}[100`110;]
		\arrow{->}[100`101;]	\arrow{->}[001`101;]	\arrow{->}[001`011;]
		\arrow{->}[110`111;]	\arrow{->}[101`111;]	\arrow{->}[011`111;]
		\arrow[nodesep=5ex]{=>}[100`001;]
		\arrow[nodesep=2ex,offsetA=0.5ex,offsetB=-0.5ex]{=>}[110`101;]
		\arrow[nodesep=2ex,offsetA=-0.5ex,offsetB=0.5ex]{=>}[101`011;]
	\end{diagps}
\end{equation}
This follows from Proposition~\ref{prop:loc-vert-unique}, as the~two changes are locally vertical with respect to small cylinders over the~crossings of $D$.

Apply the~function $\iota\colon 2\Mor(\EmbChCob(0))\to\scalars$ from Section~\ref{sec:cob-linear} to faces of the~cube $\KhCube{D}$, where $\scalars=\scalarsLong$; the~faces are now decorated with elements from $\invScalars$, the~group of invertible elements in $\scalars$, according to Tab.~\ref{tab:cube-faces}. They define a~$2$-cochain $\psi\in C^2(I^n; \invScalars)$. A~$1$-cochain $\epsilon\in C^1(I^n; \invScalars)$ is called a~\emph{sign assignment} if $d\epsilon = -\psi$. This means the~corrected cube $\KhCubeSigned{D}{\epsilon}$ anticommutes, where $\KhCubeSigned{D}{\epsilon}$ has the~same vertices as $\KhCube{D}$, but for an~edge $\zeta$ one has $\KhCubeSigned{D}{\epsilon}(\zeta) = \epsilon(\zeta)\cdot\KhCube{D}(\zeta)$. Existence of such a~cochain follows easily.

\begin{proposition}\label{prop:cochain-is-cocycle}
	The~cochain $\psi$ is a~cocycle for any link diagram $D$. Hence, $-\psi = d\epsilon$ for some sign assignment $\epsilon$.
\end{proposition}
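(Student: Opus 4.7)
The plan is to check that $\psi$ is a $2$-cocycle directly from the commutativity \eqref{eq:faces-commute} of each three-dimensional subcube, and then to produce $\epsilon$ by invoking contractibility of $I^n$. The key observation is that the multiplicativity of $\iota$ converts the $2$-categorical commutativity into the cochain-level cocycle condition.

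First I would fix a three-dimensional subcube $F\subset I^n$, corresponding to a choice of three crossings of $D$ with resolutions fixed at the other $n-3$ crossings. Its six $2$-faces split into the two triples $\{0{*}{*},\ {*}1{*},\ {*}{*}0\}$ and $\{1{*}{*},\ {*}0{*},\ {*}{*}1\}$ occurring on the left- and right-hand sides of \eqref{eq:faces-commute}. Each side is a composition of three $2$-morphisms in $\EmbChCob(0)$ connecting the cobordism at the $000$-vertex of $F$ to the cobordism at the $111$-vertex, and both sides are realized by admissible diffeotopies that are locally vertical with respect to small vertical cylinders over the three chosen crossings. Proposition~\ref{prop:loc-vert-unique} therefore implies the two diffeotopies are homotopic through admissible ones, so applying $\iota$ and using its multiplicativity yields
\begin{equation*}
\psi(0{*}{*})\cdot\psi({*}1{*})\cdot\psi({*}{*}0)
\;=\;
\psi(1{*}{*})\cdot\psi({*}0{*})\cdot\psi({*}{*}1).
\end{equation*}
This is precisely the cocycle condition $d\psi(F)=1$ written multiplicatively in the abelian group $\invScalars$.

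Once $\psi$ is known to be a cocycle, the sign assignment exists for purely topological reasons: the cube $I^n$ is contractible, so its cellular cochain complex with coefficients in $\invScalars$ has vanishing cohomology in positive degrees, and every $2$-cocycle is a coboundary. Picking any $\epsilon\in C^1(I^n;\invScalars)$ with $d\epsilon=-\psi$ completes the proof; concretely one can construct such an $\epsilon$ inductively by assigning arbitrary values on the edges of a spanning tree of the $1$-skeleton and extending uniquely via the cocycle relation along the remaining edges.

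There is no genuine obstacle beyond careful bookkeeping. The only thing to verify is that the partition of the six $2$-faces of a $3$-subcube into the two triples above matches the alternating signs $(-1)^i$ in the cellular coboundary formula, but this is immediate from the orientation conventions of the cube, since each triple collects precisely the faces where one star is replaced by $0$ (respectively $1$) while the other two remain variable.
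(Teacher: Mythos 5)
Your proof is correct and takes essentially the same route as the paper, which likewise cites the $2$-commutativity of $3$-subcubes (established just before the proposition via Proposition~\ref{prop:loc-vert-unique}) together with the contractibility of $I^n$; you merely spell out the multiplicativity-of-$\iota$ step that the paper leaves implicit. One small slip in your closing remark: the two triples $\{0{*}{*},{*}1{*},{*}{*}0\}$ and $\{1{*}{*},{*}0{*},{*}{*}1\}$ are \emph{not} ``the faces where one star is replaced by $0$'' versus ``by $1$'' --- they are the alternating triples determined by the incidence signs $(-1)^{i+j}$ for the face $\partial_i^j F$ --- but the displayed cocycle identity itself is the right one and agrees with the faces appearing on the two sides of \eqref{eq:faces-commute}.
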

\begin{proof}
	The~$2$-commutativity of faces \eqref{eq:faces-commute} of any $3$-dimensional subcube in $\KhCube{D}$ implies that $d(-\psi)=d\psi=1$. The~existence of $\epsilon$ follows from the~contractibility of $I^n$.
\end{proof}

Motivated by \cite{DrorCobs} we construct the~generalized Khovanov bracket in the~\emph{additive closure} $\catAdd{\kChCob(0)}$ of the~category $\kChCob(0)$.

\begin{definition}
	The~\emph{additive closure} $\catAdd{\cat{C}}$ of an~$R$-linear category $\cat{C}$, where $R$ is a~commutative ring, is defined as follows:
	\begin{itemize}
		
		\item objects are formal direct sums $\displaystyle{\bigoplus_{i=1}^n C_i}$ of objects from $\cat{C}$,
		
		\item a~morphism $\displaystyle{F\colon\bigoplus_{i=1}^n A_i\to\bigoplus_{j=1}^m B_j}$ is a~matrix $\left(F_{ij}\colon A_j\to B_i\right)$ of morphisms from $\cat{C}$,
		
		\item the~composition of morphisms $F\circ G\vphantom{\displaystyle{\bigoplus^n}}$ mimics the~formula for a~product of matrices
		\begin{equation}
			(F\circ G)_{ij} := \sum_k F_{ik}\circ G_{kj}.
		\end{equation}

	\end{itemize}
	This category is $R$-linear with a~natural action of $R$ and addition defined as addition of matrices: $(F+G)_{ij} := F_{ij} + G_{ij}$.
\end{definition}

We can represent objects of $\catAdd{\cat{C}}$ by finite sequences (vectors) of objects in $\cat{C}$ and morphisms between such sequences by bundles\footnote{
	In the~colloquial sense, not the~mathematical one.
}
(matrices) of morphisms in $\cat{C}$, see Fig.~\ref{fig:additive-closure}. It means each column in Fig.~\ref{diag:khov-cube} forms a~single object $C^i$, as indicated by the~dotted arrows going downwards, and all edges between two columns form a~single morphism $d\colon C^i\to C^{i+1}$. Because every square in $\KhCubeSigned{D}{\epsilon}$ anticommutes, $d^2=0$.

\begin{figure}%
	\psset{unit=6ex,linewidth=0.5pt,arrowsize=3.5pt,dash=3pt 2pt}
	\begin{pspicture}(8,3)
		\psbezier(-0.3,0.8)(-0.45,1.2)(-0.45,1.9)(-0.3,2.3)
		\rput(0,2){$A_1$}
		\rput(0,1){$A_2$}
		\psbezier(0.3,0.8)(0.45,1.2)(0.45,1.9)(0.3,2.3)
		\psellipticarc(2.05,1.5)(0.5,1.2){90}{130}
		\psellipticarc(2.05,1.5)(0.5,1.2){-130}{-90}
		\psline[border=3pt]{->}(0.5,2.1)(3.4,2.6)
		\psline{->}(0.5,2.0)(3.4,1.6)
		\psline{->}(0.5,1.9)(3.4,0.6)
		\psline[linestyle=dashed,border=3pt]{->}(0.5,0.9)(3.4,0.4)
		\psline[linestyle=dashed]{->}(0.5,1.0)(3.4,1.4)
		\psline[linestyle=dashed]{->}(0.5,1.1)(3.4,2.4)
		\psellipticarc[border=3pt](2.05,1.5)(0.5,1.2){-90}{90}
		\rput(2,0){$G$}
		\psbezier(3.7,0.3)(3.55,1.0)(3.55,2.1)(3.7,2.8)
		\rput(4,2.5){$B_1$}
		\rput(4,1.5){$B_2$}
		\rput(4,0.5){$B_3$}
		\psbezier(4.3,0.3)(4.45,1.0)(4.45,2.1)(4.3,2.8)
		\psellipticarc(6.05,1.5)(0.5,1.2){90}{120}
		\psellipticarc(6.05,1.5)(0.5,1.2){-120}{-90}
		\psline[border=3pt]{->}(4.6,0.4)(7.5,0.85)
		\psline{->}(4.6,1.4)(7.5,1.00)
		\psline{->}(4.6,2.4)(7.5,1.15)
		\psline[linestyle=dashed,border=3pt]{->}(4.6,2.6)(7.5,2.15)
		\psline[linestyle=dashed]{->}(4.6,1.6)(7.5,2.00)
		\psline[linestyle=dashed]{->}(4.6,0.6)(7.5,1.85)
		\psellipticarc[border=3pt](6.05,1.5)(0.5,1.2){-90}{90}
		\rput(6,0){$F$}
		\psbezier(7.7,0.8)(7.55,1.2)(7.55,1.9)(7.7,2.3)
		\rput(8,2){$C_1$}
		\rput(8,1){$C_2$}
		\psbezier(8.3,0.8)(8.45,1.2)(8.45,1.9)(8.3,2.3)
	\end{pspicture}
	\caption[The~composition of morphisms in the~additive closure of a~category]{The~composition of morphisms in the~additive closure of a~category. The~component $(F\circ G)_{21}$ is indicated by solid lines.}\label{fig:additive-closure}
\end{figure}
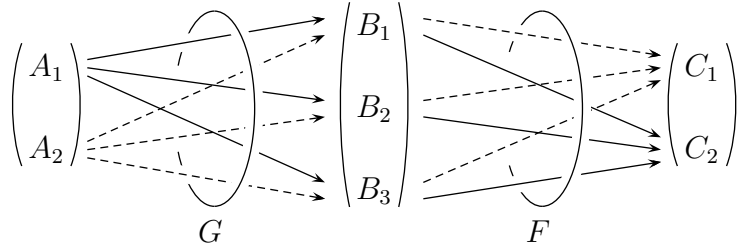

There is one more ingredient to Fig.~\ref{diag:khov-cube}: the~numbers in curly brackets along the~dotted arrows. As usual, this is a~notation for degree shifts.

\begin{definition}\label{def:cat-grad}
	Choose an~abelian group $G$. We say an~$R$-linear category $\cat{C}$ is $G$-\emph{graded}, if
	\begin{enumerate}
		\item for any objects $A,B$ the~set $\Mor(A,B)$ is a~$G$-graded $R$-module such that $\id_A$ is homogeneous of degree $0$ for any object~$A$,
		\item the~degree function is additive with respect to composition: $\deg(f\circ g) = \deg f + \deg g$, for homogeneous $f$ and $g$, and
		\item there is a~\emph{degree shift} functor $\Ob(\cat{C})\times G\ni(A,m)\longmapsto A\{m\}\in\Ob(\cat{C})$ preserving morphisms, i.e.\ $\Mor(A\{m\},B\{n\}) = \Mor(A,B)$, but degrees are changed: if a~morphism $f\in\Mor(A,B)$ has degree $d$, then $\deg f = d+n-m$ when regarded as an~element of $\Mor(A\{m\},B\{n\})$.
	\end{enumerate}
\end{definition}

We have already defined a~$\Z{\times}\Z$-valued degree function for chronological cobordisms \eqref{eq:2chcob-degree}. Here, we shall collapse it to an~integral grading, by summing up both numbers, so that $\deg W=\chi(W)$ is the~Euler characteristic of a~cobordism $W$. Degree shifts are introduced artificially: add formal objects $\Sigma\{m\}$ for every $1$-manifold $\Sigma$ and $m\in\Z$, and extend the~degree map as in the~definition above:
\begin{equation}
	\deg W := \chi(W) + n - m
		\qquad\textrm{for}\qquad
	W\colon\Sigma_0\{m\}\to\Sigma_1\{n\}.
\end{equation}
All cobordisms in the~cube of resolutions have degree $-1$. Hence, taking $C^i\{i\}$ at $i$-th place results in a~complex with a~degree $0$ differential.

This is the~last piece of the~construction. Below we summarize everything, giving a~full definition of the~bracket.

\begin{definition}
	Given a~link diagram $D$ with $n$ crossings construct its cube of resolutions $\KhCube{D}$ and choose a~sign assignment $\epsilon$. The~\emph{generalized Khovanov bracket} of $D$ is a~chain complex $\KhBracket{D}_\epsilon$ with:
	\begin{align}
		\KhBracket{D}_\epsilon^i & := \bigoplus_{\mathclap{\|\xi\|=i}}D_\xi\{i\}, \\
		d_\epsilon|_{D_\xi} & := \sum_{\mathclap{\zeta\colon\xi\to\xi'}} \epsilon(\zeta)\cdot D_\zeta,
	\end{align}
	where $\|\xi\| := \xi_1+\ldots+\xi_n$ is the~\emph{weight} of a~vertex $\xi$.
\end{definition}

\begin{corollary}
	The~sequence $(C,d)$ at the~bottom line of Fig.~\ref{diag:khov-cube} is a~chain
	complex.
\end{corollary}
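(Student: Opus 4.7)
The plan is to verify $d^2 = 0$ directly by decomposing the square of the differential into contributions coming from the two-dimensional faces of the cube $\KhCubeSigned{D}{\epsilon}$ and showing that each such face contributes zero.

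Fix a vertex $\xi$ with $\|\xi\| = i$ and a vertex $\xi''$ with $\|\xi''\| = i+2$ such that $\xi \leq \xi''$ coordinate-wise. There are exactly two directed paths $\xi \to \xi_1 \to \xi''$ and $\xi \to \xi_2 \to \xi''$ in the cube $\KhCube{D}$, corresponding to the two routes around the $2$-face $S$ they span. Let $\zeta_1, \zeta_1'$ (resp.\ $\zeta_2, \zeta_2'$) denote the edges of these two paths. Since the underlying surface of both composed cobordisms $D_{\zeta_1'}D_{\zeta_1}$ and $D_{\zeta_2'}D_{\zeta_2}$ is obtained by inserting two saddles into $D_\bullet \times I$, they are diffeomorphic through a change of chronology $\varphi_S$ that permutes the two saddle points (this is precisely the face decoration described in Section~\ref{sec:cube-picture}). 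By definition of linearization in $\kChCob(0)$, we obtain the identity
\begin{equation*}
	D_{\zeta_2'}D_{\zeta_2} \;=\; \psi(S)\cdot D_{\zeta_1'}D_{\zeta_1}
\end{equation*}
in the category $\catAdd{\kChCob(0)}$, where $\psi(S) = \iota(\varphi_S)$ is the value of the commutativity $2$-cochain read off from Table~\ref{tab:cube-faces}.

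Expanding $d_\epsilon \circ d_\epsilon$ on the summand $D_\xi$ and collecting terms by their target summand $D_{\xi''}$, the coefficient of the cobordism $D_{\zeta_1'}D_{\zeta_1}$ is
\begin{equation*}
	\epsilon(\zeta_1')\epsilon(\zeta_1) \;+\; \epsilon(\zeta_2')\epsilon(\zeta_2)\,\psi(S).
\end{equation*}
By Proposition~\ref{prop:cochain-is-cocycle} the sign assignment $\epsilon$ satisfies $d\epsilon = -\psi$, which applied to the face $S$ (with the appropriate orientation convention) gives exactly $\epsilon(\zeta_1')\epsilon(\zeta_1) + \epsilon(\zeta_2')\epsilon(\zeta_2)\,\psi(S) = 0$. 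Hence each face contributes zero to $d^2$, and summing over all faces yields $d_\epsilon \circ d_\epsilon = 0$.

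The main subtlety I would need to be careful about is orientation bookkeeping: identifying $\psi(S)$ requires picking a coherent orientation of each face $S$ and matching it with the choice implicit in the definition $d\epsilon(S) = \epsilon(\zeta_1')\epsilon(\zeta_1)\epsilon(\zeta_2')^{-1}\epsilon(\zeta_2)^{-1}$, and especially in the mixed merge--split case the two values $\permMS$ and $\permSM = \permMS^{-1}$ must be assigned consistently as in~\eqref{eq:SM-group-explained}. Once this convention is pinned down, the cancellation is a direct consequence of the cocycle equation $-\psi = d\epsilon$ together with the $2$-commutativity~\eqref{eq:faces-commute} of the cube of chronological cobordisms.
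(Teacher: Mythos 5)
Your proof is correct and is essentially an unpacking of the paper's one-line proof, which simply observes that $d^2=0$ follows from anticommutativity of the corrected cube $\KhCubeSigned{D}{\epsilon}$. You spell out exactly why each $2$-face contributes zero by combining the chronological relation $D_{\zeta_2'}D_{\zeta_2} = \psi(S)\,D_{\zeta_1'}D_{\zeta_1}$ with the defining property $d\epsilon = -\psi$ of a sign assignment, which is precisely the content being invoked.
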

\begin{proof}
	This follows from anticommutativity of the~corrected cube
	$\KhCubeSigned{D}{\epsilon}$.
\end{proof}

There are a~few choices involved in the~construction of $\KhBracket{D}_\epsilon$: an~order of crossings, arrows at the~crossings, and the~sign assignment $\epsilon$. We shall now show that different choices lead to isomorphic complexes. First, different orientation of the~arrows over crossings can be compensated by an~edge assignment.

\begin{lemma}\label{lem:kh-cube-indep-of-arr}
	Let $D_1, D_2$ be diagrams of a~link $L$ with~$n$ crossings, which differ only in directions of arrows over crossings. Then for any sign assignment~$\epsilon_1$ for $\KhCube{D_1}$ there exists a~sign assignment $\epsilon_2$ for $\KhCube{D_2}$ such that $\KhCubeSigned{D_1}{\epsilon_1} = \KhCubeSigned{D_2}{\epsilon_2}$.
\end{lemma}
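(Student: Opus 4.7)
The plan is to compensate for the change in saddle orientations by correcting $\epsilon_1$ pointwise on edges. Let $S\subseteq\{1,\ldots,n\}$ denote the set of crossings at which the arrows of $D_1$ and $D_2$ disagree. Pick any edge $\zeta$ of the cube and let $i$ be the unique index with $\zeta_i=*$. The cobordisms $D^{(1)}_\zeta$ and $D^{(2)}_\zeta$ have identical underlying surfaces: they differ at most in the framing of the single saddle inserted at the $i$-th hole, since everything outside a neighborhood of that hole is a product cobordism that does not depend on the arrow at crossing $i$. By Corollary~\ref{cor:orientation-rev}, we therefore have
\[
  D^{(2)}_\zeta \;=\; \mu(\zeta)\,D^{(1)}_\zeta,
\]
where $\mu(\zeta):=1$ if $i\notin S$, and otherwise $\mu(\zeta)=\permMM$ when the saddle is topologically a merge and $\mu(\zeta)=\permSS$ when it is topologically a split.

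Next I would simply define $\epsilon_2(\zeta):=\epsilon_1(\zeta)\,\mu(\zeta)$. Because $\permMM^2=\permSS^2=1$ we get $\mu(\zeta)^2=1$, so on every edge
\[
  \epsilon_2(\zeta)\cdot D^{(2)}_\zeta
  \;=\; \epsilon_1(\zeta)\,\mu(\zeta)^2\,D^{(1)}_\zeta
  \;=\; \epsilon_1(\zeta)\cdot D^{(1)}_\zeta.
\]
The vertices of $\KhCube{D_1}$ and $\KhCube{D_2}$ coincide (the resolutions at each vertex are independent of the arrows), so the above equality of decorated edges gives $\KhCubeSigned{D_1}{\epsilon_1}=\KhCubeSigned{D_2}{\epsilon_2}$ as diagrams in $\catAdd{\kChCob(0)}$.

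It remains to verify that $\epsilon_2$ is actually a sign assignment for $\KhCube{D_2}$, i.e.\ that $d\epsilon_2=-\psi_2$ where $\psi_2$ is the commutativity cochain of $\KhCube{D_2}$. This is automatic from the edge-by-edge equality of the two signed cubes: anticommutativity of every square face is an intrinsic property of $\KhCubeSigned{D_1}{\epsilon_1}$ as a diagram, and it is preserved under the equality above, so every face of $\KhCubeSigned{D_2}{\epsilon_2}$ anticommutes as well, which is exactly the defining condition for $\epsilon_2$ to be a sign assignment. The only real bookkeeping point, and the one I expect to be the main nuisance, is to make sure $\mu(\zeta)$ is well defined: the merge/split classification of the $i$-th saddle depends on the other coordinates of $\zeta$ (through the connectivity of the resolution away from the $i$-th hole), but this classification is topological and hence insensitive to the arrow direction, so the case analysis defining $\mu$ makes sense for both $D_1$ and $D_2$ simultaneously.
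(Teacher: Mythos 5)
The formula you propose, $\epsilon_2(\zeta)=\epsilon_1(\zeta)\mu(\zeta)$ with $\mu=\permMM$ on merges and $\mu=\permSS$ on splits at the affected crossings, is exactly the paper's formula, so the construction is essentially the same. Your intermediate observation that $\epsilon_2(\zeta)D^{(2)}_\zeta=\epsilon_1(\zeta)D^{(1)}_\zeta$ edge-by-edge, which gives the equality of signed cubes directly, is a nice way to organize the argument and more explicit than what the paper writes.

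The weak point is the very last step. You assert that anticommutativity of $\KhCubeSigned{D_2}{\epsilon_2}$ is ``exactly the defining condition'' for $\epsilon_2$ to be a sign assignment, but the defined condition is the cochain identity $d\epsilon_2=-\psi_2$. These two are equivalent only when the composed cobordism around a face has trivial automorphism group. For $\Diamond$-faces (the configuration in Table~\ref{tab:cube-faces} under $\permTpos$/$\permTneg$: one circle, two nested arrows), the composition of the two saddles is a twice-punctured torus $W$, which by Theorem~\ref{thm:aut(W)} satisfies $\Aut(W)=\{1,\permMM\permSS\}$, i.e.\ $(1-\permMM\permSS)W=0$ in $\kChCob(0)$. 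Hence anticommutativity on such a face only determines $d\epsilon_2(S)\,\psi_2(S)^{-1}$ modulo $(1-\permMM\permSS)$, and does not by itself give $d\epsilon_2(S)=-\psi_2(S)$. For the other face types the composed cobordism has genus zero and no closed components, so $\Aut=\{1\}$ and your argument is airtight there.

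The gap is easy to close: one must check $d\epsilon_2=-\psi_2$ directly on each face type, which is what the paper alludes to with ``a direct computation.'' The $\Diamond$-face case works out because reversing one arrow flips $\psi$ between $\permTpos=1$ and $\permTneg=\permMM\permSS$, while of the two edges at the reversed crossing one is a split and the other a merge, so $\prod\mu=\permMM\permSS$ as well and $d\epsilon_2(S)=d\epsilon_1(S)\prod\mu = -\psi_1(S)\cdot\permMM\permSS=-\psi_2(S)$. With that verification supplied, your proof is complete and matches the paper's.
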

\begin{proof}
	Without loss of generality we may assume $D_1$ and $D_2$ differ only in the~direction of the~arrow at the~$i$-th crossing. Reversing the~arrow changes orientation of critical points of	cobordisms at edges $\zeta$ with $\zeta_i=*$. Let $\psi_i$ be the~commutativity cocycle of the~cube $\KhCube{D_i}$. Given a~sign assignment $\epsilon_1$ for $\KhCube{D_1}$ we define
	\begin{equation}
		\epsilon_2(\zeta) := 
			\begin{cases*}
				\phantom{\permMM}\epsilon_1(\zeta),	& if $\zeta_i\neq *$,\\
				\permMM\epsilon_1(\zeta),	& if $\zeta_i=*$ and $D_\zeta$ is a~merge,\\
				\permSS\epsilon_1(\zeta),	& if $\zeta_i=*$ and $D_\zeta$ is a~split.
			\end{cases*}
	\end{equation}
	A~direct computation shows $d\epsilon_2 = -\psi_2$, and $\epsilon_2$ is the~desired sign assignment for $\KhCube{D_2}$.
\end{proof}

\noindent
A~sign assignment for a~given cube is unique up to an~isomorphism, where an~isomorphism of cubes $\eta\colon\mathcal{I}\to\mathcal{I}'$ is a~collection of invertible morphisms $\eta_\xi\colon\mathcal{I}_\xi\to\mathcal{I}'_\xi$ such that the~square commutes
\begin{equation}\label{eq:cube-morphism}
	\begin{diagps}(0,-1ex)(6em,12ex)
		\square<6em,10ex>[%
			\mathcal{I}_\xi`\mathcal{I}'_\xi`\mathcal{I}_{\xi'}`\mathcal{I}'_{\xi'};%
			\eta_\xi`\mathcal{I}_\zeta`\mathcal{I}'_\zeta`\eta_{\xi'}]
	\end{diagps}
\end{equation}
for every edge $\zeta\colon\xi\to\xi'$.

\begin{lemma}\label{lem:kh-cube-indep-of-sign-assign}
	Let $\epsilon$ and $\epsilon'$ be two sign assignments for $\KhCube{D}$. Then the~cubes $\KhCubeSigned{D}{\epsilon}$ and $\KhCubeSigned{D}{\epsilon'}$ are isomorphic.
\end{lemma}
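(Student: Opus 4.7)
The natural approach is to study the ratio of the two sign assignments. Both $\epsilon$ and $\epsilon'$ are $1$-cochains in $C^1(I^n;\invScalars)$ satisfying $d\epsilon = d\epsilon' = -\psi$, where $\invScalars$ denotes the (multiplicative) group of invertible elements of $\scalars$. Hence their quotient $\delta := \epsilon\cdot(\epsilon')^{-1}$ is a~$1$-cocycle in $C^1(I^n;\invScalars)$. Since the $n$-cube $I^n$ is contractible, every $1$-cocycle with values in an abelian group is a~coboundary, so there exists a~$0$-cochain $\eta\colon\{0,1\}^n\to\invScalars$ with $\delta = d\eta$; explicitly, fix any vertex (say the initial one $0\cdots 0$), set $\eta$ there to $1$, and propagate along any path using $\delta$ (the cocycle condition ensures the value is path-independent).

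Now I would build the isomorphism $\eta\colon\KhCubeSigned{D}{\epsilon}\to\KhCubeSigned{D}{\epsilon'}$ vertex by vertex: at each vertex $\xi$, the object $D_\xi$ is a disjoint union of circles, and the morphism $\eta_\xi\colon D_\xi\to D_\xi$ is defined as $\eta(\xi)$ times the identity cobordism (the vertical cylinder on $D_\xi$). Since $\eta(\xi)\in\invScalars$, each $\eta_\xi$ is invertible in $\kChCob(0)$. To verify the square \eqref{eq:cube-morphism} for an edge $\zeta\colon\xi\to\xi'$, observe that
\begin{equation*}
	\eta_{\xi'}\circ\KhCubeSignedSimple{D}{\epsilon}(\zeta)
		= \eta(\xi')\,\epsilon(\zeta)\,D_\zeta
		= \eta(\xi)\,\delta(\zeta)\,\epsilon(\zeta)\,D_\zeta
		= \eta(\xi)\,\epsilon'(\zeta)\,D_\zeta
		= \KhCubeSignedSimple{D}{\epsilon'}(\zeta)\circ\eta_\xi,
\end{equation*}
where the second equality uses $\delta = d\eta$ so that $\delta(\zeta) = \eta(\xi')\eta(\xi)^{-1}$, and the last uses $\epsilon'(\zeta)\delta(\zeta) = \epsilon(\zeta)$.

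There is essentially no obstacle here: the argument is a~formal consequence of contractibility of $I^n$ together with the fact that scaling by an invertible scalar is a~well-defined invertible endomorphism in the~linearized category $\kChCob(0)$. The only minor subtlety is to confirm that $\invScalars$ is genuinely an abelian group (so that cochain arithmetic makes sense), which is immediate from $\scalars=\scalarsLong*$ being commutative. This pointwise rescaling then provides the desired isomorphism of cubes, and in particular passes to an isomorphism of the associated Khovanov brackets $\KhBracket{D}_\epsilon \cong \KhBracket{D}_{\epsilon'}$.
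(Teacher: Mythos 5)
Your argument is the same as the paper's: build $\eta\in C^0(I^n;\invScalars)$ from the fact that the ratio of the two sign assignments is a $1$-cocycle on the contractible cube, then scale each vertex by $\eta(\xi)$. The paper states this in two lines without the explicit square verification, which is good to have. However, your written verification contains a reciprocal slip. With your convention $\delta := \epsilon\cdot(\epsilon')^{-1}$ and $d\eta = \delta$, the second equality ($\eta(\xi') = \eta(\xi)\delta(\zeta)$) is fine, but the third equality requires $\delta(\zeta)\,\epsilon(\zeta) = \epsilon'(\zeta)$, whereas your own definition gives $\delta(\zeta)\,\epsilon'(\zeta) = \epsilon(\zeta)$ — the identity you actually cite in the parenthetical. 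These are reciprocals of each other and are not the same unless $\delta(\zeta)^2 = 1$. The fix is to set $\delta := \epsilon'\cdot\epsilon^{-1}$ (matching the paper's $\epsilon' = d\eta\cdot\epsilon$); then $\delta(\zeta)\,\epsilon(\zeta) = \epsilon'(\zeta)$ and the displayed chain goes through. Apart from this bookkeeping inconsistency the proof is correct.
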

\begin{proof}
	The~equality $d\epsilon = -\psi = d\epsilon'$ and contractibility of $I^n$ implies that $\epsilon' = d\eta\cdot\epsilon$ for some $0$-cochain $\eta\in C^0(I^n; \invScalars)$. The~family of morphisms $f_\xi := \eta(\xi)\cdot\id$ form then a~desired isomorphism $f\colon\KhCubeSigned{D}{\epsilon}\to\KhCubeSigned{D}{\epsilon'}$.
\end{proof}

An~isomorphism of cubes induces an~isomorphism of complexes, resulting in the~following statement.

\begin{proposition}
	The~isomorphism class of the~Khovanov bracket $\KhBracket{D}_\epsilon$ depends only on the~link diagram $D$.
\end{proposition}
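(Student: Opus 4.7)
The plan is to verify, one by one, that the three auxiliary choices made in the construction---the ordering of the crossings, the directions of the arrows decorating each crossing, and the sign assignment $\epsilon$---all produce isomorphic chain complexes. First observe that the construction is functorial in the cube: the complex $\KhBracket{D}_\epsilon$ is built from $\KhCubeSigned{D}{\epsilon}$ by collecting, at homological degree $i$, the direct sum of the vertices of weight $i$ and, in the differential, the matrix of (signed) edge cobordisms. Consequently, any isomorphism of signed cubes in the sense of \eqref{eq:cube-morphism} assembles, block by block along each weight, into an isomorphism of chain complexes.

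First I would handle independence from the sign assignment. Given two sign assignments $\epsilon,\epsilon'$, Lemma~\ref{lem:kh-cube-indep-of-sign-assign} already produces the diagonal isomorphism $\eta_\xi=\eta(\xi)\cdot\id$ of cubes, which by the observation above descends to an isomorphism $\KhBracket{D}_\epsilon\cong\KhBracket{D}_{\epsilon'}$.

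Next I would treat the arrows at the crossings. Lemma~\ref{lem:kh-cube-indep-of-arr} shows that if two diagrams $D_1,D_2$ differ only by the choice of arrows, then for every sign assignment $\epsilon_1$ of $\KhCube{D_1}$ there is a sign assignment $\epsilon_2$ of $\KhCube{D_2}$ with $\KhCubeSigned{D_1}{\epsilon_1}=\KhCubeSigned{D_2}{\epsilon_2}$ as signed cubes. Combined with the previous paragraph, any two bracket complexes built from the same underlying diagram but from different arrow choices are isomorphic.

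Finally I would address the ordering of the crossings. A permutation $\pi\in S_n$ induces a bijection $\xi\mapsto\pi^*\xi$ of vertices of $I^n$ and, correspondingly, of edges and faces; the $1$--manifold $D_\xi$ decorating a vertex and the cobordism $D_\zeta$ decorating an edge depend only on the smoothings at each crossing, not on the label of that crossing, so the reindexed cube $\pi^*\KhCube{D}$ coincides with the cube built from the permuted ordering. The commutativity cocycle transforms as $\pi^*\psi$, and any sign assignment $\epsilon$ for $\KhCube{D}$ transports to the sign assignment $\pi^*\epsilon$ for the permuted cube. The identity-on-vertices bijection $\pi^*$ gives an isomorphism of signed cubes, hence of complexes; the only subtlety is that the weight $\|\xi\|$ is preserved by $\pi^*$, so the resulting map respects the homological grading.

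The main obstacle, minor as it is, lies in being pedantic that the cube-to-complex assignment is functorial: one must verify that a cube isomorphism $\eta$ satisfying \eqref{eq:cube-morphism} really assembles, weight by weight, into a degree-$0$ chain map whose components in each homological degree are invertible in $\catAdd{\kChCob(0)}$. This is immediate from the matrix description of morphisms in the additive closure, since $\eta$ is block-diagonal and its blocks are the invertible scalars $\eta(\xi)$ acting on individual resolutions $D_\xi$.
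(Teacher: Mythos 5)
Your proof is correct and follows essentially the same route as the paper: the paper's own argument observes that reordering crossings is just a reparametrization of the cube that leaves chain groups and differential unchanged, then invokes Lemmas~\ref{lem:kh-cube-indep-of-arr} and~\ref{lem:kh-cube-indep-of-sign-assign} together with the general fact that an isomorphism of anticommutative cubes descends to an isomorphism of complexes. You simply expand the same three steps with more detail, in particular spelling out the $\pi^*$-reindexing and the functoriality of the cube-to-complex passage that the paper leaves implicit.
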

\begin{proof}
	Changing the~order of crossings results in a~different parametrization of the~cube $\KhCube{D}$, but the~chain objects $\KhBracket{D}^i_\epsilon$ are preserved and likewise for the~differential. Independence of the~other choices follows from Lemmas~\ref{lem:kh-cube-indep-of-arr} and \ref{lem:kh-cube-indep-of-sign-assign}, as an~isomorphism of anticommutative cubes descends to an~isomorphism of complexes.
\end{proof}

The~generalized bracket, even up to chain homotopies, is not a~link invariant, but it is not very far from it. To construct an~invariant we have to take an~oriented diagram and shift degrees (both the~internal grading and the~homological one) according to the~number of positive and negative crossings.

\begin{definition}
	Let $D$ be an~oriented link diagram with $n_+$ positive and $n_-$ negative crossings. The~\emph{generalized Khovanov complex} $\KhCom(D)$ of $D$ is obtained from the~bracket $\KhBracket{D}$ by the~degree shifts $\KhCom(D) := \KhBracket{D}[-n_-]\{n_+-2n_-\}$, i.e.\ $\KhCom^i(D) = \KhBracket{D}^{i-n_-}\{n_+-2n_-\}$.
\end{definition}

We shall show invariance of the~complex in Section~\ref{sec:invariance}, but before that we describe an~extension of the~construction to tangles.

\section{Tangles and planar algebras}\label{sec:tangles}
\def\parpic(#1)[#2]#3{}%
Tangles have a~rich algebraic structure called a~planar algebra \cite{JonesPA}: they can be combined together to produce larger tangles, by connecting some of their endpoints. We follow here the~exposition from \cite{DrorCobs}.

\begin{definition}\label{def:diag-plan}
	\wrapfigure[r]{\begin{pspicture}(0,0)(3,2.9)%
		\newgray{diskcolor}{0.875}
		\psset{linewidth=0.3pt,dotsize=3pt,fillstyle=solid,fillcolor=white}%
		\pscircle[fillcolor=diskcolor](1.5,1.5){1.5}\psdot(2.56,0.44)
		\pscircle(0.7,1.7){0.33}\rput(0.7,1.7){$\scriptstyle 1$}\psdot(0.56,1.40)
		\pscircle(2.0,1.5){0.43}\rput(2.0,1.5){$\scriptstyle 2$}\psdot(2.42,1.43)
		\pscircle(1.4,2.4){0.23}\rput(1.4,2.4){$\scriptstyle 3$}\psdot(1.36,2.62)
		\pscircle(1.2,0.6){0.23}\rput(1.2,0.6){$\scriptstyle 4$}\psdot(0.99,0.65)
		\psset{linewidth=0.8pt,fillstyle=none}%
		\pscircle[dimen=mid,fillstyle=none](2.25,2.35){0.15}\psline{->}(2.15,2.458)(2.12,2.444)
		\psbezier    (0.20096,0.75000)(0.56924,0.96262)(0.23000,1.17475)(0.23000,1.60000)	
		\psbezier{<-}(0.43934,2.56066)(0.78696,2.21304)(0.23000,2.09160)(0.23000,1.60000)
		\psbezier{->}(0.70000,2.03000)(0.70000,2.45296)(1.04482,2.47613)(0.86607,2.85946)	
		\psbezier{->}(2.97721,1.23953)(2.52561,1.31916)(2.19411,0.39913)(2.07467,1.07653)	
		\psbezier{->}(0.98697,0.09046)(1.05448,0.27595)(0.65892,0.28761)(1.00081,0.48500)	
		\psbezier    (2.21500,1.87239)(2.34778,2.10237)(1.90000,2.08756)(1.90000,2.30000)	
		\psbezier{->}(1.90000,2.30000)(1.90000,2.48487)(1.83934,2.65366)(1.59919,2.51500)
		\psbezier{->}(1.12134,0.81613)(0.99623,1.15984)(1.46094,1.26067)(0.98579,1.53500)	
		\psbezier{->}(1.36263,0.43737)(1.70763,0.09237)(1.70763,1.10763)(1.36263,0.76263)	
		\pscurve{->}(1.4,2.17)(1.35,1.5)(1.7,0.95)(2.05,0.8)%
								(2.1,0.6)(1.9,0.4)(1.79,0.25)(1.76047,0.02279)
	\end{pspicture}}
	A~\emph{planar arc diagram} $D$ with~$d$ inputs is a~disk $\Disk$ missing $d$ smaller disks $\Disk_i$, together with a~proper embedding of disjoint circles and closed intervals. Each boundary component carries a~basepoint and meets an~even number of intervals. We say that $D$ is \emph{oriented} if the~embedded circles and intervals are oriented. Both oriented and non-oriented planar arc diagrams are considered up to planar isotopies.
\end{definition}

We can compose planar arc diagrams by placing one of them in a~hole of another. This operation is associative: when composing more than two diagrams, the~final result does not depend in which order they are composed.

\begin{definition}\label{def:alg-plan}
	A~\emph{planar algebra} $\mathcal{P}$ is a~collection of sets $\mathcal{P}(k)$ together with operators
	\begin{equation}
		D\colon\mathcal{P}(k_1)\times\cdots\times\mathcal{P}(k_s)\to\mathcal{P}(k),
	\end{equation}
	one for each planar arc diagram $D$, whose composition is associative and radial diagrams (i.e.\ those with a~single input and radially emdedded intervals) correspond to identity maps. An~\emph{oriented planar algebra} is defined similarly, using oriented planar arc diagrams.
\end{definition}

\begin{example}
	Given a~planar arc diagram $D$ we can insert into its holes some tangle diagrams, creating another tangle diagram. This results in a~map
	\begin{equation}\label{eq:planar-operator}
		D\colon\mathcal{T}^0(k_1)\times\cdots\times\mathcal{T}^0(k_s)\to\mathcal{T}^0(k),
	\end{equation}
	where $\mathcal{T}^0(k)$ is the set of all tangle diagrams with $2k$ endpoints embedded in a~disk $\Disk$ with a~basepoint on its boundary (the~basepoints make this operation well-defined). Because Reidemeister moves are local, we can replace $\mathcal{T}^0(k)$ with sets of tangles $\mathcal{T}(k)$ and the~operation induced by $D$ is still well-defined. In a~similar way oriented diagrams allow us to combine oriented tangles. Here, we group tangles (or tangle diagrams) into sets $\mathcal{T}_+(\!\vec{\,s})$ (respectively $\mathcal{T}^0_+(\!\vec{\,s})$), labeled with finite sequences $\!\vec{\,s}$ of \raisebox{0.2ex}{$\scriptstyle+$}'s and \raisebox{0.2ex}{$\scriptstyle-$}'s encoding orientation of the~endpoints.
\end{example}

\begin{definition}\label{def:alg-plan-mor}
	A~\emph{morphism} of planar algebras $\Phi\colon\mathcal{P}_1\longrightarrow\mathcal{P}_2$ is a~collection of morphisms $\Phi_k\colon\mathcal{P}_1(k)\longrightarrow\mathcal{P}_2(k)$ commuting with planar operators, i.e.
	\begin{equation}
		D\circ(\Phi_{k_1},\dots,\Phi_{k_s}) = \Phi_k\circ D.
	\end{equation}
	for every operator $D$. In a~similar way one defines morphisms of oriented planar algebras.
\end{definition}

\begin{example}
	There is a~natural morphism from the~planar algebra of tangle diagrams to the~planar algebra of tangles that maps a~tangle diagram into the~tangle it represents.
\end{example}

We shall now construct the~Khovanov complex for a~tangle diagram $T$ with $2k$ endpoints. As mentioned in Section~\ref{sec:cube-picture}, we can construct the~cube of resolutions $\KhCube{T}$ using cobordisms with corners $\EmbChCob(k)$. These $2$-categories also form a~planar algebra, with a~cubical functor
\begin{equation}\label{eq:cubical-D-functor}
	D\colon\EmbChCob(k_1)\times\dots\EmbChCob(k_d)\longrightarrow\EmbChCob(k)
\end{equation}
associated to every planar arc diagram $D$ as follows.
\begin{itemize}
	\item The~object $D(\Sigma_1,\dots,\Sigma_d)$ is defined in the~same way as a~composition of tangle diagrams: simply insert the~pictures $\Sigma_i$
				into holes of $D$.
	
	\item\wrapfigure[r]{\begin{pspicture}(-1.5,0)(1.5,2)
		\psset{linewidth=0.5pt,dimen=outer,xunit=1.2,dash=3pt 2pt}
		\psellipse(0,0.2)(1.3,0.2)
		\psset{border=1pt,fillstyle=solid,fillcolor=white}
		\psframe(-0.95,0.2)(-0.45,1.8)
		\psframe(-0.25,0.2)( 0.25,1.8)
		\psframe( 0.45,0.2)( 0.95,1.8)
		\psellipse[fillstyle=none](0,1.8)(1.3,0.2)
		\psset{border=0pt}
		\psellipticarc(-0.7,0.2)(0.25,0.05){-180}{0}
		\psdash\psellipticarc(-0.7,0.2)(0.25,0.05){0}{180}
		\psellipse(-0.7,1.8)(0.25,0.05)
		\psellipticarc( 0.0,0.2)(0.25,0.05){-180}{0}
		\psdash\psellipticarc( 0.0,0.2)(0.25,0.05){0}{180}
		\psellipse( 0.0,1.8)(0.25,0.05)
		\psellipticarc( 0.7,0.2)(0.25,0.05){-180}{0}
		\psdash\psellipticarc( 0.7,0.2)(0.25,0.05){0}{180}
		\psellipse( 0.7,1.8)(0.25,0.05)
		\psset{fillstyle=none}
		\psline(-1.3,0.2)(-1.3,1.8)\psline(1.3,0.2)(1.3,1.8)
		\psset{linewidth=0.3pt}
		\psellipticarc(-0.7,1.2)(0.25,0.05){-180}{0}
		\psellipticarc( 0.0,0.7)(0.25,0.05){-180}{0}
		\psellipticarc( 0.0,1.2)(0.25,0.05){-180}{0}
		\psellipticarc( 0.7,0.7)(0.25,0.05){-180}{0}
		\psset{linestyle=dashed}
		\psellipticarc(-0.7,1.2)(0.25,0.05){0}{180}
		\psellipticarc( 0.0,0.7)(0.25,0.05){0}{180}
		\psellipticarc( 0.0,1.2)(0.25,0.05){0}{180}
		\psellipticarc( 0.7,0.7)(0.25,0.05){0}{180}
		\rput(-0.7,1.4){$\scriptstyle W_1$}
		\rput( 0.0,0.9){$\scriptstyle W_2$}
		\rput( 0.7,0.4){$\scriptstyle W_3$}
	\end{pspicture}}
				For cobordisms take a~\emph{curtain diagram} $D\times I$ (see Fig.~\ref{fig:curtains} for an~example) and fill its holes with the~cobordisms. Here, one has to do the~same trick as with the~disjoint sum---to shift all critical points, placing the~critical points of the~first cobordism at the~top, below the~critical points of the~second one and so on.
	
	\item\wrapfigure[r]{}
				Finally, for changes of chronologies $\alpha_i\colon W_i\Rightarrow W'_i$ there is an~induced change
				\begin{equation}
					D(\alpha_1,\dots,\alpha_d)\colon D(W_1,\dots,W_d)\Longrightarrow D(W'_1,\dots,W'_d)
				\end{equation}
				defined as a~composition $\bar\alpha_1\circ\ldots\circ\bar\alpha_d$, where $\bar\alpha_i=D(\id_{W_1},\ldots,\alpha_i,\ldots,\id_{W'_d})$ is given by $\alpha_i$ in the~$i$-th hole (reparametrized accordingly) and fixed beyond it. Simply speaking, all changes $\alpha_i$ are applied at the~same time, but on different regions.\footnote{
					A~change defined in this way might not be generic.
				}
\end{itemize}
It follows directly from Proposition~\ref{prop:loc-vert-unique} that the~functors defined above are cubical. They extend naturally to cubes in $\EmbChCob(k)$.

\begin{figure}
 	\psset{xunit=2cm,yunit=0.7cm}%
	\begin{pspicture}(0,0)(3,5)%
		\newgray{tubecolor}{0.875}%
		\psset{linewidth=0.3pt,dotsize=3pt,fillstyle=none,dimen=middle}%
		\psset{hatchcolor=tubecolor,hatchwidth=0.3pt,hatchsep=0.5pt,hatchangle=0}%
		\psellipse(1.5,1.5)(1.5,1.5)
		\psframe[linestyle=none,fillstyle=solid,fillcolor=white](0.20096,1.5)(2.97721,3.1)
		\psellipse(1.5,3.5)(1.5,1.5)
		\psline(0.00,1.5)(0.00,3.5)
		\psline(3.00,1.5)(3.00,3.5)
		\psclip{\psframe[linestyle=none](0.2,1.0)(0.86,2.0)}%
			\psellipticarc(0.7,1.7)(0.33,0.33){-180}{0}
		\endpsclip
		\psellipse[fillstyle=vlines](0.7,3.7)(0.33,0.33)\rput(0.8,3.7){$\scriptstyle 1$}
		\psline(0.37,1.7)(0.37,3.7)
		\psline(1.03,3.49)(1.03,3.7)
		\psellipse[fillstyle=vlines](2.0,3.5)(0.43,0.43)\rput(2.00,3.5){$\scriptstyle 2$}
		\psline(1.57,3.05)(1.57,3.5)
		\psline(2.43,2.95)(2.43,3.5)
		\psellipse[fillstyle=vlines](1.4,4.4)(0.23,0.23)\rput(1.40,4.4){$\scriptstyle 3$}
		\psline(1.17,3.4)(1.17,4.4)
		\psline(1.63,3.72)(1.63,4.4)
		\psclip{\psframe[linestyle=none](1,0)(1.365,0.6)}
			\psellipticarc(1.2,0.6)(0.23,0.23){-180}{0}
		\endpsclip
		\psellipse[fillstyle=vlines](1.2,2.6)(0.23,0.23)\rput(1.25,2.6){$\scriptstyle 4$}
		\psline(1.43,2.4)(1.43,2.6)
		\psline(0.97,2.5)(0.97,2.6)
		\psset{linewidth=0.8pt}
		\psclip{\psframe[linestyle=none](0.2,0.6)(0.4,1.0)}%
			\psbezier(0.20096,0.75000)(0.56924,0.96262)(0.23000,1.17475)(0.23000,1.60000)
		\endpsclip
		\psline(0.20096,0.75000)(0.20096,2.75000)
		\psline(0.37226,0.98396)(0.37226,2.98396)
		\psline(0.23000,2.78000)(0.23000,3.60000)
		\psline(0.55353,4.00000)(0.55353,4.34298)
		\psline(0.43934,4.10000)(0.43934,4.56066)
		\psbezier(0.20096,2.75000)(0.56924,2.96262)(0.23000,3.17475)(0.23000,3.60000)
		\psbezier(0.43934,4.56066)(0.78696,4.21304)(0.23000,4.09160)(0.23000,3.60000)
		\psline(0.70000,3.37000)(0.70000,4.03000)
		\psline(0.91747,3.93000)(0.91747,4.66190)
		\psline(0.86607,4.50000)(0.86607,4.85946)
		\psbezier(0.70000,4.03000)(0.70000,4.45296)(1.04482,4.47613)(0.86607,4.85946)
		\psclip{\psframe[linestyle=none](2.1,0.5)(3,1.5)}
			\psbezier(2.97721,1.23953)(2.52561,1.31916)(2.19411,0.39913)(2.07467,1.07653)
		\endpsclip
		\psline(2.97721,1.23953)(2.97721,3.23953)
		\psline(2.07467,2.75000)(2.07467,3.07653)
		\psbezier(2.97721,3.23953)(2.52561,3.31916)(2.19411,2.39913)(2.07467,3.07653)
		\psclip{\psframe[linestyle=none](1.62917,0)(1.76047,2)}%
			\pscurve(1.4,2.17)(1.35,1.5)(1.7,0.95)(2.05,0.8)%
		\endpsclip
		\psecurve(2.05,2.8)(2.1,0.6)(1.9,0.4)(1.79,0.25)(1.76047,0.02279)(1.67,0)
		\psline(1.76047,0.02279)(1.76047,2.02279)
		\psline(2.1,0.6)(2.1,2.6)
		\psline(1.34,2.77)(1.34,3.65)
		\psline(1.4,3.3)(1.4,4.17)
		\pscurve(1.4,4.17)(1.35,3.5)(1.7,2.95)(2.05,2.8)%
						(2.1,2.6)(1.9,2.4)(1.79,2.25)(1.76047,2.02279)
		\psclip{\psframe[linestyle=none](0.7,0.0)(1.2,0.355)}%
			\psbezier(0.98697,0.09046)(1.05448,0.27595)(0.65892,0.28761)(1.00081,0.48500)
		\endpsclip
		\psline(0.98697,0.09046)(0.98697,2.09046)
		\psline(0.85969,0.34424)(0.85969,2.34424)
		\psline(1.00081,0.15000)(1.00081,2.48500)
		\psbezier(0.98697,2.09046)(1.05448,2.27595)(0.65892,2.28761)(1.00081,2.48500)
		\psline(1.12134,2.38387)(1.12134,2.81613)
		\psline(1.09908,2.80000)(1.09908,2.93615)
		\psline(1.20492,2.83000)(1.20492,3.28932)
		\psline(0.98579,2.67000)(0.98579,3.53500)
		\psbezier(1.12134,2.81613)(0.99623,3.15984)(1.46094,3.26067)(0.98579,3.53500)
		\psclip{\psframe[linestyle=none](1.3,0.0)(1.8,0.5722)}
			\psbezier(1.36263,0.43737)(1.64248,0.15752)(1.76714,0.86446)(1.40845,0.69720)
		\endpsclip
		\psline(1.36263,0.43737)(1.36263,2.43737)
		\psline(1.62917,0.57211)(1.62917,2.57211)
		\psline(1.40845,2.50280)(1.40845,2.69720)
		\psbezier(1.36263,2.43737)(1.64248,2.15752)(1.76714,2.86446)(1.40845,2.69720)
		\psline(2.21500,3.12761)(2.21500,3.87239)
		\psline(2.23961,3.85000)(2.23961,3.95025)
		\psline(1.90000,3.90000)(1.90000,4.30000)
		\psline(1.59919,4.28500)(1.59919,4.51500)
		\psbezier(2.21500,3.87239)(2.34778,4.10237)(1.90000,4.08756)(1.90000,4.30000)
		\psbezier(1.90000,4.30000)(1.90000,4.48487)(1.83934,4.65366)(1.59919,4.51500)
		\psline(2.1,4.10)(2.1,4.35)
		\psline(2.4,3.65)(2.4,4.35)
		\psellipse[dimen=mid,fillstyle=none](2.25,4.35)(0.15,0.15)
	\end{pspicture}
	\caption{%
		The~planar operator on the~set of~cobordisms with corners associated to the~planar diagram from Definition~\ref{def:diag-plan} consists of a~cylinder with hollow tubes and curtains, i.e.\ properly embedded vertical rectangles.
	}\label{fig:curtains}%
\end{figure}
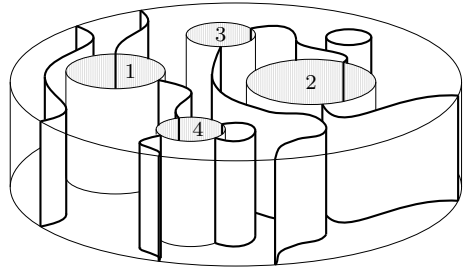

\begin{corollary}\label{cor:plan-alg-cubes}
	The~function $T\mapsto \KhCube{T}$ is a~morphism of planar algebras, i.e.
	\begin{equation}
		\KhCube{T} = D(\KhCube{T_1},\dots,\KhCube{T_d})
	\end{equation}
	for a~planar arc diagram $D$ and tangle diagrams $T_1,\dots,T_d$ with an~appropriate number of endpoints.
\end{corollary}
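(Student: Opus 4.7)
The plan is to verify the equality $\KhCube{T} = D(\KhCube{T_1},\dots,\KhCube{T_d})$ level by level in the cube structure: first on vertices, then on edges, and finally on $2$-dimensional faces. Since a cube in $\EmbChCob(k)$ is determined by this data (by Proposition~\ref{prop:loc-vert-unique}, higher-dimensional subcubes commute automatically), this suffices.

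For vertices, observe that the crossings of $T=D(T_1,\dots,T_d)$ are, by construction, the disjoint union of the crossings of the tangles $T_i$. Thus a vertex of $\KhCube{T}$ is indexed by a tuple $\xi=(\xi^{(1)},\dots,\xi^{(d)})$ where $\xi^{(i)}$ is a vertex of $\KhCube{T_i}$, and the resolution $T_\xi$ coincides with $D(T_{1,\xi^{(1)}},\dots,T_{d,\xi^{(d)}})$ by the planar algebra structure on tangle diagrams. This matches the value of the cubical functor $D$ at the vertex $\xi$ of the product cube, as defined in \eqref{eq:cubical-D-functor}.

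For edges, recall that an edge $\zeta$ of $\KhCube{T}$ flips a single crossing, which lies in some unique $T_i$. By construction, the cobordism $T_\zeta$ is a cylinder over $T_\bullet$ outside a small ball around this crossing, with a single saddle inserted in that ball, framed by the arrow at the crossing. This is precisely the image under $D$ of the tuple $(\id,\dots,T_{i,\zeta^{(i)}},\dots,\id)$: a curtain $D\times I$ filled with cylinders in all holes except the $i$-th, where the edge cobordism of $\KhCube{T_i}$ is inserted. Here one uses that the curtain construction of Fig.~\ref{fig:curtains} produces exactly the vertical cylinders over $T_\bullet$ between the holes.

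For faces the argument splits according to whether the two crossings being flipped lie in the same hole or in different holes. If both crossings lie in the same $T_i$, the face of $\KhCube{T}$ is a change of chronology supported in the $i$-th hole; by construction of the cubical functor $D$ (the $\bar\alpha_i$ applied in turn), this is exactly $D(\id,\dots,\alpha_i,\dots,\id)$ where $\alpha_i$ is the corresponding face of $\KhCube{T_i}$. If the crossings lie in two different holes $i<j$, then the $2$-morphism produced by $D$ is the homotopy $\bar\alpha_j\circ\bar\alpha_i$, which performs the two saddles one after the other in the order imposed by the hole indexing; this is precisely the disjoint-union-style permutation used to build the face of $\KhCube{T}$, since the critical points of cobordisms inserted into distinct holes are stacked vertically in the assembly of $D$, exactly as in the definition of $\rdsum$.

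The main subtlety is the last case: one must check that the $2$-morphism in $\KhCube{T}$ associated to a face whose two starred crossings lie in distinct $T_i$ and $T_j$ really is the composition $\bar\alpha_j\circ\bar\alpha_i$ produced by the cubical functor, and not some other admissible diffeotopy. This follows from Proposition~\ref{prop:loc-vert-unique}: both diffeotopies are locally vertical with respect to a pair of disjoint vertical cylinders around the two affected crossings, and they agree at time $1$, so they are homotopic in the space of admissible diffeotopies. Once the matching is established on vertices, edges and faces, the identity $\KhCube{T}=D(\KhCube{T_1},\dots,\KhCube{T_d})$ as cubes in $\EmbChCob(k)$ follows, and naturality in $D$ is immediate from the way planar operators compose.
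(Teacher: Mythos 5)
Your proof is correct and unpacks the same argument the paper leaves implicit: the paper simply asserts the corollary after observing that the planar operators on $\EmbChCob(k)$ are cubical functors and therefore extend to cubes, whereas you verify the equality of cubes explicitly at the level of vertices, edges, and faces, using Proposition~\ref{prop:loc-vert-unique} for the mixed-hole faces exactly as the paper intends. One small notational wrinkle: in the mixed-hole case you write that ``the $2$-morphism produced by $D$ is the homotopy $\bar\alpha_j\circ\bar\alpha_i$,'' but $\bar\alpha_j\circ\bar\alpha_i$ is the composite $1$-morphism (the stacked saddles); the relevant $2$-morphism is the structural commutator $\varphi$ of the cubical functor interchanging the two orderings, and your subsequent invocation of Proposition~\ref{prop:loc-vert-unique} is what correctly identifies this $\varphi$ with the linear-interpolation face of $\KhCube{T}$. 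The idea is right, only the phrasing conflates the $1$-morphism with the $2$-morphism.
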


\begin{remark}
	We can extend the~integral degree to cobordisms with corners by setting $\deg W := \chi(W) - k$ for $W\in\EmbChCob(k)$, and this degree is preserved by planar algebra operators. However, cobordisms with corners do not admit a~natural $\Z\times\Z$-grading defined in Section~\ref{sec:cob-linear}.
\end{remark}

The~main problem when defining the~Khovanov complex for tangles is to understand the~commutativity cocycle $\psi$. For instance, a~single saddle is a~part of both a~merge and a~split:
\begin{equation}
	\psset{yunit=1.5cm,xunit=1.2cm,linewidth=0.5pt,dash=1pt 1.5pt}
	\Rnode{split}{\begin{centerpict}(-0.5,-0.2)(1.5,1.2)
		\psbezier(0.0,1.0)(-0.2,1.05)(-0.6,1.05)(-0.5,0.85)
		\psbezier(0.11581, 0.77194)(0.01581,0.72194)(-0.4,0.65)(-0.5,0.85)
		\psbezier(1.0,0.8)(1.2,0.75)(1.6,0.75)(1.5,0.95)
		\psbezier(0.88419,1.02806)(0.98419,1.07806)(1.4,1.15)(1.5,0.95)
		\psbezier(0.88419, 1.02806)(0.68419, 0.92806)(0.6, 0.9)(1.0, 0.8)		
		\psbezier(0.0, 1.0)(0.4, 0.9)(0.31581, 0.87194)(0.11581, 0.77194)		
		\psclip{\psframe[linestyle=none](-1,0.12)(0.12,-1)}
			\psbezier(0.0,0.2)(-0.2,0.25)(-0.6,0.25)(-0.5,0.05)
		\endpsclip
		\psclip{\psframe[linestyle=none](-1,0.10808)(1,1)}
			\psdash\psbezier(0.0,0.2)(-0.2,0.25)(-0.6,0.25)(-0.5,0.05)
		\endpsclip
		\psbezier(0.11581,-0.02806)(0.01581,-0.07806)(-0.4,-0.15)(-0.5,0.05)
		\psclip{\psframe[linestyle=none](0.5,-1)(2,0.09)}
			\psbezier(1.0,0.0)(1.2,-0.05)(1.6,-0.05)(1.5,0.15)
		\endpsclip
		\psdash\psbezier(0.88419,0.22806)(0.98419,0.27806)(1.4,0.35)(1.5,0.15)
		\psdash\psbezier(0.0, 0.2)(0.45, 0.1)(0.53419, 0.12806)(0.88419, 0.22806)	
		\psbezier(0.11581,-0.02806)(0.46581, 0.07194)(0.55, 0.1)(1.0, 0.0)	
		\psbezier(0.28306, 0.88342)(0.28306, 0.3)(0.71694, 0.3)(0.71694, 0.91685)	
		\psbezier[linewidth=0.3pt,border=0.5pt,arrowsize=3pt,arrowlength=1.2]%
						 {<-}(0.55, 0.3)(0.53, 0.7)(0.5, 0.7)(0.45, 0.5)
		\psline(-0.51579,0.10808)(-0.51579,0.90808)
		\psline( 1.51579,0.08192)( 1.51579,0.88192)
	\end{centerpict}}
	\hskip 2cm
	\Rnode{saddle}{\begin{centerpict}(-0.15,-0.2)(1.15,1.2)
		\psbezier(0.0, 1.0)(0.4, 0.9)(0.31581, 0.87194)(0.11581, 0.77194)		
		\psbezier(0.88419, 1.02806)(0.68419, 0.92806)(0.6, 0.9)(1.0, 0.8)		
		\psbezier(0.28306, 0.88342)(0.28306, 0.3)(0.71694, 0.3)(0.71694, 0.91685)	
		\psbezier[linewidth=0.3pt,border=0.5pt,arrowsize=3pt,arrowlength=1.2]%
						 {<-}(0.55, 0.3)(0.53, 0.7)(0.5, 0.7)(0.45, 0.5)
		\psline(0.11581,-0.02806)(0.11581, 0.77194)		
		\psline(1.0, 0.0)(1.0, 0.8)										
		\psbezier(0.11581,-0.02806)(0.46581, 0.07194)(0.55, 0.1)(1.0, 0.0)			
		\psclip{\psframe[linestyle=none](0.11581,0)(0,1)}
			\psbezier(0.0, 0.2)(0.45, 0.1)(0.53419, 0.12806)(0.88419, 0.22806)	
		\endpsclip
		\psclip{\psframe[linestyle=none](0.11581,0)(1,1)}
			\psdash\psbezier(0.0, 0.2)(0.45, 0.1)(0.53419, 0.12806)(0.88419, 0.22806)	
		\endpsclip
		\psclip{\psbezier[linestyle=none](0.88419, 1.02806)(0.68419, 0.92806)(0.6, 0.9)(1.0, 0.8)}
			\psline(0.88419, 0.22806)(0.88419, 1.02806)		
		\endpsclip
		\psclip{\pscustom[linestyle=none]{%
			\moveto(0.88419, 1.02806)
			\curveto(0.68419, 0.92806)(0.6, 0.9)(1.0, 0.8)
			\lineto(1.0,0)\lineto(0.5,0)\lineto(0.5,1.02806)\lineto(0.88419,1.02806)
		}}
			\psdash\psline(0.88419, 0.22806)(0.88419, 1.02806)		
		\endpsclip
		\psline(0.0, 0.2)(0.0, 1.0)										
		\begingroup
			\psset{linestyle=dotted,linewidth=0.3pt,dotsep=2pt,dimen=middle}
			\psellipse(0.5,0.1)(0.64031,0.16008)
			\psellipse(0.5,0.9)(0.64031,0.16008)
			\psline(-0.14031,0.1)(-0.14031,0.9)
			\psline( 1.14031,0.1)( 1.14031,0.9)
		\endgroup
	\end{centerpict}}
	\hskip 2cm
	\Rnode{merge}{\begin{centerpict}(-0.2,-0.2)(1.2,1.2)
		\psbezier(0.11581,0.77194)(-0.08519,0.67194)(0.3,0.65)(0.65,0.65)		
		\psbezier(0.65,0.65)(1.0,0.65)(1.4,0.7)(1.0,0.8)										
		\psbezier(0.0,1.0)(-0.4, 1.1)(0.0,1.15)(0.35,1.15)									
		\psbezier(0.35,1.15)(0.7,1.15)(1.08419, 1.12806)(0.88419,1.02806)		
		\psbezier(0.0, 1.0)(0.4, 0.9)(0.31581, 0.87194)(0.11581, 0.77194)		
		\psbezier(0.88419, 1.02806)(0.68419, 0.92806)(0.6, 0.9)(1.0, 0.8)		
		\psclip{\psframe[linestyle=none](0.06046,0)(1,1)}
			\psdash\psbezier(0.0,0.2)(-0.4, 0.3)(0.0,0.35)(0.35,0.35)									
			\psdash\psbezier(0.35,0.35)(0.7,0.35)(1.08419, 0.32806)(0.88419,0.22806)	
			\psdash\psbezier(0.0,0.2)(0.45, 0.1)(0.53419, 0.12806)(0.88419,0.22806)		
		\endpsclip
		\psclip{\psframe[linestyle=none](0.06046,0)(-1,1)}
			\psbezier(0.0,0.2)(-0.4, 0.3)(0.0,0.35)(0.35,0.35)									
			\psbezier(0.0,0.2)(0.45, 0.1)(0.53419, 0.12806)(0.88419,0.22806)		
		\endpsclip
		\psline( 0.06046,-0.07715)( 0.06046, 0.7285)		
		\psline( 1.16697,-0.07583)( 1.16697, 0.72417)		
		\psclip{\pspolygon[linestyle=none]( 0.06046,-0.07715)( 1.16697,-0.07583)(1.16697,1)(0.06046,1)}
			\psdash\psbezier(0.11581,-0.02806)(-0.08519,-0.12806)(0.3,-0.15)(0.65,-0.15)	
			\psdash\psbezier(0.65,-0.15)(1.0,-0.15)(1.4,-0.1)(1.0,0.0)										
			\psdash\psbezier(0.11581,-0.02806)(0.46581, 0.07194)(0.55, 0.1)(1.0, 0.0)			
		\endpsclip
		\psclip{\psframe[linestyle=none](0,-0.05)(1.2,-1)}
			\psbezier(0.11581,-0.02806)(-0.08519,-0.12806)(0.3,-0.15)(0.65,-0.15)	
			\psbezier(0.65,-0.15)(1.0,-0.15)(1.4,-0.1)(1.0,0.0)										
		\endpsclip
		\psdash\psline( 0.93916, 0.27702)( 0.93916, 0.81)		
		\psline( 0.93916, 0.81)( 0.93916, 1.07702)			
		\psline( 0.06046,-0.07715)( 0.06046, 0.7285)		
		\psline( 1.16697,-0.07583)( 1.16697, 0.72417)		
		\psline(-0.16697, 0.27583)(-0.16697, 1.07583)		
		\psclip{\pscustom[linestyle=none]{%
			\moveto(0,1)
			\curveto(0.4, 0.9)(0.31581, 0.87194)(0.11581, 0.77194)
			\curveto(-0.08519,0.67194)(0.3,0.65)(0.65,0.65)
			\curveto(1.0,0.65)(1.4,0.7)(1.0,0.8)
			\curveto(0.6,0.9)(0.68419, 0.92806)(0.88419, 1.02806)
		}}
			\psbezier(0.28306, 0.88342)(0.28306, 0.3)(0.71694, 0.3)(0.71694, 0.91685)	
		\endpsclip
		\psclip{\pscustom[linestyle=none]{%
			\moveto(0,0)
			\lineto(0,0.77194)\lineto(0.11581, 0.77194)
			\curveto(-0.08519,0.67194)(0.3,0.65)(0.65,0.65)
			\curveto(1.0,0.65)(1.4,0.7)(1.0,0.8)
			\lineto(1.2,0.8)\lineto(1.2,0)
		}}
			\psdash\psbezier(0.28306, 0.88342)(0.28306, 0.3)(0.71694, 0.3)(0.71694, 0.91685)	
			\psdot[linecolor=white,dotsize=6pt](0.55,0.35)
			\psbezier[linewidth=0.3pt,arrowsize=3pt,arrowlength=1.2]%
							{<-}(0.55, 0.3)(0.53, 0.7)(0.5, 0.7)(0.45, 0.5)
		\endpsclip
	\end{centerpict}}
	\diagline{<-}{split}{saddle}\naput[npos=0.55]{\textrm{\scriptsize split}}
	\diagline{->}{saddle}{merge}\naput{\textrm{\scriptsize merge}}
\end{equation}
and any of the~diagrams in Tab.~\ref{tab:cube-faces} is a~closure of two saddles. Therefore, a~coefficient associated to a~change of a~chronology cannot be a~single element of the~ring $\scalars$, but it must be a~gadget that returns such an~element after all corners and vertical boundaries are connected in pairs.

\begin{definition}
	A~\emph{closure planar diagram} is a~planar arc diagram with one input (hence, it is an~annulus) and embedded intervals only with endpoints on the~input boundary.
\end{definition}

\noindent
Denote by $\CPO(k)$ the~set of all closure planar diagrams. If $T$ is a~tangle diagram with $2k$ endpoints and $D\in\CPO(k)$ is a closure operator, then $D(T)$ is a~link. A~diagram $D\in\CPO(k)$ induces a~strict $2$-functor\footnote{
	A~cubical functor taking one argument is automatically strict.
}
\begin{equation}
	\EmbChCob(k) \to \EmbChCob(0) \to \kChCob(0),
\end{equation}
which suggests the~commutativity cochain $\psi$ takes values in $\scalars(k) := \{f\colon\CPO(k)\to\scalars\}$, the~ring of all functions from the~set of closure planar operators to $\scalars$. To compute $\psi(S)(D)$ identify the~picture $D(S)$ in Tab.~\ref{tab:cube-faces} on page~\pageref{tab:cube-faces} (an~example is given in Tab.~\ref{tab:psi-for-corners}). It follows immediately that $\psi$ is a~cocycle and that $\KhBracket{T}_\epsilon$, up to an~isomorphism, does not depend on a~sign assignment $\epsilon$.

\begin{table}%
	\begin{tabular}{l|cc}
		\hline
		Closure diagram
		&
		\psset{unit=1.5cm}
		\begin{pspicture}[shift=-0.6](-1,-0.7)(1,0.7)
			\psset{linewidth=0.5pt,linestyle=dotted,dotsep=2pt,dotsize=3pt,dimen=middle}
			\pscircle(0,0){0.6}
			\pscircle(0,0){0.2}
			\psdot(0,-0.195)
			\psset{linestyle=solid}
			\psarc(-0.2828,0){0.2}{  45}{315}
			\psarc( 0.2828,0){0.2}{-135}{135}
		\end{pspicture}
		&
		\psset{unit=1.5cm}
		\begin{pspicture}[shift=-0.6](-1,-0.7)(1,0.7)
			\psset{linewidth=0.5pt,linestyle=dotted,dotsep=2pt,dotsize=3pt,dimen=middle}
			\pscircle(0,0){0.6}
			\pscircle(0,0){0.2}
			\psdot(0,-0.195)
			\psset{linestyle=solid}
			\psarc(0,-0.2828){0.2}{135}{ 45}
			\psarc(0, 0.2828){0.2}{-45}{225}
		\end{pspicture}
		\\
		\hline
		\raisebox{-0.7cm}{\rule{0pt}{1.6cm}}%
		$\psi\left(\begin{pspicture}[shift=-0.6](-0.6,-0.7)(0.6,0.7)
			\psset{linewidth=0.5pt,linestyle=dotted,dotsep=2pt,dotsize=3pt,dimen=middle}
			\pscircle(0,0){0.6}
			\psdot(0,-0.595)
			\psset{linestyle=solid}
			\pscircle(0,0){0.2}
			\psarc(-0.765,0){0.3}{-45}{45}
			\psarc( 0.765,0){0.3}{135}{225}
			\psset{linewidth=2pt,arrowsize=5pt,arrowlength=0.8}
			\psline{->}(-0.48,0)(-0.2,0)
			\psline{->}( 0.48,0)( 0.2,0)
		\end{pspicture}\right)$
		&
		\psset{unit=0.8}%
		\begin{pspicture}(-1.9,0.2)(1.9,1.3)
			\rput[B](0,-0.4){$\permMM$}
			\rput[b](0,0){\pictConnMM{}{->}{}{<-}}
		\end{pspicture}
		&
		\psset{unit=0.8}%
		\begin{pspicture}(-1.9,0.2)(1.9,1.3)
			\rput[B](0,-0.4){$\permSS$}
			\rput[b](0,0){\pictConnX{}{->}{}{->}}
		\end{pspicture}
		\\
		\raisebox{-0.7cm}{\rule{0pt}{1.6cm}}%
		$\psi\left(\begin{pspicture}[shift=-0.6](-0.6,-0.7)(0.6,0.7)
			\psset{linewidth=0.5pt,linestyle=dotted,dotsep=2pt,dotsize=3pt,dimen=middle}
			\pscircle(0,0){0.6}
			\psdot(0,-0.595)
			\psset{linestyle=solid}
			\psarc( 0.765,0){0.3}{135}{225}
			\psbezier(-0.15, 0.212)(-0.26, 0.106)(-0.44, 0.106)(-0.55, 0.212)
			\psbezier(-0.15,-0.212)(-0.26,-0.106)(-0.44,-0.106)(-0.55,-0.212)
			\psbezier(-0.15, 0.212)(-0.10, 0.265)(0.2, 0.3)(0.2,0)
			\psbezier(-0.15,-0.212)(-0.10,-0.265)(0.2,-0.3)(0.2,0)
			\psset{linewidth=2pt,arrowsize=5pt,arrowlength=0.8}
			\psline{->}(-0.35,-0.13)(-0.35,0.13)
			\psline{->}( 0.48,0)( 0.2,0)
		\end{pspicture}\right)$
		&
		\psset{unit=0.8}%
		\begin{pspicture}(-1.9,0.2)(1.9,1.3)
			\rput[B](0,-0.4){$\permSM$}
			\rput[b](0,0){\pictConnSM{}{->}{}{<-}}
		\end{pspicture}
		&
		\psset{unit=0.8}%
		\begin{pspicture}(-1.9,0.2)(1.9,1.3)
			\rput[B](0,-0.4){$\permTneg$}
			\rput[b](0,0){\pictConnT{}{->}{}{<-}}
		\end{pspicture}
		\\
		\hline
	\end{tabular}
	
	\hskip 0.5\baselineskip
	\caption{Values of $\psi$ for some diagrams with four endpoints.}\label{tab:psi-for-corners}
\end{table}

\begin{remark}
	For simplicity, the~linearization of $\EmbChCob(k)$ with coefficients in $\scalars(k)$ will be written as $\kChCob(k)$.
\end{remark}

\begin{remark}
	It can be shown that categories $\kChCob(k)$ form a~planar algebra. However, there is no analogue of Corollary~\ref{cor:plan-alg-cubes} for signed cubes: planar operators are only cubical functors and as such they do not preserve anticommutativity. In~particular, we cannot use planar operators to combine generalized brackets together as it was done in \cite{DrorCobs}.
\end{remark}

\section{Proof of invariance}\label{sec:invariance}
The~Khovanov complex $\KhCom(T)$ is not a~tangle invariant. For example, it depends on the~number of crossings in a~chosen diagram. This dependence disappears after passing to the~homotopy category of complexes and imposing modified versions of Bar-Natan's \textit{S}, \textit{T}, and \textit{4Tu} relations \cite{DrorCobs} explained below.
\begin{enumerate}
	\item[(\textit{S}\/)]\wrapfigure[r]{$\pictRelS = 0$}
				The~\textit{S} relation replaces with $0$ all cobordisms that have a~sphere as a~connected component. The~number and orientations of critical points do not matter.
	\item[(\textit{T}\/)]\wrapfigure[r](0pt,0.5ex){$\pictRelT = \permMS(\permMM+\permSS)$}
				The~\textit{T} relation allows us to remove a~standard torus component at a~cost of multiplying the~cobordism with $\permMS(\permMM+\permSS)$. Here, the~standard torus is defined as a~torus with four critical points and an~arrow at the~merge pointing to the~circle originating on the~left hand side of the~split. The~death is oriented clockwise.
	\item[(\textit{4Tu}\/)]\wrapfigure[r](0pt,0.5ex){\begingroup\psset{unit=0.8cm}
		$\permMS\pictRelTuL + \permMS\pictRelTuR = \permMM\pictRelTuB + \permSS\pictRelTuT$%
	\endgroup}
				The~four tube relation \textit{4Tu} involves four cobordisms from two circles to two circles. Each of them consists of a~tube and two caps, but the~position of the~tube is different in each picture: for the~first two cobordisms the~tube is a~vertical cylinder over one of the~two circles, while in the~remaining two cases it connects either the~input or the~output circles. Notice the~choice of framing for saddle points and heights of caps (the~left caps are smaller than the~right ones). Again, all deaths are oriented clockwise.
\end{enumerate}
The~relations above, especially \textit{T} and \textit{4Tu}, are local. This means all other critical points can appear only above or below the~pictures shown.\footnote{
	This is exactly how the~right disjoint union of chronological cobordisms behaves.
}
All relations are homogeneous---the~degree of the~standard torus is zero, whereas each cobordism involved in \textit{4Tu} has degree $(-1,-1)$---so that they are coherent with changes of chronologies. Let $\kChCobL(k)$ be the~quotient of $\kChCob(k)$ by these relations.

\begin{theorem}\label{thm:invariance}
	Let $T$ be a~tangle with $2k$ endpoints. The~homotopy type of the~generalized Khovanov complex $\KhCom(T)$, regarded as an~object of $\Kom(\kChCobL(k))$, is an~invariant of $T$, i.e.\ complexes for two tangle diagrams related by any of the~Reidemeister moves are chain homotopy equivalent.
\end{theorem}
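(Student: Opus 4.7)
The plan is to follow the strategy of Bar-Natan's proof \cite{DrorCobs}, treating the three Reidemeister moves R1, R2, R3 separately and constructing an explicit chain homotopy equivalence for each. Although the planar algebra operators on $\kChCob(k)$ are only cubical (not strict), each Reidemeister move takes place inside a small disk, so by Corollary~\ref{cor:plan-alg-cubes} it suffices to exhibit a local equivalence: the only global effect of embedding the move in a larger tangle is a possible shift of the critical points produced by the move into the disjoint-union order with the rest of the diagram, which amounts to multiplying the homotopy equivalence by an invertible element of $\scalars(k)$ (and hence does not affect its being a homotopy equivalence). Throughout, I will use Gaussian elimination: whenever the differential of a complex contains an invertible diagonal block, the corresponding summands can be cancelled, producing a chain homotopy equivalent subcomplex.

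For R1 the complex of the curled strand is a two-term complex whose differential is (a scalar times) a cobordism with a single Morse point. Applying the delooping supplied by the~\textit{S} relation together with a creation/annihilation change from Proposition~\ref{prop:changes-generators}, one writes this complex as a direct sum of an acyclic two-term subcomplex (an identity cylinder paired with its inverse via an invertible coefficient of the form $\permMM^a\permSS^b\permMS^c$) and a contractible summand isomorphic to the trivial strand complex. The chain maps and homotopies are exactly the birth and death cobordisms, scaled so that all faces commute in $\kChCobL(k)$; the scalars are forced by the table following Lemma~\ref{lem:disjoint-vs-connected-permutation} and by Corollary~\ref{cor:orientation-rev}.

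For R2 the complex has four summands, arranged in a square. One checks that one of the off-diagonal maps is (up to an invertible coefficient) an isomorphism---topologically a cylinder between isotopic $1$-manifolds---and performs Gaussian elimination to cancel those two vertices. The remaining two-term complex is then shown to reduce, again via the birth/death moves, to the identity complex of two parallel strands. The \textit{4Tu} relation is not needed here, only \textit{S} together with the coefficient rules of Section~\ref{sec:cob-linear}. For R3 the standard trick is to replace one crossing in each of the two R3-pictures by its R2-reduction, producing two \emph{a priori} different complexes which, after Gaussian elimination, are shown to be strictly equal (or differ only by a global invertible scalar); this is where the \textit{4Tu} relation enters, because the chain homotopy needed to identify the two resulting \quot{shared} subcomplexes is precisely the alternating sum of four tube cobordisms appearing in that relation.

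The main obstacle is not the topological content of the moves---it is the careful bookkeeping of the chronological coefficients. Each time a saddle is moved past another critical point, past a birth, or past a death, one picks up a factor from the table after Lemma~\ref{lem:disjoint-vs-connected-permutation}, and the chain maps and homotopies must be adjusted by precisely these factors so that the squares in the mapping cones commute on the nose. The hexagonal relations of Proposition~\ref{prop:changes-relations} guarantee that any two routes for moving a critical point around produce the same total scalar in $\scalars$, so the homotopies are unambiguous; but one still must verify, move by move, that the resulting coefficients on either side of \textit{S}, \textit{T}, and \textit{4Tu} are compatible with the chosen normalizations $\permMM^2=\permSS^2=1$ and with the particular framings used in the cube of resolutions. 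The independence of this verification from the chosen sign assignment and crossing arrows is handled once and for all by Lemmas~\ref{lem:kh-cube-indep-of-arr} and \ref{lem:kh-cube-indep-of-sign-assign}.
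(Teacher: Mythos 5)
Your overall architecture (local lemmas for R1/R2/R3, then a globalization step) matches the paper's, but several of the specific claims would not survive a careful write-up.

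First, you invoke ``delooping supplied by the \textit{S} relation'' for R1 and Gaussian elimination for R2. Delooping in the sense of Proposition~\ref{prop:changes-generators}'s creation/annihilation plus an invertible isomorphism $\fntCircle\cong\emptyset\{-1\}\oplus\emptyset\{+1\}$ is only available in the dotted category $\kChCobD(k)$ (it is exactly the content of the delooping proposition there); it does not exist in $\kChCobL(k)$, where the closest substitute is the neck-cutting identity multiplied by the non-invertible scalar $\permMS(\permMM+\permSS)$. Without delooping, the cancellations you propose do not go through, and one is forced back to the explicit chain-map computations. Second, you claim that \textit{4Tu} is not needed for R2 and is only needed for R1 implicitly. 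In fact the paper's proofs of both Lemma~\ref{lem:kh-inv-R1} and Lemma~\ref{lem:kh-inv-R2} use \textit{4Tu} at the crucial step $fg-\id=dh+hd$ (and \textit{T} is used in R1 to get $gf=\id$); this is not an artifact of presentation --- the needed homotopy identity is literally a rearrangement of the four-tube identity. Third, the role of \textit{4Tu} in R3 is indirect: Lemma~\ref{lem:kh-inv-R3} is purely a mapping-cone argument (Lemma~\ref{lem:cone-homot}) applied to the strong deformation retract coming from R2, so \textit{4Tu} enters only through R2, not through an additional cobordism computation inside the R3 proof.

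Finally, the globalization step is dismissed too quickly. You appeal to Corollary~\ref{cor:plan-alg-cubes}, but that corollary concerns the unsigned cube $\KhCube{T}$, and the paper explicitly warns that planar operators do \emph{not} preserve anticommutativity of signed cubes, so local homotopy equivalences cannot simply be inserted via a planar operator. The actual argument in the proof of Theorem~\ref{thm:invariance} builds a cube of complexes and checks that the local maps induce chain maps on the collapse; for R2 and R3 the local homotopies anticommute with the extra edges (Remark~\ref{rmk:R2-homotopy}), but for R1 the local maps $f$ and $g$ commute with the surrounding edges only up to invertible scalars $\lambda(\cdot,\cdot)$, and one must introduce a correcting $0$-cochain $\eta\in C^0(I^{n-1};\invScalars)$. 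Your phrase ``multiplying the homotopy equivalence by an invertible element of $\scalars(k)$'' gestures at this but does not produce the cochain or verify it is well defined; this is precisely the part that fails if done naively.
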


We shall first prove this theorem locally, for the~tangles defining Reidemeister moves. Using the~planar algebra of chronological cobordisms we shall then extend the~homotopy equivalences to complexes for bigger tangles. Proofs will be done on pictures of cobordisms and for simplicity we omit some details, keeping in mind the~following conventions:
\begin{enumerate}
	\item basepoints should be chosen in the~same place for all pictures involved
				in each proof,
	\item all deaths are oriented clockwise, and
	\item arrows orienting saddles are directed either to the~right or to the~front.
\end{enumerate}
In particular, we can cancel at no cost a~merge or a~split with a~birth or a~death respectively on its right-hand side, while a~left-hand cancellation costs a~multiplication by $\permMM$ or $\permSS$.

\begin{definition}
	We say that a~chain complex $D$ is a~\emph{strong deformation retract} of a~chain complex $C$ if there are chain maps $f\colon D\to C$ and $g\colon C\to D$ such that $gf=\id$ and $fg - \id = dh+hd$ for a~homotopy $h$ such that $hf = 0$.\footnote{
		We will often omit the~composition sign $\circ$.
	}
	The~chain map $f$ is called an~\emph{inclusion into a~deformation retract}.
\end{definition}

\begin{lemma}\label{lem:kh-inv-R1}
	The~bracket $\KhBracket*{\fntRIh*}\{1\}$ is a~strong deformation retract of $\KhBracket*{\fntRIx*}$. Hence, $\KhCom(\fntRIh*)$ and $\KhCom(\fntRIx*)$ are homotopy equivalent for any orientation of the~tangle.
\end{lemma}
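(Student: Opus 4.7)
The plan is to construct explicit chain maps realizing the deformation retract and to verify the strong-deformation-retract identities by local computations in $\kChCobL(1)$, in the style of \cite{DrorCobs}.

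I would first spell out $\KhBracket*{\fntRIx*}$ as a two-term complex. Its cube of resolutions has two vertices: one smoothing of the single crossing yields the crossingless arc $\fntRIh$, while the other yields the disjoint union $\fntRIh\sqcup\bS$ of the arc with an extra circle. The edge between the vertices carries a saddle cobordism---a merge or a split depending on the direction of the arrow at the crossing. By Lemma~\ref{lem:kh-cube-indep-of-arr} reversing the arrow produces a chain-isomorphic bracket, so it suffices to treat one case, say $C^0 = \fntRIh\sqcup\bS$ and $C^1 = \fntRIh\{1\}$ with $d^0$ a merge of the disjoint circle into the arc.

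Next, I would define the inclusion $f\colon\KhBracket*{\fntRIh*}\{1\}\to\KhBracket*{\fntRIx*}$ and the projection $g\colon\KhBracket*{\fntRIx*}\to\KhBracket*{\fntRIh*}\{1\}$ using local cobordisms. The natural candidates use an identity cylinder on $\fntRIh$ at the vertex $C^1 = \fntRIh\{1\}$ (possibly corrected by a birth creating the disjoint circle at the other vertex) together with a clockwise death killing the disjoint circle at $C^0$. The homotopy $h\colon C^1\to C^0$ is a birth cobordism introducing the disjoint circle onto the arc.

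Then, I would verify the four required identities: $f$ and $g$ are chain maps, $gf=\id$, and $fg-\id=dh+hd$ together with $hf=0$. Each identity reduces to a topological identity between compositions of births, deaths, and saddles on the extra circle. A birth followed by a death produces a disjoint sphere, which vanishes by the $S$ relation; a merge followed by a split (or a split followed by a merge) produces a handle that reduces via the $4Tu$ and $T$ relations; identity-plus-correction terms collapse to cylinders that are literally the identity morphism.

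The hard part will be the chronological bookkeeping. Unlike in the even setting of \cite{DrorCobs}, each permutation of critical points along a cobordism produces a unit coefficient in $\scalars$, and the orientations of the saddles, together with the clockwise/anticlockwise sense of deaths, must be chosen consistently so that the coefficients on both sides of each identity match. The very form of the $T$ relation, with coefficient $\permMS(\permMM+\permSS)$, and the particular signs in the $4Tu$ relation, are forced exactly by this requirement. Tracking these coefficients is the substantive content of the proof, even though each individual local verification is short.
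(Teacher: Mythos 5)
Your overall strategy---two-term complexes, a strong deformation retract built from local cobordisms, and verification via the $S$, $T$, and $4Tu$ relations with chronological coefficients tracked throughout---is exactly the paper's approach. But the candidates you propose for $f$ and $g$ do not work, and this is not merely a coefficient issue: it is the essential topological content of the $R_1$ step. Since $\KhBracket*{\fntRIh*}\{1\}$ is concentrated in homological degree $0$, $f$ and $g$ each have a single nonzero component $f^0\colon\fntRIh*\{1\}\to\fntRIv*$ and $g^0\colon\fntRIv*\to\fntRIh*\{1\}$; there is nothing at $C^1$, so the ``identity cylinder at $C^1$'' you mention has no place to live. And if $f^0$ is simply a birth of the extra circle while $g^0$ is a death of it, then $g^0f^0$ is a cylinder over the arc disjoint from a sphere, which vanishes by $S$. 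So $g^0f^0=0$, not $\id$, and no invertible scalar in $\scalars$ repairs that.

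The missing ingredient, exactly as in Bar-Natan's even argument, is that $f^0$ must be a \emph{difference of two cobordisms}: one in which the new circle is created with a handle attached to it, and one in which it is created by a plain birth. In the paper $f^0$ carries coefficients $\permSS\permMM$ and $-\permSS\permMS$ on these two terms while $g^0$ carries $\permMM\permSM$; then $g^0f^0$ contains a standard torus, which $T$ converts into $\permMS(\permMM+\permSS)$ times a cylinder, and the scalars assemble to $\permSS(\permMM+\permSS)-\permMM\permSS=\permSS^2=1$. With that $f^0$, the identity $f^0g^0-\id=hd$ becomes an instance of $4Tu$, $dh=-\id$ is the cancellation of a birth against the merge, and $hf^0=0$ holds trivially. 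You correctly flagged the chronological bookkeeping as the substance of the verification, but the handle term in $f^0$ is the load-bearing piece of geometry; without it the very first identity $gf=\id$ already fails.
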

\vskip -0.75\baselineskip
\begin{proof}
	\wrapfigure[r]<1>{%
		\psset{unit=0.75cm}%
		\begin{pspicture}(-5.1,-0.3)(5.6,5.7)
			\rput[r](-3,5){$\KhBracket*{\vcenter{\hbox{\Large\tangleRIh*}}} :$}
			\rput[r](-3,1){$\KhBracket*{\vcenter{\hbox{\Large\tangleRIx*}}} :$}
			\rput[c]( 0,5){\rnode{tl}{\tangleRIh*}}
			\rput[c]( 5,5){\rnode{tr}{$0$}}
			\rput[c]( 0,1){\rnode{bl}{\tangleRIv*}}
			\rput[c]( 5,1){\rnode{br}{\tangleRIh*}}
			\diagline{->}{tl}{tr}\naput[labelsep=1pt]{$\scriptstyle 0$}
			\diagline{<->}{tr}{br}\naput[labelsep=1pt]{$\scriptstyle 0$}
			\diagline[offset=-2pt]{->}{bl}{br}
						\nbput{$\scriptstyle d\,=\,\epsilon\fntCobRId$}
			\diagline[offset= 2pt]{<-}{bl}{br}
						\naput{$\scriptstyle h\,=\,-\epsilon^{-1}\fntCobRIh$}
			\diagline[offset=-2pt]{->}{tl}{bl}
						\nbput{$\scriptstyle\permSS\left(\!\!\permMM\fntCobRIfa-\permMS\fntCobRIfb\!\!\right)\,=\,f^0$}
			\diagline[offset= 2pt]{<-}{tl}{bl}
						\naput[npos=0.3]{$\scriptstyle g^0\,=\,\permMM\permSM\fntCobRIg$}
		\end{pspicture}%
	}
	The~second statement follows from the first one, because no matter how the~tangle is oriented, its crossing is positive. Consider now the~diagram to the~right. Rows together with morphisms pointing to the~right represent the~Khovanov brackets $\KhBracket*{\fntRIh*}$ (the top row) and $\KhBracket*{\fntRIx*}$ (the~bottom row), whereas the~morphisms pointing to the~left are chain homotopies in these complexes (zero at the top and a~curtain with a~birth at the~bottom). The~coefficient $\epsilon$ comes from a~sign assignment---although we could take $\epsilon=1$, this more general situation turns out to be useful when extending the~proof to the~global case.
	
	\parshape=0
	Vertical arrows define chain maps $f\colon\KhBracket*{\fntRIh*}\to\KhBracket*{\fntRIx*}$ and $g\colon\KhBracket*{\fntRIx*}\to\KhBracket*{\fntRIh*}$, which is obvious for $g$, but requires the~following short computation for $f$:
	\begin{equation}
		\epsilon^{-1}df^0
					= \permMM\permSS\insertCob{R1-df-0} - \permSS\permMS\insertCob{R1-df-1}
					= \permSS\permMS\insertCob{R1-df-3} - \permSS\permMS\insertCob{R1-df-1}
					= 0.
	\end{equation}
	When the~degree shifts are applied, both $f$ and $g$ have degree $0$. They are chain homotopy equivalences, as the~relation \textit{T} implies $gf = \id$:
	\begin{equation}
		g^0f^0	=	\permSS\permSM\insertCob{R1-GF-0} - \permMM\permSS\insertCob{R1-GF-1}
						= \left(\permSS(\permMM+\permSS)-\permMM\permSS\right)\insertCob{R1-GF-2} = \id,
	\end{equation}
	whereas \textit{4Tu} makes $f^0g^0 - \id = hd$:
	\begin{align*}
		0  &= \phantom{\permSS}\permMS\insertCob{R1-FG-0}\phantom{\permMM}
				+ \permMS\insertCob{R1-FG-1}\phantom{\permMS}
				- \permMM\insertCob{R1-FG-2}
				- \permSS\insertCob{R1-FG-3}\\
			 &= \permSS\permMS\insertCob{R1-FG-4}
				+ \permMM\permMS\insertCob{R1-FG-5}
				- \permMM\permMS\insertCob{R1-FG-6}\phantom{\permSS}
				- \insertCob{R1-FG-7}
			 = -\permMM\permMS(f^0g^0 - \id - hd).
	\end{align*}
	After expanding $f^0g^0$ we can see that the~last cobordism should appear with the~coefficient $-\permMM\permSS$. The~equality holds, because the~cobordism has a~handle, hence, it is annihilated by $(1-\permMM\permSS)$. Together with $dh = -\id = f^1g^1-\id$ (remove the~birth), this shows that the~maps $f$ and $g$ are mutually inverse homotopy equivalences. To finish the~proof, notice that $hf = 0$ trivially.
\end{proof}

\begin{remark}\label{rmk:R1-homotopy}
	Suppose $\fntRIx*$ is a~part of a~bigger tangle diagram $T$ and consider the~corrected cube of resolutions $\KhCubeSigned{T}{\epsilon}$. If we replace edges $d_\zeta$ corresponding to the~crossing in $\fntRIx*$ with homotopies $h_\zeta$ defined as in Lemma~\ref{lem:kh-inv-R1} (this reverses directions of the~edges), the~new cube still anticommutes. Indeed, as $d_\zeta$ is always a~merge and $h_\zeta$ is a~birth, checking anticommutativity reduces to comparing the~following squares.
	\begin{equation}
	\begin{diagps}(-0.2,-0.2)(2.2,2.2)
		\rput(0,1){\Rnode{l}{$\bullet$}}
		\rput(1,2){\Rnode{t}{$\bullet$}}
		\rput(1,0){\Rnode{b}{$\bullet$}}
		\rput(2,1){\Rnode{r}{$\bullet$}}
		\diagline{->}{l}{t}	\naput[nrot=:U,labelsep=3pt]{\scriptsize saddle}
		\diagline{->}{t}{r}	\naput[nrot=:U,labelsep=3pt]{\scriptsize merge}
		\diagline{->}{l}{b}	\nbput[nrot=:U,labelsep=4pt]{\scriptsize merge}
		\diagline{->}{b}{r}	\nbput[nrot=:U,labelsep=4pt]{\scriptsize saddle}
		\psset{linewidth=0.5pt,arrowsize,arrowsize=4pt 1.2,arrowlength=0.7,arrowinset=0.5}
		\psarcn[linewidth=0.5pt](1,1){0.2}{135}{-135}\psline{<-}(0.85858,0.85858)(0.85929,0.85828)
	\end{diagps}
	\hskip 1cm\to/<->/<3em>\hskip 1cm
	\begin{diagps}(-0.2,-0.2)(2.2,2.2)
		\rput(0,1){\Rnode{l}{$\bullet$}}
		\rput(1,2){\Rnode{t}{$\bullet$}}
		\rput(1,0){\Rnode{b}{$\bullet$}}
		\rput(2,1){\Rnode{r}{$\bullet$}}
		\diagline{->}{l}{t}	\naput[nrot=:U,labelsep=3pt]{\scriptsize saddle}
		\diagline{<-}{t}{r}	\naput[nrot=:U,labelsep=3pt]{\scriptsize birth}
		\diagline{<-}{l}{b}	\nbput[nrot=:U,labelsep=4pt]{\scriptsize birth}
		\diagline{->}{b}{r}	\nbput[nrot=:U,labelsep=4pt]{\scriptsize saddle}
		\psset{linewidth=0.5pt,arrowsize,arrowsize=4pt 1.2,arrowlength=0.7,arrowinset=0.5}
		\psarcn[linewidth=0.5pt](1,1){0.2}{135}{-135}\psline{<-}(0.85858,0.85858)(0.85929,0.85828)
	\end{diagps}
	\end{equation}
	Whatever the~saddle is, the~relation between the~top and the~bottom cobordism in the~left square is exactly the~same as the~relation between the~left and the~right cobordism in the~right square. Because we corrected $h_\zeta$ with the~inverse of the~coefficient for $d_\zeta$, coefficients along the~circular arrows are the~same and anticommutativity of one of the~squares implies anticommutativity of the~other.
\end{remark}

\begin{lemma}\label{lem:kh-inv-R2}
	The~bracket $\KhBracket*{\fntRIIhhs*}\{1\}$ is a~strong deformation retract of $\KhBracket*{\fntRIIxx*}[1]$. Hence, $\KhCom(\fntRIIhhs*)$ and $\KhCom(\fntRIIxx*)$ are homotopy equivalent for any orientation of the~tangles involved.
\end{lemma}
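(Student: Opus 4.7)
I would mirror the proof of Lemma~\ref{lem:kh-inv-R1}, constructing explicit chain maps $f,g$ and a chain homotopy $h$ directly on cobordisms, and verifying the three strong deformation retract identities $gf=\id$, $hf=0$, $fg-\id=dh+hd$ using the local relations \textit{S}, \textit{T}, \textit{4Tu} as needed.

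First I~would write out the cube of resolutions $\KhCube{\fntRIIxx*}$ together with a~concrete sign assignment~$\epsilon$, whose existence is guaranteed by Proposition~\ref{prop:cochain-is-cocycle}. The~cube is a~square whose four vertices, after the~shift~$[1]$, occupy homological degrees $-1,0,0,1$. One of the~two weight-$1$ vertices, call it~$v$, is isotopic to~$\fntRIIhhs*$; the~other, $v'$, is $\fntRIIhhs*$ disjoint union with a~small circle; and the~two extremal vertices $v_{-}$ and $v_{+}$ differ from $\fntRIIhhs*$ by a~single saddle. The~map $f$ is then defined in degree~$0$ as a~scalar multiple of the~identity cobordism from $\fntRIIhhs*$ to~$v$, with no component into~$v'$, and $g$ is the~corresponding projection onto~$v$; the~requirement $df=0$ pins its scalar down up to the~sign-assignment entries on the~two edges leaving~$v$, exactly in the~way the~coefficient $\permSS(\permMM\fntCobRIfa-\permMS\fntCobRIfb)$ is forced in the~R1 proof.

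For the~homotopy I~would take $h$ to have two nonzero components: a~saddle $v'\to v_{-}$ running backwards along the~edge of the~cube that created the~extra circle on~$v'$, and a~dual saddle $v_{+}\to v'$ along the~opposite edge. Their scalars are fixed by demanding that $hd=-\id$ on~$v_{-}$ and $dh=-\id$ on~$v_{+}$; in each case the~composition is two saddles along a~short cylinder, which reduces to an~identity cylinder times the~inverse of the~corresponding sign-assignment entry, mirroring the~coefficient $-\epsilon^{-1}\fntCobRIh$ of the~R1 homotopy. Since $h$ vanishes on~$v$, the~identity $hf=0$ is automatic, and the~verification of $fg-\id=dh+hd$ at homological degrees~$\pm 1$ reduces to the~two cancellations just described.

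The~essential step is therefore $fg-\id=dh+hd$ on the~middle piece $v\oplus v'$. On the~$v\to v$ component two saddles from $dh$ combine into a~torus that is evaluated by the~\textit{T} relation, while the~$v'\to v'$ diagonal together with the~two off-diagonal components of~$dh+hd$ produce four cylinder-with-cap cobordisms arranged precisely in the~configuration of~\textit{4Tu}. The~main obstacle is the~bookkeeping of the~chronological scalars: each composition $dh$, $hd$, or~$fg$ requires pulling one saddle past another, picking up a~factor of $\permMM,\permSS,\permMS,$ or~$\permSM$ from the~disjoint-union and connected-sum permutations of Proposition~\ref{prop:changes-generators}, and these factors accumulate differently in each composite. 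The~scalars attached to~$f$, $g$, and the~two components of~$h$ must be tuned so that, once all such permutation factors are collected together with the~entries of~$\epsilon$, the~resulting cobordism identities reduce to exactly the~coefficient pattern of~\textit{4Tu} and~\textit{T}; the~relations $\permMM^{2}=\permSS^{2}=1$ and Theorem~\ref{thm:aut(W)} (a~handle annihilates $1-\permMM\permSS$) are expected to make this balance close, exactly as they did in the~R1~proof.
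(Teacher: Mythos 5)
The gap is that the maps $f$ and $g$ you propose fail to be chain maps. After the shift $[1]$ the cube has two degree-$0$ vertices, $v\cong\fntRIIhhs*$ and $v'\cong\fntRIIhhs*\sqcup\fntCircle$, and the differential $d^0\colon v\oplus v'\to v_{+}$ has a nonzero $v$-component (a single saddle $d_{1{*}}$, one edge of the square). If $f$ is a scalar multiple of the identity into $v$ with \emph{no} component into $v'$, then $d^0\circ f$ is that scalar times the saddle, which does not vanish in $\kChCob(2)$; the only admissible scalar is zero. The same problem kills $g$ on the other side, since $d^{-1}\colon v_{-}\to v\oplus v'$ also has a nonzero $v$-component $d_{{*}0}$. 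The very content of this lemma is that the identity into $v$ must be paired with a nontrivial companion component into $v'$, namely $f^0 := h_{{*}1}d_{1{*}}$ (the edge $v\to v_{+}$ followed by the homotopy $v_{+}\to v'$) and dually $g^0 := d_{{*}0}h_{0{*}}$, chosen so that the two terms in $d^0\circ f$ cancel. Your own appeal to the R1 coefficient $\permSS\left(\permMM\fntCobRIfa-\permMS\fntCobRIfb\right)$ should have been a warning: that map is already a sum of two non-identity cobordisms, not a rescaled identity.

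A secondary miscalculation: once $f$ and $g$ carry the required $v'$-components, the $v\to v$ entry of $fg-\id = dh+hd$ is trivially $\id-\id=0$ on the left and $0$ on the right, because the components of $h$ never touch $v$; no torus appears and the \textit{T} relation is not used in the R2 argument at all. The work with \textit{4Tu} is concentrated entirely on the $v'\to v'$ entry, where one must verify $f^0g^0-\id = h_{{*}1}d_{{*}1}+d_{0{*}}h_{0{*}}$; the two off-diagonal entries hold by the definitions of $f^0$ and $g^0$ and contribute nothing further. Finally, $hf=0$ is not automatic as you claim: since $f$ has a $v'$-component, the composite $h_{0{*}}\circ f^0$ is a nontrivial cobordism, and it vanishes only because a sphere component appears, i.e.\ by the \textit{S} relation, which also handles $gf=\id$.
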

\begin{proof}
	As before, the~second claim follows from the~first one, as the~two crossings in \fntRIIxx* have different signs for any orientation of the~tangle. The~first sentence follows from the~diagram in Fig.~\ref{fig:khov-inv-R2}. As before, $\epsilon$'s come from some sign assignment (so that the~lower square anticommutes). The~nontrivial components of $f$ and $g$ are chosen as compositions $f^0 := h_{{*}1}d_{1{*}}$ and $g^0 := d_{{*}0}h_{0{*}}$. Again, both $f$ and $g$ have degree $0$ after the~degree shifts are applied.

	The~morphisms $f$ and $g$ are chain maps: the~equalities $df = 0$ and $dg = 0$ are either trivial or they follow easily from the~chronological relations. The~relation \textit{S} makes both $gf=\id$ and $hf=0$ and it remains to show that $h$ is a~chain homotopy between $fg$ and the~identity morphism. The~only nontrivial case is in the~middle, were we have to check the~matrix equality
	\begin{equation}
		\left(\begin{matrix}	g^0f^0 & f^0\\ g^0 & I\end{matrix}\right)
			-
		\left(\begin{matrix} \id & 0 \\ 0 & \id\end{matrix}\right)
			= 
		\left(\begin{matrix}
				h_{{*}1}d_{{*}1}+d_{0{*}}h_{0{*}} & h_{{*}1}d_{1{*}}\\
				d_{{*}0}h_{0{*}} & 0
		\end{matrix}\right).
	\end{equation}
	It follows from definitions of $f^0$ and $g^0$ and the~\textit{4Tu} relation:
	\begin{align*}
		0\quad &= \phantom{\permMM}\permMS\insertCob{R2-FG-0a}\phantom{\gamma\varphi}
				+ \phantom{\permMM}\permMS\insertCob{R2-FG-0b}\phantom{\permMS}
				- \permMM\insertCob{R2-FG-0c}\phantom{\permMM}
				- \phantom{\permMS}\permSS\insertCob{R2-FG-0d}\\
		   &= \permMM\permMS\insertCob{R2-FG-1a}\phantom{\gamma}
			  + \phantom{\varphi\permMM}\mathllap{\permMM\permSS}\permMS\insertCob{R2-FG-1b}
				- \permMM\permMS\insertCob{R2-FG-1c}
				- \permMM\permSS\permMS\insertCob{R2-FG-1d}\\
			 &= \permMM\permMS\insertCob{R2-FG-1a}
			  - \gamma\varphi\permMM\permMS\insertCob{R2-FG-2}
				- \permMM\permMS\insertCob{R2-FG-1c}
				- \permMM\permSS\permMS\insertCob{R2-FG-1d}\\
			 &= \permMM\permMS(-f^0g^0+\id+h_{{*}1}d_{{*}1}+d_{0{*}}h_{0{*}})\rule{0pt}{0.7cm}.
	\end{align*}
	The~coefficient $\permMM$ at the~first term appears, because the~birth is canceled with a~merge from the~left side. The~same happens in the~last two terms, but in the~third one we also have to reverse an~orientation of the~lower merge. Finally, to modify the~second term, we first used chronological relations and then anticommutativity of the~lower square in Fig.~\ref{fig:khov-inv-R2} (erase the~caps to see compositions of differentials).
\end{proof}

\begin{remark}\label{rmk:R2-homotopy}
	As before, if we take a~cube for a~bigger tangle diagram the~homotopies $h_{0{*}}$ and $h_{{*}1}$ anticommute with edges corresponding to other crossings than the~two involved in the~second Reidemeister move. This can be shown similarly as in Remark~\ref{rmk:R1-homotopy}: the~two homotopies are paired with edges that are always a~merge or a~split, however we close the~diagram.
\end{remark}

\begin{figure}
	\psset{unit=0.75}
	\begin{diagps}(0,0)(14.5,10.2)
		\rput(0,9.6){$\KhBracket*{\vcenter{\hbox{\tangleRIIhhs*}}}\mathrlap{:}$}
		\rput(0,3.0){$\KhBracket*{\vcenter{\hbox{\tangleRIIxx*}}}\mathrlap{\![\,1]:}$}
		\psline{<->}( 0,7.25)(0,5.25)\uput[ur]( 0,7.25){$\scriptstyle g$}\uput[dr](0,5.25){$\scriptstyle f$}
		\psline{<->}(-1,6.25)(1,6.25)\uput[ul](-1,6.25){$\scriptstyle h$}\uput[ur](1,6.25){$\scriptstyle d$}
		\rput( 4.2,9.6){\rnode{t-1}{$0$}}
		\rput( 9.6,9.6){%
			\rnode{t 0}{$\KhBracket*{\vcenter{\hbox{\tangleRIIhhs*}}}$}%
		}%
		\rput(15.0,9.6){\rnode{t 1}{$0$}}
		\rput( 4.2,3.0){%
			\rlap{\raisebox{-3ex}[0pt][0pt]{$\scriptstyle\mskip\thickmuskip 00$}}%
			\rnode{b00}{$\KhBracket*{\vcenter{\hbox{\tangleRIIvh*}}}$}%
		}
		\rput( 8.4,1.2){%
			\rlap{\raisebox{-3ex}[0pt][0pt]{$\scriptstyle\mskip\thickmuskip 01$}}%
			\rnode{b01}{$\KhBracket*{\vcenter{\hbox{\tangleRIIvv*}}}$}%
		}
		\rput(10.8,4.8){%
			\rlap{\raisebox{-3ex}[0pt][0pt]{$\scriptstyle\mskip\thickmuskip 10$}}%
			\rnode{b10}{$\KhBracket*{\vcenter{\hbox{\tangleRIIhh*}}}$}%
		}
		\rput(15.0,3.0){%
			\rlap{\raisebox{-3ex}[0pt][0pt]{$\scriptstyle\mskip\thickmuskip 11$}}%
			\rnode{b11}{$\KhBracket*{\vcenter{\hbox{\tangleRIIhv*}}}$}%
		}
		\diagline{->}{t-1}{t 0}\naput{$\scriptstyle 0$}
		\diagline{->}{t 0}{t 1}\naput{$\scriptstyle 0$}
		\diagline{->}{b00}{b10}\naput[labelsep=-1pt,npos=0.3]{$\scriptstyle \epsilon_{{*}0}\fntCob{R2-d*0}$}
		\diagline{->}{b10}{b11}\naput[labelsep=-7pt,npos=0.4]{$\scriptstyle \epsilon_{1{*}}\fntCob{R2-d1*}$}
		\diagline[offset= 2pt]{->}{b00}{b01}
				\naput[labelsep=-4pt]{$\scriptstyle \epsilon_{0{*}}\fntCob{R2-d0*}$}
		\diagline[offset= 2pt]{->}{b01}{b11}
				\naput[labelsep=1pt,npos=0.4]{$\scriptstyle \epsilon_{{*}1}\fntCob{R2-d*1}$}
		\diagline[offset=-2pt]{<-}{b00}{b01}
				\nbput[labelsep= 0pt]{$\scriptstyle-\epsilon_{0{*}}^{-1}\permSS\fntCob{R2-h0*}$}
		\diagline[offset=-2pt]{<-}{b01}{b11}
				\nbput[labelsep=-4pt]{$\scriptstyle-\epsilon_{{*}1}^{-1}\fntCob{R2-h*1}$}
		\diagline{<->}{b00}{t-1}\naput{$\scriptstyle 0$}
		\diagline{<->}{b11}{t 1}\nbput{$\scriptstyle 0$}
		\diagarc[arcangle= 12]{<->}{t 0}{b10}
				\naput[npos=0.3]{$\scriptstyle\id$}
		\diagarc[arcangle= 12,offset= 2pt,border=3\pslinewidth]{->}{b01}{t 0}
				\naput[labelsep=-2pt,npos=0.75]{$\scriptstyle\gamma\fntCob{R2-G}$}
		\diagarc[arcangle= 12,offset=-2pt,border=3\pslinewidth]{<-}{b01}{t 0}
				\nbput[labelsep=-1pt,npos=0.65]{$\scriptstyle\varphi\!\fntCob{R2-F}$}
		\rput[c](12.8,7.2){$\setlength\arraycolsep{0pt}\begin{array}{rl}
				\scriptstyle\gamma  &\scriptstyle= -\epsilon_{{*}0}^{\vphantom{-1}}\epsilon_{0{*}}^{-1}\permSS\\
				\scriptstyle\varphi &\scriptstyle= -\epsilon_{1{*}}^{\vphantom{-1}}\epsilon_{{*}1}^{-1}
			\end{array}$}
	\end{diagps}
	\caption{Invariance under the~$R_2$ move.}\label{fig:khov-inv-R2}
\end{figure}

The~case of the~third move is the~simplest one, although it deals with the~largest complex. This is because it can be derived from the~invariance under the~second move, as it is done in the~case of the~Kauffman bracket. For this, we need one property of mapping cone complexes.

\begin{definition}
	The~\emph{mapping cone} of a~chain map $\psi\colon C\to D$ is the~chain complex $\cone(\psi)$ defined as
	\begin{equation}
		\cone(\psi)^i := C^{i+1}\oplus D^i,\qquad
		d=\left(\begin{matrix}-d_C & 0\\ \psi & d_D\end{matrix}\right)
	\end{equation}
\end{definition}

\begin{lemma}\label{lem:cone-homot}
	The~homotopy type of a~mapping cone is preserved under compositions with inclusions into strong deformation retracts. More precisely, given a~pair of strong deformation retracts
	\begin{equation}
		C_a\twoways<2em>^{g_a}_{f_a}D_a
			\qquad\textit{and}\qquad
		C_b\twoways<2em>^{g_b}_{f_b}D_b
	\end{equation}
	and a~chain map $\psi\colon C_a\to C_b$, the~mapping cones $\cone(\psi f_a)$ and $\cone(f_b\psi)$ are strong deformation retracts of $\cone(\psi)$.
\end{lemma}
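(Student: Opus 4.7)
The plan is to construct, for each of the two claimed deformation retracts, explicit $(2{\times}2)$-block chain maps and a block homotopy, working with the decomposition $\cone(\psi)^i = C_a^{i+1}\oplus C_b^i$ equipped with the twisted differential $\left(\begin{smallmatrix}-d_{C_a}& 0\\ \psi & d_{C_b}\end{smallmatrix}\right)$, and with analogous decompositions of the two smaller cones. The only ingredients needed throughout are the given SDR identities and the chain-map identity $d_{C_b}\psi=\psi d_{C_a}$.

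For the retract in which the source of $\psi$ is reduced, i.e.\ $\cone(\psi f_a)=D_a[1]\oplus C_b$, I would take
\[
F=\left(\begin{matrix}f_a & 0\\ 0 & \id\end{matrix}\right),\qquad G=\left(\begin{matrix}g_a & 0\\ -\psi h_a & \id\end{matrix}\right),\qquad H=\left(\begin{matrix}-h_a & 0\\ 0 & 0\end{matrix}\right).
\]
The off-diagonal $-\psi h_a$ correction in $G$ is forced by the requirement that $G$ commute with the twisted differential: expanding $dG-Gd$ on the lower-left block and substituting $d_{C_b}\psi=\psi d_{C_a}$ together with $f_ag_a-\id = dh_a+h_ad$ produces exactly this term. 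Then $GF=\id$ and $HF=0$ reduce respectively to $g_af_a=\id$ and $h_af_a=0$, and a further one-line block computation yields $FG-\id=dH+Hd$.

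The dual retract, in which the target of $\psi$ is reduced from $C_b$ to $D_b$, is constructed analogously with
\[
F'=\left(\begin{matrix}\id & 0\\ h_b\psi & f_b\end{matrix}\right),\qquad G'=\left(\begin{matrix}\id & 0\\ -g_bh_b\psi & g_b\end{matrix}\right),\qquad H'=\left(\begin{matrix}0 & 0\\ 0 & h_b\end{matrix}\right);
\]
the correction term now migrates into the lower-left block of $F'$. Chain-map verifications go through as before and $G'F'=\id$ is immediate, but the identities $F'G'-\id=dH'+H'd$ and $H'F'=0$ reduce by block computation to the auxiliary side conditions $g_bh_b=0$ and $h_b^{\,2}=0$, which are \emph{not} part of the SDR definition used in the paper (only $h_bf_b=0$ is built in).

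The main obstacle is therefore the technical step of arranging these two additional side conditions. My plan is to invoke the standard homological improvement of an SDR datum: replace $h_b$ by $\widetilde h_b := h_b - f_bg_bh_b$ and check, using that $f_bg_b$ is idempotent (a one-line consequence of $g_bf_b=\id$), that all original SDR axioms are preserved while $g_b\widetilde h_b=0$ now holds; then iterate the analogous trick once more to force $\widetilde h_b^{\,2}=0$. After this replacement all block identities close up and the two strong deformation retracts are established. Beyond this improvement step the proof is pure block-matrix bookkeeping using only $g_\bullet f_\bullet=\id$, $f_\bullet g_\bullet-\id = dh_\bullet+h_\bullet d$, and the chain-map property of $\psi$.
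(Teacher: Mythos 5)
Your first retract (onto $\cone(\psi f_a)$) is identical to the paper's, down to the same matrices $F$, $G$, $H$; your verification is exactly the paper's ``quick computation.'' For the second one, note first a typo in the statement: $f_b\psi$ should read $g_b\psi$ (since $f_b\colon D_b\to C_b$, the composite $f_b\psi$ does not typecheck); your formulas are in fact consistent with the corrected target $\cone(g_b\psi)$.

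You have correctly spotted that the ``diagonal'' homotopy $H'=\left(\begin{smallmatrix}0&0\\0&h_b\end{smallmatrix}\right)$ fails: the identities $F'G'-\id=dH'+H'd$ and $H'F'=0$ really do reduce to $g_bh_b=0$ and $h_b^2=0$, which are not part of the paper's SDR definition. The paper's ``the other case is shown in a similar way'' glosses over this asymmetry, which originates in the degree shift on only the first summand of the cone.

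However, your proposed fix has a gap. The replacement $h_b\mapsto\widetilde h_b:=(\id-f_bg_b)h_b$ does force $g_b\widetilde h_b=0$ while preserving $\widetilde h_bf_b=0$ and the homotopy relation. But ``iterating the analogous trick'' to obtain $\widetilde h_b^2=0$ is not well defined: $(\id-f_bg_b)$ is idempotent, so applying it again does nothing, and the natural alternative $\widetilde h_b\mapsto-\widetilde h_bd\widetilde h_b$ spoils the homotopy identity by the error term $d\widetilde h_b^2d$ (expand $(d\widetilde h_b+\widetilde h_bd)^2=(f_bg_b-\id)^2$ to see this). Normalizing to $h^2=0$ is a delicate point that would need a real argument, not a one-line iteration.

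Fortunately, the detour is unnecessary. Keep your $F'$ and $G'$, but take
\[
H'=\begin{pmatrix}0&0\\-h_b^2\psi&h_b\end{pmatrix}.
\]
Then $H'F'=\left(\begin{smallmatrix}0&0\\-h_b^2\psi+h_bh_b\psi&h_bf_b\end{smallmatrix}\right)=0$ using only $h_bf_b=0$, while the homotopy identity $dH'+H'd=F'G'-\id$ reduces on the lower-left block to the one-line computation $d_{C_b}h_b^2-h_b^2d_{C_b}=f_bg_bh_b-h_bf_bg_b=f_bg_bh_b$, which follows from $dh_b+h_bd=f_bg_b-\id$ together with $h_bf_b=0$. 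No side conditions are needed; the extra off-diagonal entry $-h_b^2\psi$ in $H'$ plays the same structural role that $-\psi h_a$ plays in $\tilde g_a$ in the first case.
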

\begin{proof}
	Let $h$ be the~homotopy associated to the~retract $D$. Then there is a~diagram with commuting squares
	\begin{equation}\begin{diagps}(-2.5,0.8)(8,3.4)
		\rput[B](-2.5,3){$\cone(\psi f_a):$}
		\rput[B](-2.5,1){$\cone(\psi):$}
		\node t0(0,3)[\cdots]	\node[href=-0.2] t1(2.5,3)[D_a^r\oplus C_b^{r-1}]	\node[href=0.25] t2(5.5,3)[D_a^{r+1}\oplus C_b^r]	\node t3(8,3)[\cdots]
		\node b0(0,1)[\cdots]	\node[href=-0.2] b1(2.5,1)[C_a^r\oplus C_b^{r-1}]	\node[href=0.25] b2(5.5,1)[C_a^{r+1}\oplus C_b^r]	\node b3(8,1)[\cdots]
		\arrow{->}[t0`t1;]	\arrow{->}[t1`t2;d]	\arrow{->}[t2`t3;]
		\arrow{->}[b0`b1;]	\arrow{->}[b2`b3;]
		\arrow[offset=2pt]|a{labelsep=3pt}|{->}[b1`b2;d]
		\arrow[offset=-2pt]|b{labelsep=3pt}|{<-}[b1`b2;h]
		\arrow[offset= 2pt]|a{labelsep=3pt}|{->}[t1`b1;\tilde f_a^{r}]
		\arrow[offset=-2pt]|b{labelsep=3pt}|{<-}[t1`b1;\tilde g_a^{r}]
		\arrow[offset= 2pt]|a{labelsep=3pt}|{->}[t2`b2;\tilde f_a^{r+1}]
		\arrow[offset=-2pt]|b{labelsep=3pt}|{<-}[t2`b2;\tilde g_a^{r+1}]
	\end{diagps}\end{equation}
	with morphisms $\tilde f_a$, $\tilde g_a$, and $\tilde h$ given by matrices
	\begin{equation}
		\def\arraystretch{0.75}%
		\tilde f_a^r = \left(\begin{matrix} f_a & 0 \\ 0 & \id\end{matrix}\right),
			\hskip 1cm
		\tilde g_a^r = \left(\begin{matrix} g_a & 0\\ -\psi h & \id\end{matrix}\right),
			\hskip 1cm
		\tilde h^r = \left(\begin{matrix}-h & 0\\ 0 & 0\end{matrix}\right).
	\end{equation}
	A~quick computation shows $\tilde h\tilde f_a=0$, $\tilde g_a\tilde f_a = \id$, and $\tilde f_a\tilde g_a-\id = d\tilde{h}+\tilde{h}d$, which proves $\cone(\psi f_a)$ is a~strong deformation retract of $\cone(\psi)$. The~other case is shown in a~similar way.
\end{proof}

\begin{lemma}\label{lem:kh-inv-R3}
	The~complexes $\KhBracket*{\fntRIIIax*}$ and~$\KhBracket*{\fntRIIIbx*}$ are homotopy equivalent and so are $\KhCom(\fntRIIIax*)$ and $\KhCom(\fntRIIIbx*)$ for any orientation of tangles.
\end{lemma}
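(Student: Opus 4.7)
The plan is to reduce R3 invariance to the R2 case by a mapping cone argument, mimicking the classical R3 proof for the Kauffman bracket. On each side of the move I pick one of the three crossings---call it the \emph{distinguished} crossing---whose two smoothings both leave a Reidemeister II configuration visible in the remaining tangle. Expanding the bracket along this crossing realises it as a mapping cone of two smaller brackets,
\[
\KhBracket*{\vcenter{\hbox{\tangleRIIIax*}}} \;\cong\; \cone\!\left(\KhBracket*{\vcenter{\hbox{\tangleRIIIah*}}}\{1\} \xrightarrow{\;\partial_a\;} \KhBracket*{\vcenter{\hbox{\tangleRIIIav*}}}\right),
\]
and analogously for $\fntRIIIbx*$, where each $\partial$ is a saddle cobordism scaled by a coefficient from the chosen sign assignment. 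The point is that both the source and the target of each cone now contain an R2 pair accessible to Lemma~\ref{lem:kh-inv-R2}.

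Next I would apply Lemma~\ref{lem:kh-inv-R2} to each side of each cone to obtain strong deformation retracts, and then invoke Lemma~\ref{lem:cone-homot} to replace the cone by a homotopy equivalent one whose source and target are the R2-simplified brackets. Remarks~\ref{rmk:R1-homotopy} and~\ref{rmk:R2-homotopy} are what make this step legitimate: they guarantee that the chain homotopies built in the proofs of Lemmas~\ref{lem:kh-inv-R1}--\ref{lem:kh-inv-R2} anticommute with every edge of the ambient cube, so the local R2 collapse really does induce a strong deformation retract of the full bracket. Performing this reduction on both sides of R3, the simplified cones have source and target that agree as objects of $\kChCobL(k)$ after a planar isotopy (the same $3$-strand tangle with the middle crossing removed on both sides, respectively).

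What remains is to compare the two resulting connecting maps. After R2 reduction each of them is the original saddle $\partial_a$ (respectively $\partial_b$) pre- or post-composed with a birth/death coming from the R2 homotopy. The main obstacle will be checking that these composed cobordisms coincide in $\kChCobL(k)$ up to an invertible scalar in $\invScalars$ that can be absorbed into a sign assignment. Concretely, the two composites differ by a bounded sequence of permutations of critical points whose total contribution is a product of values of the function $\iota$ from Tab.~\ref{tab:cube-faces}; I expect this product to be the coboundary $d\eta$ of some $0$-cochain $\eta\in C^{0}(I^{n};\invScalars)$, and then Lemma~\ref{lem:kh-cube-indep-of-sign-assign} allows me to absorb it by changing the sign assignment on one of the two sides. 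The second claim, about $\KhCom(\fntRIIIax*)$ and $\KhCom(\fntRIIIbx*)$, is then automatic because an R3 move preserves both the writhe and the individual counts $n_{\pm}$ of positive and negative crossings, so the degree and homological shifts on the two sides coincide.
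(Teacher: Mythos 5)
Your cone decomposition and the intended use of Lemma~\ref{lem:cone-homot} are the same as the paper's, but the way you want to close the argument is based on an incorrect structural claim, and that claim is doing all the work.

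The gap is the assertion that \emph{both} smoothings of the distinguished crossing leave a Reidemeister II configuration in the remaining two crossings. Only one does. Smoothing the distinguished crossing horizontally gives $\fntRIIIah*$, which really is an R2 pair---one can see this from the fact that one of its four resolutions ($\fntRIIIahhv*$) contains a free circle, exactly the situation that Lemma~\ref{lem:kh-inv-R2} deloops. Smoothing it vertically gives $\fntRIIIav*$, whose two crossings are between the ``hat'' strand and two separated vertical arcs; none of its four resolutions contains a free circle, so there is no R2 pair there, and Lemma~\ref{lem:kh-inv-R2} is not available to you on that side of the cone. Since everything in your step ``apply R2 to each side of each cone, then compare the two connecting maps'' rests on having an R2 retraction of the \emph{target} as well, that step cannot be carried out as stated, and the subsequent comparison of birth/death-decorated saddles has nothing to compare.

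The paper avoids this entirely. It applies the R2 retraction only to the source, replacing $\cone(\Psi)$ by $\cone(\Psi\circ f)$ where $f$ is the inclusion of the retract $\KhBracket*{\fntIIIVert*}\hookrightarrow\KhBracket*{\fntRIIIah*}$, and then observes that the vertical smoothings $\fntRIIIav*$ and $\fntRIIIbv*$ of the two sides of R3 are literally planar isotopic tangles. That isotopy gives an isomorphism of the entire target squares, hence an isomorphism $\cone(\Psi_L)\cong\cone(\Psi_R)$ with no further computation of scalars, sign assignments, or chronology coefficients. Nothing about the connecting maps has to be compared by hand, because the source and target of the two reduced cones agree \emph{on the nose} after the isotopy, and $\Psi_L$, $\Psi_R$ are both the same composite ``saddle then $f$.'' Your plan to absorb a hypothetical discrepancy into a coboundary $d\eta$ via Lemma~\ref{lem:kh-cube-indep-of-sign-assign} is a reasonable instinct when two maps differ by invertible scalars, but it is not needed here, and in any case cannot rescue the earlier step.

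The final observation about $\KhCom$ following from $\KhBracket{\blank}$ because R3 preserves $n_{\pm}$ is correct and matches the paper.
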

\begin{proof}
	\wrapfigure[r]{\psset{unit=0.6cm}%
	\begin{diagps}(-1.5,0)(6.7,6)
			\rput(0.0,4.75){\rnode{t00}{$\fntRIIIahhh*$}}
			\rput(2.3,4.00){\rnode{t01}{$\fntRIIIahhv*$}}
			\rput(3.7,5.50){\rnode{t10}{$\fntRIIIahvh*$}}
			\rput(6.0,4.75){\rnode{t11}{$\fntRIIIahvv*$}}
			\rput(0.0,1.25){\rnode{b00}{$\fntRIIIahhh*$}}
			\rput(2.3,0.50){\rnode{b01}{$\fntRIIIahhv*$}}
			\rput(3.7,2.00){\rnode{b10}{$\fntRIIIahvh*$}}
			\rput(6.0,1.25){\rnode{b11}{$\fntRIIIahvv*$}}
			\psset{border=3\pslinewidth}%
			\diagline{->}{t00}{b00}\diagline{->}{t10}{b10}
			\diagline[nodesep=0pt]{->}{t00}{t01}\diagline[nodesep=0pt]{->}{t00}{t10}
			\diagline[nodesep=0pt]{->}{t01}{t11}\diagline[nodesep=0pt]{->}{t10}{t11}
			\diagline[nodesep=0pt]{->}{b00}{b01}\diagline[nodesep=0pt]{->}{b00}{b10}
			\diagline[nodesep=0pt]{->}{b01}{b11}\diagline[nodesep=0pt]{->}{b10}{b11}
			\diagline{->}{t01}{b01}\diagline{->}{t11}{b11}
			\rput[l](-1.5,3){$\Psi\big\downarrow$}
	\end{diagps}}
	Again, the~second part follows from the~first one, because \fntRIIIax* and \fntRIIIbx* have same crossings, regardless of orientation. The~complex $\KhBracket{\fntRIIIax}$ is a~mapping cone of the~chain map $\Psi := \KhBracket{\fntRIIIac*}\colon\KhBracket{\fntRIIIah*}\to\KhBracket{\fntRIIIav*}$ visualized by the~four vertical morphisms to the~right. Consider the~chain map $f$ from the~proof of Lemma~\ref{lem:kh-inv-R2}. It is an~inclusion into a~strong deformation retract and Lemma~\ref{lem:cone-homot} implies $\KhBracket{\fntRIIIax*}$ is homotopy equivalent to the~mapping cone of $\Psi_L := \Psi\circ f$ given in Fig~\ref{fig:psi-cone}. For the~same argument $\KhBracket{\fntRIIIbx*}$ is homotopy equivalent to $\Psi_R$. Since tangle diagrams \fntRIIIav* and \fntRIIIbv* are isotopic, the~mapping cone complexes $\cone(\Psi_L)$ and $\cone(\Psi_R)$ are isomorphic.
\end{proof}

\begin{figure}
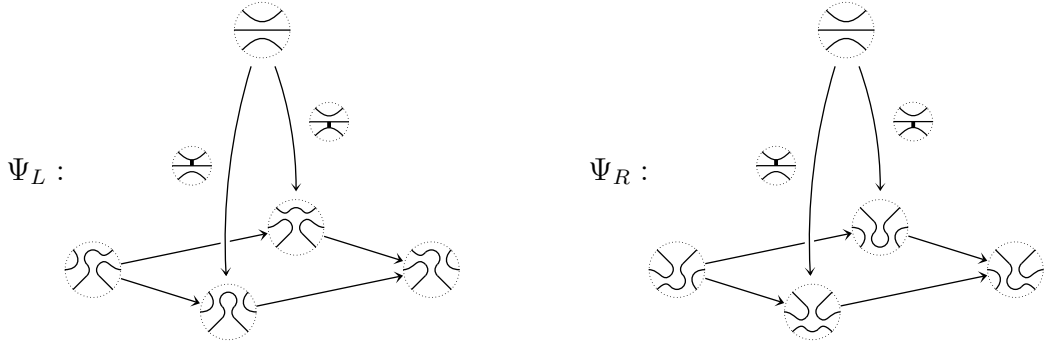

	\psset{unit=0.75cm}
	\begin{diagps}(-1,0)(6.5,6)
			\rput[r](-0.5,3){$\Psi_L:$}
			\rput(3.0,5.50){\rnode{top}{\tangleIIIVert*}}
			\rput(0.0,1.25){\rnode{b00}{\tangleRIIIavhh*}}
			\rput(2.4,0.50){\rnode{b01}{\tangleRIIIavhv*}}
			\rput(3.6,2.00){\rnode{b10}{\tangleRIIIavvh*}}
			\rput(6.0,1.25){\rnode{b11}{\tangleRIIIavvv*}}
			\diagline[nodesep=0pt]{->}{b00}{b01}\diagline[nodesep=0pt]{->}{b00}{b10}
			\diagline[nodesep=0pt]{->}{b01}{b11}\diagline[nodesep=0pt]{->}{b10}{b11}
			\diagarc[arcangle=-10,border=3\pslinewidth]{->}{top}{b01}\nbput{\fntIIIVertT*}
			\diagarc[arcangle= 10]{->}{top}{b10}\naput{\fntIIIVertB*}
	\end{diagps}
	\hskip 2cm
	\begin{diagps}(-1,0)(6.5,6)
			\rput[r](-0.5,3){$\Psi_R:$}
			\rput(3.0,5.50){\rnode{top}{\tangleIIIVert*}}
			\rput{180}(0.0,1.25){\rnode{b00}{\tangleRIIIavhh*}}
			\rput{180}(2.4,0.50){\rnode{b01}{\tangleRIIIavvh*}}
			\rput{180}(3.6,2.00){\rnode{b10}{\tangleRIIIavhv*}}
			\rput{180}(6.0,1.25){\rnode{b11}{\tangleRIIIavvv*}}
			\diagline[nodesep=0pt]{->}{b00}{b01}\diagline[nodesep=0pt]{->}{b00}{b10}
			\diagline[nodesep=0pt]{->}{b01}{b11}\diagline[nodesep=0pt]{->}{b10}{b11}
			\diagarc[arcangle=-10,border=3\pslinewidth]{->}{top}{b01}\nbput{\fntIIIVertT*}
			\diagarc[arcangle= 10]{->}{top}{b10}\naput{\fntIIIVertB*}
	\end{diagps}
	\caption{Morphisms describing complexes for the~two tangles defining the~$R_3$ move.}
	\label{fig:psi-cone}
\end{figure}

\begin{proof}[Proof of Theorem~\ref{thm:invariance}]
It remains to show that the~above local proofs extend to diagrams of bigger tangles. Each case follows the~same pattern. Assume there is a~chain map $\psi\colon\KhCom(T_1)\to\KhCom(T_2)$ defined for whichever sign assignments were chosen to construct the~complexes. Choose a~tangle $T$ and a~planar arc diagram $D$ with two inputs, and construct a~corrected cube $\KhCubeSigned{D(T,T_1)}{\epsilon_1}$ using some sign assignment $\epsilon_1$. We can collapse it partially to obtain a~cube of complexes as in Fig.~\ref{fig:cube-of-complexes}. Namely, a~resolution $T_\xi$ of the~tangle $T$ picks a~subcube $\KhCubeSigned{D(T_\xi,T_1)}{\epsilon_1|_{\xi}}$, which collapses to the~complex $\KhCom(D(T_\xi,T_1))$. Put these complexes in vertices of an~$n$-dimensional cube, where $n$ is the~number of crossings of $T$. Since the~original cube $\KhCubeSigned{D(T,T_1)}{\epsilon_1}$ anticommutes, the~edge morphisms corresponding to changing resolutions of $T$ induce \quot{anti-chain} maps
\begin{equation}
	d_\zeta\colon\KhCom(D(T_\xi,T_1))\to\KhCom(D(T_{\xi'},T_1)),
\end{equation}
i.e. morphisms that anticommute with differentials.

We can do the~same with the~tangle $T_2$, obtaining a~cube of complexes $\KhCom(D(T_\xi,T_2))$. Because planar operators with one input are strict $2$-functors, $\KhCom(D(T_\xi,T_1)) = D(T_\xi,\KhCom(T_1))$ and there are chain maps
\begin{equation}\label{eq:D(psi)-chain-maps}
	D(T_\xi,\psi)\colon \KhCom(D(T_\xi,T_1))\to\KhCom(D(T_\xi,T_2)),
\end{equation}
one for each resolution $T_\xi$. Hence, we have two cubes of complexes and a~morphism between them. Collapsing these cubes (while taking care about homological grading of complexes in vertices\footnote{
	This can be achieved for instance by shifting a~homological degree of a~complex $\KhCom(D(T_\xi,T_i))$ by $-\|\xi\|$ and then taking a~direct sum of complexes over all vertices.
}) results in the~complexes $\KhCom(D(T,T_i))$. If the~chain maps $D(T_\xi,\psi)$ commute with the~edge morphisms $d_\zeta$, they induce a~chain map $\Psi\colon\KhCom(D(T,T_1))\to\KhCom(D(T,T_2))$. In particular, if all $\psi$ are homotopy equivalences, so is $\Psi$.

\begin{figure}
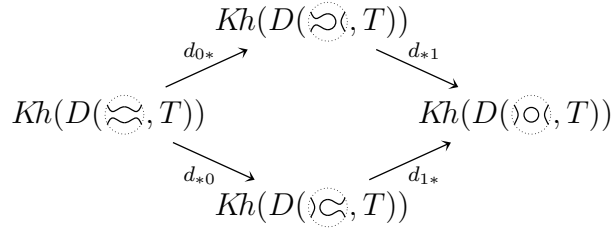

	\begin{diagps}(0,0)(8,3)
		\rput[l](0,1.5){\Rnode{00}{$\KhCom(D(\fntRIIhh*,T))$}}
		\rput[t](4,3.0){\Rnode{01}{$\KhCom(D(\fntRIIhv*,T))$}}
		\rput[b](4,0.0){\Rnode{10}{$\KhCom(D(\fntRIIvh*,T))$}}
		\rput[r](8,1.5){\Rnode{11}{$\KhCom(D(\fntRIIvv*,T))$}}
		\diagline{->}{00}{01}\diagaput{$\scriptstyle d_{0*}$}
		\diagline{->}{00}{10}\diagbput{$\scriptstyle d_{*0}$}
		\diagline{->}{01}{11}\diagaput{$\scriptstyle d_{*1}$}
		\diagline{->}{10}{11}\diagbput{$\scriptstyle d_{1*}$}
	\end{diagps}
	\caption{%
		The~cube of complexes for a~tangle $T$ induced by a~planar diagram $D$ and a~tangle \protect\fntFullTwist* with 2 crossings. Each arrow is a~degree $0$ morphism that anticommutes with differentials.
	}
	\label{fig:cube-of-complexes}
\end{figure}

There is nothing to do for the~second Reidemester move. If a~tangle diagram $T$ can be reduced to $T'$ by this move, consider $\KhCube{T'}$ as a~subcube of $\KhCube{T}$. Remark~\ref{rmk:R2-homotopy} implies that both homotopies $h_{0{*}}$ and $h_{{*}1}$ from Lemma~\ref{lem:kh-inv-R2} anticommute with edge morphisms from $\KhCube{T'}$, so that the~morphisms $f^0 := h_{{*}1}d_{1{*}}$ and $g^0 := d_{{*}0}h_{0{*}}$ commute with them.
	
Invariance under the~third Reidemeister move follows from the~same argument as the~one used to prove Lemma~\ref{lem:kh-inv-R3}: the~chain map $f$ from the~previous paragraph is again an~inclusion into a~strong deformation retract.
	
The~first Reidemeister move is the~most challenging one. As before, choose a~diagram $T$ that can be reduced to $T'$ by this move, and consider $\KhCube{T'}$ as a~subcube of $\KhCube{T}$. The~morphisms $f$ and $g$ does not commute with the~edge morphisms of $\KhCube{T'}$: for an~edge $\zeta\colon\xi\to\xi'$ decorated with a~morphism $d_\zeta$ we have
\begin{equation}
	d_\zeta g_\xi = \lambda(\chdeg d_\zeta, \chdeg(\textnormal{a birth})) g_{\xi'} d_\zeta
\end{equation}
and similarly for $f$. To fix it, we define a~$0$-cochain $\eta\in C^0(I^{n-1}; \invScalars)$ in the~following way. Pick any oriented path in $\KhCube{T'}$ from the~origin $(0,\ldots,0)$ to a~vertex $\xi$. It represents some chronological cobordism $W$, whose degree $\chdeg W$ depends only on $\xi$, but not on the~path. Define $\eta(\xi) := \lambda(\chdeg W,\chdeg(\textnormal{a birth}))$. Then
\begin{equation}
	\eta(\xi')g_{\xi'}d_\zeta =
		\eta(\xi)\lambda(\chdeg d_\zeta, \chdeg(\textnormal{a birth}))g_{\xi'}d_\zeta =
		\eta(\xi)d_\zeta g_\xi.
\end{equation}
Hence, $\eta g$ commutes with edge morphisms. In a~similar way we show that $\eta^{-1}f$ induces a~chain map.
\end{proof}

\section{Basic properties}\label{sec:properties}
Directly from its definition the~generalized Khovanov bracket satisfies the~following properties, similar to the~rules of the~Kauffman bracket:
\begin{enumerate}[label={(KB\arabic{*})},ref={KB\arabic{*}},leftmargin=!,labelwidth=1.5cm,labelsep=5mm]
	\item $\KhBracket\emptyset = \emptyset$,
	\item $\KhBracket{T\sqcup T'} = \KhBracket{T}\sqcup T'$, if $T'$ has no crossings\footnote{
		Think of $(\blank)\sqcup T'$ as a~functor on embedded cobordisms, which we extend naturally to complexes. It maps an~object $\Sigma$ to $\Sigma\sqcup T'$ and a~cobordisms $W$ to $W\sqcup(T'\times I)$.
	}, and
	\item\label{KhBracket:seq}
				$\KhBracket\fntNWSECr = \cone\left(\KhBracket\fntHorResCob\colon
					\KhBracket\fntHorRes\to\KhBracket\fntVertRes\{1\}\right)[1]$.
\end{enumerate}
In the~last property, the~symbols \fntNWSECr, \fntHorRes{} and \fntVertRes{} represent three tangle diagrams that are identical except the~indicated region and the~morphism $\KhBracket\fntHorResCob$ is induced by edge maps in the~cube $\KhCube\fntNWSECr$ at which the~resolution of the~distinguished crossing is changed.

The~property \eqref{KhBracket:seq} implies a~long exact sequence of generalized Khovanov complexes that mimics the~Jones skein relation. Say that a~sequence $\dots\to A_i\to A_{i+1}\to A_{i+2}\to\dots$ in $\catAdd{\kChCob}$ is \emph{exact} if its image under any additive functor $\F\colon\catAdd{\kChCob}\to\cat{A}$ is exact, where $\cat{A}$ is any abelian category.

\begin{proposition}
	There is an~exact sequence of complexes
	\begin{equation}\label{eq:Kh(T)-exact-seq}
		0 \longrightarrow\KhCom(\fntNoCr)[2]\{1\}
			\longrightarrow\KhCom(\fntNegCr)[2]\{2\}
			\longrightarrow\KhCom(\fntPosCr)\{-2\}
			\longrightarrow\KhCom(\fntNoCr)\{-1\}
			\longrightarrow 0.
	\end{equation}
\end{proposition}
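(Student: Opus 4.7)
The plan is to derive the exact sequence from property \eqref{KhBracket:seq} (henceforth (KB3)) applied twice at the distinguished crossing, using the key observation that property (KB3) does not see the sign of the crossing: the cube of resolutions is the same for a positive and a negative crossing, so $\KhBracket{\fntPosCr}=\KhBracket{\fntNegCr}$ as complexes in $\catAdd{\kChCob}$. The two generalized Khovanov complexes $\KhCom(\fntPosCr)$ and $\KhCom(\fntNegCr)$ then differ only by the normalization $[-n_-]\{n_+-2n_-\}$ applied to this common bracket, and one has $\KhCom(\fntNegCr)=\KhCom(\fntPosCr)[-1]\{-3\}$.

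Property (KB3) realizes $\KhBracket{\fntNWSECr}$ as $\cone(\fntHorResCob\colon\KhBracket{\fntHorRes}\to\KhBracket{\fntVertRes}\{1\})[1]$, and hence produces the canonical short exact sequence
\[
 0\longrightarrow \KhBracket{\fntVertRes}\{1\}[1]\longrightarrow \KhBracket{\fntNWSECr}\longrightarrow \KhBracket{\fntHorRes}\longrightarrow 0.
\]
Identifying $\fntNoCr$ with the oriented (Seifert) resolution $\fntVertRes$ and applying the two normalizations yields the pair of short exact sequences
\begin{align*}
 0&\to \KhCom(\fntNoCr)\{2\}[1]\to \KhCom(\fntPosCr)\to \KhCom(\fntHorRes)\{1\}\to 0,\\
 0&\to \KhCom(\fntNoCr)\{-1\}\to \KhCom(\fntNegCr)\to \KhCom(\fntHorRes)[-1]\{-2\}\to 0,
\end{align*}
in which the outer terms are always computed from the non-crossing resolutions. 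Shifting the first sequence by $\{-2\}$ and the second by $[2]\{2\}$ brings them to the form
\begin{align*}
 0&\to \KhCom(\fntNoCr)[1]\to \KhCom(\fntPosCr)\{-2\}\to \KhCom(\fntHorRes)\{-1\}\to 0,\\
 0&\to \KhCom(\fntNoCr)[2]\{1\}\to \KhCom(\fntNegCr)[2]\{2\}\to \KhCom(\fntHorRes)[1]\to 0,
\end{align*}
so that the second supplies the first three terms and the first supplies the last three terms of the sequence to be proved.

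The 4-term exact sequence is then assembled by splicing these two shifted short exact sequences along their intermediate $\KhCom(\fntHorRes)$-terms, using the chain map $\KhCom(\fntHorRes)[1]\to \KhCom(\fntNoCr)[1]$ induced by the saddle cobordism $\fntHorRes\to\fntVertRes$ at the distinguished crossing (equipped with the appropriate internal shift so that it becomes a degree-$0$ chain map). The composite middle map $\KhCom(\fntNegCr)[2]\{2\}\to \KhCom(\fntPosCr)\{-2\}$ then has kernel equal to $\KhCom(\fntNoCr)[2]\{1\}$ (the kernel of the upper SES) and cokernel equal to $\KhCom(\fntNoCr)\{-1\}$ (the cokernel of the lower SES after identifying the image of the saddle map), yielding the required 4-term exact sequence. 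Independence of the sign assignment used to build the two brackets is automatic because $\KhBracket{\fntPosCr}$ and $\KhBracket{\fntNegCr}$ are isomorphic as signed cubes once the same arrow choice is made at the distinguished crossing.

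The main obstacle in the plan is the careful bookkeeping of the internal grading shifts: the saddle has degree $-1$ in $\kChCob$, so one must check that under the particular shifts $\{-2\}$ and $[2]\{2\}$ prescribed by the proposition the connecting saddle-type map indeed acquires internal degree $0$, and that the splicing identifies its image with precisely the image of the quotient from the negative SES. Once these degree matches are confirmed, the exactness of the 4-term sequence is a formal consequence of the exactness of the two short exact sequences produced by (KB3).
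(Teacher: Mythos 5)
Your argument rests on the claim that $\KhBracket{\fntPosCr}=\KhBracket{\fntNegCr}$, and this is false. In the skein triple the positive and negative crossings are not the same unoriented diagram equipped with two orientation choices; with the strand orientations fixed they are the two different geometric crossing types $\fntNESWCr$ and $\fntNWSECr$. These have different cubes of resolutions: for $\fntNESWCr$ the $0$-resolution is $\fntVertRes$ and the $1$-resolution is $\fntHorRes$, while for $\fntNWSECr$ those roles are swapped. Consequently $\KhBracket{\fntPosCr}$ and $\KhBracket{\fntNegCr}$ have different chain objects in each homological degree and are not equal (nor isomorphic) in $\catAdd{\kChCob}$. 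The identity $\KhCom(\fntNegCr)=\KhCom(\fntPosCr)[-1]\{-3\}$ that you derive, and the pair of short exact sequences obtained by normalizing a single instance of (KB3) twice, are therefore built on a false premise. In particular, the correct short exact sequence coming from (KB3) applied to the positive crossing $\fntNESWCr$ has $\KhCom(\fntHorRes)$ as its \emph{kernel} and $\KhCom(\fntNoCr)$ as its \emph{cokernel}, the opposite of what you write.

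The damage shows up concretely at the splice. Both of your short exact sequences have $\KhCom(\fntNoCr)$ on the left and $\KhCom(\fntHorRes)$ on the right, so their outer terms never match up, and two short exact sequences splice into a four-term exact sequence only when the cokernel of one literally equals the kernel of the other. Inserting an external saddle map $\KhCom(\fntHorRes)[1]\to\KhCom(\fntNoCr)[1]$ does not repair this: that map is neither injective nor surjective, so the kernel of the composite middle map need not equal the kernel of the inner quotient, and the cokernel of the composite is forced to surject onto $\KhCom(\fntHorRes)\{-1\}$ rather than being $\KhCom(\fntNoCr)\{-1\}$. So even the last term comes out wrong.

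The correct route, which is the paper's, is to apply (KB3) separately to $\fntNWSECr$ and $\fntNESWCr$. Precisely because the two crossings exchange the $0$- and $1$-resolutions, one bracket-level short exact sequence ends with $\KhBracket{\fntHorRes}$ and the other begins with it; after shifting the first by $[1]$ and the second by $\{-1\}$ they share this term on the nose and splice automatically, with $\KhBracket{\fntVertRes}$ appearing at both ends. Applying the normalization $[-n_-]\{n_+-2n_-\}$ term by term, keeping track of the fact that $\fntNWSECr$ contributes one extra negative crossing and $\fntNESWCr$ one extra positive crossing, then yields the proposition exactly.
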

\begin{proof}
	The~property \eqref{KhBracket:seq} for diagrams \fntNESWCr\ and \fntNWSECr\ implies the~following sequences are exact:
	\begin{gather}
		0	\longrightarrow\KhBracket\fntVertRes[1]\{1\}
			\longrightarrow\KhBracket\fntNWSECr
			\longrightarrow\KhBracket\fntHorRes
			\longrightarrow 0,\\
		0	\longrightarrow\KhBracket\fntHorRes[1]\{1\}
			\longrightarrow\KhBracket\fntNESWCr
			\longrightarrow\KhBracket\fntVertRes
			\longrightarrow 0.
	\end{gather}
	Gluing them together results in an~exact sequence
	\begin{gather}
		0	\longrightarrow\KhBracket\fntVertRes[2]\{1\}
			\longrightarrow\KhBracket\fntNWSECr[1]
			\longrightarrow\KhBracket\fntNESWCr \{-1\}
			\longrightarrow\KhBracket\fntVertRes\{-1\}
			\longrightarrow 0,
	\end{gather}
	which is the~same as \eqref{eq:Kh(T)-exact-seq} up to grading shifts.
\end{proof}

\noindent
Next, the~generalized Khovanov complex $\KhCom(T)$ depends on the~orientation of $T$ in a~well understood way.

\begin{proposition}
	Choose an~oriented tangle $T$. Denote by $-T$ the~same tangle with reversed orientation of
	all its components and by $T'$ the~tangle when only the~orientation of a~single component
	$T_0$ is reversed. Then
	\begin{align}
		\label{eq:Kh(-T)}\KhCom(-T)^r &\cong \KhCom(T)^r,\\
		\label{eq:Kh(T-T0)}\KhCom(T')^r &\cong \KhCom(T)^{r-2\ell}\{-6\ell\},
	\end{align}
	where $\ell=\mathrm{lk}(T-T_0,T_0)$ is the~linking number of $T_0$ with the~remaining components of $T$.
\end{proposition}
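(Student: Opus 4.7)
The plan is to reduce both statements to a degree-counting exercise using the definition $\KhCom(D)=\KhBracket{D}[-n_-]\{n_+-2n_-\}$, together with the fundamental observation that the unshifted bracket $\KhBracket{T}$ is constructed entirely from unoriented data---the underlying diagram, the arrows over crossings, and a sign assignment---none of which refers to the orientation of $T$. Hence reversing any subset of components affects $\KhCom(T)$ only through the numbers $n_+$ and $n_-$ of positive and negative crossings, and the problem becomes purely combinatorial: track how these counts change under the prescribed reorientation.

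For the equality \eqref{eq:Kh(-T)}, I would simply note that reversing \emph{every} orientation reverses the two strands at each crossing \emph{simultaneously}, so the sign convention assigns the same value. Thus $n_\pm(-T)=n_\pm(T)$, both the homological and quantum shifts coincide, and $\KhCom(-T)^r\cong\KhCom(T)^r$ follows immediately.

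For \eqref{eq:Kh(T-T0)} the key is to classify crossings by how they meet the component $T_0$. Crossings among strands both belonging to $T_0$, or both belonging to $T-T_0$, see both strands reversed (in the first case) or neither (in the second), so their signs are preserved when passing from $T$ to $T'$. Only the mixed crossings---those between $T_0$ and $T-T_0$---have exactly one strand flipped, so their signs toggle. Writing $c_\pm$ for the number of mixed crossings of each sign in $T$, the standard definition of the linking number yields $\ell=\tfrac12(c_+-c_-)$, and therefore $n_+(T')=n_+(T)-c_++c_-=n_+(T)-2\ell$ while $n_-(T')=n_-(T)+c_+-c_-=n_-(T)+2\ell$.

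Substituting into the defining shift formula gives
\begin{equation*}
\KhCom(T')^r=\KhBracket{T'}^{\,r-n_-(T')}\{n_+(T')-2n_-(T')\}
=\KhBracket{T}^{\,r-n_-(T)-2\ell}\{\,n_+(T)-2n_-(T)-6\ell\,\},
\end{equation*}
where in the middle equality I use $\KhBracket{T'}=\KhBracket{T}$ as chain complexes (no orientation data enters the bracket). Recognising the right-hand side as $\KhCom(T)^{r-2\ell}\{-6\ell\}$ completes the argument. There is no genuine obstacle here; the only thing to be careful about is the sign conventions for the linking number and for the functorial shift $[\,\cdot\,]$, which I would verify in a brief sentence to make sure the exponents come out as $r-2\ell$ and $-6\ell$ rather than $r+2\ell$ and $+6\ell$.
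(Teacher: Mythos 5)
Your argument is correct and matches the paper's own proof: both reduce to the observation that $\KhBracket{T}$ is orientation-independent, then track how $n_\pm$ change under reorientation, with $\ell=\tfrac12(c_+-c_-)$ giving $n_+(T')=n_+(T)-2\ell$ and $n_-(T')=n_-(T)+2\ell$. Your version spells out the final substitution into the shift formula, which the paper leaves implicit, but the approach is identical.
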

\begin{proof}
	For \eqref{eq:Kh(-T)} it is enough to see that after reversing orientation of all components
	the~signs of crossings are the~same. If we reverse the~orientation only of one component $T_0$,
	then the~crossings of $T_0$ with other components change signs, so that $n_+(T') = n_+(T)-2\ell$
	and $n_-(T') = n_-(T)+2\ell$.
\end{proof}

\wrapfigure[r]<2>{%
				\psset{unit=3mm,labelsep=2pt}
				\begin{pspicture}(-6,-4.5)(6,4.5)
					\rput[c](-5,0){\Rnode{hor}{\begin{pspicture}[shift=-0.5](-1,-1)(1,1)
							\psbezier(-1,-1)(0,-0.1)(0,-0.1)(1,-1)
							\psbezier(-1, 1)(0, 0.1)(0, 0.1)(1, 1)
					\end{pspicture}}}
					\rput[c](5,0){\Rnode{vert}{\begin{pspicture}[shift=-0.5](-1,-1)(1,1)
							\psbezier(-1,-1)(-0.1,0)(-0.1,0)(-1,1)
							\psbezier( 1,-1)( 0.1,0)( 0.1,0)( 1,1)
					\end{pspicture}}}
					\rput[c](0,3){\Rnode{NWSE}{\begin{pspicture}[shift=-1](-1,-1)(1,1)
							\psline(-1,-1)(1,1)
							\psline[border=5\pslinewidth](-1,1)(1,-1)
					\end{pspicture}}}
					\rput[c](0,-3){\Rnode{NESW}{\begin{pspicture}(-1,-1)(1,1)
							\psline(-1,1)(1,-1)
							\psline[border=5\pslinewidth](-1,-1)(1,1)
					\end{pspicture}}}
					\diagline{->}{NWSE}{hor} \nbput{$\scriptstyle0$}
					\diagline{->}{NWSE}{vert}\naput{$\scriptstyle1$}
					\diagline{->}{NESW}{hor} \naput{$\scriptstyle1$}
					\diagline{->}{NESW}{vert}\nbput{$\scriptstyle0$}
					\diagline[linestyle=dotted]{<->}{NWSE}{NESW}\naput{$\scriptstyle!$}
				\end{pspicture}}
Given a~tangle diagram $T$ we form its mirror image $T^!$ by replacing every crossing with the~other one---think about placing a~mirror below the~diagram. It follows the~cube of resolutions of $T^!$ is a~reflection of $\KhCube{T}$: we start in the~terminal state of $T$, which is the~initial state of $T^!$, and proceed backwards (see the~picture to the~right). Formally the~symmetry comes from a~duality functor $(\blank)^*\colon\kChCob(k)\to\kChCob(k)$ induced by the~vertical flip of $\Disk\times I$. We must be careful with defining orientations of critical points in $W^*$: if $p$ is a~critical point of $W$, its orientation determines an~orientation of the~stable part of $T_pW^*$. We choose for the~unstable part the~complementary orientation with respect to the~outward orientation of the~cobordism $W$. The~only exception is a~death, as there is only one orientation of births: if $W$ is a~negatively oriented death, we first rewrite it as a~positively oriented death scaled by $\permSS$, and then we make the~flip.

The~convention for orientation of critical points can be described also diagrammatically in the~following way. Color each region in the~complement of $W$ black or white, so that the~unbounded region is white and regions with same colors do not meet. Then for a~saddle point $p$ rotate the~framing arrow in $W^*$ clockwise, if the~region below $p\in W$ is white, and anticlockwise otherwise:
\begin{gather*}
	\psset{unit=8mm}
	\left(\begin{centerpict}(-0.1,-0.1)(1.3,1.3)
		\rput[tl](0,1.2){\textnormal{\scriptsize b}}%
		\rput(0.6,0.7){\textnormal{\scriptsize w}}%
		\cobordism[2](0,0)(M-R)
	\end{centerpict}\right)^{\!\!*} = \textcobordism[1](S-B)
		\hskip 2cm
	\left(\begin{centerpict}(-0.1,-0.1)(1.3,1.3)
		\rput[bl](0,0.0){\textnormal{\scriptsize b}}%
		\rput(0.6,0.5){\textnormal{\scriptsize w}}%
		\cobordism[1](0.4,0)(S-B)
	\end{centerpict}\right)^{\!\!*} = \textcobordism[2](M-R)
		\\[1ex]
	\psset{unit=8mm}
	\left(\begin{centerpict}(-0.1,-0.1)(1.3,1.3)
		\rput[tl](0,1.2){\textnormal{\scriptsize w}}%
		\rput(0.6,0.7){\textnormal{\scriptsize b}}%
		\cobordism[2](0,0)(M-R)
	\end{centerpict}\right)^{\!\!*} = \textcobordism[1](S-F)
		\hskip 2cm
	\left(\begin{centerpict}(-0.1,-0.1)(1.3,1.3)
		\rput[bl](0,0.0){\textnormal{\scriptsize w}}%
		\rput(0.6,0.5){\textnormal{\scriptsize b}}%
		\cobordism[1](0.4,0)(S-B)
	\end{centerpict}\right)^{\!\!*} = \textcobordism[2](M-L)
\end{gather*}
Since we want the~duality functor to be coherent with annihilations and creations, there is no choice left for births and deaths:
\begin{gather*}
		\left(\begin{centerpict}(-0.1,-0.1)(0.5,0.7)
			\rput[br](0,0){\textnormal{\scriptsize b}}%
			\rput(0.2,0.4){\textnormal{\scriptsize w}}%
			\cobordism[0](0.4,0)(sB)
		\end{centerpict}\right)^{\!\!*} = \textcobordism[1](sD+)
			\hskip 2cm
		\left(\begin{centerpict}(-0.1,-0.1)(0.5,0.7)
			\rput[tr](0,0.6){\textnormal{\scriptsize b}}%
			\rput(0.2,0.2){\textnormal{\scriptsize w}}%
			\cobordism[1](0,0)(sD+)
		\end{centerpict}\right)^{\!\!*} = \textcobordism[0](sB)
			\hskip 2cm
		\left(\begin{centerpict}(-0.1,-0.1)(0.5,0.7)
			\rput[tr](0,0.6){\textnormal{\scriptsize b}}%
			\rput(0.2,0.2){\textnormal{\scriptsize w}}%
			\cobordism[1](0,0)(sD-)
		\end{centerpict}\right)^{\!\!*} = \permSS\textcobordism[0](sB)
			\\[1ex]
		\left(\begin{centerpict}(-0.1,-0.1)(0.5,0.7)
			\rput[br](0,0){\textnormal{\scriptsize w}}%
			\rput(0.2,0.4){\textnormal{\scriptsize b}}%
			\cobordism[0](0.4,0)(sB)
		\end{centerpict}\right)^{\!\!*} = \textcobordism[1](sD-)
			\hskip 2cm
		\left(\begin{centerpict}(-0.1,-0.1)(0.5,0.7)
			\rput[tr](0,0.5){\textnormal{\scriptsize w}}%
			\rput(0.2,0.2){\textnormal{\scriptsize b}}%
			\cobordism[1](0,0)(sD-)
		\end{centerpict}\right)^{\!\!*} = \textcobordism[0](sB)
			\hskip 2cm
		\left(\begin{centerpict}(-0.1,-0.1)(0.5,0.7)
			\rput[tr](0,0.5){\textnormal{\scriptsize w}}%
			\rput(0.2,0.2){\textnormal{\scriptsize b}}%
			\cobordism[1](0,0)(sD+)
		\end{centerpict}\right)^{\!\!*} = \permSS\textcobordism[0](sB)
\end{gather*}

Flipping a~cobordism permutes its degree components, $\chdeg W^* = (b,a)$ if $\chdeg W = (a,b)$, but it also intertwines the two disjoint unions, $(W\rdsum W')^* = W^*\ldsum W'^*$. Hence, in the~linearized case, the~roles of $\permMM$ and $\permSS$ are exchanged, but the~role of $\permMS$ is preserved. Therefore, the~flipping operation is a~functor $(\blank)^*\colon\kChCob_{\permMM\permSS\permMS}(k)\to\kChCob_{\permSS\permMM\permMS}(k)$ between two different categories. It is coherent with all chronological relations, as well as with relations \textit{S}, \textit{T}, and \textit{4Tu}. We extend it to categories of complexes, by reflecting the~homological grading, i.e.\ we set $(C^*)^i := (C^{-i})^*$.

\begin{proposition}\label{prp:kh-dual}
	The~generalized Khovanov complexes of a~tangle $T$ and its mirror image $T^!$ are dual to each other: $\KhCom_{\permMM\permSS\permMS}(T^!) \cong \KhCom_{\permSS\permMM\permSM}(T)^*$, where $\KhCom_{abc}$ stands for a~Khovanov complex constructed in the~category $\kChCobL(k)$ with chronology change coefficients $\permMM$, $\permSS$, and $\permMS$ set to $a$, $b$, and $c$ respectively.
\end{proposition}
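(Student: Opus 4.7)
The plan is to construct the desired isomorphism by applying the duality functor $(\blank)^*$, already described in the paragraph preceding the statement, to the corrected cube $\KhCubeSigned{T}{\epsilon}$ for some sign assignment $\epsilon$, and then matching the result with the cube $\KhCube{T^!}$ after relabelling vertices by their complement $\bar\xi:=(1-\xi_1,\dots,1-\xi_n)$. The underlying observation is that, with the convention of the paper, the type~$0$ (horizontal) resolution of a crossing does not depend on its sign, so the $1$-manifolds decorating vertex $\xi$ of $\KhCube{T}$ and vertex $\xi$ of $\KhCube{T^!}$ are literally the same; under the relabelling $\xi\mapsto\bar\xi$, which reverses the direction of edges of the cube, this identifies the vertex data of $\KhBracket{T}^*$ with the vertex data of $\KhBracket{T^!}$, and on edges takes a saddle of $\KhCube{T}$ to the same saddle viewed upside-down, which is a saddle of $\KhCube{T^!}$ with (possibly) a different framing.

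First I would perform the bookkeeping on gradings. Using $n_\pm(T^!)=n_\mp(T)$ and the general rules $(C[k])^*=C^*[-k]$, $(C\{m\})^*=C^*\{-m\}$, a direct computation starting from $\KhCom(T^!)=\KhBracket{T^!}[-n_+(T)]\{n_-(T)-2n_+(T)\}$ shows that if one identifies $\KhBracket{T^!}$ with $\KhBracket{T}^*[n]\{n\}$ for $n=n_+(T)+n_-(T)$ (which is precisely what the reindexing $\xi\mapsto\bar\xi$ with a uniform shift of the internal degree produces), then $\KhCom(T^!)=\KhCom(T)^*$ as bigraded objects. Next I would check that edges match. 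Under $(\blank)^*$ a merge becomes a split and vice versa; reversing the framing, if necessary, is absorbed by the sign assignment thanks to Lemma~\ref{lem:kh-cube-indep-of-arr}, so up to a choice of $\epsilon$ the differentials coincide and Lemma~\ref{lem:kh-cube-indep-of-sign-assign} guarantees the resulting complexes are isomorphic irrespective of that choice.

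The crucial step is the parameter change. The flip exchanges the two components of the bidegree, $\chdeg W^*=(b,a)$ for $\chdeg W=(a,b)$, and intertwines the two disjoint unions, $(W\rdsum W')^*=W^*\ldsum W'^*$. Consequently the coefficient $\lambda(a,b,c,d)=\permMM^{ac}\permSS^{bd}\permMS^{ad-bc}$ assigned to a disjoint-union permutation becomes $\lambda(b,a,d,c)=\permSS^{ac}\permMM^{bd}\permMS^{bc-ad}$, i.e.\ the same formula with $(\permMM,\permSS,\permMS)$ replaced by $(\permSS,\permMM,\permSM)$; Lemma~\ref{lem:disjoint-vs-connected-permutation} gives the analogous change for connected-sum permutations, and the exceptional changes, whose coefficients are products in $\{1,\permMM\permSS\}$, are manifestly preserved. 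Hence $(\blank)^*$ is a well-defined functor $\kChCob_{\permMM\permSS\permMS}(k)\to\kChCob_{\permSS\permMM\permSM}(k)$; coherence with the relations \textit{S}, \textit{T}, and \textit{4Tu} was already noted, so it descends to $\kChCobL$.

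Combining these three ingredients gives the claimed isomorphism $\KhCom_{\permMM\permSS\permMS}(T^!)\cong\KhCom_{\permSS\permMM\permSM}(T)^*$. The step I expect to be the main obstacle is not any single conceptual point but the careful compatibility check between the sign assignment for $T$ and the induced sign assignment for $T^!$; one must verify that the shifted, dualised cube actually anticommutes in the target linearisation with the new parameters, and the cleanest way to do this is to show that $(\blank)^*$ sends the commutativity $2$-cocycle $\psi_T$ of $\KhCube{T}$ to the commutativity $2$-cocycle $\psi_{T^!}$ of $\KhCube{T^!}$ (after relabelling by $\bar\xi$) computed with the swapped parameters, and then apply Lemma~\ref{lem:kh-cube-indep-of-sign-assign}.
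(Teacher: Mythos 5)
Your overall strategy is the same as the paper's: apply the flip functor $(\blank)^*$ to the cube of $T$, match the result with the cube of $T^!$ via the relabelling $\xi\mapsto\bar\xi$, transport the sign assignment, and observe that the flip exchanges the roles of $\permMM$ and $\permSS$ while preserving $\permMS$ up to inversion. That last point — that $\chdeg W^*=(b,a)$ and $(W\rdsum W')^*=W^*\ldsum W'^*$ force the parameter change $(\permMM,\permSS,\permMS)\rightsquigarrow(\permSS,\permMM,\permSM)$ — is exactly the content of the paragraph preceding the proposition, and you have it right.

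There is, however, a concrete error in your first paragraph. You write that ``the type~$0$ (horizontal) resolution of a crossing does not depend on its sign, so the $1$-manifolds decorating vertex $\xi$ of $\KhCube{T}$ and vertex $\xi$ of $\KhCube{T^!}$ are literally the same.'' With the paper's labelling convention (see the picture flanking the statement: NWSE has hor~$\leadsto 0$, vert~$\leadsto 1$; NESW has hor~$\leadsto 1$, vert~$\leadsto 0$), swapping a crossing \emph{swaps} the labels $0$ and $1$ attached to the two smoothings. The smoothings themselves are of course unchanged, but the correct matching is that vertex $\xi$ of $\KhCube{T}$ agrees with vertex $\bar\xi$ of $\KhCube{T^!}$ — which is precisely why you later need the relabelling $\xi\mapsto\bar\xi$. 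As written, your claim (1) (``vertex $\xi$ of both cubes agree'') and your use of the relabelling (2) contradict each other, since they cannot both identify the vertices unless the cube is $\bar{}$-symmetric. Replace the sentence by: the $0$-smoothing of a crossing in $T$ coincides with the $1$-smoothing of the corresponding crossing in $T^!$; hence $D_\xi = D'_{\bar\xi}$, and the relabelling $\xi\mapsto\bar\xi$ together with the homological shift $[n]$ lines up the vertex data of $\KhBracket{T}^*$ with $\KhBracket{T^!}$.

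The second difference from the paper is in how the framing arrows are matched. The paper picks a \emph{specific} diagram for $T^!$: every crossing is replaced by its opposite and the framing arrow is rotated clockwise or anticlockwise according to the checkerboard colouring, mirroring exactly the rule used to define $(\blank)^*$. With that choice the equality $\KhCube{T^!}=\KhCube{T}^*$ is verbatim, and a sign assignment $\epsilon$ for $\KhCube{T}$ is automatically one for $\KhCube{T}^*$, so no further matching of $2$-cocycles is required; the ``main obstacle'' you flag at the end simply disappears. Your route — leaving the arrows on $T^!$ arbitrary and invoking Lemma~\ref{lem:kh-cube-indep-of-arr} to absorb the discrepancy — is plausible, but that lemma (and Lemma~\ref{lem:kh-cube-indep-of-sign-assign}) is stated for link diagrams only; for tangles you would need to note that the argument carries over, since the $2$-cocycle $\psi$ now takes values in $\scalars(k)$. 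Choosing the diagram wisely avoids this extra step. Finally, your grading bookkeeping $\KhBracket{T^!}\cong\KhBracket{T}^*[n]\{n\}$ looks right (the paper writes only $[n]$, which appears to drop the $\{n\}$), and either way the shifts in the definition of $\KhCom$ cancel it, so the final isomorphism $\KhCom_{\permMM\permSS\permMS}(T^!)\cong\KhCom_{\permSS\permMM\permSM}(T)^*$ goes through.
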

\begin{proof}
	Choose a~diagram of $T$ with $n$ enumerated crossings and arrows over them. To obtain a~diagram for $T^!$ replace first each crossing \fntNWSECr\ with the~opposite one \fntNESWCr, and rotate the~arrows over crossings using the~same convention as for $(\blank)^*$: color regions black and white and rotate an~arrow anticlockwise, when it is placed over white regions, and clockwise otherwise. With this choice of diagrams $\KhCube{T^!}=\KhCube{T}^*$, which follows directly from the~construction of the~cube of resolutions. Moreover, a~sign assignment $\epsilon\in C^1(I^n;\invScalars)$ for $\KhCube{T}$ is automatically a~sign assignment for $\KhCube{T}^*$. Therefore, $(\KhBracket{T}_\epsilon)^*[n] = \KhBracket{T^!}_\epsilon$ and the~proposition follows.
\end{proof}

\section{Homology}\label{sec:homology}
Although the~complex $\KhCom(T)$ is an~invariant of the~tangle $T$, it is a~difficult problem to determine whether two complexes in $\kChCobL$ are homotopy equivalent. One can obtain a~partial answer, by applying a~functor $\F\colon\kChCobL\to\cat{A}$ to some abelian category $\cat{A}$. Such a~functor extends naturally to complexes $\F\colon\Kom(\kChCobL)\to\Kom(\cat{A})$ and the~homology $H(\F\KhCom(T))$ is an~invariant of the~tangle $T$.

For simplicity, we will consider only functors $\F\colon\kChCobL(0)\to\Mod\scalars$, producing invariants of links. If we restrict to $\Z{\times}\Z$-graded $\scalars$-modules and $\F$ preserves degrees of morphisms, then homology groups $H^i(\F\KhCom(T))$ are $\Z$-graded (recall, that in $\KhCom$ we collapse the~$\Z{\times}\Z$-grading into the~$\Z$-grading, by replacing $(a,b)$ with $a+b$).

\subsection*{Even Khovanov homology}

Denote by $\Zev$ the~ring of integers with the~trivial action of $\scalars$, i.e.\ all $\permMM$, $\permSS$, and $\permMS$ act as $1$. Then $\Zev\otimes\kChCob$ is the~category of ordinary cobordisms, so that all invariants decribed in \cite{DrorCobs} can be computed from $\KhCom(L)$. In particular, we can take a~functor $\Fev$ that sends a~family of $s$ circles in $\Disk$ into an~$s$-folded tensor product\footnote{
	Strictly speaking, one should think of $A^{\otimes s}$ as an~orderless tensor product, which makes sense in any symmetric monoidal category. Otherwise, $\Fev$ is defined on objects only up to an~isomorphism, since it requires ordering of circles. However, there is a~canonical isomorphism induced by the~symmetric structure, and coherence result for symmetric monoidal categories implies it is unique. The~same issue arises in other examples of functors described in this paper.}
$A^{\otimes s}$ of a~rank $2$ module $A=\Zev v_+\oplus\Zev v_-$, graded with $\deg v_+ = (1,0)$ and $\deg v_- = (0,-1)$. For cobordisms we define $\Fev$ as below
\begin{align}
	\Fev\left(\textcobordism[2](M)\right)&\colon A\otimes A\to A,\phantom\Zev\quad
			\begin{cases}
					v_+\otimes v_+ \mapsto v_+, &\qquad v_-\otimes v_+ \mapsto v_-,\\
					v_+\otimes v_- \mapsto v_-, &\qquad v_-\otimes v_- \mapsto 0,
				\end{cases} \\
	\Fev\left(\textcobordism[1](S)\right)&\colon A\to A\otimes A,\phantom\Zev\quad
			\begin{cases}
				v_+\mapsto v_-\otimes v_+ + v_+\otimes v_-,\\
				v_-\mapsto v_-\otimes v_-,
			\end{cases} \\
	\Fev\left(\textcobordism[0](sB)\right)&\colon\Zev\to A,\phantom{A\otimes A}\quad
			\begin{cases}
				1\mapsto v_+,
			\end{cases}\\
	\label{eq:Fev-counit}
	\Fev\left(\textcobordism[1](sD)\right)&\colon A\to\Zev,\phantom{A\otimes A}\quad
			\begin{cases}
				v_+\mapsto 0,\\
				v_-\mapsto 1.
			\end{cases}
\end{align}	
The~above turns $A$ into a~Frobenius algebra, so that $\Fev$ is well-defined. Compatibility with the~three relations \textit{S}, \textit{T}, and \textit{4Tu} is easy to check \cite{DrorCobs}. The~resulting homology $\EKh(L):=H(\Fev\KhCom(L))$ is the~categorification of the~Jones polynomial from \cite{KhCatJones}.

\subsection*{Odd Khovanov homology}

Assume now that $\permMM$ and $\permMS$ act on integers as $1$, but $\permSS$ as $-1$, and denote this $\scalars$-algebra by $\Zodd$. This choice provides a~framework for the~odd Khovanov homology \cite{ORS}. The~functor $\Fodd\colon\kChCobL(0)\to\Mod\scalars$ associates to a~family of $s$ circles in $\Disk$ the~exterior algebra $\Lambda_s:=\bigwedge^*\Zodd\langle a_1,\dots,a_s\rangle$ with one generator $a_i$ for each circle. A~merge of circles labeled $a_i$ and $a_j$ is realized by the~canonical projection $\Lambda_s\to/->>/ \quotient{\Lambda_s}{(a_i-a_j)}\cong \Lambda_{s-1}$ that identifies appropriate generators. Dually, splitting a~circle into two, labeled $a_i$ and $a_j$, is given as
\begin{equation}
	\Lambda_{s-1}\cong\quotient{\Lambda_s}{(a_i-a_j)} \ni [w]\longmapsto (a_i-a_j)\wedge w \in \Lambda_s,
\end{equation}
assuming the~$i$-th circle in the~target configuration is to the~left of the~framing arrow and the~$j$-th one is to the~right. A~birth is an~inclusion of algebras and an~anticlockwise death of an~$i$-th circle is the~Kronecker delta~function $a_j\mapsto \delta_{i,j}$ wedged with identity, i.e. it strips off $a_i$ from the~element $w$ from the~left hand side, if it is present, or sends $w$ to $0$ otherwise.

One can directly check that $\Fodd$ defined in this way is a~functor. It is shown in \cite{ORS} that $\OKh(L) := H(\Fodd\KhCom(L))$ is an~invariant of a~link $L$. The~group $\Lambda_s$ is graded with an~element $a_{i_1}\wedge\cdots\wedge a_{i_r}$ in degree $s-2r$, which makes $\Fodd$ a~degree-preserving functor. Both a~sphere and a~torus evaluate to zero ($a_i-a_j$ becomes $0$ after merging $i$-th and $j$-th circles). and \textit{4Tu}
follows from the~table below.
\begin{center}
	\psset{unit=8mm}%
	\begin{tabular}{c|cccc}
		\hline \rule[-6mm]{0pt}{14mm} &
		\ \pictRelTuL\  & \ \pictRelTuR\  & \ \pictRelTuB\  & $-$\pictRelTuT\phantom{$-$} \\
		\hline
	  $1$ & $0$ & $0$ & $0$ & $0$ \\
		$a_1$ & $0$ & $1$ & $1$ & $0$ \\
		$a_2$ & $1$ & $0$ & $1$ & $0$ \\
		$a_1\wedge a_2$ & $-a_1$ & $a_2$ & $0$ & $a_2-a_1$ \\
		\hline
	\end{tabular}
\end{center}
Therefore, invariance of $\Fodd\KhCom(L)$ also follows from Theorem~\ref{thm:invariance}.

\section{Chronological Frobenius algebras}\label{sec:chron-Frob}
We shall now construct a~natural target for a~chronological TQFT functor. Choose a~commutative ring $R$ and a~category of symmetric $R$-bimodules graded by an~abelian group $G$.

\begin{definition}
	Choose a~function $\lambda\colon G\times G\to U(R)$ that is a~group homomorphism in each variable, where $U(R)$ is the~group of invertible elements in $R$. Define the~\emph{graded tensor product} for $G$-graded modules in the~ordinary way, but for homogeneous homomorphisms $f$ and $g$ we define the~product $f\otimes g$ by the~formula
	\begin{equation}\label{eq:tensor-cubical}
		(f\otimes g)(m\otimes n) := \lambda(\deg g, \deg m) f(m)\otimes g(n).
	\end{equation}
	There is a~\emph{braiding} $\sigma_{M,N}\colon M\otimes N\to N\otimes M$, $m\otimes n\mapsto\lambda(\deg m,\deg n)n\otimes m$, which is a~\emph{symmetry} if $\lambda(a,b)\lambda(b,a)=1$ for all $a,b\in G$.
\end{definition}

The~graded tensor product generalizes the~Koszul product ($G=\Z_2$ and $\lambda(a,b) = (-1)^{ab}$), and the~anyonic braiding ($G=\Z$ and $\lambda(a,b) = \zeta^{ab}$ for some root of unity $\zeta$).

\begin{lemma}
	The~following hold
	\begin{align}
	\label{eq:graded-product}
			(f'\otimes g')\circ(f\otimes g) &= \lambda(\deg g',\deg f)(f'\circ f)\otimes (g'\circ g), \text{ and}\\
	\label{eq:graded-brading}
			\sigma_{M',N'}\circ(f\otimes g) &= \lambda(\deg f,\deg g)(g\otimes f)\circ\sigma_{M,N}
	\end{align}
	for any homogeneous homomorphisms $M\to^f M'\to^{f'} M''$ and $N\to^g N'\to^{g'} N''$.
\end{lemma}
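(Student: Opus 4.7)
The plan is to verify both identities by unwrapping the defining formula \eqref{eq:tensor-cubical} on an elementary tensor $m\otimes n$ and repeatedly using the bilinearity of $\lambda$ in its two arguments. Nothing deeper is needed beyond tracking which degree is paired with which.

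For \eqref{eq:graded-product} I would first compute
\[
(f\otimes g)(m\otimes n)=\lambda(\deg g,\deg m)\,f(m)\otimes g(n),
\]
then apply $f'\otimes g'$ and use $\deg f(m)=\deg f+\deg m$ to obtain an overall coefficient
$\lambda(\deg g,\deg m)\lambda(\deg g',\deg f+\deg m)$. Bilinearity in the first slot splits this as $\lambda(\deg g',\deg f)\cdot\lambda(\deg g'+\deg g,\deg m)$, and since $\deg(g'\circ g)=\deg g'+\deg g$ the second factor is exactly the coefficient built into $(f'f)\otimes(g'g)$ evaluated at $m\otimes n$. Pulling the scalar $\lambda(\deg g',\deg f)$ out of the elementary tensor yields the claimed identity.

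For \eqref{eq:graded-brading} the same strategy gives, on the one hand,
\[
\sigma_{M',N'}(f\otimes g)(m\otimes n)=\lambda(\deg g,\deg m)\,\lambda(\deg f+\deg m,\deg g+\deg n)\,g(n)\otimes f(m),
\]
and on the other
\[
(g\otimes f)\sigma_{M,N}(m\otimes n)=\lambda(\deg m,\deg n)\,\lambda(\deg f,\deg n)\,g(n)\otimes f(m).
\]
Expanding $\lambda(\deg f+\deg m,\deg g+\deg n)$ as the product of four bilinear pieces and cancelling $\lambda(\deg g,\deg m)\lambda(\deg m,\deg g)$ against $1$ (using the symmetry $\lambda(a,b)\lambda(b,a)=1$, which holds for the $\lambda$ arising from our chronological scalars since $\permMM^2=\permSS^2=1$ and the exponents of $\permMS$ cancel) leaves precisely the coefficient $\lambda(\deg f,\deg g)\lambda(\deg m,\deg n)\lambda(\deg f,\deg n)$ that appears on the right-hand side of the claim.

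The only mild subtlety, and the place to be careful, is the bookkeeping of degrees inside $\lambda$: writing $\deg f(m)=\deg f+\deg m$ looks trivial but is what makes the bilinearity step work, and in the second identity one must keep in mind that the $\lambda$ produced by \eqref{eq:tensor-cubical} uses the output degree $\deg g(n)$ rather than $\deg n$. Once this bookkeeping is done, no further structure is used beyond the bilinearity of $\lambda$ (for the first identity) and the symmetry relation on $\lambda$ (for the second), so the proof reduces to two short computations on elementary tensors, extended $R$-linearly to the whole tensor product.
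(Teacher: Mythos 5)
Your computation is correct and proceeds by the only sensible route, which is the same one the paper has in mind (the paper's proof is literally ``Straightforward''): evaluate both sides on an elementary tensor, use the bilinearity of $\lambda$, and compare coefficients. The first identity falls out cleanly from bilinearity alone; you correctly trace the key rearrangement $\lambda(\deg g,\deg m)\lambda(\deg g',\deg m)=\lambda(\deg g'+\deg g,\deg m)$.

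The only thing worth flagging — and you flag it yourself, which is to your credit — is that the second identity is \emph{not} a consequence of bilinearity alone. Your expansion shows that the two sides differ by the factor $\lambda(\deg g,\deg m)\lambda(\deg m,\deg g)$, which is $1$ only when $\lambda$ satisfies the symmetry relation $\lambda(a,b)\lambda(b,a)=1$. The lemma as stated places no such hypothesis, so read literally it is slightly too strong: with, say, the anyonic $\lambda(a,b)=\zeta^{ab}$ for $\zeta$ a primitive $k$-th root of unity with $k>2$, \eqref{eq:graded-brading} fails. In the paper's actual usage — the $\Z\times\Z$-graded case with $\lambda(a,b,c,d)=\permMM^{ac}\permSS^{bd}\permMS^{ad-bc}$ and $\permMM^2=\permSS^2=1$ — the symmetry relation holds (the $\permMS$-exponents cancel, as you note), and that is the situation the lemma is applied to throughout Section~9. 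So your proof is correct for the lemma as used, and the explicit identification of the needed symmetry hypothesis is a genuine improvement over the paper's one-word proof; strictly speaking the lemma's hypotheses should include $\lambda(a,b)\lambda(b,a)=1$ for the second identity, a condition the paper mentions only in passing in the preceding definition when distinguishing a braiding from a symmetry.
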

\begin{proof}
	Straightforward.
\end{proof}

\begin{example}
	The~category $\Mod\scalars$ of $\Z{\times}\Z$-graded modules over a~commutative ring $\scalars=\scalarsLong*$ is a~graded tensor category in the~above sense with $\lambda$ defined as $\lambda(a,b,c,d) = \permMM^{ac}\permSS^{bd}\permMS^{ad-bc}$.
\end{example}

There is a~nice graphical interpretation of the~formulas \eqref{eq:graded-product} and \eqref{eq:tensor-cubical}. We represent a~homomorphism $f\colon M\to N$ by a~box labeled $f$ with two legs: one at the~bottom, labeled with $M$, and one at the~top, labeled with $N$. Composition of morphism is given by placing the~boxes one over the~other and a~tensor product of homomorphisms by placing them side by side, the~left on the~higher level than the~right one. Then we have the~following relation for homogeneous morphisms $f$ and $g$:
\begin{equation}\label{diag:graded-product}
	\psset{unit=0.5}
	\begin{centerpict}(0,1)(4,4.5)
		\psset{linewidth=1pt}
		\psline(1,1)(1,4.5)
		\psline(3,1)(3,4.5)
		\psset{linewidth=0.5pt}
		\rput(1,2.0){\psframe[framearc=0.5,fillstyle=solid](-0.7,-0.5)(0.7,0.5)\rput[B](0,-2pt){$\scriptstyle f$}}
		\rput(3,3.5){\psframe[framearc=0.5,fillstyle=solid](-0.7,-0.5)(0.7,0.5)\rput[B](0,-2pt){$\scriptstyle g$}}
	\end{centerpict}
	 = \lambda(\deg g,\deg f)
	\begin{centerpict}(0,1)(4,4.5)
		\psset{linewidth=1pt}
		\psline(1,1)(1,4.5)
		\psline(3,1)(3,4.5)
		\psset{linewidth=0.5pt}
		\rput(1,3.5){\psframe[framearc=0.5,fillstyle=solid](-0.7,-0.5)(0.7,0.5)\rput[B](0,-2pt){$\scriptstyle f$}}
		\rput(3,2.0){\psframe[framearc=0.5,fillstyle=solid](-0.7,-0.5)(0.7,0.5)\rput[B](0,-2pt){$\scriptstyle g$}}
	\end{centerpict}.
\end{equation}
For example, \eqref{eq:tensor-cubical} is coherent with the~following simple calculation, where we represent an~element $m\in M$ of a~module $M$ by a~box with no input (think of it as a~homomorphisms $R\to M$ taking $1$ to $m$):
\begin{equation}
	\psset{unit=0.5}
	\begin{centerpict}(0,0)(4,5)
		\psset{linewidth=1pt}
		\psline(1,2)(1,5)
		\psline(3,1)(3,5)
		\psset{linewidth=0.5pt}
		\rput(1,4){\psframe[framearc=0.5,fillstyle=solid](-0.7,-0.5)(0.7,0.5)\rput[B](0,-2pt){$\scriptstyle f$}}
		\rput(3,3){\psframe[framearc=0.5,fillstyle=solid](-0.7,-0.5)(0.7,0.5)\rput[B](0,-2pt){$\scriptstyle g$}}
		\rput(1,2){\psframe[framearc=0.5,fillstyle=solid](-0.7,-0.5)(0.7,0.5)\rput[B](0,-2pt){$\scriptstyle m$}}
		\rput(3,1){\psframe[framearc=0.5,fillstyle=solid](-0.7,-0.5)(0.7,0.5)\rput[B](0,-2pt){$\scriptstyle n$}}
	\end{centerpict}
	\quad\to^{\cdot\lambda}\quad
	\begin{centerpict}(0,0)(4,5)
		\psset{linewidth=1pt}
		\psline(1,3)(1,5)
		\psline(3,1)(3,5)
		\psset{linewidth=0.5pt}
		\rput(1,4){\psframe[framearc=0.5,fillstyle=solid](-0.7,-0.5)(0.7,0.5)\rput[B](0,-2pt){$\scriptstyle f$}}
		\rput(1,3){\psframe[framearc=0.5,fillstyle=solid](-0.7,-0.5)(0.7,0.5)\rput[B](0,-2pt){$\scriptstyle m$}}
		\rput(3,2){\psframe[framearc=0.5,fillstyle=solid](-0.7,-0.5)(0.7,0.5)\rput[B](0,-2pt){$\scriptstyle g$}}
		\rput(3,1){\psframe[framearc=0.5,fillstyle=solid](-0.7,-0.5)(0.7,0.5)\rput[B](0,-2pt){$\scriptstyle n$}}
	\end{centerpict}
	\quad=\quad
	\begin{centerpict}(0,0)(4,5)
		\psset{linewidth=1pt}
		\psline(1,3.5)(1,5)
		\psline(3,1.1)(3,5)
		\psset{linewidth=0.5pt}
		\rput(1,3.5){\psframe[framearc=0.5,fillstyle=solid](-1,-0.5)(1,0.5)\rput[B](0,-2pt){$\scriptstyle f(m)$}}
		\rput(3,1.5){\psframe[framearc=0.5,fillstyle=solid](-1,-0.5)(1,0.5)\rput[B](0,-2pt){$\scriptstyle g(n)$}}
	\end{centerpict}
\end{equation}
where the~arrow $\to^{\cdot\lambda}$ indicates that the~picture to the~right must be scaled by $\lambda(\deg g,\deg m)$.

If $\lambda(a,b)\lambda(b,a)=1$ for all $a,b\in G$, we represent the~symmetry $\sigma_{M,N}\colon M\otimes N\to N\otimes M$ by a~crossing:
\begin{equation}
	\psset{unit=0.5}
	\begin{centerpict}(1,0.5)(3,5)
		\psset{linewidth=1pt}
		\psline(1,2.5)(1,3)(3,5)
		\psline(3,1)(3,3)(1,5)
		\psset{linewidth=0.5pt}
		\rput(1,2.5){\psframe[framearc=0.5,fillstyle=solid](-0.7,-0.5)(0.7,0.5)\rput[B](0,-2pt){$\scriptstyle m$}}
		\rput(3,1.0){\psframe[framearc=0.5,fillstyle=solid](-0.7,-0.5)(0.7,0.5)\rput[B](0,-2pt){$\scriptstyle n$}}
	\end{centerpict}
	\qquad=\qquad
	\begin{centerpict}(1,0.5)(3,5)
		\psset{linewidth=1pt}
		\psline(1,1)(1,5)
		\psline(3,2.5)(3,5)
		\psset{linewidth=0.5pt}
		\rput(3,2.5){\psframe[framearc=0.5,fillstyle=solid](-0.7,-0.5)(0.7,0.5)\rput[B](0,-2pt){$\scriptstyle m$}}
		\rput(1,1.0){\psframe[framearc=0.5,fillstyle=solid](-0.7,-0.5)(0.7,0.5)\rput[B](0,-2pt){$\scriptstyle n$}}
	\end{centerpict}
	\qquad=\lambda(\deg m,\deg n)
	\begin{centerpict}(0.3,0.5)(3.7,5)
		\psset{linewidth=1pt}
		\psline(1,2.5)(1,5)
		\psline(3,1)(3,5)
		\psset{linewidth=0.5pt}
		\rput(1,2.5){\psframe[framearc=0.5,fillstyle=solid](-0.7,-0.5)(0.7,0.5)\rput[B](0,-2pt){$\scriptstyle n$}}
		\rput(3,1.0){\psframe[framearc=0.5,fillstyle=solid](-0.7,-0.5)(0.7,0.5)\rput[B](0,-2pt){$\scriptstyle m$}}
	\end{centerpict}
\end{equation}
This does not work in the~braided case. Indeed, one can first change the~heights of boxes labeled $m$ and $n$, which results in $\sigma(m\otimes n) = \lambda(\deg n,\deg m)^{-1}n\otimes m$. Comparing the~two values we conclude it must be $\lambda(\deg n,\deg m)\lambda(\deg m,\deg n)=1$. One solution to this issue is to add horizontal lines originating at all boxes and pointing leftwards, in which case the~relation \eqref{diag:graded-product} appears in two versions
\begin{align}
	\psset{unit=0.5}\begin{centerpict}(-0.5,1)(4,4.5)
		\psset{linewidth=1pt}
		\psline(1,1)(1,4.5)
		\psline(3,1)(3,4.5)
		\psline[linecolor=blue,border=3\pslinewidth](0.3,2.0)(-0.5,2.0)
		\psline[linecolor=blue,border=3\pslinewidth](2.3,3.5)(-0.5,3.5)
		\psset{linewidth=0.5pt}
		\rput(1,2.0){\psframe[framearc=0.5,fillstyle=solid](-0.7,-0.5)(0.7,0.5)\rput[B](0,-2pt){$\scriptstyle f$}}
		\rput(3,3.5){\psframe[framearc=0.5,fillstyle=solid](-0.7,-0.5)(0.7,0.5)\rput[B](0,-2pt){$\scriptstyle g$}}
	\end{centerpict}
	&=
	\psset{unit=0.5}\begin{centerpict}(-1.2,1)(4,4.5)
		\psset{linewidth=1pt}
		\psline(1,1)(1,4.5)
		\psline(3,1)(3,4.5)
		\pscustom[linecolor=blue]{%
			\moveto(0.3,3.5)\curveto(-0.5,3.5)(-0.5,2.0)(-1.2,2.0)}
		\pscustom[linecolor=blue]{%
			\moveto(2.3,2.0)\lineto(0.3,2.0)\curveto(-0.5,2.0)(-0.5,3.5)(-1.2,3.5)
			\stroke[linecolor=white,linewidth=7\pslinewidth]}
		\psset{linewidth=0.5pt}
		\rput(1,3.5){\psframe[framearc=0.5,fillstyle=solid](-0.7,-0.5)(0.7,0.5)\rput[B](0,-2pt){$\scriptstyle f$}}
		\rput(3,2.0){\psframe[framearc=0.5,fillstyle=solid](-0.7,-0.5)(0.7,0.5)\rput[B](0,-2pt){$\scriptstyle g$}}
	\end{centerpict},\quad\text{and}
	\\[1ex]
	\psset{unit=0.5}\begin{centerpict}(-0.5,1)(4,4.5)
		\psset{linewidth=1pt}
		\psline[linecolor=blue](0.3,2.0)(-0.5,2.0)
		\psline[linecolor=blue](2.3,3.5)(-0.5,3.5)
		\psline[border=3\pslinewidth](1,1)(1,4.5)
		\psline[border=3\pslinewidth](3,1)(3,4.5)
		\psset{linewidth=0.5pt}
		\rput(1,2.0){\psframe[framearc=0.5,fillstyle=solid](-0.7,-0.5)(0.7,0.5)\rput[B](0,-2pt){$\scriptstyle f$}}
		\rput(3,3.5){\psframe[framearc=0.5,fillstyle=solid](-0.7,-0.5)(0.7,0.5)\rput[B](0,-2pt){$\scriptstyle g$}}
	\end{centerpict}
	 &=
	\psset{unit=0.5}\begin{centerpict}(-1.2,1)(4,4.5)
		\psset{linewidth=1pt}
		\pscustom[linecolor=blue]{%
			\moveto(2.3,2.0)\lineto(0.3,2.0)\curveto(-0.5,2.0)(-0.5,3.5)(-1.2,3.5)}
		\pscustom[linecolor=blue]{%
			\moveto(0.3,3.5)\curveto(-0.5,3.5)(-0.5,2.0)(-1.2,2.0)
			\stroke[linecolor=white,linewidth=7\pslinewidth]}
		\psline[border=3\pslinewidth](1,1)(1,4.5)
		\psline[border=3\pslinewidth](3,1)(3,4.5)
		\psset{linewidth=0.5pt}
		\rput(1,3.5){\psframe[framearc=0.5,fillstyle=solid](-0.7,-0.5)(0.7,0.5)\rput[B](0,-2pt){$\scriptstyle f$}}
		\rput(3,2.0){\psframe[framearc=0.5,fillstyle=solid](-0.7,-0.5)(0.7,0.5)\rput[B](0,-2pt){$\scriptstyle g$}}
	\end{centerpict}.
\end{align}
Decorating the~horizontal lines with degrees of the~boxes, we add untwisting relations
\begin{equation}
	\psset{linecolor=blue,linewidth=1pt,xunit=1em,yunit=1ex}%
	\begin{centerpict}(-1,-2.1)(1.5,2.1)
		\psbezier(-1,-2)(0,-2)(0, 2)(1, 2)
		\psbezier[border=3\pslinewidth](-1, 2)(0, 2)(0,-2)(1,-2)
		\rput[l](1.1, 2){$\scriptstyle b$}
		\rput[l](1.1,-2){$\scriptstyle a$}
	\end{centerpict}
	= \lambda(a,b)
	\begin{centerpict}(-1,-2.1)(1.5,2.1)
		\psline(-1,-2)(1,-2)\psline(-1,2)(1,2)
		\rput[l](1.1, 2){$\scriptstyle b$}
		\rput[l](1.1,-2){$\scriptstyle a$}
	\end{centerpict}
	\hskip 1cm\text{and}\hskip 1cm
	\begin{centerpict}(-1,-2)(1.5,2)
		\psbezier(-1, 2)(0, 2)(0,-2)(1,-2)
		\psbezier[border=3\pslinewidth](-1,-2)(0,-2)(0, 2)(1, 2)
		\rput[l](1.1, 2){$\scriptstyle b$}
		\rput[l](1.1,-2){$\scriptstyle a$}
	\end{centerpict}
	= \lambda(b,a)^{-1}
	\begin{centerpict}(-1,-2)(1.5,2)
		\psline(-1,-2)(1,-2)\psline(-1,2)(1,2)
		\rput[l](1.1, 2){$\scriptstyle b$}
		\rput[l](1.1,-2){$\scriptstyle a$}
	\end{centerpict}
\end{equation}
This can be done only at the~left edge of the~diagram. The~product $f\otimes g$ is then represented by the~diagram in which the~line originating from $g$ passes over the~input for $f$, and we can represent $\sigma$ by the~positive crossing \fntNESWCr. However, the~composition of boxes becomes more complicated---one cannot simply join two boxes, unless their horizontal lines pass all other lines in the~same way. We shall not go deeper into the~braided case, as all graded tensor products considered in this paper are symmetric.

\begin{definition}\label{def:graded-properties}
	Choose a~ring $S$ that is a~$G$-graded $\scalars$-algebra, and consider the~category of $G$-graded modules over $S$. We say that
	\begin{enumerate}

		\item the~ring $S$ is \emph{commutative} if $rs = \lambda(\deg r,\deg s)sr$ for homogeneous elements $r,s\in S$,

		\item a~$G$-graded bimodule $M$ over $S$ is \emph{symmetric} if $sm = \lambda(\deg s, \deg m)ms$ for homogeneous elements $s\in S$, $m\in M$, and

		\item a~homogeneous function $f\colon M\to N$ between $G$-graded bimodules over $S$ is \emph{right linear} if $f(ms) = f(m)s$, but \emph{left linear} if $f(sm) = \lambda(\deg f,\deg s) sf(m)$ for a~homogeneous element $s\in S$.
	
	\end{enumerate}
\end{definition}

If we think of linearity as a~commutativity of a~map $f$ with the~action of $S$, then the~last definition follows easily from the~graphical calculus (notice that the~actions of $S$ are degree $0$ maps):
\begin{equation}
	\psset{unit=3ex}
	\begin{centerpict}(1,0.5)(3,6)
		\psset{linewidth=1pt}
		\psline(1,2)(1,3)(2,4)(3,3)(3,1)
		\psline(2,4)(2,6)
		\psset{linewidth=0.5pt}
		\rput(1,2){\psframe[framearc=0.5,fillstyle=solid](-0.7,-0.5)(0.7,0.5)\rput[B](0,-2pt){$\scriptstyle s$}}
		\rput(3,1){\psframe[framearc=0.5,fillstyle=solid](-0.7,-0.5)(0.7,0.5)\rput[B](0,-2pt){$\scriptstyle m$}}
		\rput(2,5){\psframe[framearc=0.5,fillstyle=solid](-0.7,-0.5)(0.7,0.5)\rput[B](0,-2pt){$\scriptstyle f$}}
	\end{centerpict}
	\qquad=\qquad
	\begin{centerpict}(1,0.5)(3,6)
		\psset{linewidth=1pt}
		\psline(1,2)(1,4)(2,5)(3,4)(3,1)
		\psline(2,5)(2,6)
		\psset{linewidth=0.5pt}
		\rput(1,2){\psframe[framearc=0.5,fillstyle=solid](-0.7,-0.5)(0.7,0.5)\rput[B](0,-2pt){$\scriptstyle s$}}
		\rput(3,1){\psframe[framearc=0.5,fillstyle=solid](-0.7,-0.5)(0.7,0.5)\rput[B](0,-2pt){$\scriptstyle m$}}
		\rput(3,3){\psframe[framearc=0.5,fillstyle=solid](-0.7,-0.5)(0.7,0.5)\rput[B](0,-2pt){$\scriptstyle f$}}
	\end{centerpict}
	\qquad\to^{\cdot\lambda}\qquad
	\begin{centerpict}(1,0.5)(3,6)
		\psset{linewidth=1pt}
		\psline(1,3.5)(1,4)(2,5)(3,4)(3,1)
		\psline(2,5)(2,6)
		\psset{linewidth=0.5pt}
		\rput(1,3.5){\psframe[framearc=0.5,fillstyle=solid](-0.7,-0.5)(0.7,0.5)\rput[B](0,-2pt){$\scriptstyle s$}}
		\rput(3,1.0){\psframe[framearc=0.5,fillstyle=solid](-0.7,-0.5)(0.7,0.5)\rput[B](0,-2pt){$\scriptstyle m$}}
		\rput(3,2.5){\psframe[framearc=0.5,fillstyle=solid](-0.7,-0.5)(0.7,0.5)\rput[B](0,-2pt){$\scriptstyle f$}}
	\end{centerpict}
	\qquad=\qquad
	\begin{centerpict}(1,0.5)(3,6)
		\psset{linewidth=1pt}
		\psline(1,3)(1,4)(2,5)(3,4)(3,2)
		\psline(2,5)(2,6)
		\psset{linewidth=0.5pt}
		\rput(1,3){\psframe[framearc=0.5,fillstyle=solid](-0.7,-0.5)(0.7,0.5)\rput[B](0,-2pt){$\scriptstyle s$}}
		\rput(3,2){\psframe[framearc=0.5,fillstyle=solid](-1.0,-0.5)(1.0,0.5)\rput[B](0,-2pt){$\scriptstyle f(m)$}}
	\end{centerpict}
\end{equation}
With these conventions we can define a~tensor product of $G$-graded bimodules $M\otimes_S N$ in the~usual way, with actions of $S$ given as $s(m\otimes n) := (sm)\otimes n$ and $(m\otimes n)s := m\otimes(ns)$. If both $M$ and $N$ are symmetric in the~graded sense, so is $M\otimes_S N$.

By an~analogy to ordinary cobordisms, a~chronological TQFT $\F\colon\kChCob(0)\to\Mod\scalars$ is determined by the~pair $(\F(\emptyset),\F(\fntCircle))$, a~variant of a~Frobenius system over $\scalars$.

\begin{definition}\label{def:chron-Frob}
	Choose an~abelian group $G$ and a~commutative ring $R$. A~\emph{chronological Frobenius system} in the~category $\Mod{R}$ with a~symmetric $G$-graded tensor product of type $\lambda$ is a~pair $(S,A)$ of two $R$-modules such that $S$ is a~graded ring and $A$ a~symmetric $S$-bimodule, together with four homogeneous operations, a~unit $\eta\colon S\to A$, a~counit $\epsilon\colon A\to S$, a~multiplication $\mu\colon A\otimes_S A\to A$, and a~comultiplication $\Delta\colon A\to A\otimes_S A$, subject to the~following conditions:
	\begin{gather}
		\label{eq:chron-frob-first}
		\mu\circ(\mu\otimes\id) = \lambda(\deg\mu,\deg\mu) \mu\circ(\id\otimes\mu), \\
		\label{eq:chron-frob-assoc-delta}
		(\Delta\otimes\id)\circ\Delta = \lambda(\deg\Delta,\deg\Delta) (\id\otimes\Delta)\circ\Delta, \\
		\label{eq:chron-frob-units}
		\mu\circ(\eta\otimes\id) = \id, \hskip 2cm
		(\epsilon\otimes\id)\circ\Delta = \id, \\
		\label{eq:chron-frob-comm}
		\mu\circ\sigma = \lambda(\deg\mu,\deg\mu)\mu, \hskip 2cm
		\sigma\circ\Delta = \lambda(\deg\Delta,\deg\Delta)\Delta, \\
		\label{eq:chron-frob-last}
		(\mu\otimes\id)\circ(\id\otimes\Delta)
			= \lambda(\deg\mu,\deg\Delta) \Delta\circ\mu
			= (\id\otimes\mu)\circ(\Delta\otimes\id).
	\end{gather}
	We call $A$ a~\emph{chronological Frobenius algebra} over $S$.
\end{definition}

The~conditions for a~chronological Frobenius algebra reflect the~chronological relations: \eqref{eq:chron-frob-first}, \eqref{eq:chron-frob-assoc-delta} and \eqref{eq:chron-frob-last} are like the~connected sum permutations changes, \eqref{eq:chron-frob-units} mimics the~creation and the~annihilation changes, whereas \eqref{eq:chron-frob-comm} is the~orientation reversion. Therefore, this is not a~surprise that they give chronological TQFT functors.

\begin{proposition}\label{prop:TQFT-from-Frob}
	Choose a~chronological Frobenius system $(S,A)$ in the~category of $G$-graded modules $\Mod{R}$ of type $\lambda$. Then there is a~group homomorphism $\psi\colon\Z\times\Z\to G$, a~$\scalars$-algebra structure on $R$, and a~$\scalars$-linear functor $\FA\colon\kChCob\to\Mod{R}$ that sends a~family of $s$ circles to the~tensor product $A^{\otimes s}$ and
	\psset{unit=7mm}%
	\begin{align}
		\FA\left(\textcobordism[2](M-L)\right) &= \Big(\mu\colon A\otimes A\to A\Big), &
		\FA\left(\textcobordism[0](sB) \right) &= \Big(\eta\colon S\to A\Big), \\
		\FA\left(\textcobordism[1](S-B)\right) &= \Big(\Delta\colon A\to A\otimes A\Big), &
		\FA\left(\textcobordism[1](sD-)\right) &= \Big(\epsilon\colon A\to S\Big).
	\end{align}
	This functor is graded in the~sense that $\deg\F(W) = \psi(\deg W)$ for a~cobordism $W$.
\end{proposition}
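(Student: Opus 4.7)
First I would fix the grading data. Define $\psi\colon\Z\times\Z\to G$ by $\psi(1,0) := \deg\eta$ and $\psi(0,1) := \deg\epsilon$. The unit identities \eqref{eq:chron-frob-units} then force $\deg\mu = -\psi(1,0)$ and $\deg\Delta = -\psi(0,1)$, so the prescribed image under $\FA$ of each of the five generators of $\Mor(2\cat{ChCob})$ has degree $\psi(\chdeg W)$; by Lemma~\ref{lem:chdeg-vs-bdry} this consistency is preserved under composition and disjoint union. The $\scalars$-algebra structure on $R$ is furnished by the ring homomorphism
\begin{equation*}
    \permMM \mapsto \lambda(\deg\mu,\deg\mu), \qquad \permSS \mapsto \lambda(\deg\Delta,\deg\Delta), \qquad \permMS \mapsto \lambda(\deg\mu,\deg\Delta),
\end{equation*}
which is well defined since the symmetry $\lambda(x,y)\lambda(y,x) = 1$ of the braiding gives $\permMM^2 = \permSS^2 = 1$ and makes $\permMS$ invertible.

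Next I would define $\FA$ itself. By Proposition~\ref{prop:2ChCob-presentation}, $2\cat{ChCob}$ is generated under composition and disjoint union by the morphisms of \eqref{diag:mor-gens} together with the twist. I send these to the four operations $\mu, \Delta, \eta, \epsilon$ as prescribed by the statement, with the opposite orientation of death forced to $\permSS\epsilon$ by Corollary~\ref{cor:orientation-rev}, and I send the twist to the graded symmetry $\sigma$. Composition is extended tautologically and disjoint unions via
\begin{equation*}
    \FA(W \rdsum W') := (\FA(W) \otimes \id) \circ (\id \otimes \FA(W')),
\end{equation*}
reflecting the fact that in $\rdsum$ the critical points of $W'$ occur below those of $W$ by \eqref{eq:chron-disjoint-sum}.

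Finally, and this will be the bulk of the work, I need to verify that $\FA$ respects the chronological relations, i.e.\ that $\FA(W') = \iota(\varphi)\FA(W)$ for every generating change $\varphi\colon W\dblto W'$ from Proposition~\ref{prop:changes-generators}. Creations and annihilations are handled by \eqref{eq:chron-frob-units}. For a disjoint-union permutation $\sigma^\sqcup_{W,W'}$, the interchange identity \eqref{eq:graded-product} produces the scalar $\lambda(\deg\FA(W),\deg\FA(W'))$, which upon expanding via $\psi$ and bilinearity becomes $\permMM^{ac}\permSS^{bd}\permMS^{ad-bc} = \iota(\sigma^\sqcup_{W,W'})$ when $\chdeg W = (a,b)$ and $\chdeg W' = (c,d)$. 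Connected-sum permutations share the same coefficient by Lemma~\ref{lem:disjoint-vs-connected-permutation} and are realized algebraically by the Frobenius identities \eqref{eq:chron-frob-first}, \eqref{eq:chron-frob-assoc-delta} and \eqref{eq:chron-frob-last}, while the ${\times}$-changes reduce to the (co)commutativity relations \eqref{eq:chron-frob-comm} contributing $\permMM$ or $\permSS$. The main obstacle is the family of exceptional $\Diamond$-changes, which rearrange three critical points joined by a 4-connected surgery graph and must yield $1$ or $\permMM\permSS$; for these I would decompose each change along the hexagon \eqref{eq:hexagon-relation} into a composition of generating permutations already verified above (following Tab.~\ref{tab:chcob3-hexagon-rels}), and rely on the Frobenius axioms to collapse the resulting product of $\lambda$-factors to the prescribed value.
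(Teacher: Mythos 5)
Your setup of $\psi$, the $\scalars$-algebra structure on $R$, the definition of $\FA$ on generators, and the verification for creations/annihilations, disjoint-union and connected-sum permutations, and $\times$-changes all match the paper and are correct (the degree-bookkeeping reduction of the interchange-law scalar to $\permMM^{ac}\permSS^{bd}\permMS^{ad-bc}$ is right).

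The $\Diamond$-change step, however, is a genuine gap. The hexagon relations from Tab.~\ref{tab:chcob3-hexagon-rels} constrain the \emph{values of $\iota$}; they do not decompose a $\Diamond$-change as a composite of other generating 2-morphisms. Even if one views a hexagon as expressing one edge as the composite of the other five, two of the diagrams in the table contain two $\Diamond$-changes apiece, so the bootstrapping doesn't close. More importantly, the issue the $\Diamond$-change actually poses has nothing to do with the hexagons: a $\Diamond$-change relates two cobordisms $W$ and $W'$ that are \emph{diffeomorphic}, both being a genus-one piece on a single circle, so $\FA(W)=\FA(W')$ by construction. The chronological relation $W'=\iota(\varphi)W$ with $\iota(\varphi)\in\{1,\permMM\permSS\}$ then demands $(1-\permMM\permSS)\FA(W)=0$, i.e.\ that $1-\permMM\permSS$ annihilate $\mu\circ\Delta$. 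This is the one thing you must actually prove, and it does not follow from the Frobenius associativity or coassociativity axioms you list. The paper's proof supplies the missing ingredient by the short computation
\begin{equation*}
	\mu\circ\Delta \;=\; \mu\circ\sigma^2\circ\Delta \;=\; (\mu\circ\sigma)\circ(\sigma\circ\Delta)\;=\;\permMM\permSS\,\mu\circ\Delta,
\end{equation*}
which uses $\sigma^2=\id$ together with the (co)commutativity conditions \eqref{eq:chron-frob-comm}. Without that identity your argument does not establish well-definedness of $\FA$ at $\Diamond$-changes.
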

\begin{proof}
	The~condition $\deg\F(W)=\psi(\deg W)$ requires $\psi(1,0)=\deg\eta$ and $\psi(0,1)=\deg\epsilon$, while the~ring homomorphism $\scalars\to R$ is determined by $\lambda$ as below:
	\begin{equation*}
		\permMM\mapsto\lambda(\deg\mu,\deg\mu),
			\hskip 1cm
		\permSS\mapsto\lambda(\deg\Delta,\deg\Delta),
			\hskip 1cm
		\permMS\mapsto\lambda(\deg\mu,\deg\Delta).
	\end{equation*}
	It remains to check that $\FA$ preserves the~chronological relations. Most cases follow from \eqref{eq:tensor-cubical} and conditions \eqref{eq:chron-frob-first}--\eqref{eq:chron-frob-last}, with the~exception of ${\times}$- and $\Diamond$-changes. The~former follows from \eqref{eq:chron-frob-comm}, as an~${\times}$-change adds a~twist on one side of the~cobordism. In the~latter both cobordisms are equivalent, so it is enough to show that $1-\permMM\permSS$ annihilates $\mu\circ\Delta$. This follows from \eqref{eq:chron-frob-comm}:
	$\mu\circ\Delta
			= \permMM\permSS(\mu\circ\sigma)\circ (\sigma\circ\Delta)
			= \permMM\permSS\mu\circ\sigma^2\circ\Delta
			= \permMM\permSS\mu\circ\Delta$.
\end{proof}

\begin{example}[Covering Khovanov homology]
Let $S:=\scalars$ and take $A:=\scalars v_+\oplus\scalars v_-$. As before, we grade $A$ by setting $\deg v_+ = (1,0)$ and $\deg v_- = (0,-1)$, and we equip it with the~following operations
\setlength\arraycolsep{1pt}%
\begin{align}
	\mu\colon A\otimes A\to A,&\qquad\left\{\begin{array}{rlrl}
		\vv++ &\mapsto v_+, \quad\ & \vv-+ &\mapsto \permMM\permMS v_-,\\
		\vv+- &\mapsto v_-,        & \vv-- &\mapsto 0,
	\end{array}\right. \\
	\Delta\colon A\to A\otimes A,&\qquad\left\{\begin{array}{l}
		v_+\mapsto \vv-+ + \permSS\permMS\vv+-,\\
		v_-\mapsto \vv--,
	\end{array}\right. \\
	\eta\colon\scalars\to A,&\qquad \left\{\begin{array}{l}
		1\mapsto v_+,
	\end{array}\right.\\
	\epsilon\colon A\to\scalars,&\qquad \left\{\begin{array}{l}
		v_+\mapsto 0,\\
		v_-\mapsto 1.
	\end{array}\right.
\end{align}
One can directly check that conditions \eqref{eq:chron-frob-first}--\eqref{eq:chron-frob-last} hold. The~induced functor $\Fcov$ clearly satisfies the~sphere relation and a~direct calculation shows that a~standard torus evaluates to $\permMS(\permMM+\permSS)$. Finally, the~\textit{4Tu} relation follows from the~table below.
\begin{center}
	\psset{unit=8mm}%
	\begin{tabular}{c|cccc}
		\hline \rule[-6mm]{0pt}{14mm}
		& $\permMS\pictRelTuL\phantom{\permMS}$ & $\permMS\pictRelTuR\phantom{\permMS}$
		& $\permMM\pictRelTuB\phantom{\permMM}$ & $\permSS\pictRelTuT\phantom{\permSS}$ \\
		\hline
	  $\vv++$ & $0$            & $0$            & $0$            & $0$ \\
		$\vv+-$ & $\permMM\vv++$ & $0$            & $\permMM\vv++$ & $0$ \\
		$\vv-+$ & $0$            & $\permMS\vv++$ & $\permMS\vv++$ & $0$ \\
		$\vv--$ & $\permSS\vv-+$ & $\permMS\vv+-$ & $0$            & $\permSS v_-\otimes v_+
																															  + \permMS v_+\otimes v_-$ \\
		\hline
	\end{tabular}
\end{center}
Therefore, this algebra defines a~functor $\F\colon\kChCobL\to\Mod\scalars$. We call the~invariant $\KhCov(L):=H(\Fcov\KhCom(L))$ the~\emph{covering Khovanov homology} of the~link $L$.
\end{example}

Recall we defined two $\scalars$-module structure on $\Z$, depending on the~actions of the~generators $\permMM,\permSS,\permMS\in\scalars$: all three act as $1$ in $\Zev$, but $\permSS$ acts as $-1$ in $\Zodd$. The~following proposition explains the~name \emph{covering homology}.

\begin{proposition}
	For any link $L$ there are isomorphisms
	\begin{equation}
		\EKh(L) \cong \KhCov(L; \Zev)
			\qquad\textrm{and}\qquad
		\OKh(L)\cong \KhCov(L; \Zodd),
	\end{equation}
	where $\KhCov(L; M) := H(\Fcov\KhCom(L)\otimes M)$ for any $\scalars$-module $M$.
\end{proposition}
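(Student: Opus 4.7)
The plan is to construct, for each closed $1$-manifold $\Sigma$, a family of module isomorphisms $\Phi_\Sigma$ assembling into a natural isomorphism of $\scalars$-linear functors $\kChCobL(0) \to \Mod{\Z}$, from $\Fcov(\blank) \otimes_{\scalars} \Zev$ to $\Fev$ in one case and from $\Fcov(\blank) \otimes_{\scalars} \Zodd$ to $\Fodd$ in the other. Since the generalized Khovanov complex is built functorially from the cube of resolutions, and any two sign assignments produce isomorphic corrected cubes by Lemma~\ref{lem:kh-cube-indep-of-sign-assign}, naturality of $\Phi$ at the four generating cobordisms $\mu$, $\Delta$, $\eta$, $\epsilon$ is enough to obtain isomorphisms of chain complexes, hence of homology.

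The even case is essentially a tautology: substituting $\permMM = \permSS = \permMS = 1$ in the covering structure maps recovers on the nose the multiplication, comultiplication, unit, and counit of $\Fev$, so $\Phi$ may be taken to be the identity and the two functors agree literally. For the odd case I would define $\Phi_{\fntCircle} \colon A \otimes_{\scalars} \Zodd \to \Lambda_1$ by $v_+ \mapsto 1$ and $v_- \mapsto a$, and extend tensorially: with respect to a chosen enumeration of circles, an elementary tensor $\vvv{\epsilon_1,\dots,\epsilon_s}$ is sent to $a_{j_1} \wedge \cdots \wedge a_{j_k}$, where $j_1 < \cdots < j_k$ are the positions with $\epsilon_{j_i} = -$. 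Under this identification the specialized split $\Delta(v_+) = \vv-+ - \vv+-$ corresponds to $1 \mapsto a_1 - a_2$, and $\Delta(v_-) = \vv--$ corresponds to $a \mapsto a_1 \wedge a_2$, both matching the ORS formula $[w] \mapsto (a_1 - a_2) \wedge w$; the merge $\mu$ becomes the quotient $\Lambda_s \to \Lambda_s / (a_1 - a_2)$; the unit $\eta$ is the inclusion of constants; and the anticlockwise counit $\epsilon$ strips off the leading generator, matching $v_+ \mapsto 0$, $v_- \mapsto 1$.

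The step requiring the most care is the compatibility of $\Phi$ with the braiding that governs permutation cylinders. Over $\Zodd$ the graded symmetry in the target satisfies $\sigma(\vv--) = \permSS \cdot \vv-- = -\vv--$ while all other basis swaps are unsigned, and under $\Phi$ this matches exactly the antisymmetry $a_i \wedge a_j = -a_j \wedge a_i$ of the exterior algebra, so $\Phi$ is natural with respect to the relabelings of circles induced by permutation cobordisms. A potential concern is that the edge signs produced by the chronological sign assignment need not literally coincide with those used in \cite{ORS} to define $\OKh$, but Lemma~\ref{lem:kh-cube-indep-of-sign-assign} guarantees that any two sign assignments yield isomorphic corrected cubes, so any such discrepancy contributes only an isomorphism of complexes and is invisible to the homology.
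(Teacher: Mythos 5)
Your proof is correct and takes essentially the same approach as the paper: the even case is the same tautological observation, and for the odd case you define exactly the isomorphism the paper denotes $i$ (namely $v_+\mapsto 1$ and $v_-$ at position $i\mapsto a_i$), then check compatibility on generating cobordisms. You supply additional detail the paper leaves as ``easy to see''---in particular, the check that the graded braiding $\sigma(\vv--)=\permSS\vv--=-\vv--$ matches the exterior-algebra antisymmetry, and the remark that discrepancies between sign assignments are absorbed by Lemma~\ref{lem:kh-cube-indep-of-sign-assign}---but these are elaborations of the same argument, not a different route.
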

\begin{proof}
	The~first isomorphism follows directly from the~construction: replacing $\permMM$, $\permSS$ and $\permMS$ with $1$'s in the~definition of the~algebra $A$ results in the~Khovanov algebra. For the~second one it is enough to show that functors $\Fcov(\blank)\otimes\Zodd$ and $\Fodd$ are equivalent. This follows from applying an~isomorphism $i\colon A^{\otimes s}\otimes\Zodd\to\Lambda_s$ that sends any $v_+$ into $1$ and $v_-$ at the $i$-th position to $a_i$. Comparing the~two definitions, one can easily see that $\Fodd(M) = i\circ\left(\Fcov(M)\otimes\Zodd\right)\circ i^{-1}$ for any generating cobordism $M$.
\end{proof}

The~above proposition is an~example of a~more general operation called a~\emph{base change}: given a~chronological Frobenius system $(S,A)$ in $\Mod{R}$ and a~symmetric $R$-module $S'$, which is also a~ring, together with a~degree zero homomorphism of $R$-algebras $S\to S'$, the~pair $(S',A')$ with $A':=A\otimes_S S'$ is another chronological Frobenius system, called a~\emph{base change} of $(S,A)$. Clearly, $H(\F_{A'})\cong H(\F_A; S')$.

\begin{example}\label{ex:tautological-circle}
	One of the~consequences of the~\textit{4Tu} relation is the~following equality
	\begin{equation}\label{eq:bad-neck-cutting}
	\psset{linewidth=0.5pt,dash=1pt 1.5pt}
	\permMS(\permMM+\permSS)
	\begin{centerpict}(0,0)(0.7,2)
		\COBcylinder(0.1,0.1)(0.1,1.9)
	\end{centerpict}
	=
	\begin{centerpict}(0,0)(1.2,2)
		\COBdeath(0.4,0.1)
		\psellipse(0.6,1.9)(0.2,0.075)
		\psbezier(0.4,1.9)(0.4,1.5)(0.2,1.5)(0.2,1.1)
		\psbezier(0.8,1.9)(0.8,1.5)(1.0,1.5)(1.0,1.1)
		\psellipticarc(0.6,1.1)(0.4,0.3){180}{360}
		\psellipticarc(0.6,1.2)(0.25,0.18){190}{350}
		\psellipticarc(0.6,1.0)(0.2,0.2){30}{150}
	\end{centerpict}
	+
	\begin{centerpict}(0,0)(1.2,2)
		\COBbirth(0.8,0.7)
		\psdash\psellipticarc(0.6,0.1)(0.2,0.075){0}{180}
		\psellipticarc(0.6,0.1)(0.2,0.075){-180}{0}
		\psbezier(0.4,0.1)(0.4,0.5)(0.2,0.5)(0.2,0.9)
		\psbezier(0.8,0.1)(0.8,0.5)(1.0,0.5)(1.0,0.9)
		\psellipticarc(0.6,0.9)(0.4,0.3){0}{180}
		\psellipticarc(0.6,0.9)(0.25,0.18){190}{350}
		\psellipticarc(0.6,0.7)(0.2,0.2){30}{150}
	\end{centerpict},
	\end{equation}
	called a~neck-cutting relation. Again, we omitted the~orienting arrows, but the~convention is to orient all death clockwise, merges with arrow pointing leftwards, and splits with arrows pointing to the~back. If we impose the~relation $\permMM+\permSS=0$, we can use \eqref{eq:bad-neck-cutting} to move handles freely between components of a~cobordism (up to multiplication by $\permMM\permMS^a$). A~similar theory over the~two-element field $\mathbb{F}_2$ was analyzed in \cite{DrorCobs}, suggesting we have found its lift to $\Z$ in the~odd setting. Namely, we have an~algebra $A_H:=\Mor(\fntCircle,\fntCircle)$ over the~ring $R_H:=\Z[H,\permMM,\permMS^{\pm1}]/(2H,\permMM^2-1)$, where $H$ has degree $(-1,-1)$ and represents a~handle. Unfortunately, $H$ is a~torsion element, as it is annihilated by $1-\permMM\permSS = 1+\permMM^2 = 2$.	One can check that $A_H$ is a~free module generated by $v_+$ and $v_-$ of degrees $(1,0)$ and $(0,-1)$ respectively, with multiplication and comultiplication given by the~formulas
	\begin{align}
		\mu\colon A_H\otimes A_H\to A_H,&\qquad\left\{\begin{array}{rlrl}
			\vv++ &\mapsto v_+, \quad\ & \vv-+ &\mapsto \permMM\permMS v_-,\\
			\vv+- &\mapsto v_-,        & \vv-- &\mapsto Hv_-,
		\end{array}\right. \\
		\Delta\colon A_H\to A_H\otimes A_H,&\qquad\left\{\begin{array}{l}
			v_+\mapsto \vv-+ + \permMM\permMS\vv+- - H\permMM\permSM\vv++,\\
			v_-\mapsto \vv--.
		\end{array}\right.
	\end{align}
	The~generator $v_+$ is represented by a~death followed by a~birth and $v_-$ by a~vertical cylinder. In tensor products, each $v_+$ is represented by a~birth and all other circles are boundaries of a~single component built from splits only (or a~single death, if there is no $v_-$). See \cite{DrorCobs} for details.
\end{example}

We shall end this section with a~proof of the~nondegeneracy result for chronological cobordisms. For that we define a~universal rank 2 Frobenus system, with scalars in a~$\Z{\times}\Z$-graded commutative ring
\begin{equation}
	R_U := \quotient{\scalars[a,c,e,f,t,h]}{\bigg(\begin{array}{l}
					(\permMM\permSS-1)h, (\permMM\permSS-1)t, af+ce, \\
					ae+ceh+\permSS\permMS cft-1
		\end{array}\bigg)}
\end{equation}
where $\deg a=\deg e=(0,0)$, $\deg c=\deg f=(1,1)$, $\deg h=(-1,-1)$ and $\deg t=(-2,-2)$. The~element $\permMM\permSS-1$ annihilates not only polynomials in $h$ and $t$, but also $c^2$ and $f^2$ due to the~graded commutativity, see Definition~\ref{def:graded-properties}. Consider a~rank two chronological Frobenius agebra $A_U$ over $R_U$ with the~following operations:
\begin{align}&\left\{\begin{array}{rlrl}
		\mu(\vv++) &= v_+, & \quad\mu(\vv-+) &= \permMM\permMS v_-,\\[0.5ex]
		\mu(\vv+-) &= v_-, &      \mu(\vv--) &= hv_- + tv_+,
	\end{array}\right. \\[1ex]
	&\left\{\begin{array}{l}
		\Delta(v_+) = (ft-\permSS\permSM eh)\vv++ + e(\vv-+ + \permSS\permMS\vv+-) + \permMS^2 f\vv--,\\[0.5ex]
		\Delta(v_-) = \permMS^{-2}et\vv++ + ft(\permSS\permSM\vv-+ + \vv+-) + (e+fh)\vv--,
	\end{array}\right. \\[1ex]
	&\left\{\begin{array}{l}
		\eta(1) = v_+,
	\end{array}\right.\\[1ex]
	&\left\{\begin{array}{l}
		\epsilon(v_+) = c,\\[0.5ex]
		\epsilon(v_-) = a.
	\end{array}\right.
\end{align}
It is a~graded version of the~system $(R_4,A_4)$ in \cite{KhFrobExt} and it has the~same universality property. The~following proposition is proven in the~same way as Proposition~4 in \cite{KhFrobExt}.

\begin{proposition}
	Let $(R',A')$ be a~homogeneous chronological Frobenius system in $\Mod\scalars$ of rank two. Then there is a~unique graded ring homomorphism $R_U\to R'$ such that $A'\cong A\otimes_{R_U} R'$.
\end{proposition}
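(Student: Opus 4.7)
The plan is to adapt Khovanov's argument for Proposition~4 in [KhFrobExt] to the graded chronological setting. Given a rank two homogeneous chronological Frobenius system $(R',A')$ with structure maps $\eta',\epsilon',\mu',\Delta'$, one must produce structure constants $a',c',e',f',h',t'\in R'$, show they satisfy the four defining relations of $R_U$, conclude the induced graded homomorphism $\phi\colon R_U\to R'$ is well defined, and verify that it identifies $A_U\otimes_{R_U} R'$ with $A'$. Uniqueness will follow from the fact that $\phi$ is forced on generators by how these constants appear in $\mu'$, $\Delta'$, and $\epsilon'$.

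The first step is to fix a homogeneous $R'$-basis. Set $v_+':=\eta'(1)$, which is homogeneous of degree $(1,0)$ since $\deg\eta'=\deg\eta=(1,0)$. The unit axiom $\mu'\circ(\eta'\otimes\id)=\id$ makes $v_+'$ a two-sided identity for $\mu'$. Because $A'$ is free of rank two and $\eta'$, $\epsilon'$, $\mu'$, $\Delta'$ are degree-preserving in the sense of Proposition~\ref{prop:TQFT-from-Frob}, one can pick a homogeneous complement $v_-'$ of degree $(0,-1)$ so that $\{v_+',v_-'\}$ is an $R'$-basis. Define $c':=\epsilon'(v_+')$, $a':=\epsilon'(v_-')$; expand $\mu'(v_-'\otimes v_-')=h'v_-'+t'v_+'$ to read off $h',t'$; and expand $\Delta'(v_+')$ in the basis $\{\vv{\pm}{\pm}'\}$ of $A'\otimes_{R'} A'$ to read off $e'$ (the coefficient of $\vv{-}{+}'$) and $f'$ (the coefficient of $\vv{-}{-}'$, up to the fixed factor $\permMS^2$ predicted by the universal formula). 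A routine degree check gives $\deg a'=\deg e'=(0,0)$, $\deg c'=\deg f'=(1,1)$, $\deg h'=(-1,-1)$, $\deg t'=(-2,-2)$, matching the gradings of the generators of $R_U$.

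To verify the four defining relations of $R_U$, one proceeds as follows. The relations $(\permMM\permSS-1)h'=0$ and $(\permMM\permSS-1)t'=0$ come from the graded commutativity axiom $\mu'\circ\sigma=\permMM\mu'$: applied to $v_-'\otimes v_-'$ and combined with $\sigma(v_-'\otimes v_-')=\permSS\, v_-'\otimes v_-'$, it gives $\mu'(v_-'\otimes v_-')=\permMM\permSS\,\mu'(v_-'\otimes v_-')$, so the coordinates $h',t'$ are annihilated by $1-\permMM\permSS$. The relations $a'f'+c'e'=0$ and $a'e'+c'e'h'+\permSS\permMS\,c'f't'-1=0$ arise from expanding the counit axiom $(\epsilon'\otimes\id)\circ\Delta'(v_+')=v_+'$ in the basis: applying $\epsilon'\otimes\id$ to the expansion of $\Delta'(v_+')$ and using the graded tensor rule \eqref{eq:tensor-cubical}, the coefficient of $v_-'$ yields the first relation, while the coefficient of $v_+'$ yields the second. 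The power $\permSS\permMS$ in the second relation is exactly the factor $\lambda(\deg\Delta,\deg\mu)$ produced while tracking signs through the comultiplication of $v_+'$. Finally, to confirm that $\Delta'(v_-')$ also follows the universal formula, one applies the Frobenius compatibility \eqref{eq:chron-frob-last} to compute
\[
\Delta'(v_-')=\Delta'\bigl(\mu'(v_+'\otimes v_-')\bigr)=\lambda(\deg\mu,\deg\Delta)^{-1}(\mu'\otimes\id)\bigl(v_+'\otimes\Delta'(v_-')\bigr),
\]
so $\Delta'(v_-')$ is determined by $\Delta'(v_+')$ and the previously identified constants, reproducing the formula in the definition of $A_U$.

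Defining $\phi\colon R_U\to R'$ by $a\mapsto a'$ etc.\ then gives a graded ring homomorphism, and the identification of the defining maps makes $A'\cong A_U\otimes_{R_U} R'$ as a chronological Frobenius system. For uniqueness, any other choice of $v_-'$ differs by $v_-'+rv_+'$ for some $r\in R'_{(-1,-1)}$, and $r$ is itself killed by $1-\permMM\permSS$ so enters $\phi$ only through a change of basis that is absorbed by the relations; since the structure constants $a',c',e',f',h',t'$ are recovered from $\eta'$, $\epsilon'$, $\mu'$, $\Delta'$ intrinsically, $\phi$ is unique. The main obstacle will be honestly tracking the $\lambda$-factors from the graded tensor product through the counit and Frobenius computations, as those are precisely what produce the asymmetric coefficients $\permSS\permSM$, $\permSS\permMS$, and $\permMS^2$ in the universal formulas and the relation $a'e'+c'e'h'+\permSS\permMS\,c'f't'=1$; everything else is bookkeeping.
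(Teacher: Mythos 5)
Your overall strategy is the right one — the paper itself gives no argument here, only the remark that the claim ``is proven in the same way as Proposition~4 in [KhFrobExt]'', and your write-up is a reasonable attempt at that adaptation — but two of your steps contain concrete errors.

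The Frobenius-axiom step is circular as written. Since $v_+'=\eta'(1)$ is the unit for $\mu'$, one has $(\mu'\otimes\id)(v_+'\otimes x)=x$ for every $x\in A'\otimes_{R'} A'$ (there is no extra $\lambda$-factor because $\deg\id=(0,0)$). Taking $x=\Delta'(v_-')$ turns your displayed equation $\Delta'(v_-')=\lambda(\deg\mu,\deg\Delta)^{-1}(\mu'\otimes\id)\bigl(v_+'\otimes\Delta'(v_-')\bigr)$ into a tautology once the $\lambda$-factor coming from $(\id\otimes\Delta')(v_+'\otimes v_-')$ is accounted for; it does not express $\Delta'(v_-')$ in terms of $\Delta'(v_+')$ and the structure constants. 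You should instead use the \emph{other} half of \eqref{eq:chron-frob-last}, namely $(\id\otimes\mu')\circ(\Delta'\otimes\id)=\lambda(\deg\mu,\deg\Delta)\,\Delta'\circ\mu'$ applied to $v_+'\otimes v_-'$: its left-hand side involves $\Delta'(v_+')$, and that is the identity which actually solves for $\Delta'(v_-')$.

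The uniqueness argument has a real gap. You assert that $r\in R'_{(-1,-1)}$ is killed by $1-\permMM\permSS$, but graded commutativity of $R'$ only gives $(1-\permMM\permSS)r^2=0$, and the relation for $h'$ only yields $(1-\permMM\permSS)\cdot 2r=0$; neither implies $(1-\permMM\permSS)r=0$. Consequently, replacing $v_-'$ by $v_-'+rv_+'$ generically alters the structure constants — for instance $h'\rightsquigarrow h'+2r$ — and hence the homomorphism $\phi$, which contradicts the final sentence asserting that the constants are ``recovered intrinsically''. You need a genuine uniqueness mechanism: either show that the homogeneous complement is canonically normalized (e.g.\ via non-degeneracy of the Frobenius pairing $\epsilon'\circ\mu'$), or verify directly that any two admissible choices of $v_-'$ produce the same graded ring map $R_U\to R'$; neither is supplied, and the assertion as written is false.
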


We are now ready to prove the~nondegeneracy result for $\kChCob(0)$.

\begin{proof}[Proof of Theorem~\ref{thm:aut(W)}]
	Given a~chronological cobordism $W$ we want to compute the~group $\Aut(W):=\{k\in\scalars\ |\ kW=W\}$; its elements are products of values of $\iota$, hence, they are invertible.
	
	We shall first show that $\Aut(W)$ is a~subgroup of $\{1,\permMM\permSS\}$. For that take a~graded ring $R_1=R_U/(\permMM-\permSS,a,e,h)=\Z[\permMM,\permMS^{\pm1},c,f,t]/(\permMM^2=\permMM\permMS cft=1)$, and consider a~chronological Frobenius system $(R_1,A_1)$ with $A_1=A_U\otimes R_1$. It has the~following operations:
	\begin{align}&\left\{\begin{array}{rlrl}
			\mu(\vv++) &= v_+, & \quad\mu(\vv-+) &= \permMM\permMS v_-,\\[0.5ex]
			\mu(\vv+-) &= v_-, &      \mu(\vv--) &= tv_+,
		\end{array}\right.
		&&\left\{\begin{array}{l}
			\eta(1) = v_+,
		\end{array}\right.
		\\[1ex]
		&\left\{\begin{array}{l}
			\Delta(v_+) = ft\vv++ + \permMS^2 f\vv--,\\[0.5ex]
			\Delta(v_-) = ft\vv+- + \permMM\permSM ft\vv-+,
		\end{array}\right.
		&&\left\{\begin{array}{l}
			\epsilon(v_+) = c,\\[0.5ex]
			\epsilon(v_-) = 0.
		\end{array}\right.
	\end{align}
	In particular, $\mu(\Delta(v_+)) = (1+\permMS^2)ftv_+$. Since $c$, $f$, and $t$ are invertible and polynomials in $Z$ are not zero divisors, it follows $\F_1(W)$ is not a~zero divisor for any closed surface $W$. This implies $\Aut(W)$ is a~subgroup of $\{1, \permMM\permSS\}$. If $\partial W\neq\emptyset$, create a~closed surface $\widehat W$ by capping its boundary components with births and deaths. Then $\Aut(W)\subset\Aut(\widehat W)$, as every 2-morphism $\varphi\colon W\dblto W$ in $\EmbChCob(0)$ extends to $\widehat W$ in a~way that preserves the~value of $\iota$ (juxtapose $\varphi$ with the~identity 2-morphisms on the~caps).
	
	Now assume $W$ is a~surface of genus $0$ with at most one closed component. Choose the~graded ring $R_2:=R_U/(c^2,a-1,e-1,h)\cong\scalars[c,t]/(c^2, (\permMM\permSS-1)t)$ and consider a~chronological Frobenius system $(R_2,A_2)$ with $A_2=A_U\otimes R_2$. In particular, the~unit and counit are given by formulas
	\begin{equation}
			\eta(1) = v_+,
		\hskip 1cm
			\epsilon(v_+) = c,
		\hskip 1cm
			\epsilon(v_-) = 1,
	\end{equation}
	and a~sphere is evaluated to $c$. Create $\widehat W$ by capping some inputs and outputs of $W$ so that, up to a~change of a~chronology, $\widehat W$ is a~disjoint union of caps and at most one spherical component. The~homomorphism $\F_2(\widehat W)\colon A^{\otimes k}\to A^{\otimes\ell}$ takes $(v_-)^{\otimes k}$ to $(v_+)^{\otimes\ell}$ or $c(v_+)^{\otimes\ell}$, perhaps multiplied by a~monomial in $\permMM$, $\permSS$ and $\permMS$. Since none of $r\in\scalars$ annihilates $c$, $(1-r)W=0$ implies $r=1$, which shows the~group $\Aut(\widehat W)$ is trivial.
\end{proof}

\section{Dotted cobordisms}\label{sec:dots}
A~very generic example of a~chronological Frobenius algebra is given by the~tautological functor $\Mor(\Sigma,\blank)$, where $\Sigma$ is any object of $\kChCob(0)$.

\begin{proposition}
	Given an~object $\Sigma\in\kChCob(0)$, the~group of morphisms $\Mor(\Sigma,\emptyset)$ is a~ring with multiplication induced by the~`right-then-left' disjoint sum and $\Mor(\Sigma, \fntCircle)$ is a~chronological Frobenius algebra over $\Mor(\Sigma,\emptyset)$.
\end{proposition}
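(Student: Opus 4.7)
The strategy is to recognize that the circle $\fntCircle$ is itself a chronological Frobenius object in the $2$-category $\kChCob(0)$, equipped with the generating cobordisms $\mu_\fntCircle$ (merge), $\Delta_\fntCircle$ (split), $\eta_\fntCircle$ (birth), and $\epsilon_\fntCircle$ (clockwise death). The proposition then follows by transporting this structure through the representable functor $\Mor(\Sigma,-)$.

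First I would establish the ring structure on $S:=\Mor(\Sigma,\emptyset)$. For $W,W'\in S$, the product $W\cdot W':=W\rdsum W'$ is well-defined since $\emptyset\rdsum\emptyset=\emptyset$; associativity comes from the Gray-monoidality of $\rdsum$ (Theorem~\ref{thm:ChCob-gray}), the empty cobordism provides a two-sided unit, and graded commutativity with coefficient $\lambda(\deg W,\deg W')$ comes from the disjoint-union permutation $\sigma^{\sqcup}_{W,W'}$ together with the $\iota$-formula of Section~\ref{sec:cob-linear}. Throughout, $\lambda((a,b),(c,d))=\permMM^{ac}\permSS^{bd}\permMS^{ad-bc}$ is the type of the graded tensor product on $\Mod{\scalars}$.

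Next I would equip $A:=\Mor(\Sigma,\fntCircle)$ with its symmetric $S$-bimodule structure via $R\cdot W:=R\rdsum W$, and define the four Frobenius operations by post-composing with the structure maps of $\fntCircle$ after juxtaposing under $\rdsum$:
\[
	\mu(W_1\otimes_S W_2):=\mu_\fntCircle\circ(W_1\rdsum W_2),\quad
	\Delta(W):=\Delta_\fntCircle\circ W,\quad
	\eta(R):=\eta_\fntCircle\rdsum R,\quad
	\epsilon(W):=\epsilon_\fntCircle\circ W.
\]
Each operation is homogeneous of the same bidegree as its generating cobordism.

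The heart of the proof is verifying the axioms \eqref{eq:chron-frob-first}--\eqref{eq:chron-frob-last}. Each one reduces to a specific chronological relation: associativity and coassociativity to the permutation of two merges or two splits, contributing $\lambda(\deg\mu,\deg\mu)=\permMM$ and $\lambda(\deg\Delta,\deg\Delta)=\permSS$; unitality to the creation and annihilation changes \eqref{change:creation-annihilation}; commutativity \eqref{eq:chron-frob-comm} to the orientation-reversal rules of Corollary~\ref{cor:orientation-rev}; and the Frobenius identity \eqref{eq:chron-frob-last} to the $\Diamond$-change permuting a split past a merge, contributing $\lambda(\deg\mu,\deg\Delta)=\permMS$. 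The main obstacle is tracking the $\iota$-scalars carefully: one has to check that the coefficients produced by moving $\mu_\fntCircle$, $\Delta_\fntCircle$, and the other generators past tensor factors under $\rdsum$ agree with the shifts dictated by the graded-tensor convention \eqref{eq:tensor-cubical}. This compatibility is precisely what the formula for $\lambda$ above was engineered to ensure, so the verification becomes a systematic term-by-term comparison carried out generator by generator.
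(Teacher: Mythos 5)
Your strategy---recognize $\fntCircle$ as a chronological Frobenius object in the $2$-category and transport its structure through the representable functor $\Mor(\Sigma,-)$---is the right one, and since the paper states this proposition without giving any argument, there is no written proof to compare against. However, two points in your construction fail to typecheck as written.

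First, your ring multiplication, bimodule action, and $\mu$ all have the wrong \emph{source}. You justify $W\cdot W':=W\rdsum W'$ by checking that the target $\emptyset\rdsum\emptyset=\emptyset$ is correct, but the source of $W\rdsum W'$ is $\Sigma\rdsum\Sigma=\Sigma\sqcup\Sigma$, so $W\rdsum W'$ lives in $\Mor(\Sigma\sqcup\Sigma,\emptyset)$, not in $\Mor(\Sigma,\emptyset)$, unless $\Sigma=\emptyset$; the same holds for $R\rdsum W$ and for $\mu_\fntCircle\circ(W_1\rdsum W_2)$. The paper explicitly invokes the case $\Sigma=\fntCircle$ in the sentence right after the proposition, so this cannot be waved away: for nonempty $\Sigma$ one must precompose with a diagonal-type map $\Sigma\to\Sigma\sqcup\Sigma$ (built, say, by splitting each circle of $\Sigma$) to bring the source back down to $\Sigma$, and then re-verify the axioms \eqref{eq:chron-frob-first}--\eqref{eq:chron-frob-last} with the extra splits and their $\iota$-coefficients taken into account. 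Second, even for $\Sigma=\emptyset$, your $\Delta(W):=\Delta_\fntCircle\circ W$ lands in $\Mor(\Sigma,\fntCircle\sqcup\fntCircle)$, whereas Definition~\ref{def:chron-Frob} requires the comultiplication to take values in $A\otimes_S A$. The canonical map $A\otimes_S A\to\Mor(\Sigma,\fntCircle\sqcup\fntCircle)$ induced by $\rdsum$ goes the wrong way and is not surjective in $\kChCob(0)$: a connected two-boundary surface such as the annulus $\Delta_\fntCircle\circ\eta_\fntCircle$ is not a disjoint union of two one-boundary pieces, so your $\Delta$ is not defined where it needs to be without a delooping argument or a reinterpretation of the target.
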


The~case $\Sigma=\fntCircle$ was analyzed in Example~\ref{ex:tautological-circle} under the~assumption $\permMM+\permSS=0$, in which case $\Mor(\fntCircle,\fntCircle)$ was a~free rank $2$ module over $\Mor(\fntCircle,\emptyset)\cong\Z[H,\permMM,\permMS^{\pm1}]/(2H,X^2-1)$. However, the~rank of $\Mor(\Sigma,\fntCircle)$ over $\Mor(\Sigma,\emptyset)$ is in general infinite, but the~neck-cutting relation \eqref{eq:bad-neck-cutting} suggests a~way how to reduce it to the~finite case.

\begin{definition}
	The~category $\kChCobD(k)$ consists of chronological cobordisms (with $2k$ vertical boundary lines) and dots on regular levels. A~single dot has a~degree $(-1,-1)$ and two dots cannot lie on the~same level. In addition to chronological relations, we allow dots to move past other dots and critical points at the~cost specified by $\lambda$, and we impose the~following three local relations:
	\begin{align*}
		(\textit{S\/})\quad&\pictRelS = 0, \hskip 1.5cm
		(\textit{D\/})\quad\pictRelD = 1,\\[2ex]
		(\textit{N\/})\quad&\pictRelNeckI = \pictRelNeckB + \pictRelNeckT - \pictRelNeckM.
	\end{align*}
	where all deaths are oriented clockwise.
\end{definition}

Dots are a~part of the~chronological structure and one can think of them as \quot{infinitesimal} handles, which are \quot{frozen}, so that a~dot is not annihilated by $1-\permMM\permSS$. But a~cobordism with two dots on one component is, because permuting two dots costs $\permMM\permSS$. All relations are homogeneous, thence coherent with changes of chronologies. Even more: the~neck cutting relation \textit{N} together with the~cubical structure of the~disjoint sum determines all coefficients for changes of chronologies, except the $\Diamond$-change. For example,
\begingroup\psset{unit=0.8cm}
\begin{align*}
		\textcobordism*[3](cI,2)(M-L)(M-L)\phantom{\permMM}
			&= \phantom{\permMM}\textcobordism*[3](D*B,2)(M-L)(M-L)\phantom{\permMM}
			 + \phantom{\permMM}\permMS^2\textcobordism*[3](DB,2)(M-L)(M*-L)\phantom{\permMM}
			 - \phantom{\permMM}\textcobordism*[3](D**B,2)(M-L)(M-L)\phantom{\permMM}
		\\[1ex]&
			 = \permMM\textcobordism*{\COBshortCylinderVert(0,0)
																\COBshortCylinderVert(1.6,0)
																\COBdeathDotNoBirth(0.8,0)}
												 {\COBcylinderRight(0,0.6)\COBcylinderLeft(1.6,0.6)}
												 {\COBmergeFrLeft(0.4,1.8)}\phantom{\permMM}
			 + \permMM\permMS^2\textcobordism*{\COBshortCylinderVert(0,0)
																				 \COBshortCylinderVert(1.6,0)
																				 \COBdeathNoBirth(0.8,0)}
												 {\COBcylinderRight(0,0.6)\COBcylinderLeft(1.6,0.6)}
												 {\COBmergeDotFrLeft(0.4,1.8)}\phantom{\permMM}
			 - \permMM\textcobordism*{\COBshortCylinderVert(0,0)
																\COBshortCylinderVert(1.6,0)
																\COBdeathDotsNoBirth(0.8,0)}
												 {\COBcylinderRight(0,0.6)\COBcylinderLeft(1.6,0.6)}
												 {\COBmergeFrLeft(0.4,1.8)}\phantom{\permMM}
		\\[1ex]&
			 = \permMM\textcobordism*[3](D*B,2)(M-L,2)(M-L)\phantom{\permMM}
			 + \permMM\permMS^2\textcobordism*[3](DB,2)(M-L,2)(M*-L)\phantom{\permMM}
			 - \permMM\textcobordism*[3](D**B,2)(M-L,2)(M-L)\phantom{\permMM}
		=\ \permMM\textcobordism*[3](cI,2)(M-L,2)(M-L)
\end{align*}\endgroup
where we moved dots in the~middle pictures from the~birth to the~top by the~cost of $\permMS^2$. Dotted cobordisms satisfy also the~other relations from $\kChCobL(k)$. Hence, we can think of $\kChCobD(k)$ as an~abelian extension of $\kChCobL(k)$.

\begin{lemma}
	Relations \textit{T} and \textit{4Tu} follow from \textit{S}, \textit{D} and \textit{N}. Therefore, there are natural functors $k\ChCobL(k)\to\kChCobD(k)$.
\end{lemma}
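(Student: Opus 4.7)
\medskip

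\emph{Strategy.} The plan for both relations is the same: apply \textit{N} to an appropriately placed cylinder, then simplify each of the three resulting terms using \textit{S} (which kills any term containing an undotted sphere component) and \textit{D} (which evaluates a singly-dotted sphere to $1$). The scalars $\permMM$, $\permSS$, $\permMS$ appearing in \textit{T} and \textit{4Tu} will emerge from $\iota$ as we slide the newly introduced dots (each of bidegree $(-1,-1)$) past the remaining critical points, using the orientation-reversal rules of Corollary~\ref{cor:orientation-rev} and the table of values of $\iota$ in Section~\ref{sec:cob-linear}.

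\emph{T.} Present the standard torus as a $2$-sphere with a handle, the handle being a cylinder whose two boundary circles are glued into a pair of disjoint disks removed from the sphere. Apply \textit{N} to this handle. The third term contains a plain $2$-sphere component (the remainder of the ambient sphere after its two disks have been capped off trivially, with the sphere-with-two-dots left floating), which vanishes by \textit{S}. Each of the first two terms produces a closed $2$-sphere carrying a single dot, equal to $1$ by \textit{D}. Tracking the chronological cost of the two cap insertions---governed by the orientations specified in the definition of the standard torus (the arrow at the merge pointing to the left-hand circle of the split, and the clockwise death)---yields coefficients $\permMS\permMM$ and $\permMS\permSS$, whose sum is $\permMS(\permMM+\permSS)$.

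\emph{4Tu.} Apply \textit{N} to the vertical tube in $T_L$. Each of its three terms becomes a disjoint union of four caps over the four boundary circles, either carrying a single dot or accompanied by a floating sphere-with-two-dots; the last is handled in combination with the parallel expansion of $T_R$. The same neck-cutting analysis, performed on the natural cylinder regions of $T_B$ and of $T_T$, reduces those cobordisms to linear combinations of the \emph{same} four-cap configurations (with dots reshuffled by \textit{D} and orientation flips from Corollary~\ref{cor:orientation-rev}). Comparing the two expansions and matching coefficients term-by-term yields $\permMS T_L + \permMS T_R = \permMM T_B + \permSS T_T$.

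\emph{Main obstacle.} The principal difficulty is the coefficient bookkeeping. Every time a dot is repositioned past a critical point or past another dot, one picks up a factor $\lambda(\cdot,\cdot)\in\{1,\permMM,\permSS,\permMS^{\pm1}\}$, and these factors depend delicately on the chosen orientations of the saddles and deaths (in particular on the conventions for the standard torus and for the four cobordisms of \textit{4Tu}). Verifying that these factors multiply to exactly $\permMS(\permMM+\permSS)$ and to the exact scalars in \textit{4Tu}---and not merely up to a unit of $\scalars$ such as $\permMM\permSS$---is the technical heart of the proof and requires a careful case analysis using Lemma~\ref{lem:disjoint-vs-connected-permutation} together with the explicit table of $\iota$-values.
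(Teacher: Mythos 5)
Your proposal is correct and follows the paper's own strategy: apply \textit{N} to the handle of the torus (resp.\ the unique tube in each of the four \textit{4Tu} cobordisms), kill the undotted-sphere terms with \textit{S}, evaluate the dotted spheres via \textit{D}, and collect the chronological coefficients coming from $\iota$. One point your sketch glosses over and the paper makes explicit: in the \textit{4Tu} comparison the four cap configurations do match coefficient-for-coefficient, but the floating two-dotted-sphere terms do not---$\permMS T_L+\permMS T_R$ contributes $-\permMS(1+\permMM\permSS)$ times the two-dotted sphere while $\permMM T_B+\permSS T_T$ contributes $-2\permMM\permSS\permMS$ times it, and these coincide only because the two-dotted sphere is annihilated by $1-\permMM\permSS$ (permuting the two dots on one component costs $\permMM\permSS$). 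Your phrase ``handled in combination with the parallel expansion of $T_R$'' hints at this, but that annihilation is the key identity that makes the matching go through.
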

\begin{proof}
	For the~\textit{T} relation take a~standard torus and cut its handle. In the~resulting expression, one term has a~sphere as its component and the~other two can be reduced to dotted spheres by changing chronologies:
	\begin{equation}
		\pictRelTfromNi = \pictRelTfromNb + \pictRelTfromNt - \pictRelTfromNm
		= (\permMM\permMS+\permSS\permMS)\pictRelTfromNd
	\end{equation}
	The~\textit{4Tu} relation is proved in a~similar way, by cutting the~unique tube in each term. Again, by changing chronologies we can reduce each term to four caps, with left caps smaller than the~right ones, possibly with a~two-dotted sphere in the~middle:
	\begin{align}\label{eq:4TuL-resolved}
			\permMS\pictRelTuL &= \permMM\pictRelTuNbl
													+ \permSS\pictRelTuNtl
													- \permMM\permSS\permMS\pictRelTuNmid,\\[2ex]
	\label{eq:4TuR-resolved}
			\permMS\pictRelTuR &= \permMS\pictRelTuNbr
													+ \permMS\pictRelTuNtr\phantom{\permMM}
													- \phantom{\permSS}\permMS\pictRelTuNmid,\\[2ex]
	\label{eq:4TuB-resolved}
			\permMM\pictRelTuB &= \permMM\pictRelTuNbl
													+ \permMS\pictRelTuNbr
													- \permMM\permSS\permMS\pictRelTuNmid,\\[2ex]
	\label{eq:4TuT-resolved}
			\permSS\pictRelTuT &= \permSS\pictRelTuNtl
													+ \permMS\pictRelTuNtr
													- \permMM\permSS\permMS\pictRelTuNmid.  
	\end{align}
	Because a~two-dotted sphere is annihilated by $(\permTneg-\permTpos)$, the~sum of right hand sides of \eqref{eq:4TuL-resolved} and \eqref{eq:4TuR-resolved} is equal to the~sum of right hand sides of \eqref{eq:4TuB-resolved} and \eqref{eq:4TuT-resolved}.
\end{proof}

The~additive closure $\catAdd{\kChCobD(0)}$ is equivalent to a~category of finitely generated free graded symmetric bimodules over a~certain ring. This follows from the~proposition below.

\begin{proposition}[Delooping]
	The~following two morphisms
	\begin{equation}\label{eq:delooping}
		\begin{diagps}(-3,-1.2)(3,1.4)
			\node l(-3,0)[\vcenter{\hbox{\psset{unit=3ex}\bigCircle}}]
			\node r( 3,0)[\vcenter{\hbox{\psset{unit=3ex}\bigCircle}}]
			\node[vref=0.3] t(0, 1)[\emptyset\{-1\}]
			\node[vref=0.0] b(0,-1)[\emptyset\{+1\}]
			\rput[B](0,0){$\oplus$}
			\psset{dash=1pt 1.5pt,linewidth=0.5pt}
			\arrow|a|{->}[l`t;\pictDeloopTL]
			\arrow|b|{->}[l`b;-\,\pictDeloopBLb\,+\,\pictDeloopBLa]
			\arrow|a|{->}[t`r;\pictDeloopTR]
			\arrow|b|{->}[b`r;\pictDeloopBR]
		\end{diagps}
	\end{equation}
	form a~pair of inverse isomorphisms in the~additive closure $\catAdd{\kChCobD}$.
\end{proposition}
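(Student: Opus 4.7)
Write the two morphisms of \eqref{eq:delooping} as $\phi\colon\fntCircle\to\emptyset\{-1\}\oplus\emptyset\{+1\}$ with components $\phi_1=\pictDeloopTL$ and $\phi_2=-\pictDeloopBLb+\pictDeloopBLa$, and $\psi=(\psi_1,\psi_2)$ going the other way, where $\psi_1=\pictDeloopTR$ (a dotted cap) and $\psi_2=\pictDeloopBR$ (an undotted cap). The first thing to check is that both $\phi$ and $\psi$ are homogeneous of bidegree $(0,0)$ once the shifts $\emptyset\{\pm 1\}$ are taken into account; this is immediate from $\chdeg(\text{cap})=(0,1)$ or $(1,0)$ and the convention that a dot contributes $(-1,-1)$.

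The easy direction is to verify that $\phi\circ\psi\colon\emptyset\{-1\}\oplus\emptyset\{+1\}\to\emptyset\{-1\}\oplus\emptyset\{+1\}$ equals the identity matrix. Its four entries $\phi_i\circ\psi_j$ are each obtained by stacking two caps along the circle, producing either a sphere, a once-dotted sphere, or a twice-dotted sphere (possibly with a sign). The sphere relation \textit{S} kills the undotted sphere, the dot relation \textit{D} evaluates the once-dotted sphere to $1$, and a twice-dotted sphere represents $\pi\mskip-1mu\cdot\mskip-1mu 1$ up to a specific power of $\permMS$ coming from sliding the two dots past each other. A bookkeeping of these three cases, combined with the chosen signs in $\phi_2$, shows that the off-diagonal entries cancel and the diagonal entries equal $\id$.

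The interesting direction is $\psi\circ\phi\colon\fntCircle\to\fntCircle$, which must equal the identity cylinder. Expanding gives $\psi_1\circ\phi_1+\psi_2\circ\phi_2=\psi_1\circ\phi_1+\psi_2\circ\pictDeloopBLa-\psi_2\circ\pictDeloopBLb$. Each summand is a cap stacked onto a cup along a central circle, i.e. a cylinder on $\fntCircle$ with a handle attached whose data is encoded in the positions of the dots. After writing these three summands in the normal form of the right-hand side of the neck-cutting relation \textit{N} (namely, \emph{dot below}, \emph{dot above}, \emph{two-dotted middle sphere}), one recognizes exactly the right-hand side of \textit{N} applied to the plain cylinder, so the sum equals $\pictRelNeckI$, i.e.\ $\id_{\fntCircle}$. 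The signs in the definition of $\phi_2$ are chosen precisely to match the signs in \textit{N}.

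The main obstacle is the careful tracking of the chronological coefficients $\permMM,\permSS,\permMS$ that appear when one slides dots past critical points and past each other while putting compositions into a normal form. In particular, when matching the three summands of $\psi\circ\phi$ to the three terms on the right of \textit{N}, one must choose consistent orientations of the births, deaths and dots, and absorb the resulting powers of $\permMS$ using the convention built into the definition of $\iota$ (Lemma~\ref{lem:disjoint-vs-connected-permutation} and the table of values of $\lambda$). Similarly, for the off-diagonal entries of $\phi\circ\psi$ to cancel, one must verify that reversing the order of the two dots in a twice-dotted sphere produces exactly the factor $\permMM\permSS$ that turns $\pictDeloopBLa$ and $\pictDeloopBLb$ into a difference annihilated by the sphere with two dots. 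Once these bookkeeping checks are done, no further relations beyond \textit{S}, \textit{D}, \textit{N}, and the chronological relations are required.
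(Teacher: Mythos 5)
Your proof matches the paper's in both structure and ingredients: you recognize that $\psi\circ\phi$ (the paper's $g\circ f$) is literally the right-hand side of the neck-cutting relation \textit{N} applied to the plain cylinder, and that the four entries of $\phi\circ\psi$ reduce to closed spheres with zero, one, or two dots, handled by \textit{S}, \textit{D}, and cancellation of the two-dotted terms — exactly the paper's argument. One small inaccuracy in your bookkeeping paragraph: a twice-dotted sphere does not represent a scalar multiple of $1$ (it is the generator $h\in R_\bullet$, not a unit), but your argument never actually needs that; the off-diagonal entries vanish because the two copies of the twice-dotted sphere cancel, possibly after invoking that a closed component carrying two dots is $(1-\permMM\permSS)$-torsion.
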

\begin{proof}
	Call the~left map $f$ and the~right one $g$. The~equality $g\circ f=\id$ is exactly the~neck-cutting relation \textit{N}, whereas the~other composition is the~identity $2\times 2$ matrix---this follows directly from relations \textit{D} and \textit{S}.
\end{proof}

\begin{corollary}
	The~tautological functor $\Mor(\emptyset,\blank)\colon\kChCobD(0)\to\Mod{R'}$ is full and faithful, where $R' := \Mor(\emptyset,\emptyset)$. Hence, we can identify $\kChCobD(0)$ with the~category of finitely generated free graded symmetric $\Mor(\emptyset,\emptyset)$-bimodules.
\end{corollary}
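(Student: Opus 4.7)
The plan is to reduce the statement to the delooping isomorphism \eqref{eq:delooping} and then to an easy bookkeeping of ranks and degree shifts. First, I would note that the tautological functor $\F := \Mor(\emptyset,\blank)$ genuinely lands in symmetric $R'$-bimodules: the left action of a closed cobordism $X\in R'$ on a morphism $W\colon\emptyset\to\Sigma$ is given by $X\rdsum W$, while the right action is $W\rdsum X$, and the cubical coherence 2-isomorphism $\sigma^{\sqcup}_{X,W}$ together with the linearization rule for disjoint union permutations yields $XW = \lambda(\chdeg X,\chdeg W)\, WX$, which is exactly the graded symmetry of Definition~\ref{def:graded-properties}. The functor $\F$ clearly preserves degree shifts, and $\F(\emptyset)=R'$ as a free rank one symmetric $R'$-bimodule.

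Next I would invoke the delooping proposition to see that in $\catAdd{\kChCobD(0)}$ every circle is isomorphic to $\emptyset\{-1\}\oplus\emptyset\{+1\}$. Iterating disjoint unions and using functoriality of the monoidal product, any $s$-circle object becomes isomorphic to a direct sum $\bigoplus_{i=1}^{2^s}\emptyset\{n_i\}$ of $2^s$ shifted empty objects, with an explicit collection of shifts coming from the binomial expansion. Hence $\F$ is essentially surjective onto the full subcategory of finitely generated free graded symmetric $R'$-bimodules (whose typical object is exactly such a direct sum of shifted copies of $R'$).

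It then remains to verify that $\F$ is fully faithful. Since additive closures, degree shifts, and the tautological functor all commute with finite direct sums, and since every object is a sum of shifted $\emptyset$'s, it suffices to check that
\[
	\F\colon \Mor_{\kChCobD(0)}(\emptyset\{m\},\emptyset\{n\})\longrightarrow \Hom_{R'\textrm{-bimod}}(R'\{m\},R'\{n\})
\]
is a bijection for all $m,n\in\Z\times\Z$. Both sides are canonically identified with the homogeneous component of $R'$ of degree $n-m$: the left side by definition (morphisms $\emptyset\to\emptyset$ are exactly $R'$, and the shift just relabels the degree), the right side because a homomorphism of free rank one graded bimodules is determined by the image of the generator, and symmetry of the bimodule forces the left and right linearity conditions to coincide modulo the graded commutativity of $R'$. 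The map is the identity on these components, hence a bijection.

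The only delicate step is the last one: I must check that right $R'$-linearity together with left $R'$-linearity in the graded sense of Definition~\ref{def:graded-properties} really pins down a morphism of free bimodules by a single element, and that this element corresponds to the morphism $\F$ assigns. This is where the symmetric (as opposed to merely left- or right-) bimodule structure is essential; the compatibility boils down to the identity $XW = \lambda(\chdeg X,\chdeg W)WX$ noted in the first paragraph, which is exactly the graded commutativity of $R'$ and the symmetry condition on $\F(\Sigma)$. Once this is in place, fullness and faithfulness are immediate and the equivalence of categories follows.
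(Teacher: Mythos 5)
Your proof is correct and follows the same route the paper intends: deloop every object into a direct sum of shifted copies of $\emptyset$, then observe that the tautological functor is tautologically bijective on $\Mor(\emptyset\{m\},\emptyset\{n\})$ and respects symmetric bimodule structure. One small wording slip: $\Mor_{\kChCobD(0)}(\emptyset\{m\},\emptyset\{n\})$ is all of $R'$ (regraded), not just a single homogeneous component; it is only the degree-zero part of this hom that sits in degree $n-m$ of $R'$, and since both sides are identified as graded modules the bijectivity you want still holds.
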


We shall now compute a~presentation of the~ring $\Mor(\emptyset,\emptyset)$.

\begin{proposition}
	There is an~isomorphism of graded commutative rings
	\begin{equation}
		\Mor(\emptyset,\emptyset)\cong R_\bullet :=
			\quotient{\scalars[h,t]}{\big((\permTneg{-}\permTpos)t, (\permTneg{-}\permTpos)h\big)},
	\end{equation}
	where $\deg h=(-1,-1)$ and $\deg t=(-2,-2)$, such that
	\begin{equation}
		\begin{pspicture}[shift=-0.2](-0.3,-0.3)(0.3,0.3)
			\pscircle(0,0){0.3}\psdot(0,0.1)\psdot(0,-0.1)
		\end{pspicture}\mapsto h
			\hskip 1cm\textnormal{and}\hskip 1cm
		\begin{pspicture}[shift=-0.3](-0.4,-0.4)(0.4,0.4)
			\pscircle(0,0){0.4}
			\psdot(0,0)\psdot(0,0.2)\psdot(0,-0.2)
		\end{pspicture}\mapsto \permMM\permMS t+h^2.
	\end{equation}
\end{proposition}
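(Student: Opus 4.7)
The plan is to build a ring homomorphism $\phi\colon R_\bullet \to \Mor(\emptyset,\emptyset)$ realizing the stated formulas, and then produce a chronological TQFT furnishing its inverse. Write $S_n$ for the $n$-dotted sphere. First I would define $\phi$ by $h \mapsto S_2$ and $t \mapsto \permMM\permSM(S_3 - S_2^2)$, so that $\phi(\permMM\permMS t + h^2) = S_3$. Well-definedness reduces to checking $(\permMM\permSS-1)S_2 = 0$ and $(\permMM\permSS-1)(S_3 - S_2^2) = 0$; both follow from permuting two dots on a single sphere component, which, since dots have degree $(-1,-1)$, costs $\lambda((-1,-1),(-1,-1)) = \permMM\permSS$. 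Thus every $S_n$ with $n \geq 2$ is annihilated by $\permMM\permSS - 1$.

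To see $\phi$ is surjective I would show every closed dotted cobordism reduces, via $S$, $D$, $N$ and chronological scalings, to a $\scalars$-linear combination of disjoint unions of dotted spheres. Neck-cutting lowers genus and, applied to a tube joining two components, severs them; since $S_0=0$ and $S_1=1$, only $S_n$ with $n \geq 2$ survive. For such $n$ I would insert a thin dot-free neck inside $S_n$ just above the second dot and apply $N$, so that the three $N$-summands close off into pairs of smaller spheres and give the recursion
\[
S_n = h\, S_{n-1} + (S_3 - h^2)\, S_{n-2}
\qquad (n \geq 2),
\]
with initial data $S_0=0$ and $S_1=1$. Since $S_3 - h^2 = \permMM\permMS t$ in the image of $\phi$, induction places every $S_n$ in $\phi(\scalars[h,t])$, and disjoint unions become products, so $\phi$ is surjective.

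For injectivity I would construct a section via a chronological TQFT $\FD\colon \kChCobD(0) \to \Mod{R_\bullet}$. Applying the graded $\scalars$-algebra map $R_U \to R_\bullet$ sending $a, e \mapsto 1$, $c, f \mapsto 0$, $h \mapsto h$, $t \mapsto \permMM\permMS t$ collapses the four defining relations of the universal system $R_U$ from Section~\ref{sec:chron-Frob} to the two defining relations of $R_\bullet$. Setting $A_\bullet := A_U \otimes_{R_U} R_\bullet$ yields a chronological Frobenius system with $\mu(v_-\otimes v_-) = hv_- + \permMM\permMS t\, v_+$, $\eta(1) = v_+$, $\epsilon(v_+) = 0$, $\epsilon(v_-) = 1$, and, interpreting a dot as right-multiplication by $v_-$, Proposition~\ref{prop:TQFT-from-Frob} supplies the functor $\FD$. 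Iterating $x \mapsto \mu(x \otimes v_-)$ starting from $v_+$ one computes $\FD(S_2) = h$ and $\FD(S_3) = h^2 + \permMM\permMS t$, so the induced ring map $\Mor(\emptyset,\emptyset) \to \End_{R_\bullet}(R_\bullet) = R_\bullet$ inverts $\phi$ on generators, and hence everywhere.

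The main obstacle will be the execution of step two: localizing the $N$-move inside $S_n$ at a dot-free regular level, matching orientations of the new caps with the clockwise-death convention, and then passing the dots on either hemisphere past these caps in the prescribed order so that the chronological scalars in the three $N$-summands assemble without extraneous factors into the clean recursion above.
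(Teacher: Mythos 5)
Your proposal follows the same overall route as the paper: you build the chronological Frobenius system $(R_\bullet,A_\bullet)$ (here arriving at it by the base change $a,e\mapsto 1$, $c,f\mapsto 0$, $h\mapsto h$, $t\mapsto\permMM\permMS t$ of the universal system, which does reproduce the paper's $A_\bullet$), use the resulting TQFT as a section, and make explicit via the neck--cutting recursion $S_n = S_2S_{n-1} + (S_3-S_2^2)S_{n-2}$ the generation of $\Mor(\emptyset,\emptyset)$ by $S_2$ and $S_3$. That recursion is a useful addition --- the paper compresses the invertibility of $\varphi$ into a one-line assertion, and your step (2) is exactly what makes it true. Your well-definedness check for $\phi$ via the cost $\permMM\permSS$ of permuting dots is also correct.

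There is, however, a genuine gap in the injectivity half: the dot operator is \emph{not} right multiplication by $v_-$. Taking $\theta(a) = \mu(a\otimes v_-)$ with your $A_\bullet$ gives $\theta(v_-)=hv_-+\permMM\permMS tv_+$, and rewriting with coefficients on the right using symmetry ($hv_- = \permSS\permMS\,v_-h$, $\permMM\permMS tv_+=\permMS^2\,v_+\permMM\permMS t$) one finds
\begin{equation}
 \epsilon(\theta(v_-)) = \permSS\permMS\,h = \permMM\permMS\,h \neq h
\end{equation}
in $R_\bullet$ (only $\permMM\permSS h = h$ is a relation, not $\permMM\permMS h = h$). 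So $\FD(S_2)=\permMM\permMS h$ rather than $h$, the claimed computations fail, and --- worse --- this $\theta$ does not satisfy the neck-cutting relation $N$, so $\FD$ does not even descend to $\kChCobD$. The paper's $\theta$ carries an extra scalar, namely $\theta(v_-) = \permMM\permSM(tv_+ + hv_-) = v_+t\permMM\permMS + v_-h$; equivalently, $\theta = \permMM\permSM\cdot\mu(v_-\otimes\blank)$ is a \emph{scaled left} multiplication, not right multiplication. With the correctly scaled $\theta$ your injectivity argument goes through, but as written the claimed section is not a functor on $\kChCobD$ and hence cannot invert $\phi$.

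Finally, note that the paper's $A_\bullet$ has $\mu(\vv--)=tv_+ + hv_-$ whereas your base change yields $\mu(\vv--)=\permMM\permMS t\,v_+ + hv_-$; this is consistent because your rescaling of $t$ matches your sending $t\mapsto\permMM\permMS t$, but it means you should not silently identify the two sets of formulas without tracking the factor.
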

\begin{proof}
	It is enough to show that the~above defines a~homomorphism---it is clearly invertible if it exists. We begin with constructing a~graded monoidal functor $\FD\colon\kChCobD\to\Mod{R_\bullet}$. For that take a~free rank two symmetric bimodule $A_\bullet=R_\bullet v_+\oplus R_\bullet v_-$ with $\deg v_+=(1,0)$ and $\deg v_-=(0,-1)$ as usual. This module is a~chronological Frobenius algebra with operations
	\setlength\arraycolsep{1pt}%
	\begin{align}
		\mu\colon A_\bullet\otimes A_\bullet\to A_\bullet,&\qquad\left\{\begin{array}{rlrl}
			\vv++ &\mapsto v_+,		&\qquad \vv-+ &\mapsto \permMM\permMS v_-,\\
			\vv+- &\mapsto v_-,   &       \vv-- &\mapsto tv_+ + hv_-,
		\end{array}\right. \\
		\Delta\colon A_\bullet\to A_\bullet\otimes A_\bullet,&\qquad\left\{\begin{array}{l}
			v_+\mapsto \vv-+ + \permSS\permMS\vv+- - \permSS\permSM h\vv++,\\
			v_-\mapsto \vv-- + \permMS^{-2}t\vv++,
		\end{array}\right. \\
		\eta\colon R_\bullet\to A_\bullet,&\qquad \left\{\begin{array}{l}
			1\mapsto v_+,
		\end{array}\right.\\
		\epsilon\colon A_\bullet\to R_\bullet,&\qquad \left\{\begin{array}{l}
			v_+\mapsto 0,\\
			v_-\mapsto 1
		\end{array}\right.
	\end{align}
	These tell us how to define $\F_\bullet$ on all generators except one, a~cylinder decorated with a~dot. Associate to it the~following homomorphism:
	\begin{equation}
		\theta\colon A_\bullet\to A_\bullet,\qquad\left\{\begin{array}{l}
			v_+ \mapsto v_-,\\
			v_- \mapsto \permMM\permSM(tv_+ + hv_-) = v_+t\permMM\permMS + v_-h.
		\end{array}\right.
	\end{equation}
	Clearly, $\epsilon\circ\eta = 0$ and $\epsilon\circ\theta\circ\eta = 1$, so that $\F_\bullet$ preserves relations \textit{S} and \textit{T}. It remains to show that $\F_\bullet$ is also coherent with the~neck-cutting relation \textit{N}. This follows from computing the~terms on the~right hand side of \textit{N}:
	\begin{align}
		\textcobordism[1](D*B)\colon A_\bullet\to A_\bullet, & \qquad\left\{\begin{array}{l}
					v_+ \mapsto v_+,\\
					v_- \mapsto v_+\cdot h,
				\end{array}\right.\\
		\textcobordism[1](DB*)\colon A_\bullet\to A_\bullet, & \qquad\left\{\begin{array}{l}
					v_+ \mapsto 0,\\
					v_- \mapsto v_-,
				\end{array}\right.\\
		\textcobordism[1](D**B)\colon A_\bullet\to A_\bullet, & \qquad\left\{\begin{array}{l}
					v_+ \mapsto 0,\\
					v_- \mapsto v_+\cdot h.
				\end{array}\right.
	\end{align}
	Summing the~first two and subtracting the~last homomorphism results in the~identity on $A_\bullet$. The~functor $\F_\bullet$ induces a~homomorphism $\varphi\colon\Mor(\emptyset, \emptyset)\to R_\bullet$ by associating an~element from the~ring to any closed surface with dots. In particular, we compute
	\begin{equation}
		\varphi\left(\begin{pspicture}[shift=-0.2](-0.3,-0.3)(0.3,0.3)
			\pscircle(0,0){0.3}\psdot(0,0.1)\psdot(0,-0.1)
		\end{pspicture}\right) = h
			\hskip 1cm\textnormal{and}\hskip 1cm
		\varphi\left(\begin{pspicture}[shift=-0.3](-0.4,-0.4)(0.4,0.4)
			\pscircle(0,0){0.4}
			\psdot(0,0)\psdot(0,0.2)\psdot(0,-0.2)
		\end{pspicture}\right) = \permMM\permMS t+h^2,
	\end{equation}
	which is the~desired homomorphism.
\end{proof}

\begin{remark}
	Similarly to the~even case, dotted cobordisms lead us to a~deformation of odd theory, although both $t$ and $h$ are torsion elements: $2t=2h=0$ if $\permMM\permSS=-1$. In particular, we cannot set $t=1$ to obtain Lee deformation, unless we work with $\Z_2$ coefficients.
\end{remark}

The~homology theory defined by the~algebra $A_\bullet$ is universal: it carries the~most information among all chronological Frobenius algebras producing link homology. The~proof follows the~argument from \cite{KhFrobExt} and it is based on the~following observation.

Given a~chronological Frobenius algebra $A$ and an~invertible element $y\in A$ of degree $(1,0)$, we can twist its coalgebra structure by $y$ as follows:
\begin{equation}
	\epsilon'(a) := \epsilon(ya),
		\hskip 1cm
	\Delta'(a) := \Delta(y^{-1}a).
\end{equation}
If $\Delta$ and $\epsilon$ are homogeneous, so are their twisted version $\Delta'$ and $\epsilon'$. The~degrees are not changed. Because $\deg y=-\deg\mu$, there is an~equality $\Delta(y^{-1}a) = y^{-1}\Delta(a)$:
\begin{equation}
	\psset{unit=5mm}
	\begin{centerpict}(0,1)(4,6)
		\psset{linewidth=1pt}
		\psline(1,2)(1,3.5)\psline(3,1)(3,3.5)\psline(2,3.5)(2,5)\psline(1,5)(1,6)\psline(3,5)(3,6)
		\psset{linewidth=0.5pt}
		\rput(2,5){\psframe[framearc=0.5,fillstyle=solid](-1.7,-0.5)(1.7,0.5)\rput[B](0,-2pt){$\scriptstyle \Delta$}}
		\rput(2,3.5){\psframe[framearc=0.5,fillstyle=solid](-1.7,-0.5)(1.7,0.5)\rput[B](0,-2pt){$\scriptstyle \mu$}}
		\rput(1,2){\psframe[framearc=0.5,fillstyle=solid](-0.7,-0.5)(0.7,0.5)\rput[B](0,-2pt){$\scriptstyle y^{-1}$}}
	\end{centerpict}
	= Z^{-1}
	\begin{centerpict}(0,1)(5,6)
		\psset{linewidth=1pt}
		\psline(1,2)(1,5)\psline(3,1)(3,3.5)\psline(1.65,5)(1.65,6)\psline(2.33,3.5)(2.33,5)\psline(3.67,3.5)(3.67,6)
		\psset{linewidth=0.5pt}
		\rput(3,3.5){\psframe[framearc=0.5,fillstyle=solid](-1.35,-0.5)(1.35,0.5)\rput[B](0,-2pt){$\scriptstyle \Delta$}}
		\rput(1.65,5){\psframe[framearc=0.5,fillstyle=solid](-1.35,-0.5)(1.35,0.5)\rput[B](0,-2pt){$\scriptstyle \mu$}}
		\rput(1,2){\psframe[framearc=0.5,fillstyle=solid](-0.7,-0.5)(0.7,0.5)\rput[B](0,-2pt){$\scriptstyle y^{-1}$}}
	\end{centerpict}
	=
	\begin{centerpict}(-0.15,1)(4,6)
		\psset{linewidth=1pt}
		\psline(1,3.5)(1,5)\psline(1.65,5)(1.65,6)\psline(2.33,1.5)(2.33,5)\psline(3.67,1.5)(3.67,6)\psline(3,1)(3,2)
		\psset{linewidth=0.5pt}
		\rput(3,2){\psframe[framearc=0.5,fillstyle=solid](-1.35,-0.5)(1.35,0.5)\rput[B](0,-2pt){$\scriptstyle \Delta$}}
		\rput(1.65,5){\psframe[framearc=0.5,fillstyle=solid](-1.35,-0.5)(1.35,0.5)\rput[B](0,-2pt){$\scriptstyle \mu$}}
		\rput(1,3.5){\psframe[framearc=0.5,fillstyle=solid](-0.7,-0.5)(0.7,0.5)\rput[B](0,-2pt){$\scriptstyle y^{-1}$}}
	\end{centerpict}
\end{equation}

\begin{lemma}[cf. \cite{KhFrobExt}]
	Assume that $\F$ and $\F'$ are two functors induced by an~algebra $A$ and its twisted version $A'$. Then the~complexes $\F\KhCom(L)$ and $\F'\KhCom(L')$ are isomorphic.
\end{lemma}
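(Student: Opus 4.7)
To prove the lemma I would construct an explicit chain isomorphism $\Phi \colon \F\KhCom(L) \to \F'\KhCom(L)$ vertex by vertex in the cube of resolutions. Since $\mu$, $\eta$ and the induced $\scalars$-action are unchanged in the twist, and the edges of the Khovanov cube are saddles (hence correspond only to $\mu$ or $\Delta$, never to $\eta$ or $\epsilon$), the cubes $\F\KhCube{L}$ and $\F'\KhCube{L}$ share the same underlying graded modules $\bigoplus_v A^{\otimes s(v)}$ and the same edge maps on every merge edge; they differ only on split edges, where $\F'(\text{split}) = \F(\text{split}) \circ m_{y^{-1}}$ with $m_{y^{-1}}$ acting on the source circle by left multiplication.

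The strategy is to compensate for these $m_{y^{-1}}$-factors by inserting suitable powers of $m_y$ at every circle of every vertex. Explicitly, I would look for $\Phi_v = \bigotimes_C m_y^{k_C(v)}$ with $k_C(v) \in \Z$ to be determined. Using left-linearity of $\mu$ and $\Delta$ with respect to $y$ (together with the graded commutativity of $A$, which costs nothing since $\lambda(\deg y,\deg y) = \lambda((1,0),(1,0)) = 1$), the intertwining requirement translates into two local rules: at a merge $v\to v'$ combining circles $C_1,C_2$ into $C$ one needs $k_C(v') = k_{C_1}(v)+k_{C_2}(v)$; at a split of $C$ into $C_1,C_2$ (ordered by the saddle's framing) one needs $k_{C_1}(v') = k_C(v)-1$ and $k_{C_2}(v') = 0$, the $-1$ absorbing the twist factor. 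Setting $k_C(v_0) = 0$ at the initial vertex and propagating along edges by these two rules produces candidate exponents at every vertex.

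The remaining task, and the main obstacle, is to verify that the propagation is path-independent, which (since the cube is contractible) reduces to checking consistency on each $2$-face. I would do this by case analysis on the face types of Tab.~\ref{tab:cube-faces}: the disjoint configurations of type MM, SS, MS are immediate since the two saddles act on independent circles; the X-type face uses the graded commutativity of $\mu$ applied to $ya\cdot yb$; and the T-type faces, in which a split is followed by a merge involving one of its outputs, require the Frobenius relation
\[
  (\mu\otimes\id)\circ(\id\otimes\Delta) \;=\; \lambda(\deg\mu,\deg\Delta)\,\Delta\circ\mu
\]
together with the saddle framings to pin down which output of the split carries the $k_C(v)-1$ exponent and which carries $0$. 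Once face-consistency is verified, $\Phi$ is a well-defined chain map (the intertwining is exactly what the rules enforce) and an isomorphism, since each $m_y^{k_C(v)}$ is invertible with inverse $m_y^{-k_C(v)}$, giving the required isomorphism of complexes and completing the proof.
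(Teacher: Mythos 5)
Your overall strategy is the same as the paper's: construct the isomorphism inductively over the cube of resolutions, starting from the identity at the initial vertex and propagating $y$\nobreakdash-power exponents on circles along edges, leaving exponents alone at merge edges and decrementing the left output circle by one at split edges. The paper phrases this as ``at a split, multiply by $y^{-1}$ the element on the circle to the left of the split'', which is exactly your rule $k_{C_1}(v') = k_C(v)-1$, $k_{C_2}(v')=0$; the merge rule and the reduction to a face-consistency check are likewise the same.

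However, one claim you make in justifying that the $y$-juggling ``costs nothing'' is false: with $\deg y = (1,0)$, the paper's formula gives
\[
\lambda\bigl((1,0),(1,0)\bigr)\;=\;\permMM^{1\cdot 1}\permSS^{0\cdot 0}\permMS^{1\cdot 0-0\cdot 1}\;=\;\permMM,
\]
which is \emph{not} $1$ over $\scalars$ (it is $1$ only after specializing $\permMM\mapsto 1$, as in the even and odd theories). Moreover, the relevant commutativity factor when collecting $y$-powers across a merge, $\mu(y^{k_1}a\otimes y^{k_2}b)\rightsquigarrow y^{k_1+k_2}\mu(a\otimes b)$, involves $\lambda(\deg a,\deg y)^{k_2}$, which depends on $\deg a$, not just $\lambda(\deg y,\deg y)$. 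So your dismissal of the scalar bookkeeping is unjustified as stated. The correct input is the identity $\Delta(y^{-1}a)=y^{-1}\Delta(a)$, which the paper proves graphically from $\deg y=-\deg\mu$ and condition~\eqref{eq:chron-frob-last}; this is exactly what makes the split-edge square commute on the nose. It is worth noting the paper's own proof is similarly terse at this point --- it states the two local propagation rules and leaves path-independence implicit (deferring to the argument of \cite{KhFrobExt}) --- so your proposal matches the paper's level of detail in that respect, but the $\lambda$-claim you rely on needs to be replaced by the $\Delta(y^{-1}a)=y^{-1}\Delta(a)$ computation.
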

\begin{proof}
	Consider cubes $\F\KhCubeSigned{L}{\epsilon}$ and $\F'\KhCubeSigned{L}{\epsilon}$, both corrected by a~sign assignment $\epsilon$. They have the~same $R$-modules in vertices and the~only difference is in edges labeled with comultiplications. The~isomorphism is constructed inductively, starting with the~identity homomorphism on the~initial vertex $(0,\dots,0)$ and applying the~following rule at every face:
	\begin{equation}
		\begin{diagps}(0,-1ex)(22em,12ex)
			\square<7em,10ex>[%
				\F\mathcal{I}_{\xi}`\F'\mathcal{I}_{\xi'}`\F\mathcal{I}_{\xi'}`\F'\mathcal{I}_{\xi};
				f`\mu`\mu`f]
			\square(15em,0ex)<7em,10ex>[%
				\F\mathcal{I}_{\xi}`\F'\mathcal{I}_{\xi'}`\F\mathcal{I}_{\xi'}`\F'\mathcal{I}_{\xi};
				f`\Delta`\Delta`y^{-1}\cdot f]
		\end{diagps}
	\end{equation}
	where in the~case of a~split we multiply by $y^{-1}$ the~element from the~copy of $A$ corresponding to the~circle that appears to the~left of the~split.
\end{proof}

\begin{theorem}\label{thm:universality}
	Any homogeneous rank two chronological Frobenius system $(R',A')$ in $\Mod{R}$ is obtained from $(R_\bullet, A_\bullet)$ by a~base change and a~twist. In particular, $\DKh(L):=H(\FD\KhCom(L))$ is the~most general link homology theory in our framework.
\end{theorem}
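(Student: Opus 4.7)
The plan is to reduce to the universality statement of the preceding proposition and then normalize the counit by a~suitable twist. Given a~rank two chronological Frobenius system $(R',A')$, I would first invoke that proposition to obtain the unique graded ring homomorphism $\phi\colon R_U\to R'$ with $A'\cong A_U\otimes_{R_U,\phi}R'$; I abusively write $a,c,e,f,h,t$ for both the generators of $R_U$ and their images in $R'$. The goal is then to produce a~graded ring homomorphism $\psi\colon R_\bullet\to R'$ and an~invertible element $y$ of degree $(1,0)$ in $A_\bullet\otimes_{R_\bullet,\psi}R'$ such that twisting the base-changed system by $y$ recovers $(R',A')$.

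First, I would define $\psi$ by $h\mapsto h$ and $t\mapsto t$, which is well-defined because the torsion relations $(\permMM\permSS-1)h=(\permMM\permSS-1)t=0$ holding in $R_\bullet$ also hold in $R_U$ and hence in $R'$. Set $A'':=A_\bullet\otimes_{R_\bullet,\psi}R'$. Since the multiplication and unit of $A_\bullet$ depend only on $h$ and $t$, the structures $\mu$ and $\eta$ on $A''$ coincide with those on $A'$; only the counits differ, with $A''$ carrying the normalized counit $\epsilon_\bullet(v_+)=0$, $\epsilon_\bullet(v_-)=1$ while $A'$ has $\epsilon(v_+)=c$, $\epsilon(v_-)=a$.

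Next, I would search for a~twist element of the form $y=\alpha v_++\beta v_-$ with $\alpha\in R'$ of degree $(0,0)$ and $\beta\in R'$ of degree $(1,1)$. A~direct computation using $y\cdot v_+=\alpha v_++\permMM\permMS\beta v_-$ and $y\cdot v_-=\beta t\,v_++(\alpha+\beta h)v_-$ shows that matching the counit of $A'$ forces $\permMM\permMS\beta=c$ and $\alpha+\beta h=a$, which is solved by $\beta=\permMM\permSM c$ and $\alpha=a-\permMM\permSM ch$. The main technical obstacle is verifying that this $y$ is invertible in $A''$: the plan is to construct an~explicit $y^{-1}$ whose coefficients involve $e$ and $f$, using the defining relations $af+ce=0$ and $ae+ceh+\permSS\permMS cft=1$ of $R_U$ (the second supplying the scalar $1$ needed for $y\cdot y^{-1}=v_+$, the first cancelling cross-terms).

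Once $\mu$, $\eta$, and $\epsilon$ of the twisted $A''$ have been matched to those of $A'$, the comultiplication is forced. Indeed, in any chronological Frobenius algebra the bilinear form $(x,y)\mapsto\epsilon(\mu(x\otimes y))$ is nondegenerate and $\Delta$ is determined as its graded dual, so the twisted coproduct on $A''$ necessarily equals that of $A'$. This presents $(R',A')$ as a~base change and twist of $(R_\bullet,A_\bullet)$. The concluding assertion that $\DKh(L)$ is the most general link homology theory in our framework then follows at once: any functor $\FA$ arising from a~rank two chronological Frobenius system factors through $\FD$ up to base change and twist, and by the lemma on twists in the preceding section both operations descend to an~isomorphism of chain complexes and hence of homology groups.
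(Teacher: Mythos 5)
Your proposal follows essentially the same approach as the paper's proof: both reduce the statement to the universality of $(R_U,A_U)$ established just beforehand, and both express the relationship between $A_U$ and $A_\bullet$ via a twist by a degree $(1,0)$ invertible element. The paper phrases this at the universal level, asserting $A_\bullet$ is the twist of $A_U$ by $y=ev_++\permSS\permMS fv_-$ with inverse $(a+ch)v_+-\permSS\permMS cv_-$; you instead carry the base change $R_\bullet\to R'$ out explicitly and construct the corresponding twist element inside $A_\bullet\otimes_{R_\bullet}R'$ by forcing the counit to match, which produces the image of $y^{-1}$. That is the same computation seen from the opposite direction, and unwinding it as you do makes visible the steps the paper leaves implicit, namely that twist and base change commute and that $y$ survives base change. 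Your appeal to the Frobenius nondegeneracy pairing to conclude that $\Delta$ is then forced is a reasonable way to avoid a direct verification, and it is implicitly needed by the paper's argument as well; stating a short lemma to that effect would tighten the write-up. (As a minor remark, the explicit coefficients of the twist element in the graded setting are quite finicky, and the paper itself appears to have small sign slips in $R_U$ and $y^{-1}$; your element $(a-\permMM\permSM ch)v_++\permMM\permSM cv_-$ appears to be the correct one once these are sorted out, so do not worry about the mismatch with the paper's stated $y^{-1}$.)
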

\begin{proof}
	Recall the~Frobenius system $(R_U,A_U)$ is universal with respect to the~base change operation. An~element $y=ev_+ + \permSS\permMS fv_- \in A_U$ is invertible and of degree $(1,0)$, with an~inverse $y^{-1} = (a+ch)v_+ - \permSS\permMS cv_-$. The~dotted algebra $A_\bullet$ arises as the~twisting of $(R_U,A_U)$ by this element.
\end{proof}

\section{Odds and ends}\label{sec:odds-ends}
\subsection*{Tangle cobordisms}

Let $\cat{Cob}^4(k)$ be the~category of tangles with $2k$ endpoints and tangle cobordisms between them, i.e.\ surfaces $W\subset\mathbb{D}^3\times I$ with its boundary decomposing into the~input and the~output tangles $T_i\subset\mathbb{D}^3\times\{i\}$, $i=0,1$, and vertical lines on $\partial\mathbb{D}^3\times I$. In particular, cobordisms between empty links are $2$-knots.

There is a~presentation of $\cat{Cob}^4(k)$ due to Carter and Saito \cite{CarterSaito} using \emph{movies}: sequences of sections of $W$ cutting the~cobordism into simple pieces, each with at most one singularity. There are nine singularities, corresponding to nine generators: the~three Reidemeister moves (each represents two generators), a~saddle move, a~birth, and a~death (see Fig.~\ref{fig:cob4-generators}). They are subject to a~number of relations, called \emph{movie moves}, that represent isotopic cobordisms, see \cite{CarterSaito}.

The~even Khovanov homology $\EKh(L)$ was proven to be functorial up to sign \cites{JacobFun,KhFunctorial,DrorCobs}, and corrected later to a~functor \cites{ClarkMorWalker,Blanchet}. This means there is a~chain map $\KhCom(W)\colon\KhCom(L_0)\to\KhCom(L_1)$ for any surface $W\subset\mathbb{R}^3\times I$ with $L_0$ and $L_1$ as its boundary, $L_i\subset\R^3\times\{i\}$.

It is not obvious how functoriality should be understood for odd homology. For instance, consider a~cobordism $W\colon \fntCircle \fntCircle \Longrightarrow \emptyset$ from the~two-component unlink to an~empty diagram given by two deaths. Depending on how we decompose $W$ into simple pieces (i.e.\ which link component vanishes first), we obtain two chain maps that differ by $\permSS$. One can try to show $\KhCom$ is a~weak $2$-functor, where movie moves are $2$-morphisms in $\cat{Cob}^4(k)$. However, this approach requires understanding of higher singularities of embedded cobordisms.

Functoriality up to \quot{sign} of the~generalized Khovanov complex $\KhCom(\blank)$, where by a~\quot{sign} we mean any degree $0$ invertible element of $\scalars$, is more promising. One can try to modify the~proof of the~even case presented in \cite{DrorCobs}, showing that for most tangles the~
automorphism groups of $\KhCom(T)$ are multiplies of the~identity map. We can define the~chain maps for generators as in the~table below.
\begin{center}
	\begin{tabular}{p{0.25\textwidth}p{0.6\textwidth}}
		\hline
		\multicolumn{1}{c}{\textbf{Movie}} & \multicolumn{1}{c}{\textbf{Chain map on $\KhCom(D)$}}\\
		\hline
		Reidemeister moves	& Homotopy equivalences from Theorem~\ref{thm:invariance} \\[1ex]
		Saddle move					& The~chain map $\KhBracket{\fntHorResCob}\colon\KhBracket{\fntHorRes}\to\KhBracket{\fntVertRes}\{1\}$
													obtained from the~cube or resolutions of the~tangle $\fntNWSECr$.	\\[1ex]
		Birth/death move		& The~chain maps induced by births and clockwise deaths. \\
		\hline
	\end{tabular}
\end{center}
The~last chain map requires some explanation. Consider a~morphism $b\colon\KhCubeSigned{T}{\epsilon}\to\KhCubeSigned{T\sqcup\fntCircle}{\epsilon}$ of anticommutative cubes with each component $b_\xi$ given by a~birth. They do not commute with edge morphisms of the~cubes, but we can fix it by the~same argument we used in the~proof of Theorem~\ref{thm:invariance}: scale $b_\xi$ by $\lambda((1,0),\chdeg W)$, where $W\subset\Disk\times I$ is a~cobordism given by any path from the~initial vertex $(0,\dots,0)$ to $\xi$. In a~similar way we define the~chain map for a~death.

Unfortunately, the~proof of functoriality in \cite{DrorCobs} does not translate immediately to our setting---the~problem is with Lemma~8.8, which states that a~tangle $T$ is $\KhCom$-simple (i.e.\ the~only automorphisms of $\KhCom(T)$ are $\pm\id$) if $TX$ is such ($TX$ is the~tangle obtained from $T$ by adding one extra crossing along its boundary). Functoriality of planar operations is used in the~original proof, the~property that does not hold in our setting. However, we believe this can be fixed with some generalization of the~argument used in the~proof of Theorem~\ref{thm:invariance}.

\begin{figure}
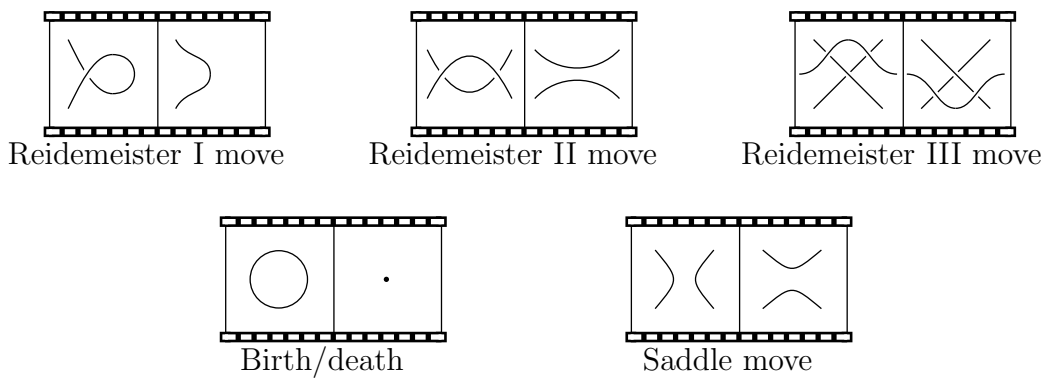

	\psset{unit=1.3}%

	\vskip\baselineskip
	\begin{tabular}{ccccc}
		\begin{movie}(1.1,1.1){2}
			\movieclip{\tangleRIx}
			\movieclip{\tangleRIline}
		\end{movie} & \quad\quad &
		\begin{movie}(1.1,1.1){2}
			\movieclip{\tangleRIIxx}
			\movieclip{\tangleRIIhhs}
		\end{movie} & \quad\quad &
		\begin{movie}(1.1,1.1){2}
			\movieclip{\tangleRIIIax}
			\movieclip{\tangleRIIIbx}
		\end{movie}	\\
		Reidemeister I move &&
		Reidemeister II move &&
		Reidemeister III move
	\end{tabular}
	
	\vskip\baselineskip
	\begin{tabular}{ccc}
		\begin{movie}(1.1,1.1){2}
			\movieclip{\pscircle[linewidth=0.5pt](0,0){0.3}}
			\movieclip{\psdot[dotsize=2pt](0,0)}
		\end{movie} &\quad\hskip 1cm\quad&
		\begin{movie}(1.1,1.1){2}
			\movieclip{%
				\psset{linewidth=0.5pt}%
				\psbezier(-0.3,-0.3)(-0.05,0)(-0.05,0)(-0.3,0.3)
				\psbezier( 0.3,-0.3)( 0.05,0)( 0.05,0)( 0.3,0.3)
			}%
			\movieclip{%
				\psset{linewidth=0.5pt}%
				\psbezier(-0.3,-0.3)(0,-0.05)(0,-0.05)(0.3,-0.3)
				\psbezier(-0.3, 0.3)(0, 0.05)(0, 0.05)(0.3, 0.3)
			}%
		\end{movie} \\
		Birth/death &&
		Saddle move
	\end{tabular}
	\caption{Movie diagrams for generator of $\cat{Cob}^4(k)$. Each diagram represents
		up to two generators, depending on the~direction in which the~movie is watched.}
	\label{fig:cob4-generators}
\end{figure}

\begin{conjecture}
	The~above defines a~\quot{functor} $\KhCom\colon\cat{Cob}^4(k)\to\Kom(\kChCobL(k))$ that assigns to a~tangle $T$ the~generalized Khovanov complex $\KhCom(T)$ and to a~tangle cobordism $W$ a~chain map $\KhCom(W)\colon\KhCom(T)\to\KhCom(T')$, defined up to a~global invertible scalar.
\end{conjecture}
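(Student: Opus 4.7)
The plan is to adapt Bar-Natan's strategy from \cite{DrorCobs} to the chronological setting, where ``equality'' of chain maps is replaced by equality up to a degree $0$ invertible scalar in $\scalars$. First I would verify that each of the nine generating chain maps in the table is well-defined up to a unit: for Reidemeister moves this follows from Theorem~\ref{thm:invariance} together with Lemma~\ref{lem:kh-cube-indep-of-sign-assign}, since any two homotopy equivalences between the same Khovanov complexes must agree up to a unit whenever the target has a suitably simple automorphism group; for saddles and births/deaths the definitions only involve the edge maps of the cube of resolutions, corrected by the universal cochain $\eta$ built from $\lambda$ exactly as in the proof of the $R_1$ extension, and thus are canonical up to a unit determined by the chosen sign assignment and arrow decorations.

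The bulk of the work is checking the fifteen Carter--Saito movie moves. The strategy is to reduce each movie move to a statement about the automorphism group $\Aut_{\Kom(\kChCobL)}(\KhCom(T))$ of the generalized complex for a specific small tangle $T$. If that group consists of units times the identity, then both sides of the movie move --- being endomorphisms or parallel morphisms with the same source and target --- automatically agree up to an invertible scalar. For each move one identifies the relevant tangle (usually a braid closure or a small cap/cup tangle), computes the complex, and uses the delooping isomorphism of Section~\ref{sec:dots} together with the nondegeneracy result Theorem~\ref{thm:aut(W)} to pin down its endomorphism algebra in the homotopy category. Most moves localize to tangles for which the complex is homotopy equivalent to a single cobordism or a small contractible piece, where Theorem~\ref{thm:aut(W)} directly yields $\Aut\subset\{1,\permMM\permSS\}\cdot\id$; the two ``non-trivial'' moves involving the third Reidemeister move and a saddle are handled as in \cite{DrorCobs} by reducing to simpler tangles via the cone decomposition \eqref{KhBracket:seq}.

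The main obstacle, as remarked, is the analogue of Bar-Natan's Lemma~8.8: one needs to know that if a tangle $T$ is $\KhCom$-simple then so is the tangle $TX$ obtained by attaching one extra crossing. Bar-Natan's argument uses that planar operators act strictly on complexes, which fails here since the functors \eqref{eq:cubical-D-functor} are only cubical. I would circumvent this by working not with the planar operator itself but with the cube-of-complexes construction used at the end of the proof of Theorem~\ref{thm:invariance}: for a tangle $T$ together with an extra crossing, the complex $\KhCom(TX)$ is the mapping cone of a morphism between complexes of the form $D(T_\xi,\KhCom(X))$, and an automorphism of $\KhCom(TX)$ restricts, after passing to the associated graded with respect to the homological filtration coming from $X$, to a pair of automorphisms of $\KhCom(T)$ in each column. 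The cubical correction cocycle $\eta$ introduced in the $R_1$ argument then transfers automorphisms of $\KhCom(TX)$ to automorphisms of $\KhCom(T)$ modulo units, which is exactly what is needed.

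Finally, the inductive step of Bar-Natan's argument --- reducing an arbitrary tangle to a disjoint union of crossingless unknots and unknotted arcs by a sequence of crossing additions and Reidemeister moves --- carries over unchanged, with the base case handled by the delooping isomorphism and Theorem~\ref{thm:aut(W)}. Combining this with the movie-move verifications gives the desired ``functor up to a unit.'' The principal technical obstacle I expect to face is making the cube-of-complexes argument sufficiently robust that the accumulated units from different edges of the cube can be simultaneously absorbed into a single global scalar; this is the chronological analogue of the delicate sign-chasing that also required care in the classical case.
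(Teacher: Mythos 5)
This statement is labeled a \emph{Conjecture} in the paper, and the paper does not contain a proof of it. The surrounding text explicitly records why: Bar-Natan's functoriality argument from \cite{DrorCobs} breaks down at Lemma~8.8 of that paper (the step showing that a tangle $T$ is $\KhCom$-simple whenever $TX$ is), because that argument uses the strict functoriality of planar operators on complexes, a property that fails in the chronological setting where planar operators are only cubical. The paper states only that ``we believe this can be fixed with some generalization of the argument used in the proof of Theorem~\ref{thm:invariance}'' --- in other words, the gap is acknowledged and left open.

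Your sketch essentially reiterates this same outline: adapt Bar-Natan, check the generators, check the movie moves by pinning down automorphism groups, and handle the Lemma~8.8 analogue with a cube-of-complexes filtration argument dressed up with the correction cochain $\eta$. The first two stages are plausible and parallel what the paper sketches. But the third stage --- the one place where a new idea is required --- is not a proof. You assert that ``the cubical correction cocycle $\eta$ \ldots transfers automorphisms of $\KhCom(TX)$ to automorphisms of $\KhCom(T)$ modulo units, which is exactly what is needed,'' but you give no argument for why the transferred map is again a scalar multiple of the identity, nor why the filtration by the extra crossing is respected by an arbitrary automorphism up to correctable units. This is precisely the content of Bar-Natan's Lemma~8.8, and precisely the point where the paper says the argument does not go through. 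Restating the difficulty and then claiming it is resolved by the same cocycle trick used for $R_1$ invariance does not close the gap; the $R_1$ argument transfers a \emph{specific} cube morphism (built from births of known degree), not an arbitrary chain automorphism of a mapping cone, and the two situations are not obviously comparable. Your own final sentence, ``the principal technical obstacle I expect to face is \ldots'' is an honest admission that this is an outline of an intended strategy rather than a completed proof, which is in fact the status the paper itself assigns to this statement.

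If you want to make progress here, the concrete missing ingredient is a chronological replacement for Bar-Natan's Lemma~8.8: a precise statement and proof that an automorphism of $\cone(\Psi)$ over $\kChCobL(k)$, where $\Psi\colon\KhCom(D(T_\xi,X_0))\to\KhCom(D(T_\xi,X_1))$ is assembled from a cube of complexes, descends to automorphisms of the two columns that agree up to a single unit, with the cubical defect of $D$ absorbed into that unit coherently. Until that lemma is formulated and proved, the chain of reductions to simple tangles does not run, and the movie-move checks cannot be completed.
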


\subsection*{\texorpdfstring{$\Diamond$}{Diamond}-change revisited}

\wrapfigure[r]{\begin{pspicture}(-0.8,-0.2)(0.8,0.8)\diagConnT{}{-}{}{-}\end{pspicture}}
The~choice we used to assign a~coefficient for a~$\Diamond$-change is not the~only one. We might as well assign $1$ to the~diagram with the~outer arrow pointing to the~right and $\permMM\permSS$ for the~other case, and $\iota$ would still be coherent with all relations between elementary changes of chronologies. The~new commutativity cocycle $\overline\psi$ has the~same values as $\psi$, except that
\begin{equation}\label{eq:dual-psi}
	\overline\psi\left(\begin{centerpict}(-0.8,-0.6)(0.8,0.6)\diagConnT{}{->}{}{->}\end{centerpict}\right) = \permMM\permSS
	\qquad\text{and}\qquad
		\overline\psi\left(\begin{centerpict}(-0.8,-0.6)(0.8,0.6)\diagConnT{}{->}{}{<-}\end{centerpict}\right) = 1.
\end{equation}
We shall now prove that the~corrected cube of resolutions does not depend on which commutativity cocycle we choose. Unfortunately, there is a~gap in the~original proof from \cite{ORS}, noticed by Cotton Seed: given a~sign assignment $\epsilon$ with $d\epsilon=\psi$ the~authors of \cite{ORS} constructed $\overline\epsilon$ with $d\overline\epsilon=\overline\psi$, but an~isomorphism of cubes $\KhCubeSigned{T}{\epsilon}\cong\KhCubeSigned{T}{\overline\epsilon}$ is missing. We found such an~isomorphism only when $T$ is a~link and the~cube $\KhCubeSigned{T}\epsilon$ is regarded as a~diagram in $\kChCob(0)$,\footnote{
	This step requires us to enumerate circles in each resolution, since the~disjoint union in $\kChCob(0)$ is not strictly symmetric. The~cube, however, is independent of these choices: different orders of circles are related by canonical isomorphisms, which in turn induce an~isomorphism of cubes.}
which is enough for the~odd theory, but leaves the~case of nested theories open.

\begin{proposition}\label{lem:kh-cube-indep-of-psi}
	Given a~link diagram $D$ choose sign assignments $\epsilon$ and $\overline\epsilon$ for the~cube $\KhCube{D}$ with respect to the~cocycles $\psi$ and $\overline\psi$ respectively. Then there is an~isomorphism of cubes $\KhCubeSigned{D}{\epsilon}\cong\KhCubeSigned{D}{\overline\epsilon}$, regarded as diagrams in $\kChCob(0)$.
\end{proposition}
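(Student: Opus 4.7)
The plan is to reduce the problem to a cohomological statement about $\{1,\permMM\permSS\}$-valued cocycles and then use Theorem~\ref{thm:aut(W)} to absorb the obstruction through cobordism automorphisms available in $\kChCob(0)$.

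First, I would observe that $\kappa := \overline\psi\cdot\psi^{-1}$ is a 2-cocycle on $I^n$ taking values in the subgroup $K := \{1,\permMM\permSS\}\subset\invScalars$: by \eqref{eq:dual-psi} the two cocycles agree off $\Diamond$-change faces and differ by the involution $\permMM\permSS$ there, and $(\permMM\permSS)^2=1$. Since $I^n$ is contractible there exists a $K$-valued 1-cochain $\sigma$ with $d\sigma=\kappa$, and by Lemma~\ref{lem:kh-cube-indep-of-sign-assign} applied to the cocycle $\overline\psi$ we may replace $\overline\epsilon$ by $\epsilon\cdot\sigma$ without loss of generality, since any two sign assignments for the same cocycle yield isomorphic cubes.

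Next, I would attempt a scalar iso $\eta_\xi := \mu(\xi)\cdot\id_{D_\xi}$, where $\mu(\mathbf{0})=1$ and $\mu(\xi)$ is defined as the product of $\sigma$-values along a chosen spanning-tree path in $I^n$ from $\mathbf{0}$ to $\xi$. The commutation square \eqref{eq:cube-morphism} then holds along all tree edges by construction; along a non-tree edge $\zeta$ completing a face $S$ it holds up to the scalar factor $\kappa(S)\in K$, i.e.\ outright for non-$\Diamond$ faces but off by $\permMM\permSS$ for $\Diamond$-faces. The key topological point is that for a link diagram the two-saddle composition $D_{\zeta'}D_\zeta$ around any $\Diamond$-face---after the arcs of the remaining diagram close it up into a closed surgery configuration---produces a cobordism containing a twice-punctured-torus summand (as computed directly from the connected 4-valent chord pattern of $\Diamond$-changes). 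By Theorem~\ref{thm:aut(W)} such a genus-1 piece satisfies $\permMM\permSS\cdot D_{\zeta'}D_\zeta = D_{\zeta'}D_\zeta$ in $\kChCob(0)$.

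The final step is to convert this face-level absorption into an iso of cubes. I would modify $\eta_\xi$ along the problematic non-tree edges by postcomposing with an invertible endomorphism of $D_\xi$ that realises, upon composition with the edge saddle, the $\permMM\permSS$-automorphism of the face cobordism; the consistency of this modification across the cube follows from the cocycle identity $d\kappa=1$, which controls how $K$-discrepancies on different $\Diamond$-faces fit together.

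The main obstacle will be this last step: the only invertible endomorphisms of a vertex object (a disjoint union of circles) are scalars and circle permutations, and each edge cobordism is a single saddle with trivial automorphism group, so the $\permMM\permSS$-absorption guaranteed by Theorem~\ref{thm:aut(W)} at the level of the \emph{full face composition} must be distributed consistently among the four vertices of each $\Diamond$-face---a bookkeeping that is delicate and uses that $\Diamond$-faces form a specific combinatorial subcomplex of $I^n$. This argument relies essentially on the link hypothesis: only then does every $\Diamond$-face carry a closed genus-1 surgery pattern, explaining why the tangle case (where the two chords may terminate at the boundary without generating a genus-1 summand) is not covered.
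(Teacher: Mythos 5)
Your proposal has a genuine gap, and it is precisely the gap that the paper is designed to fix (the text mentions that Cotton Seed noticed a hole in the argument in \cite{ORS}). The scalar-isomorphism approach cannot succeed as a matter of principle: if one sets $\eta_\xi = \mu(\xi)\cdot\id$, then the square \eqref{eq:cube-morphism} forces $d\mu = \epsilon\cdot\overline\epsilon^{-1}$, but $d(\epsilon\cdot\overline\epsilon^{-1}) = \overline\psi\cdot\psi^{-1} = \kappa$, which is a nontrivial $\{1,\permMM\permSS\}$-valued $2$-cocycle on any cube containing a $\Diamond$-face. So $\epsilon\cdot\overline\epsilon^{-1}$ is not a coboundary and no scalar $\mu$ exists. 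Your reduction via Lemma~\ref{lem:kh-cube-indep-of-sign-assign} only shows independence of the sign assignment for a \emph{fixed} cocycle, and you cannot choose $\sigma$ so that the spanning-tree construction closes up --- the loop discrepancies are exactly the $\kappa$-values, which are sometimes $\permMM\permSS \neq 1$. Your acknowledged last step ("postcompose with an invertible endomorphism of $D_\xi$") is never made concrete, and you correctly note there is nothing available there that would help: scalars reduce to the obstructed computation above, while a bare permutation of circle labels does not alter $\iota$-values of changes of chronology. The fact that a $\Diamond$-face composition is annihilated by $1-\permMM\permSS$ (which is true; it is a merge-then-split on one component) is a relation on the composite morphism, not an isomorphism of the vertex objects that could fill the square \eqref{eq:cube-morphism}, so Theorem~\ref{thm:aut(W)} cannot be used the way you want.

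The paper's actual proof sidesteps the obstruction by producing a genuinely non-scalar isomorphism of cubes. It checkerboard-colors the diagram $D$ and reverses every crossing arrow lying between two white regions to get a decorated diagram $D'$ with the \emph{same} underlying link diagram. This operation swaps the two $\Diamond$-face configurations in Tab.~\ref{tab:cube-faces} and fixes all the others, so the commutativity cocycle of $\KhCube{D'}$ is $\overline\psi$ and the original $\epsilon$ is already a sign assignment for it. The isomorphism $s\colon\KhCubeSigned{D}{\epsilon}\to\KhCubeSigned{D'}{\epsilon}$ is then defined vertexwise by applying a half-twist to the boundary of each black region of $D_\xi$; these half-twists are exactly the isotopies that reverse the framing arrows between white regions, hence intertwine the edge cobordisms as required. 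This is where the link (as opposed to tangle) hypothesis enters: the checkerboard coloring and the fact that every circle in every resolution bounds a unique black region are available for link diagrams, and the half-twists are honest isomorphisms in $\kChCob(0)$. If you want to salvage your outline, the missing ingredient is to allow $\eta_\xi$ to be such half-twist compositions rather than scalars.
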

\begin{proof}
	Instead of constructing $\overline\epsilon$ we shall alter the~diagram $D$ into $D'$, so that $\delta\epsilon=\overline\psi$ for $D'$. Color the~diagram $D$ black and white in a~checkerboard fashion. Given a~set of arrows orienting crossings, reverse every arrow between white regions:
	\begin{equation}
		\psset{unit=10mm}
		\begin{centerpict}(0,0)(1,1)
			\pspolygon[fillstyle=solid,fillcolor=lightgray,linestyle=none](0,0)(1,1)(1,0)(0,1)
			\psset{linewidth=0.5pt,border=2.5pt}%
			\psline(0,0)(1,1)\psline(1,0)(0,1)
			\psline[border=0.5pt]{->}(0.5,0.2)(0.5,0.8)
		\end{centerpict}
		\to/<->/
		\begin{centerpict}(0,0)(1,1)
			\pspolygon[fillstyle=solid,fillcolor=lightgray,linestyle=none](0,0)(1,1)(1,0)(0,1)
			\psset{linewidth=0.5pt,border=2.5pt}%
			\psline(0,0)(1,1)\psline(1,0)(0,1)
			\psline[border=0.5pt]{<-}(0.5,0.2)(0.5,0.8)
		\end{centerpict}
		\hskip 1cm
		\begin{centerpict}(0,0)(1,1)
			\pspolygon[fillstyle=solid,fillcolor=lightgray,linestyle=none](0,0)(1,1)(0,1)(1,0)
			\psset{linewidth=0.5pt,border=2.5pt}%
			\psline(0,0)(1,1)\psline(1,0)(0,1)
			\psline[border=0.5pt]{->}(0.5,0.2)(0.5,0.8)
		\end{centerpict}
		\to/<->/
		\begin{centerpict}(0,0)(1,1)
			\pspolygon[fillstyle=solid,fillcolor=lightgray,linestyle=none](0,0)(1,1)(0,1)(1,0)
			\psset{linewidth=0.5pt,border=2.5pt}%
			\psline(0,0)(1,1)\psline(1,0)(0,1)
			\psline[border=0.5pt]{->}(0.5,0.2)(0.5,0.8)
		\end{centerpict}
	\end{equation}
	to obtain a~new decorated diagram $D'$. This operation preserves all the~diagrams from Tab.~\ref{tab:cube-faces}, except the~two shown in \eqref{eq:dual-psi}, which are exchanged. Hence, $\delta\epsilon:=\overline\psi$ for $D'$. We construct an~isomorphism $s\colon\KhCubeGradedSigned{D}{\epsilon}\cong\KhCubeGradedSigned{D'}{\epsilon}$ as follows. The~coloring of $D$ induces a~coloring of its resolutions $D_\xi$ such that every circle is a~boundary of a~unique black region. Take the~boundary circles of a~black region and apply a~half-twist to them; the~component $s_\xi\colon D_\xi \to D'_\xi$ is a~composition of such half-twists for all black regions in $D_\xi$. It is an~isomorphism of cubes, since what it does is exactly to reverse the~arrows connecting white regions.
\end{proof}

\noindent
In fact, the~only condition for $\iota$ to be coherent with relations between changes of chronologies is that the~quotient of its values on the~two $\Diamond$-changes is equal to $\permMM\permSS$. Hence, we can set
\begin{equation}
	\iota\bigg(\pictConnT{}{->}{}{->}\bigg)=\permT\permMM
		\hskip 2cm
	\iota\bigg(\pictConnT{}{->}{}{<-}\bigg)=\permT\permSS
\end{equation}
where $\permT$ is an~additional generator. This new parameter is useless from the~point of view of Frobenius algebras: it will give only an~additional restriction, that $\permT\permMM-1$ and \mbox{$\permT\permSS-1$} annihilate $\mu\circ\Delta$. However, it may be used to produce odd versions of nested homology theories (the~two cobordisms related by a~$\Diamond$-change are diffeomorphic, but not isotopic), see \cites{StWeb, BelWag}.

\subsection*{Rotating arrows and \texorpdfstring{$\mathfrak{sl}(2)$}{sl(2)} foams}

In the~original construction of odd Khovanov homology, the~small arrow over a~crossing frames not only the~negative eigenspace $E^-(p)$ of a~saddle point $p$, but also its positive eigenspace $E^+(p)$ and the~latter is used to distinguish between the~two output circles of a~split. Because of the~convention that every arrow rotates clockwise when going up, one framing arrow is enough.

If we allow an~arrow to rotate in any direction, i.e.\ when we orient both $E^-(p)$ and $E^+(p)$ independently, we will create a~richer category with two versions of each generating cobordism. It is not difficult to find out chronological relations: the~coefficients assigned to changes do not depend on how the~arrows rotate, except $\times$-and $\Diamond$-changes, in which cases the~coefficients are multiplied by $\permSS$, if the~arrows rotate in different directions, see Tab.~\ref{tab:extended-coeffs}.

\begin{remark}
	This is not the~most general solution. For instance, one can assign different coefficients to changes permutating merges that are differently oriented. In the~most general case one obtains a~system of nine independent parameters.
\end{remark}

A~choice of how a~single arrow rotates introduces another datum to the~construction of the~generalized Khovanov complex. The~isomorphism class of the~complex does not depend on this~additional chain, which follows from the~commutativity of the~following square:
\begin{equation}\label{diag:change-rotation}
	\begin{diagps}(-3,-1.8)(3,2.6)
		\fnode[linestyle=none,framesize=1.3 0.5](-2, 1.5){domA}
			\pscircle(-2.4, 1.5){0.25}\pscircle(-1.6, 1.5){0.25}
		\fnode[linestyle=none,framesize=1.3 0.5](-2,-1.5){domB}
			\pscircle(-2.4,-1.5){0.25}\pscircle(-1.6,-1.5){0.25}
		\fnode[linestyle=none,framesize=0.5](2, 1.5){codA}
			\pscircle(2, 1.5){0.25}
		\fnode[linestyle=none,framesize=0.5](2,-1.5){codB}
			\pscircle(2,-1.5){0.25}
		\diagline{->}{domA}{codA}\naput{\psset{unit=0.7}\cobordism[2](-0.6,0.1)({a}M-R)}
		\diagline{->}{domB}{codB}\naput{\psset{unit=0.7}\cobordism[2](-0.6,0.1)({c}M-R)}
		\diagline{->}{domA}{domB}\nbput{%
			\psset{unit=0.7}%
			\rput[r](-2.2,0){$\scriptstyle\permMM$}
			\cobordism*[1](-2.1,-0.9)(sI)(I)
			\cobordism*[1](-0.5,-0.9)({c}slB)({a}M-R)}
		\diagline{->}{codA}{codB}\naput{\psset{unit=0.7}\cobordism[1](0.1,-0.6)(I)}
	\end{diagps}
\end{equation}
where the~left vertical cobordism is an~isomorphism in $\kChCob(0)$ and its inverse is given by the~same picture, but with different orientations of critical points:
\begin{equation}
	\psset{unit=0.8}
	\textcobordism*[1]({c}slB)({a}M-R)({a}slB)({c}M-R)
		\,\dblto^X\,
	\textcobordism*{%
		\COBshortBirth{a}(0.4,0)
		\COBshortRightBirth{c}[1,2](0,0.6)
		\COBcylinder(1.6,0)(1.6,1.2)
	}{\COBmergeFrRight{c}[3](0,1.2)}{\COBmergeFrRight{a}(0.4,2.4)}
		\,\dblto^1\,
	\textcobordism*[1]({a}lB)({a}M-R)
		\,\dblto^X\,
	\textcobordism*[1](I)(I)
\end{equation}
and similarly for the~other composition. The~vertical morphisms are homogeneous in degree $0$, which implies they commute with all other edge morphisms in the~cubes. Hence, \eqref{diag:change-rotation} induces an~isomorphism between complexes obtained from two diagrams of a~tangle, that differ only in the~way a~single arrow rotates.

\def\smallfrac#1#2{\tfrac{#1}{\raisebox{2pt}{$\scriptstyle #2$}}}
\begin{table}%
	\begin{tabular}{cccc}
		\pictConnX{a|c}{->}{a|c}{->} &
		\pictConnX{a|c}{->}{c|a}{->} &
		\pictConnX{a|c}{->}{a|c}{<-} &
		\pictConnX{a|c}{->}{c|a}{<-} \\
		$\permSS$ & $1$ & $\permMM$ & $\permMM\permSS$ \\ \\
		\pictConnT{\smallfrac ac}{->}{a|c}{->} &
		\pictConnT{\smallfrac ac}{->}{c|a}{->} &
		\pictConnT{\smallfrac ac}{->}{a|c}{<-} &
		\pictConnT{\smallfrac ac}{->}{c|a}{<-} \\
		$1$ & $\permSS$ & $\permMM\permSS$ & $\permMM$ \\
	\end{tabular}
	\vskip\baselineskip
	\caption{%
		Coefficients assigned to $\times$- and $\Diamond$-permutation, when each arrow can rotate either clockwise or anticlockwise. The~symbols \protect\raisebox{1pt}{$\scriptstyle a|c$} and $\protect\smallfrac ac$ stand for two alternative ways of rotating an~arrow and one has to make the~same choice (left/top or right/bottom) for both arrows.
	}
	\label{tab:extended-coeffs}
\end{table}

The~author was encouraged to investigate rotations of arrows by M.~Hempel, who computed several circular movies for the~odd theory and noticed, that if arrows over crossings with opposite signs rotate differently, movies consisting of Reidemeister II moves induce identity chain maps. This suggests a~connection with $\mathfrak{sl}(2)$ foams, i.e. singular cobordisms with two types of saddle points, one for positive and one for negative crossings.

\appendix

\section{Framed functions}\label{sec:framed-function}
Let $W$ be a~smooth compact manifold, possibly with boundary.
\begin{definition}\label{def:Igusa-function}
	An~\emph{Igusa function} is a~smooth function $f\colon W\to\mathbb{R}$, such that at every point $p\in W$ one of the~following conditions holds:
	\begin{enumerate}[label={IF\arabic{*}:},ref={IF\arabic{*}}]
		\item\label{GM-reg} $p$ is \emph{regular}, i.e. the~derivative $df_p$ does not vanish, or
		\item\label{GM-A1} $f$ has a~\emph{Morse singularity} (or $A_1$ singularity) at $p$, i.e.\ $df_p=0$ but the Hessian $\Hess_p(f)$ is nondegenerate, or
		\item\label{GM-A2} $f$ has a~\emph{birth-death singularity} (or $A_2$ singularity) at $p$, i.e.\ $df_p=0$ and $\Hess_p(f)$ has a~1-dimensional kernel $N(p)\subset T_pW$, but $d^3f_p$ is nonzero on $N(p)$.
	\end{enumerate}
\end{definition}

Morse and birth-death singularities of a~function $f$ have the~following local models:
\begin{align}
	f(x_1,\dots,x_n) &= f(p) - x_1^2 - \dots - x_k^2 + x_{k+1}^2 +\dots x_n^2,\\
	f(x_1,\dots,x_n) &= f(p) - x_1^2 - \dots - x_k^2 + x_{k+1}^2 +\dots x_{n-1}^2+x_n^3.
\end{align}
In the~latter case the~nullspace $N(p)$ of $Hess_p(f)$ is spanned by $\frac\partial{\partial x_n}$. The~number $k=\mu(p)$ is called the~\emph{index} of $p$.

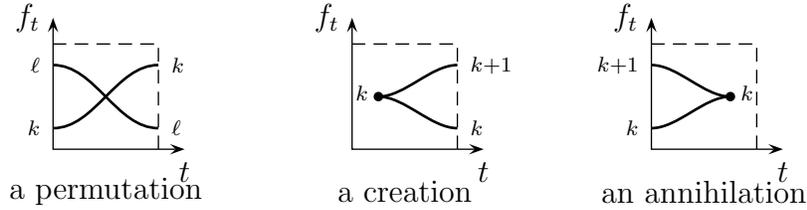
\begin{figure}
	\centering
	\psset{unit=7mm}%
\begin{pspicture}(-1,-1)(3.5,2.8)
	\locusCanvas(2,2)
	\psbezier[linewidth=1pt]{-}(0,0.4)(0.8,0.4)(1.2,1.6)(2,1.6)
	\uput[180](0,0.4){$\scriptstyle   k$}\uput[0](2,1.6){$\scriptstyle   k$}
	\psbezier[linewidth=1pt]{-}(0,1.6)(0.8,1.6)(1.2,0.4)(2,0.4)
	\uput[180](0,1.6){$\scriptstyle\ell$}\uput[0](2,0.4){$\scriptstyle\ell$}
	\rput[Bc](1,-0.8){a permutation}
\end{pspicture}\hskip 2em
\begin{pspicture}(-1,-1)(3.5,2.8)
	\locusCanvas(2,2)
	\psbezier[linewidth=1pt]{-}(2,0.4)(1.5,0.4)(1,1)(0.5,1)
	\psbezier[linewidth=1pt]{-}(2,1.6)(1.5,1.6)(1,1)(0.5,1)
	\psdot(0.5,1)
	\uput[170](0.55,1){$\scriptstyle k$}\uput[0](2,1.6){$\scriptstyle k+1$}\uput[0](2,0.4){$\scriptstyle k$}
	\rput[Bc](1,-0.8){a creation}
\end{pspicture}\hskip 2em
\begin{pspicture}(-1,-1)(3.5,2.8)
	\locusCanvas(2,2)
	\psbezier[linewidth=1pt]{-}(0,0.4)(0.5,0.4)(1,1)(1.5,1)
	\psbezier[linewidth=1pt]{-}(0,1.6)(0.5,1.6)(1,1)(1.5,1)
	\psdot(1.5,1)
	\uput[180](0,1.6){$\scriptstyle k+1$}\uput[180](0,0.4){$\scriptstyle k$}\uput[10](1.45,1){$\scriptstyle k$}
	\rput[Bc](1,-0.8){an annihilation}
\end{pspicture}
	\caption[Singular loci for elementary homotopies of Igusa functions]{Singular loci for elementary homotopies of Igusa functions. Cusps represent $A_2$-singularities, and labels are the~indicies of critical points.}%
	\label{fig:singular-loci}%
\end{figure}

Igusa functions arise naturally if one considers homotopies between smooth functions: a~generic function on $W$ is Morse (conditions \ref{GM-reg} and \ref{GM-A1}) and \emph{separative} (critical points lie on different levels), but a~space of such functions is not even connected. However, a~transversality argument implies a~generic homotopy $f_t$ is separative Morse except finitely many moments $0 < t_1 < \dots < t_k < 1$, at which either two critical points are permuted or a~birth-death singularity occurs \cite{Cerf}; we refer to them as \emph{events}. We can visualize them by drawing the~singular locus $\SingLocus{f} := \left\{(t,f_t(x))\ \vert\ x\in\mathrm{crit}(f_t)\right\}$, see Fig.~\ref{fig:singular-loci}.

Choose a~generic two-parameter family $f_{t,s}\colon W\to\R$ of Igusa functions, $t,s\in I$. The~path $t\mapsto f_{t,s}$ is a~generic homotopy of Igusa functions for all except finitely many $s\in I$, at which one of the~situations described below occurs, see \cites{IgusaHS,EliMish-Contr}.

\begin{enumerate}[label={Case \Roman{*}},leftmargin=!,labelwidth=3.5em,align=left]
	
	\item Two events can occur at the~same time $t_i$. For example, we have homotopies
	\begin{equation}\label{rel:far-away}\begin{split}
		\psset{unit=0.4cm}
		\begin{centerpict}(0,-0.5)(3,3.5)
			\pspolygon[linestyle=dotted,dotsep=2pt](0,-0.2)(3,-0.2)(3,3.2)(0,3.2)
			\psline(0,0)(1,0)\psbezier(1,0)(2,0)(2,1)(3,1)
			\psline(0,1)(1,1)\psbezier(1,1)(2,1)(2,0)(3,0)
			\psbezier(0,2)(1,2)(1,3)(2,3)\psline(2,3)(3,3)
			\psbezier(0,3)(1,3)(1,2)(2,2)\psline(2,2)(3,2)
		\end{centerpict}
		\to
		\begin{centerpict}(0,-0.5)(3,3.5)
			\pspolygon[linestyle=dotted,dotsep=2pt](0,-0.2)(3,-0.2)(3,3.2)(0,3.2)
			\psline[linestyle=dashed,linewidth=0.5\pslinewidth](1.5,-0.2)(1.5,3.2)
			\psbezier(0,0)(2,0)(1,1)(3,1)
			\psbezier(0,1)(2,1)(1,0)(3,0)
			\psbezier(0,2)(2,2)(1,3)(3,3)
			\psbezier(0,3)(2,3)(1,2)(3,2)
		\end{centerpict}
		\to
		\begin{centerpict}(0,-0.5)(3,3.5)
			\pspolygon[linestyle=dotted,dotsep=2pt](0,-0.2)(3,-0.2)(3,3.2)(0,3.2)
			\psbezier(0,0)(1,0)(1,1)(2,1)\psline(2,1)(3,1)
			\psbezier(0,1)(1,1)(1,0)(2,0)\psline(2,0)(3,0)
			\psline(0,2)(1,2)\psbezier(1,2)(2,2)(2,3)(3,3)
			\psline(0,3)(1,3)\psbezier(1,3)(2,3)(2,2)(3,2)
		\end{centerpict}
		\hskip 2cm
		\begin{centerpict}(0,-0.5)(3,3.5)
			\pspolygon[linestyle=dotted,dotsep=2pt](0,-0.2)(3,-0.2)(3,3.2)(0,3.2)
			\psbezier(0,0)(1,0.25)(2,0.5)(2.5,0.5)
			\psbezier(0,1)(1,0.75)(2,0.5)(2.5,0.5)
			\psbezier(0,2)(0.5,2.25)(1,2.5)(1.5,2.5)
			\psbezier(0,3)(0.5,2.75)(1,2.5)(1.5,2.5)
		\end{centerpict}
		\to
		\begin{centerpict}(0,-0.5)(3,3.5)
			\pspolygon[linestyle=dotted,dotsep=2pt](0,-0.2)(3,-0.2)(3,3.2)(0,3.2)
			\psline[linestyle=dashed,linewidth=0.5\pslinewidth](2,-0.2)(2,3.2)
			\psbezier(0,0)(0.75,0.25)(1.5,0.5)(2,0.5)
			\psbezier(0,1)(0.75,0.75)(1.5,0.5)(2,0.5)
			\psbezier(0,2)(0.75,2.25)(1.5,2.5)(2,2.5)
			\psbezier(0,3)(0.75,2.75)(1.5,2.5)(2,2.5)
		\end{centerpict}
		\to
		\begin{centerpict}(0,-0.5)(3,3.5)
			\pspolygon[linestyle=dotted,dotsep=2pt](0,-0.2)(3,-0.2)(3,3.2)(0,3.2)
			\psbezier(0,0)(0.5,0.25)(1,0.5)(1.5,0.5)
			\psbezier(0,1)(0.5,0.75)(1,0.5)(1.5,0.5)
			\psbezier(0,2)(1,2.25)(2,2.5)(2.5,2.5)
			\psbezier(0,3)(1,2.75)(2,2.5)(2.5,2.5)
		\end{centerpict} \\
		\psset{unit=0.4cm}
		\begin{centerpict}(0,-0.5)(3,3.5)
			\pspolygon[linestyle=dotted,dotsep=2pt](0,-0.2)(3,-0.2)(3,3.2)(0,3.2)
			\psline(0,0)(1,0)\psbezier(1,0)(2,0)(2,1)(3,1)
			\psline(0,1)(1,1)\psbezier(1,1)(2,1)(2,0)(3,0)
			\psbezier(3,2)(2.5,2.25)(1.5,2.5)(1,2.5)
			\psbezier(3,3)(2.5,2.75)(1.5,2.5)(1,2.5)
		\end{centerpict}
		\to
		\begin{centerpict}(0,-0.5)(3,3.5)
			\pspolygon[linestyle=dotted,dotsep=2pt](0,-0.2)(3,-0.2)(3,3.2)(0,3.2)
			\psline[linestyle=dashed,linewidth=0.5\pslinewidth](1.5,-0.2)(1.5,3.2)
			\psbezier(0,0)(2,0)(1,1)(3,1)
			\psbezier(0,1)(2,1)(1,0)(3,0)
			\psbezier(3,2)(2.5,2.25)(2,2.5)(1.5,2.5)
			\psbezier(3,3)(2.5,2.75)(2,2.5)(1.5,2.5)
		\end{centerpict}
		\to
		\begin{centerpict}(0,-0.5)(3,3.5)
			\pspolygon[linestyle=dotted,dotsep=2pt](0,-0.2)(3,-0.2)(3,3.2)(0,3.2)
			\psbezier(0,0)(1,0)(1,1)(2,1)\psline(2,1)(3,1)
			\psbezier(0,1)(1,1)(1,0)(2,0)\psline(2,0)(3,0)
			\psbezier(3,2)(2.75,2.25)(2.5,2.5)(2,2.5)
			\psbezier(3,3)(2.75,2.75)(2.5,2.5)(2,2.5)
		\end{centerpict}
		\hskip 2cm
		\begin{centerpict}(0,-0.5)(3,3.5)
			\pspolygon[linestyle=dotted,dotsep=2pt](0,-0.2)(3,-0.2)(3,3.2)(0,3.2)
			\psbezier(0,0)(0.5,0.25)(1.5,0.5)(2,0.5)
			\psbezier(0,1)(0.5,0.75)(1.5,0.5)(2,0.5)
			\psbezier(3,2)(2.5,2.25)(1.5,2.5)(1,2.5)
			\psbezier(3,3)(2.5,2.75)(1.5,2.5)(1,2.5)
		\end{centerpict}
		\to
		\begin{centerpict}(0,-0.5)(3,3.5)
			\pspolygon[linestyle=dotted,dotsep=2pt](0,-0.2)(3,-0.2)(3,3.2)(0,3.2)
			\psline[linewidth=0.5\pslinewidth,linestyle=dashed](1.5,-0.2)(1.5,3.2)
			\psbezier(0,0)(0.5,0.25)(1,0.5)(1.5,0.5)
			\psbezier(0,1)(0.5,0.75)(1,0.5)(1.5,0.5)
			\psbezier(3,2)(2.5,2.25)(2,2.5)(1.5,2.5)
			\psbezier(3,3)(2.5,2.75)(2,2.5)(1.5,2.5)
		\end{centerpict}
		\to
		\begin{centerpict}(0,-0.5)(3,3.5)
			\pspolygon[linestyle=dotted,dotsep=2pt](0,-0.2)(3,-0.2)(3,3.2)(0,3.2)
			\psbezier(0,0)(0.25,0.25)(0.5,0.5)(1,0.5)
			\psbezier(0,1)(0.25,0.75)(0.5,0.5)(1,0.5)
			\psbezier(3,2)(2.75,2.25)(2.5,2.5)(2,2.5)
			\psbezier(3,3)(2.75,2.75)(2.5,2.5)(2,2.5)
		\end{centerpict}
	\end{split}\end{equation}
	where dashed lines indicate singular values of $t$. See also Fig.~\ref{diag:event-two} for singular loci of the~left two homotopies.
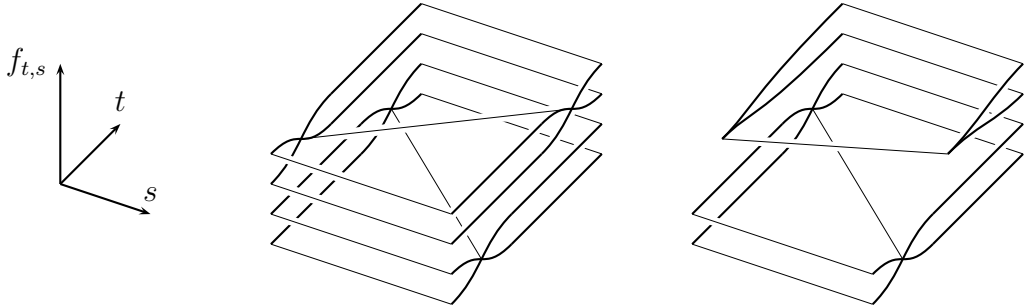
\begin{figure}
	\centering\psset{unit=0.4cm}
\begin{pspicture}(0,0)(35,10)
\rput(1,4){
	\psline{->}(0,0)(0, 4)\uput[l](0, 4){$f_{t,s}$}
	\psline{->}(0,0)(3,-1)\uput[u](3,-1){$s$}
	\psline{->}(0,0)(2, 2)\uput[u](2, 2){$t$}
}
\rput(8,0){
	\psline[linewidth=0.3\pslinewidth](5, 7)(11,5)											
	\psline[border=2\pslinewidth](8,3)(11,6)\psbezier(6,0)(7,1)(7,2)(8,3)	
	\psline[linewidth=0.3\pslinewidth](5, 8)(11,6)											
	\psline(0,2)(3,5)\psbezier(3,5)(4,6)(4,7)(5,8)			
	\psline[linewidth=0.3\pslinewidth](0,2)(6,0)												
	\psline[border=2\pslinewidth,linewidth=0.3\pslinewidth](0,3)(6,1)		
	\psline(0,3)(3,6)\psbezier(3,6)(4,7)(4,6)(5,7)			
	\psline(8,2)(11,5)\psbezier(6,1)(7,2)(7,1)(8,2)			
	\psline[linewidth=0.3\pslinewidth](4,6.5)(7,1.5)		
	\psline[linewidth=5\pslinewidth,linecolor=white](1,5.5)(10,6.5)		
	\psline[linewidth=5\pslinewidth,linecolor=white](0,4)(6,2)				
	\psline[linewidth=5\pslinewidth,linecolor=white](0,5)(6,3)				
	\psline[linewidth=0.3\pslinewidth](5, 9)(11,7)			
	\psline[border=2\pslinewidth](6,2)(9,5)\psbezier[border=2\pslinewidth](9,5)(10,6)(10,7)(11,8)		
	\psline(2,7)(5,10)\psbezier(0,4)(1,5)(1,6)(2,7)		
	\psline[border=2\pslinewidth,linewidth=0.3\pslinewidth](0,5)(6,3)			
	\psline(2,6)(5, 9)\psbezier(0,5)(1,6)(1,5)(2,6)		
	\psline[border=2\pslinewidth](6,3)(9,6)\psbezier(9,6)(10,7)(10,6)(11,7)		
	\psline[linewidth=0.3\pslinewidth](5,10)(11,8)		
	\psline[linewidth=0.3\pslinewidth](0,4)(6,2)			
	\psline[linewidth=0.3\pslinewidth](1,5.5)(10,6.5)		
}
\rput(22,0){
	\psline[linewidth=0.3\pslinewidth](5, 7)(11,5)											
	\psline[border=2\pslinewidth](8,3)(11,6)\psbezier(6,0)(7,1)(7,2)(8,3)	
	\psline[linewidth=0.3\pslinewidth](5, 8)(11,6)											
	\psline(0,2)(3,5)\psbezier(3,5)(4,6)(4,7)(5,8)			
	\psline[linewidth=0.3\pslinewidth](0,2)(6,0)												
	\psline[border=2\pslinewidth,linewidth=0.3\pslinewidth](0,3)(6,1)		
	\psline(0,3)(3,6)\psbezier(3,6)(4,7)(4,6)(5,7)			
	\psline(8,2)(11,5)\psbezier(6,1)(7,2)(7,1)(8,2)			
	\psline[linewidth=0.3\pslinewidth](4,6.5)(7,1.5)		
	\psline[border=2\pslinewidth,linewidth=0.3\pslinewidth](1,5.5)(8.5,5)		
	\psbezier[linecolor=white,linewidth=5\pslinewidth](8.5,5)(9.5,6)(10,6)(11,7)		
	\psline[linewidth=0.3\pslinewidth](5, 9)(11,7)						
	\psbezier[border=2\pslinewidth](8.5,5)(9.5,6)(10,7)(11,8)		
	\psbezier(8.5,5)(9.5,6)(10,6)(11,7)													
	\psline[linewidth=0.3\pslinewidth](5,10)(11,8)						
	\psbezier(1,5.5)(2,6.5)(2,7)(5,10)		
	\psbezier(1,5.5)(2,6.5)(2,6)(5, 9)		
}
\end{pspicture}
	\caption{Exampes of singular loci, when two events occurs  at the~same times.}\label{diag:event-two}%
\end{figure}
	
	\item A~non-transverse event occurs, i.e.\ the~singular set is not transverse to some level set $\{t=a\}$. Up to direction of the~change, there are three such homotopies
	\begin{equation}\label{rel:inverses}\begin{split}
		\psset{unit=0.6cm}
		\begin{centerpict}(-1,-1.3)(1,1.3)
			\pspolygon[linestyle=dotted,dotsep=2pt](-1,-1)(1,-1)(1,1)(-1,1)
			\psbezier(-0.8,0)(-0.4,0)(-0.4, 0.5)(0, 0.5)
			\psbezier(-0.8,0)(-0.4,0)(-0.4,-0.5)(0,-0.5)
			\psbezier( 0.8,0)( 0.4,0)( 0.4, 0.5)(0, 0.5)
			\psbezier( 0.8,0)( 0.4,0)( 0.4,-0.5)(0,-0.5)
		\end{centerpict}
		\to
		\begin{centerpict}(-1,-1.3)(1,1.3)
			\pspolygon[linestyle=dotted,dotsep=2pt](-1,-1)(1,-1)(1,1)(-1,1)
			\psdot(0,0)
		\end{centerpict}
		\to
		\begin{centerpict}(-1,-1.3)(1,1.3)
			\pspolygon[linestyle=dotted,dotsep=2pt](-1,-1)(1,-1)(1,1)(-1,1)
			\rput(0,0){$\emptyset$}
		\end{centerpict}
		\hskip 2cm
		\begin{centerpict}(-1,-1.3)(1,1.3)
			\pspolygon[linestyle=dotted,dotsep=2pt](-1,-1)(1,-1)(1,1)(-1,1)
			\psbezier(-1, 0.6)(-0.6,0)(-0.4,0)(-0.2,0)
			\psbezier(-1,-0.6)(-0.6,0)(-0.4,0)(-0.2,0)
			\psbezier( 1, 0.6)( 0.6,0)( 0.4,0)( 0.2,0)
			\psbezier( 1,-0.6)( 0.6,0)( 0.4,0)( 0.2,0)
		\end{centerpict}
		\to
		\begin{centerpict}(-1,-1.3)(1,1.3)
			\pspolygon[linestyle=dotted,dotsep=2pt](-1,-1)(1,-1)(1,1)(-1,1)
			\psbezier(-1, 0.6)(-0.5,0)(-0.3,0)(0,0)
			\psbezier(-1,-0.6)(-0.5,0)(-0.3,0)(0,0)
			\psbezier( 1, 0.6)( 0.5,0)( 0.3,0)(0,0)
			\psbezier( 1,-0.6)( 0.5,0)( 0.3,0)(0,0)
		\end{centerpict}
		\to
		\begin{centerpict}(-1,-1.3)(1,1.3)
			\pspolygon[linestyle=dotted,dotsep=2pt](-1,-1)(1,-1)(1,1)(-1,1)
			\psbezier(-1, 0.6)(-0.5, 0.1)(0.5, 0.1)(1, 0.6)
			\psbezier(-1,-0.6)(-0.5,-0.1)(0.5,-0.1)(1,-0.6)
		\end{centerpict} \\
		\psset{unit=0.6cm}
		\begin{centerpict}(-1,-1.3)(1,1.3)
			\pspolygon[linestyle=dotted,dotsep=2pt](-1,-1)(1,-1)(1,1)(-1,1)
			\psbezier(-1, 0.6)(-0.5,-0.5)(0.5,-0.5)(1, 0.6)
			\psbezier(-1,-0.6)(-0.5, 0.5)(0.5, 0.5)(1,-0.6)
		\end{centerpict}
		\to
		\begin{centerpict}(-1,-1.3)(1,1.3)
			\pspolygon[linestyle=dotted,dotsep=2pt](-1,-1)(1,-1)(1,1)(-1,1)
			\psbezier(-1, 0.6)(-0.5,-0.2)(0.5,-0.2)(1, 0.6)
			\psbezier(-1,-0.6)(-0.5, 0.2)(0.5, 0.2)(1,-0.6)
		\end{centerpict}
		\to
		\begin{centerpict}(-1,-1.3)(1,1.3)
			\pspolygon[linestyle=dotted,dotsep=2pt](-1,-1)(1,-1)(1,1)(-1,1)
			\psbezier(-1, 0.6)(-0.5, 0.1)(0.5, 0.1)(1, 0.6)
			\psbezier(-1,-0.6)(-0.5,-0.1)(0.5,-0.1)(1,-0.6)
		\end{centerpict}
	\end{split}\end{equation}
	and their singular loci are shown in Fig.~\ref{diag:event-nontranverse}.
\begin{figure}%
	\centering\psset{unit=0.4cm}
\begin{pspicture}(0,0)(35,10)
\rput(1,3){
	\psline{->}(0,0)(0, 4)\uput[l](0, 4){$f_{t,s}$}
	\psline{->}(0,0)(3,-1)\uput[u](3,-1){$s$}
	\psline{->}(0,0)(2, 2)\uput[u](2, 2){$t$}
}
\rput(6,2){
	\psbezier(0,2)(1,3)(1,2)(2,3)\psbezier(2,3)(3,4)(3,5)(4,6)
	\psbezier[border=2\pslinewidth,linewidth=0.3\pslinewidth](2.2,5.2)(5.5,4.1)(5,3.6)(5.9,3.3)
	\psbezier(0,2)(1,3)(1,4)(2,5)\psbezier(2,5)(3,6)(3,5)(4,6)
	\psbezier[linewidth=0.3\pslinewidth](1.8,2.8)(4,2.5)(5,3.6)(5.9,3.3)
	\psbezier[linewidth=0.3\pslinewidth](4,6)(6.1,5.5)(6.9,4.3)(5.9,3.3)
	\psbezier[linewidth=0.3\pslinewidth](0,2)(3,1)(4.9,2.3)(5.9,3.3)
}
\rput(13,0){
	\psbezier[linewidth=0.3\pslinewidth](1.75,4.75)(3.25,4.25)(5,4)(5.5,4.5)
	\psbezier[linewidth=0.3\pslinewidth](3.25,6.25)(4.75,5.75)(6,5)(5.5,4.5)
	\psbezier[linewidth=0.3\pslinewidth](5.5,4.5)(6.4,4.2)(7.5,4)(8.5,2.5)
	\psline[linewidth=0.3\pslinewidth](5,7)(11,5)			
	\psline[linewidth=0.3\pslinewidth](0,2)(6,0)			
	\psline(6,0)(11,5)													
	\psbezier(0,2)(0.75,3.75)(1,4)(1.75,4.75)		
	\psbezier(5,7)(4.25,7.00)(4,7)(3.25,6.25)		
	\psline[border=2\pslinewidth](6,2)(11,7)															
	\psline[border=2\pslinewidth,linewidth=0.3\pslinewidth](0,4)(6,2)			
	\psline[linewidth=0.3\pslinewidth](5,9)(11,7)													
	\psbezier(0,4)(0.75,4.00)(1,4)(1.75,4.75)				
	\psbezier(5,9)(4.25,7.25)(4,7)(3.25,6.25)				
	\psbezier[linewidth=0.3\pslinewidth](5.5,4.5)(6.4,4.2)(7.5,4)(8.5,4.5)
}
\rput(24,0){
	\psbezier[linewidth=0.3\pslinewidth](1.09136,4.09136)(4.09136,3.09136)(5.25,4.25)(5.5,4.5)
	\psbezier[linewidth=0.3\pslinewidth](3.90864,6.90864)(6.90864,5.90864)(5.75,4.75)(5.5,4.5)
	\psbezier(5,9)(4,7.5)(3.5,5.5)(2.5,4.5)		
	\psline[linewidth=0.3\pslinewidth](5,7)(11,5)			
	\psline[linewidth=0.3\pslinewidth](0,2)(6,0)			
	\psline(6,0)(11,5)													
	\psbezier[border=2\pslinewidth,linewidth=0.3\pslinewidth](2.5,6.5)(3.6,6)(5,5.5)(5.5,4.5)
	\psbezier(0,2)(1,3.5)(1.5,5.5)(2.5,6.5)		
	\psbezier[linewidth=0.3\pslinewidth](5.5,4.5)(6,3.5)(7.4,3)(8.5,2.5)
	\psbezier[linewidth=0.3\pslinewidth](2.5,4.5)(3.6,4)(5,4.2)(5.5,4.5)
	\psline[border=2\pslinewidth](6,2)(11,7)															
	\psline[border=2\pslinewidth,linewidth=0.3\pslinewidth](0,4)(6,2)			
	\psline[linewidth=0.3\pslinewidth](5,9)(11,7)													
	\psbezier(5,7)(4,6.5)(3.5,7.5)(2.5,6.5)		
	\psbezier(0,4)(1,4.5)(1.5,3.5)(2.5,4.5)		
	\psbezier[linewidth=0.3\pslinewidth](5.5,4.5)(6,4.8)(7.4,5)(8.5,4.5)
}
\end{pspicture}

%
%
	\caption{Singular loci of non-transverse events.}\label{diag:event-nontranverse}
\end{figure}
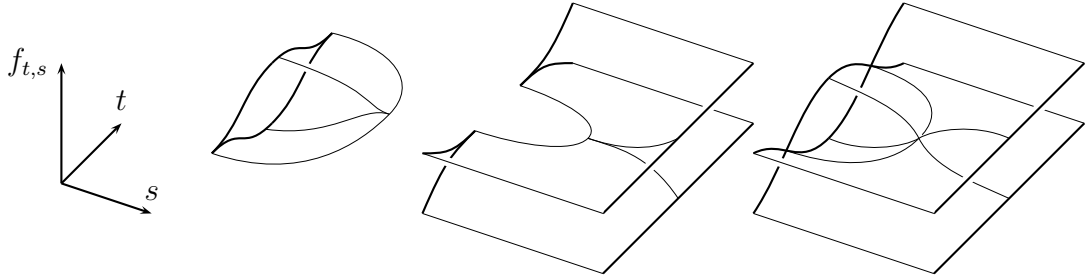

	\item Either three Morse singularities or an~$A_2$-singularity and a~Morse one meet at the~same critical level. There are three types of such homotopies
	\begin{equation}\label{rel:permutations}\begin{split}
		\psset{unit=0.6cm}
		\begin{centerpict}(-1,-1.3)(1,1.3)
			\pspolygon[linestyle=dotted,dotsep=2pt](-1,-1)(1,-1)(1,1)(-1,1)
			\psbezier(-1, 0.6)(-0.5, 0.2)(-0.3,0)(0.2,0)
			\psbezier(-1,-0.6)(-0.5,-0.2)(-0.3,0)(0.2,0)
			\psbezier(-1, 0)(-0.5,-0.6)(0.5,-0.6)(1,0)
		\end{centerpict}
		\to
		\begin{centerpict}(-1,-1.3)(1,1.3)
			\pspolygon[linestyle=dotted,dotsep=2pt](-1,-1)(1,-1)(1,1)(-1,1)
			\psbezier(-1, 0.6)(-0.5, 0.2)(-0.3,0)(0.2,0)
			\psbezier(-1,-0.6)(-0.5,-0.2)(-0.3,0)(0.2,0)
			\psline(-1, 0)(1,0)
		\end{centerpict}
		\to
		\begin{centerpict}(-1,-1.3)(1,1.3)
			\pspolygon[linestyle=dotted,dotsep=2pt](-1,-1)(1,-1)(1,1)(-1,1)
			\psbezier(-1, 0.6)(-0.5, 0.2)(-0.3,0)(0.2,0)
			\psbezier(-1,-0.6)(-0.5,-0.2)(-0.3,0)(0.2,0)
			\psbezier(-1, 0)(-0.5,0.6)(0.5,0.6)(1,0)
		\end{centerpict}
		\hskip 2cm
		\begin{centerpict}(-1,-1.3)(1,1.3)
			\pspolygon[linestyle=dotted,dotsep=2pt](-1,-1)(1,-1)(1,1)(-1,1)
			\psbezier(1, 0.6)(0.5, 0.2)(0.3,0)(-0.2,0)
			\psbezier(1,-0.6)(0.5,-0.2)(0.3,0)(-0.2,0)
			\psbezier(-1, 0)(-0.5,-0.6)(0.5,-0.6)(1,0)
		\end{centerpict}
		\to
		\begin{centerpict}(-1,-1.3)(1,1.3)
			\pspolygon[linestyle=dotted,dotsep=2pt](-1,-1)(1,-1)(1,1)(-1,1)
			\psbezier(1, 0.6)(0.5, 0.2)(0.3,0)(-0.2,0)
			\psbezier(1,-0.6)(0.5,-0.2)(0.3,0)(-0.2,0)
			\psline(-1, 0)(1,0)
		\end{centerpict}
		\to
		\begin{centerpict}(-1,-1.3)(1,1.3)
			\pspolygon[linestyle=dotted,dotsep=2pt](-1,-1)(1,-1)(1,1)(-1,1)
			\psbezier(1, 0.6)(0.5, 0.2)(0.3,0)(-0.2,0)
			\psbezier(1,-0.6)(0.5,-0.2)(0.3,0)(-0.2,0)
			\psbezier(-1, 0)(-0.5,0.6)(0.5,0.6)(1,0)
		\end{centerpict} \\
		\psset{unit=0.6cm}
		\begin{centerpict}(-1,-1.3)(1,1.3)
			\pspolygon[linestyle=dotted,dotsep=2pt](-1,-1)(1,-1)(1,1)(-1,1)
			\psbezier(-1, 0.6)(-0.5, 0.6)(0.5,-0.6)(1,-0.6)
			\psbezier(-1,-0.6)(-0.5,-0.6)(0.5, 0.6)(1, 0.6)
			\psbezier(-1, 0.0)(-0.5,-0.6)(0.5,-0.6)(1,0)
		\end{centerpict}
		\to
		\begin{centerpict}(-1,-1.3)(1,1.3)
			\pspolygon[linestyle=dotted,dotsep=2pt](-1,-1)(1,-1)(1,1)(-1,1)
			\psbezier(-1, 0.6)(-0.5, 0.6)(0.5,-0.6)(1,-0.6)
			\psbezier(-1,-0.6)(-0.5,-0.6)(0.5, 0.6)(1, 0.6)
			\psline(-1, 0)(1,0)
		\end{centerpict}
		\to
		\begin{centerpict}(-1,-1.3)(1,1.3)
			\pspolygon[linestyle=dotted,dotsep=2pt](-1,-1)(1,-1)(1,1)(-1,1)
			\psbezier(-1, 0.6)(-0.5, 0.6)(0.5,-0.6)(1,-0.6)
			\psbezier(-1,-0.6)(-0.5,-0.6)(0.5, 0.6)(1, 0.6)
			\psbezier(-1, 0)(-0.5,0.6)(0.5,0.6)(1,0)
		\end{centerpict}
	\end{split}\end{equation}
	with singular loci of two of them visualized in Fig.~\ref{diag:event-permutations} (the~case of an~annihilation is symmetric to the~one of a~creation).
\end{enumerate}
	
The~space of Igusa functions is not simply connected, which is manifested by the~lack of the~dove tail singularity in the~list above. Indeed, this singularity is modeled by a~biquadratic polynomial and as such it cannot appear. We introduce framing to obtain a~simply connected space.\footnote{
	Framed functions were introduced to overcome the~problem of lost information, when replacing a~manifold with a~Morse function: although a~Morse function decomposes $W$ into cells, one cannot build $W$ back, unless a~parametrization of each cell is given. This is the~additional information a~framing provides \cite{IgusaFMF}.
}
In fact, the~space of framed functions is contractible \cites{IgusaFMF,Lurie-Cobs, EliMish-Contr}, but we will not use this result in this paper. The~following definition comes from \cite{EliMish-Contr}.

Choose a~Riemannian metric on $W$ and a~critical point $p\in W$ of an~Igusa function $f\colon W\to\R$. We shall write $E^-(p)$ and $E^+(p)$ for the~negative and positive eigenspaces of the~Hessian of $f$ at the~point $p$, regarded as a~linear map $\Hess_p(f)\colon T_pW\to T_pW$.

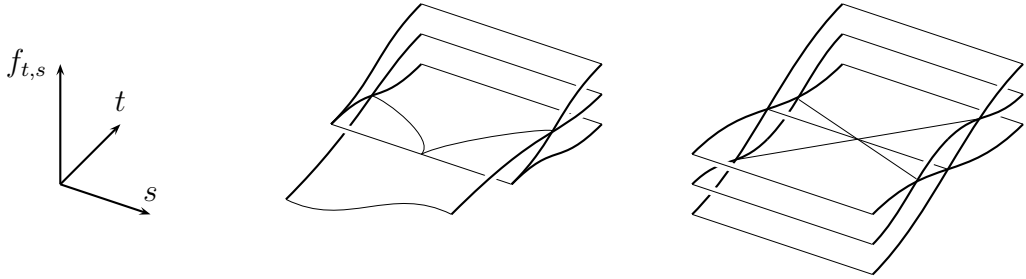
\begin{figure}
	\centering\psset{unit=0.4cm}
\begin{pspicture}(0,0)(35,10)
\rput(1,3){
	\psline{->}(0,0)(0, 4)\uput[l](0, 4){$f_{t,s}$}
	\psline{->}(0,0)(3,-1)\uput[u](3,-1){$s$}
	\psline{->}(0,0)(2, 2)\uput[u](2, 2){$t$}
}
\rput(8,0){
	\psbezier(0.5,2.5)(3,5)(3.5,6.5)(5,8)		
	\psline[border=2\pslinewidth,linewidth=0.3\pslinewidth](2,5)(8,3)		
	\psline[linewidth=0.3\pslinewidth](5,7)(11,5)		
	\psbezier(2,5)(3.5,6.5)(3.5,5.5)(5,7)			
	\psbezier[linecolor=white,linewidth=5\pslinewidth](6,2)(9,5)(9.5,4.5)(11,6)		
	\psline[linewidth=0.3\pslinewidth](5,8)(11,6)		
	\psbezier(2,5)(3.5,6.5)(3.5,7.5)(5,9)		
	\psbezier[border=2\pslinewidth](8,3)(9.5,4.5)(9.5,5.5)(11,7)	
	\psbezier(8,3)(9.5,4.5)(9.5,3.5)(11,5)	
	\psbezier(6,2)(9,5)(9.5,4.5)(11,6)			
	\psline[linewidth=0.3\pslinewidth](5,9)(11,7)			
	\psbezier[linewidth=0.3\pslinewidth](0.5,2.5)(2.5,1.5)(4,3)(6,2)	
	\psbezier[linewidth=0.3\pslinewidth](3.355,5.945)(3.855,5.8)(5.5,4.5)(5,4)
	\psbezier[linewidth=0.3\pslinewidth](9.33,4.73)(9.83,4.98)(6,4.5)(5,4)
}
\rput(22,0){
	\psline[linewidth=0.3\pslinewidth](5,7)(11,5)		
	\psbezier[linecolor=white,linewidth=5\pslinewidth](11,6)(10,5)(9.5,5.5)(8.5,4.5)	
	\psline[linewidth=0.3\pslinewidth](5,8)(11,6)		
	\psbezier[linecolor=white,linewidth=5\pslinewidth](6,0)(8,2)(9,5)(11,7)				
	\psline[linewidth=0.3\pslinewidth](5,9)(11,7)		
	\psbezier(5,8)(4,7)(3.5,5.5)(2.5,4.5) 
	\psline[border=2\pslinewidth,linewidth=0.3\pslinewidth](2.5,5.5)(8.5,3.5)		
	\psbezier(0,2)(2,4)(3,7)(5,9)					
	\psline[linewidth=0.3\pslinewidth](0,2)(6,0)		
	\psline[border=2\pslinewidth,linewidth=0.3\pslinewidth](0,3)(6,1)		
	\psbezier(0,3)(1,4)(1.5,3.5)(2.5,4.5)	
	\psline[border=2\pslinewidth,linewidth=0.3\pslinewidth](0,4)(6,2)		
	\psbezier(0,4)(2,6)(3,5)(5,7)					
	\psbezier[border=2\pslinewidth](6,1)(7,2)(7.5,3.5)(8.5,4.5)		
	\psbezier(11,6)(10,5)(9.5,5.5)(8.5,4.5)	
	\psbezier(6,0)(8,2)(9,5)(11,7)				
	\psbezier(6,2)(8,4)(9,3)(11,5)				
	\psline[linewidth=0.3\pslinewidth](1.45464,3.8202)(9.54536,5.1798)
	\psline[linewidth=0.3\pslinewidth](3.54536,5.8202)(7.45464,3.1798)
}
\end{pspicture}
	\caption{Singular loci of exceptional events from the~third group.}\label{diag:event-permutations}
\end{figure}

\begin{definition}\label{def:framing}
	Let $f\colon W\to\R$ be an~Igusa function. A~\emph{framing} on $f$ is a~choice of a~Riemannian metric on $W$ and an~orthonormal frame $v_1,\dots,v_{\mu(p)}$ of $E^-(p)$ at every critical point $p$. If $p$ is an~$A_2$-singularity, we add an~extra vector $v_{\mu(p)+1}\in N(p)$ in the~positive direction of $d^3\tau$.
\end{definition}

The~topology on the~space of framed functions $\FFun(W)$ was described indirectly in \cite{IgusaFMF} by constructing a~simplicial complex homotopy equivalent to this space. Here we only remind how homotopies look like, following \cite{EliMish-Contr}.

Choose a~smooth function $f\colon W\times I^m\to\R$ such that each \emph{slice} $f_{\underline t}\colon W\to\R$ for $\underline t\in I^m$ is an~Igusa function. Denote by $V\subset W\times I^m$ the~set of critical points of all slice functions $f_{\underline t}$ and let $\Sigma$ be the~subset of all $A_2$ points. Genericly, $V$ is an~$m$-dimensional submanifold of $W\times I^m$, $\Sigma$ has codimension $1$ in $V$, and $V$ is transverse to each slice $W\times \{\underline t\}$ at the~set $V-\Sigma$, see \cite{EliMish-Contr}. Let $V-\Sigma = V^0\cup\cdots\cup V^n$ and $\Sigma=\Sigma^0\cup\cdots\cup\Sigma^{n-1}$ be decompositions of $V-\Sigma$ and $\Sigma$ with respect to the~index. Then
\begin{itemize}
	\item $\Sigma^k$ is the~intersection of the~closures of $V^k$ and $V^{k+1}$, and
	\item for $z=(p,\underline t)\in V^k$ one has $T_pW = E^-(z)\oplus E^+(z)$, and
	\item for $z=(p,\underline t)\in\Sigma^k$ one has $T_pW = E^-(z)\oplus N(z)\oplus E^+(z)$,
\end{itemize}
where $E^{\pm}(z)$ stands for the~positive or negative eigenspace of $\Hess_p(f_{\underline t})$ and $N(z)$ is its nullspace. It follows that for $z_0\in\Sigma^k$ and $z\in V^k$
\begin{equation}\label{eq:limits-of-V-r}
	\lim_{\phantom0z\to<1em> z_0}E^+(z) = N(z_0)\oplus E^+(z_0)
			\quad\textrm{and}\quad
	\lim_{\phantom0z\to<1em> z_0}E^-(z) = E^-(z_0),
\end{equation}
whereas for $z_0\in\Sigma^k$ and $z\in V^{k+1}$
\begin{equation}\label{eq:limits-of-V-r+1}
	\lim_{\phantom0z\to<1em> z_0}E^-(z) = E^-(z_0)\oplus N(z_0)
			\quad\textrm{and}\quad
	\lim_{\phantom0z\to<1em> z_0}E^+(z) = E^+(z_0).
\end{equation}
A~framing on $f\colon W\times I^m\to I$ forms a~collection of sections $(v_1,\dots,v_n)$, where each $v_k$ is defined only over the~union $\Sigma^{k-1}\cup V^k\cup\cdots\cup\Sigma^{n-1}\cup V^n$, such that $v_k(z)\in N(z)$ for $z\in\Sigma^{k-1}$ and at $z\in V^k\cup\Sigma^k$ the~vectors $v_1(z), \dots, v_k(z)$ form an~orthonormal frame of $E^-(z)$. In particular, when we approach a~birth-death singularity, framings of canceling points agree with the~framing of the~limiting point, see Fig.~\ref{fig:framed-creation}. For more details see \cite{EliMish-Contr}.

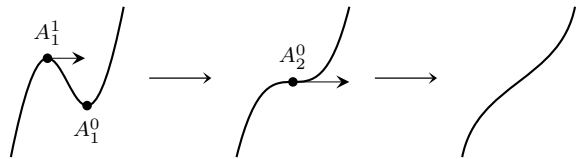
\begin{figure}
	\centering
	\begin{pspicture}(0,0)(7.5,2)
		\rput(6,0){\psbezier(0,0)(0.2,1)(1.3,1)(1.5,2)}
		\rput(3,0){%
				\psbezier(0,0)(0.5,2)(1.0,0)(1.5,2)
				\psline[linewidth=0.3pt,arrowsize=5pt,arrowlength=1]{->}(0.75,1)(1.5,1)
				\psdot(0.75,1)\uput[u](0.75,1){$\scriptstyle A_2^0$}}
		\rput(0,0){%
				\psbezier(0,0)(0.7,3.5)(0.8,-1.5)(1.5,2)
				\psline[linewidth=0.3pt,arrowsize=5pt,arrowlength=1]{->}(0.487,1.315)(0.987,1.315)
				\psdot(0.487,1.315)\uput[u](0.487,1.315){$\scriptstyle A_1^1$}
				\psdot(1.013,0.685)\uput[d](1.013,0.685){$\scriptstyle A_1^0$}}
		\rput(2.25,1){$\to<2em>$}
		\rput(5.25,1){$\to<2em>$}
	\end{pspicture}
	\caption{A~cancelation of framed $A_1$ points.}%
	\label{fig:framed-creation}%
\end{figure}

\begin{theorem}[cf. \cites{EliMish-Contr,Lurie-Cobs}]\label{thm:framed-contr}
	The~space of framed Igusa functions $\FFun(W)$ is contractible for any compact manifold $W$.
\end{theorem}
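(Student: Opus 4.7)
The plan is to follow the h-principle approach of Eliashberg--Mishachev, which asserts that the inclusion of the space $\FFun(W)$ of genuine framed Igusa functions into the larger space $\FPFun(W)$ of \emph{formal} framed functions (pairs $(f,\Phi)$, where $f$ is an arbitrary smooth function and $\Phi$ is abstract framing data at each point, without the compatibility conditions between $v_{\mu(p)+1}$ and $N(p)$ imposed at $A_2$-strata) is a weak homotopy equivalence. Since $\FPFun(W)$ is tautologically contractible --- both the function and the formal frames can be deformed independently to standard models --- contractibility of $\FFun(W)$ follows.

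First I would establish path-connectedness of $\FFun(W)$ directly. Given $f_0,f_1\in\FFun(W)$, connect the underlying smooth functions by any path and perturb it in the space of Igusa functions so that its singular locus consists only of the elementary events from Section~\ref{sec:framed-function}. Framings propagate unchanged across permutations; at a creation or annihilation event, continuity of the framing across the $A_2$-point is precisely the condition built into Definition~\ref{def:framing} --- the null-direction vector $v_{\mu(p)+1}$ of the birth-death point becomes either the extra frame vector for the index-$(\mu(p){+}1)$ critical point after birth, or is absorbed by it before death, see Fig.~\ref{fig:framed-creation}. Hence any path between $f_0$ and $f_1$ can be lifted to a path of framings, proving $\pi_0\FFun(W)=0$.

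Next I would bootstrap to higher homotopy groups by induction. Given $\alpha\colon S^n\to\FFun(W)$, perturb to arrange that the family is generic in the stratified sense: the union of critical sets $V\subset W\times S^n$ is a stratified submanifold with strata $V^k$ (Morse, index $k$) and $\Sigma^k$ ($A_2$, index $k$) satisfying the incidence relations~\eqref{eq:limits-of-V-r}--\eqref{eq:limits-of-V-r+1}. Extending $\alpha$ to $D^{n+1}$ amounts to extending both the function and the framing across the cone on $V$, which in turn reduces to extending framings across codimension $(n{+}1)$ singularities. The classification of framed Boardman singularities up to this codimension yields, in each case, a canonical extension: the point is that framings determine a specific normal form for the singularity, so the moduli of extensions is contractible. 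This is precisely where framings kill the obstruction from higher singularities such as the dove-tail $A_3$, which otherwise makes $\pi_1\Fun(W)$ nontrivial.

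The hard part will be verifying this last step at codimension $\geq 2$ strata, especially across the dove-tail $A_3$: one must show that the two framed $A_2$-arcs meeting at an $A_3$-point admit a canonical framed interpolation. This is the technical core of Igusa's original argument \cite{IgusaFMF} and of the sheaf-theoretic refinements in \cites{EliMish-Contr,Lurie-Cobs}, and it is the part I would expect to take the most effort to carry out in full, as it requires a detailed local analysis of the jet space at each Thom--Boardman stratum together with a gluing argument to patch together the local extensions into a global one over $D^{n+1}$.
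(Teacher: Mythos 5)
The paper never proves Theorem~\ref{thm:framed-contr}: it is stated with a ``cf.'' citation to Eliashberg--Mishachev~\cite{EliMish-Contr} and Lurie~\cite{Lurie-Cobs} (following Igusa~\cite{IgusaFMF}) and used as a black box. The only in-paper argument in the appendix is for the downstream statement that $\OFun(W)$ is simply connected, and that proof already takes contractibility of $\FFun(W)$ as given and deduces the rest from the path-lifting property of $\FFun(W)\to\OFun(W)$. So there is no proof in the paper to compare your proposal against; you are reconstructing an argument the author chose to cite.

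Judged as a blind sketch, your plan correctly points at the Eliashberg--Mishachev route and correctly locates where the difficulty lives (codimension~$\geq 2$ strata, notably the dove-tail $A_3$). But there are two substantive problems. First, you conflate two distinct strategies. The opening paragraph is an h-principle argument --- compare holonomic framed functions with formal ones and show the inclusion is a weak equivalence --- while the later paragraphs attempt a hands-on extension of maps $S^n\to\FFun(W)$ over $D^{n+1}$ by canonically interpolating framings across singular strata. These are not the same proof. The h-principle route in~\cite{EliMish-Contr} does not proceed by explicit stratum-by-stratum extension; it is a wrinkling/microflexibility argument whose parametric version absorbs the sphere-by-sphere induction into the formalism. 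The hands-on route is closer in spirit to Igusa's original argument in~\cite{IgusaFMF}, which yielded only high connectivity of the framed function space, not full contractibility --- and closing that gap is precisely what required the h-principle viewpoint. Second, and more bluntly, the entire mathematical content of the theorem is deferred to ``a detailed local analysis of the jet space at each Thom--Boardman stratum together with a gluing argument,'' which you identify as the hard part and then do not carry out. What you have produced is an accurate roadmap to the literature, not a proof, and the point where the roadmap forks (h-principle versus direct extension) is exactly where you stop.
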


There is a~natural action of $SO(k)$ on the~set of all framings of a~critical point of index $k$. The~quotient by this action, one per each critical point, results in a~much smaller space of functions, which is still simply connected.

\begin{definition}\label{def:oriented-fun}
	An~\emph{orientation} of an~Igusa function is a~choice of an~orientation of the~negative eigenspace $E^-(p)$ at every critical point $p$. The~space of oriented Igusa functions on $W$ will be denoted by $\OFun(W)$.
\end{definition}

\begin{theorem}
	$\OFun(W)$ is simply connected for any compact manifold $W$.
\end{theorem}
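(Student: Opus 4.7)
The plan is to deduce simple connectedness of $\OFun(W)$ from Theorem~\ref{thm:framed-contr} by studying the natural forgetful map $p\colon\FFun(W)\to\OFun(W)$ that remembers only the orientation of $E^-(p)$ determined by a framing $(v_1,\dots,v_{\mu(p)})$ at each critical point. The fibre of $p$ over an oriented Igusa function $f_0$ with critical points $p_1,\dots,p_r$ of indices $k_1,\dots,k_r$ is the compact, connected Lie group $\prod_{i}SO(k_i)$, acting freely by rotating each frame within its oriented eigenspace $E^-(p_i)$. (At a birth--death point the extra vector $v_{\mu(p)+1}\in N(p)$ is determined by $d^3f$, so it contributes no additional freedom; by \eqref{eq:limits-of-V-r}--\eqref{eq:limits-of-V-r+1} framings pass continuously through $A_2$ events as in Fig.~\ref{fig:framed-creation}.)

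First, I would establish that $p$ has the homotopy lifting property for paths and for maps out of $I^2$: given a continuous family $f_t\in\OFun(W)$ and a framing $\tilde f_0$ of $f_0$, one can produce continuously varying framings $\tilde f_t$ of $f_t$. Away from birth--death moments the critical points trace out smooth arcs, and orthogonal projection of $\tilde f_0$'s vectors onto the moving $E^-$ followed by Gram--Schmidt produces a canonical continuation; across an $A_2$ event one uses the local normal form of Definition~\ref{def:Igusa-function}(\ref{GM-A2}) to extend framings by the convention of Definition~\ref{def:framing}, matching Fig.~\ref{fig:framed-creation}. This is the technical step that I expect to be the main obstacle, since $\FFun(W)$ and $\OFun(W)$ are only ind-spaces of smooth functions and one has to be careful with perturbing a family into general position and with continuity across singular loci of Cases~I--III described on page~\pageref{rel:far-away}.

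Granted path lifting, I would argue as follows. Let $\gamma\colon(S^1,*)\to(\OFun(W),f_0)$ be a loop; lift it to a path $\tilde\gamma\colon[0,1]\to\FFun(W)$ with $\tilde\gamma(0)=\tilde f_0$ covering $\gamma$. Then $\tilde\gamma(1)$ is another framing of $f_0$ compatible with the same orientation, so there is a unique $(A_1,\dots,A_r)\in\prod_i SO(k_i)$ with $\tilde\gamma(1)=(A_1,\dots,A_r)\cdot\tilde f_0$. Since each $SO(k_i)$ is path-connected for $k_i\ge 0$, choose smooth paths $A_i(s)$ from $\id$ to $A_i$; applying $(A_1(s),\dots,A_r(s))$ to $\tilde f_0$ defines a path $\alpha\colon[0,1]\to\FFun(W)$ inside the fibre $p^{-1}(f_0)$ from $\tilde f_0$ to $\tilde\gamma(1)$. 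The concatenation $\tilde\gamma\cdot\overline{\alpha}$ is a loop in $\FFun(W)$ at $\tilde f_0$, hence null-homotopic by Theorem~\ref{thm:framed-contr}.

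Finally, pushing this null-homotopy forward by $p$ produces a null-homotopy in $\OFun(W)$: indeed $p\circ\alpha$ is constant at $f_0$ because rotations of framings within the oriented negative eigenspaces do not change the orientation, so $p(\tilde\gamma\cdot\overline\alpha)=\gamma\cdot(\text{const}_{f_0})\simeq\gamma$. Thus $\gamma$ is contractible in $\OFun(W)$, proving $\pi_1(\OFun(W),f_0)=0$. Equivalently, the long exact sequence of the fibration yields $\pi_1(\OFun)\cong\pi_0(\prod_i SO(k_i))=0$ since $\FFun$ is contractible and each factor $SO(k_i)$ is connected.
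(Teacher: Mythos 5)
Your argument is essentially the paper's proof: consider the forgetful projection $p\colon\FFun(W)\to\OFun(W)$, observe its fibres are connected (torsors over products of $SO(k_i)$'s), establish path-lifting by treating Morse arcs and $A_2$ events separately using compactness of $[0,1]$, and conclude by lifting a loop to a path, closing it up in a fibre, contracting it upstairs via Theorem~\ref{thm:framed-contr}, and pushing the contraction down by $p$. The remarks about homotopy lifting over $I^2$ and the concluding appeal to the long exact sequence of a fibration are superfluous (and would require showing more than path-lifting); composing the upstairs null-homotopy with $p$ already finishes the argument, exactly as in the paper.
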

\begin{proof}
	Consider the~canonical projection $\pi\colon\FFun(W)\to\OFun(W)$. It is easy to see that it has connected fibers (a~product of $SO(k)$'s). Hence, if we can show it has a~path-lifting property, then any~loop $\gamma$ can be lifted to a~loop up to reparametrization (lift $\gamma$ as a~path and connect its endpoints in a~fiber). Then a~contracting homotopy upstairs descends to a~contracting homotopy of $\gamma$.

	Pick a~path $\gamma\colon[0,1]\to\OFun(W)$. The~compactness of $[0,1]$ implies the~existence of a~sequence $0=t_0<t_1<\dots<t_k=1$ such that $\gamma|_{[t_{i-1},t_i]}$ looks like one of the~homotopies listed in Fig.~\ref{fig:singular-loci}. Since $\pi$ has connected fibers, it is enough to lift each of the~three homotopies.
	\begin{itemize}
	
		\item If $\gamma$ has only Morse singularities, for each critical point of $\gamma(0)$ choose any framing with a~given orientation and transport it along the~path.
	
		\item If $\gamma$ has a~birth singularity of index $k$ at $p$, pick any framing at this point agreeing with its orientation. Then transport it along the~path of points with index $k$ and for the~path of index $k+1$ add to the~framing the~additional vector coming from the~nullspace $N(p)$.
		
		\item For a~death singularity do the~same but with the~time reversed.
	
	\end{itemize}
	Hence, every path in $\OFun(W)$ lifts to $\FFun(W)$.
\end{proof}

\begin{remark}
	The~group $SO(k)$ is not simply connected, and there is a~choice for a~path connecting the~endpoints of the~lift. In particular, $\pi_2(\OFun(W))$ may be nontrivial. This is not a~problem for us, as we never go beyond $\pi_1(\OFun(W))$ in this paper.
\end{remark}

\section{2-categories}\label{sec:2-cats}
\subsection{Basic definitions}
This section provides basic definitions from the~theory of 2-categories \cites{Ben,Gray} and monoidal structures on them \cites{Baez-braided, KaprVoed-braided}. The~shortest way to~define a~2-category is to say that it is a~category enriched over $\cat{Cat}$. This means the~following:
\begin{itemize}

	\item for every two objects $A$ and $B$ there is a~category of morphisms $\Mor(A,B)$; morphism of this category are called \emph{$2$-morphisms}\footnote{
		We use the~double arrow notation for 2-morphisms, i.e.\ $\alpha\colon f\dblto g$, to distinguish them from the~regular ones.
	} and composition is denoted by $\star$,

	\item the~composition is given by functors $\circ_{A,B,C}\colon\Mor(B,C)\times\Mor(A,B)\to\Mor(A,C)$,

	\item the~identity morphisms are picked by functors $1\!\mathrm{l}_A\colon *\to\Mor(A,A)$, where the~category $*$ consists of a~single object $*$ and a~single morphism $\id_{*}$,

	\item the~unitarity and associativity axioms are replaced with three invertible $2$-morphisms\linebreak $\rho_f\colon f\circ \id_A \dblto f$, $\lambda_f\colon \id_B\circ f \dblto f$, and $\alpha_{f,g,h}\colon f\circ(g\circ h) \dblto (f\circ g)\circ h$ for any $f\in\Mor(A,B)$, $g\in\Mor(B,C)$, and $h\in\Mor(C,D)$, fitting into the~commutative diagrams
	\begin{gather}
		\begin{diagps}(0,0)(17em,20ex)\Dpentagon(0em,1ex)<17em,16ex>{=>`=>`=>`=>`=>}[%
			  f\circ(g \circ(h \circ k))` f\circ((g\circ h)\circ k)`
			 (f\circ g)\circ(h \circ k)` (f\circ (g \circ h))\circ k`
			((f\circ g)\circ h)\circ k;
			\id\circ\alpha`
			\alpha`\alpha`
			\alpha`\alpha\circ\id]
		\end{diagps}
		\\[2ex]
		\begin{diagps}(0,0)(10em,10ex)
			\Vtriangle<10em,8ex>{=>`=>`=>}[%
				f\circ(\id\circ g)`(f\circ\id)\circ g`f\circ g;%
				\alpha`\id\circ\lambda`\rho\circ\id%
			]
		\end{diagps}
	\end{gather}
	They are called the~MacLane's coherence conditions \cite{MacLane-cats}.
\end{itemize}
A 2-category is \emph{strict}, if all $\alpha$, $\rho$ and $\lambda$ are identities. Otherwise, it is \emph{weak}.

\begin{example}\label{ex:2cat-of-cats}
	Given two small categories $\cat{C}$ and $\cat{D}$ there is a~category $[\cat{C}\to<1em>\cat{D}]$ of functors from $\cat{C}$ to $\cat{D}$, where the~role of morphisms is played by natural transformations. Therefore, we have a~2-category of all small categories. This 2-category is strict, because composition of functors is associative.
\end{example}

\begin{example}\label{ex:2cat-R-Mod}
	Consider a~category $\Mod{R}$ of modules over a~fixed commutative ring $R$. We can extend it to a~2-category with 2-morphisms given by elements of $R$ as follows. Choose module homomorphisms $f,g\colon M\to N$ and $r\in R$. We write $r\colon f\dblto g$ if $g(m) = f(rm)$ for any $m\in M$. Both compositions of 2-morphisms are given as multiplication in $R$. The~2-category defined this way is again strict.
\end{example}

If we represent objects by points on a~plane and 1-morphisms by oriented edges, then 2-morphisms decorate regions. With this interpretation, a~picture of a~typical 2-morphisms looks as~follows:
\begin{equation}
	\begin{diagps}(-1.45,-0.9)(1.45,1.2)
		\node dom(-1.25,0)[A]
		\node cod( 1.25,0)[B]
		\carrow[angleA= 45,angleB= 135,arm=0.7,linearc=1.2]|a|{->}[dom`cod;\Rnode{f}{\scriptstyle f}]
		\carrow[angleA=-45,angleB=-135,arm=0.7,linearc=1.2]|b|{->}[dom`cod;\Rnode{g}{\scriptstyle g}]
		\arrow[nodesep=2.5ex]|a{npos=0.55,labelsep=1pt}|{=>}[f`g;\alpha]
	\end{diagps}
\end{equation}

There are two ways of composing 2-morphisms: a~\emph{vertical} composition, induced by the~internal composition in morphism categories $\Mor(A,B)$
\begin{equation}
	\begin{diagps}(-2.7,-1.2)(2.7,1.5)
		\node dom(0.0,0)[A]
		\node cod(2.5,0)[B]
		\carrow[angleA= 60,angleB= 120,arm=0.9,linearc=1.1]|a|{->}[dom`cod;\Rnode{f}{\scriptstyle f}]
		\arrow|c|{->}[dom`cod;\Rnode{m}{}]
		\carrow[angleA=-60,angleB=-120,arm=0.9,linearc=1.1]|b|{->}[dom`cod;\Rnode{g}{\scriptstyle g}]
		\arrow[nodesep=1.7ex]|a{npos=0.65,labelsep=1pt}|{=>}[f`m;\alpha]
		\arrow[nodesep=1.7ex]|a{npos=0.55,labelsep=1pt}|{=>}[m`g;\beta]
	\end{diagps}
	\qquad=\qquad
	\begin{diagps}(-0.2,-0.9)(2.7,1.2)
		\node dom(0.0,0)[A]
		\node cod(2.5,0)[B]
		\carrow[angleA= 45,angleB= 135,arm=0.7,linearc=1.2]|a|{->}[dom`cod;\Rnode{f}{\scriptstyle f}]
		\carrow[angleA=-45,angleB=-135,arm=0.7,linearc=1.2]|b|{->}[dom`cod;\Rnode{g}{\scriptstyle g}]
		\arrow[nodesep=2.5ex]|a{npos=0.55,labelsep=1pt}|{=>}[f`g;\beta\star\alpha]
	\end{diagps}
\end{equation}
and a~\emph{horizontal} composition, given by the~composition functors $\circ_{A,B,C}$
\begin{equation}
	\begin{diagps}(-0.2,-0.9)(5.2,1.2)
		\node A(0.0, 0)[A]
		\node B(2.5, 0)[B]
		\node C(5.0, 0)[C]
		\carrow[angleA= 45,angleB= 135,arm=0.7,linearc=1.2]|a|{->}[A`B;\Rnode{f0}{\scriptstyle f}]
		\carrow[angleA=-45,angleB=-135,arm=0.7,linearc=1.2]|b|{->}[A`B;\Rnode{g0}{\scriptstyle{g\phantom'}}]
		\carrow[angleA= 45,angleB= 135,arm=0.7,linearc=1.2]|a|{->}[B`C;\Rnode{f1}{\scriptstyle{f'}}]
		\carrow[angleA=-45,angleB=-135,arm=0.7,linearc=1.2]|b|{->}[B`C;\Rnode{g1}{\scriptstyle{g'}}]
		\arrow[nodesep=2.5ex]|a{npos=0.55,labelsep=1pt}|{=>}[f0`g0;\alpha]
		\arrow[nodesep=2.5ex]|a{npos=0.55,labelsep=1pt}|{=>}[f1`g1;\alpha]
	\end{diagps}
	\qquad=\qquad
	\begin{diagps}(-0.2,-0.9)(2.7,1.2)
		\node A(0.0, 0)[A]
		\node B(2.5, 0)[B]
		\carrow[angleA= 45,angleB= 135,arm=0.7,linearc=1.2]|a|{->}[A`B;\Rnode{f}{\scriptstyle{f'\circ f}}]
		\carrow[angleA=-45,angleB=-135,arm=0.7,linearc=1.2]|b|{->}[A`B;\Rnode{g}{\scriptstyle{g'\circ g}}]
		\arrow[nodesep=2.5ex]|a{npos=0.55,labelsep=1pt}|{=>}[f`g;\beta\circ\alpha]
	\end{diagps}
\end{equation}
Moreover, the~two ways of composing 2-morphisms are compatible, which means that the~diagram
\begin{equation}\label{diag:interchange-law}
	\begin{diagps}(-2.5,-5ex)(2.5,6.5ex)
		\node X(-2.5,0)[A]
		\node Y( 0.0,0)[B]
		\node Z( 2.5,0)[C]
		\carrow[angleA=60,angleB=120,arm=0.9,linearc=1.1]|c|{->}[X`Y;\Rnode{tl}{}]
		\arrow|c|{->}[X`Y;\Rnode{ml}{}]
		\carrow[angleA=-60,angleB=-120,arm=0.9,linearc=1.1]|c|{->}[X`Y;\Rnode{bl}{}]
		\carrow[angleA=60,angleB=120,arm=0.9,linearc=1.1]|c|{->}[Y`Z;\Rnode{tr}{}]
		\arrow|c|{->}[Y`Z;\Rnode{mr}{}]
		\carrow[angleA=-60,angleB=-120,arm=0.9,linearc=1.1]|c|{->}[Y`Z;\Rnode{br}{}]
		\arrow[nodesep=1.5ex]|a{npos=0.6,labelsep=1pt}|{=>}[tl`ml;\alpha]
		\arrow[nodesep=1.5ex]|a{npos=0.6,labelsep=1pt}|{=>}[ml`bl;\beta]
		\arrow[nodesep=1.5ex]|a{npos=0.6,labelsep=1pt}|{=>}[tr`mr;\alpha']
		\arrow[nodesep=1.5ex]|a{npos=0.6,labelsep=1pt}|{=>}[mr`br;\beta']
	\end{diagps}
\end{equation}
produces the~same 2-morphism no matter whether we first compose the 2-morphisms vertically or horizontally. In other words,
\begin{equation}\label{eq:interchange-law}
	(\beta'\star\alpha')\circ(\beta\star\alpha) = (\beta'\circ\beta)\star(\alpha'\circ\alpha).
\end{equation}
This property, called the~\emph{interchange law}, together with the~obvious associativity and unitarity axioms, is another way how to define a~2-category \cite{Ben}.

\begin{example}
	Chronological cobordisms form a~strict 2-category:
	\begin{itemize}
		\item objects are smooth (collared) oriented manifolds,
		\item morphisms are (collared) cobordisms with chronologies,
		\item 2-morphisms are homotopy classes of changes of chronologies.
	\end{itemize}
	The~vertical composition of 2-morphisms is given by concatenation of homotopies, whereas the~horizontal one by juxtaposition. A~routine check shows both operations are compatible, i.e. the~interchange law holds.
\end{example}

The~higher structure of 2-categories affects a~notion of a~functor: we no longer assume that it preserves identities nor compositions of morphisms. Instead, both properties should hold up to some 2-morphisms, which are part of the~data, subject to some coherence relations.\footnote{
	See \cite{Ben} for details. The~most general definition does not even assume invertibility of $\iota$ and $\varphi$, but we will never need such functors.
}

\begin{definition}\label{def:2-functor}
	A~functor $F\colon\cat{C}\to\cat{D}$ between 2-categories consists of a~function of objects $F_0\colon\Ob\cat{C}\to\Ob\cat{D}$, a~collection of functors $F_{A,B}\colon\Mor(A,B)\to\Mor(FA,FB)$, and $2$-morphisms $\iota_A\colon \id_{FA}\dblto F(\id_A)$ and $\varphi_{f,g}\colon F(f)\circ F(g)\dblto F(f\circ g)$ satisfying certain coherence relations. A~functor $F$ is \emph{strict}, if both $2$-morphisms are equalities.
\end{definition}

A~famous result states that every 2-category can be strictified: every 2-category is equivalent to some strict 2-category. Hence, we do not have to care about weak 2-categories. On the~other hand, this does not apply to functors: there are functors between strict 2-categories that cannot be replaced by strict ones. However, most functors used in this paper will be strict, with the~only exception being the~cubical functors \cite{GPS-coherence-3-cats}.

\begin{definition}\label{def:cubical}
	A~functor $F\colon\cat{C}_1\times\dots\times\cat{C}_r\longrightarrow\cat{D}$ between strict 2-categories\footnote{
		There is also a~more general notion of a~cubical functor between weak 2-categories.
	} is \emph{cubical} if the~following conditions hold:
	\begin{enumerate}
		\item $F(\id_{A_1},\dots,\id_{A_r}) = \id_{F(A_1,\dots,A_r)}$, and
		\item $F(f_1\circ g_1,\dots,f_r\circ g_r) = F(f_1,\ldots,f_r)\circ F(g_1,\ldots,g_r)$ if there is $k$ such that $f_i=\id$ and $g_j=\id$ for all $i>k>j$.
	\end{enumerate}
	In other words, $\iota$ is the~identity $2$-morphism and so is $\varphi$, unless we have to \quot{permute} nontrivial morphisms $f_i$ and $g_j$ with $i>j$.
\end{definition}

In the~case of a~cubical functor, the~coherence relations mentioned in Definition~\ref{def:2-functor} reduce to two commuting diagrams of 2-morphisms

\begin{gather}
	\label{eq:cubical-2-morphisms}
	\begin{diagps}(0,-1ex)(12em,15ex)
		\square(0,0)(12em,12ex){=>`=>`=>`=>}[%
			{F(\underline f)\circ F(\underline g)}`{F(\underline f')\circ F(\underline g')}`%
			{F(\underline f\circ \underline g)}`{F(\underline f'\circ\underline g')};%
			{F(\underline\alpha)\circ F(\underline\beta)}`\varphi`\varphi`{F(\underline\alpha\circ\underline\beta)}%
		]
	\end{diagps}
	\\[2ex]
	\label{eq:cubical-monoidal}
	\begin{diagps}(0,-1ex)(12em,15ex)
		\square(0,0)(12em,12ex){=>`=>`=>`=>}[%
			{F(\underline f)\circ F(\underline g)\circ F(\underline h)}`%
			{F(\underline f\circ \underline g)\circ F(\underline h)}`%
			{F(\underline f)\circ F(\underline g\circ\underline h)}`%
			{F(\underline f\circ\underline g\circ\underline h)};%
			\varphi\circ\id`\id\circ\varphi`\varphi`\varphi
		]
	\end{diagps}
\end{gather}
where we used a~shortcut notation $\underline f = (f_1,\dots,f_r)$ for morphisms in a~product of $2$-categories, and similarly for $2$-morphisms. The~latter condition has the~following interpretation when $r=2$: whenever we have three pairs of morphisms, passing from a~composition of values of $F$ on them to the~value of $F$ on their composition requires two \quot{transpositions} of \quot{inner} arguments and it can be done in two different ways. The~condition \eqref{eq:cubical-monoidal} says, it does not matter which way we choose.

\begin{example}\label{ex:right-is-cubical}
	The~\quot{right-then-left} disjoint sum $\rdsum$ is a~cubical functor, whereas the~\quot{left-then-right} one $\ldsum$ is cocubical (i.e. $\varphi$ in Definition~\ref{def:cubical} is identity if for some $k$ we have $f_i=\id$ and $g_j=\id$ for $i<k<j$).
\end{example}

\subsection{Gray products}

A~Gray monoidal structure on a~2-category is an~analogue of a~strict monoidal one for ordinary categories: there is a~more general definition of a~(weak) monoidal 2-category, but each such category is equivalent (in a~monoidal sense) to a~Gray-monoidal one \cite{GPS-coherence-3-cats}. Because of that it is sometimes called a~\emph{semi-strict} monoidal $2$-category \cites{Baez-braided,Lauda-open-string}.

\begin{definition}\label{def:Gray-monoidal}
	A~\emph{Gray monoidal structure} in a~strict 2-category $\cat{C}$ consists of an~associative cubical functor $\otimes\colon\cat{C}\times\cat{C}\to\cat{C}$ and a~unit object $I\in\cat{C}$ such that both $I\otimes(\blank)$ and $(\blank)\otimes I$ are identity $2$-functors.
\end{definition}

\begin{example}\label{ex:Gray-product-in-Mor-R}
	Consider a~(non-additive) subcategory $\Mod{R}^h\subset\Mod{R}$ of all $G$-graded $R$-modules and only homogeneous morphisms. The~graded tensor product, when restricted to this subcategory, is a~cubical functor: the~2-morphism $\varphi\colon(f'\otimes g')\circ(f\otimes g)\dblto(f'\circ f)\otimes(g'\circ g)$ is given as multiplication by $\lambda(\deg g',\deg f)$. This example shows that graded monoidal categories are very close to Gray categories.
\end{example}	

It is much harder to describe braiding in a~monoidal $2$-category: writing down all coherence conditions takes usually a~few pages \cites{Baez-braided,KaprVoed-braided}. Since we will never use this notion in such generality, we provide here a~very simplified version with all $2$-morphisms being identities. That is why we call it a~\emph{strict braiding}.

\begin{definition}\label{def:strict-symmetry}
	A~\emph{strict braiding} in a~Gray monoidal category $(\cat{C},\otimes,I)$ is a~collection of isomorphisms $\sigma_{A,B}\colon	A\otimes B\to B\otimes A$ such that each $\sigma_{A,\blank}$ and $\sigma_{\blank,B}$ is a~natural transformation and the~triangle below commutes
	\begin{equation}\label{diag:strict-symm-triangle}
		\begin{diagps}(0,-0.5ex)(10em,10.5ex)
			\Vtriangle<10em,8ex>[%
				A\otimes B\otimes C`A\otimes C\otimes B`C\otimes A\otimes B;
				\id\otimes\sigma_{B,C}`\sigma_{A\otimes B,C}`\sigma_{A,C}\otimes\id
			]
		\end{diagps}
	\end{equation}
	for any object $C$. If in addition $\sigma_{A,B}\circ\sigma_{B,A}=\id$, we call $\sigma$ a~\emph{strict symmetry}.
\end{definition}

A~natural transformation $\eta\colon F\to G$ in a~2-categorical setting must be coherent with 2-morphisms. This means the~following compositions of $2$-morphisms are equal
\begin{equation}
	\begin{diagps}(-0.5,0.2)(3.5,5.1)
		\node lt(0,4)[F(A)]		\node rt(3,4)[F(B)]
		\node lb(0,1)[G(A)]		\node rb(3,1)[G(B)]
		\carrow*[arcangle= 30]|a|{->}[lt`rt;\Rnode{Ff0}{\scriptstyle{F(f)}}]
		\carrow*[arcangle=-30]|b|{->}[lt`rt;\Rnode{Ff1}{\scriptstyle{F(f')}}]
		\arrow|b|{->}[lt`lb;\eta_X]	\arrow|a|{->}[rt`rb;\eta_Y]
		\arrow{->}[lb`rb;G(f')]
		\arrow[nodesep=2ex]|a{npos=0.55}|{=>}[Ff0`Ff1;F(\alpha)]
		\arrow[nodesep=5ex,offset=1ex]|a{labelsep=1ex}|{=>}[rt`lb;\id]
	\end{diagps}
	\qquad=\qquad
	\begin{diagps}(-0.5,0.2)(3.5,5.1)
		\node lt(0,4)[F(A)]		\node rt(3,4)[F(B)]
		\node lb(0,1)[G(A)]		\node rb(3,1)[G(B)]
		\arrow{->}[lt`rt;`F(f)]
		\arrow|b|{->}[lt`lb;\eta_X]	\arrow|a|{->}[rt`rb;\eta_Y]
		\carrow*[arcangle= 30]|a|{->}[lb`rb;\Rnode{Gf0}{\scriptstyle{G(f)}}]
		\carrow*[arcangle=-30]|b|{->}[lb`rb;\Rnode{Gf1}{\scriptstyle{G(f')}}]
		\arrow[nodesep=2ex]|a{npos=0.55}|{=>}[Gf0`Gf1;G(\alpha)]
		\arrow[nodesep=5ex,offset=-1ex]|b{labelsep=1ex}|{=>}[rt`lb;\id]
	\end{diagps}
\end{equation}
for any $2$-morphism $\alpha\colon f\dblto f'$.

\begin{example}
	The~category $\Mod{R}^h$ from Example~\ref{ex:Gray-product-in-Mor-R} is strictly braided, with the~braiding isomorphism $\sigma_{A,B}(a\otimes b) := \lambda(\deg a,\deg b)b\otimes a$.
\end{example}

\begin{example}
	The~2-category $\cat{ChCob}$ of chronological cobordisms is a~strictly symmetric Gray monoidal category, with a~product given by the~\quot{right-then-left} disjoint sum $\rdsum$, the~empty manifold $\emptyset$ as a~unit object, and a~permutation cylinder as a~symmetry. On the~other hand, the~$2$-category $\EmbChCob(0)$ of cobordisms embedded in $\Disk\times I$ is only strictly braided.
\end{example}

\end{document}